\numberwithin{equation}{section}
\newtheorem{theorem}{Theorem}[section]
\newtheorem{lemma}[theorem]{Lemma}
\newtheorem{proposition}[theorem]{Proposition}
\newtheorem{corollary}[theorem]{Corollary}
\theoremstyle{definition}
\newtheorem{definition}[theorem]{Definition}
\newtheorem{remark}[theorem]{Remark}
\newcommand{\restr}{\mathop{\raisebox{-.127ex}{\reflectbox{\rotatebox[origin=br]{-90}{$\lnot$}}}}}
\newcommand{\R}{\mathbb{R}}
\newcommand{\N}{\mathbb{N}}
\newcommand{\C}{\mathbf{C}}
\newcommand{\eps}{\varepsilon}
\newcommand{\be}{\begin{equation}}
\newcommand{\ee}{\end{equation}}
\newcommand\lt{\left}
\newcommand\rt{\right}
\def\les{\lesssim}
\def\ges{\gtrsim}
\newcommand{\cF}{\mathcal{F}}
\newcommand{\cB}{\mathcal{B}}
\newcommand{\cK}{\mathcal{K}}
\def \C{\mathbf{C}}
\def\EE{\mathbb{E}}
\def\PP{\mathbb{P}}
\def\diam{\operatorname{diam}}
\newcommand{\cN}{\mathcal{N}}
\newcommand{\cM}{\mathcal{M}}
\newcommand{\cS}{\mathcal{S}}
\renewcommand{\c}{\mathsf{c}}
\newcommand{\bra}[1]{\left( #1 \right)}
\newcommand{\sqa}[1]{\left[ #1 \right]}
\newcommand{\cur}[1]{\left\{ #1 \right\}}
\newcommand{\abs}[1]{\left| #1 \right|}
\newcommand{\nor}[1]{\left\| #1 \right\|}
\def\fref{f}
\newcommand{\TSP}{\operatorname{TSP}}
\renewcommand{\C}{\mathcal{C}}
\newcommand{\pP}{\mathsf{P}}
\newcommand{\CPp}{\C_{\pP}^p}
\renewcommand{\TSP}{\mathsf{TSP}^p}%_{\TSP}}
\newcommand{\dist}{\mathsf{d}}
\newcommand{\Mp}{\mathsf{M}^p}
\newcommand{\W}{\mathsf{W}^p}
\newcommand{\cT}{\mathcal{T}}
\newcommand{\cG}{\mathcal{G}}
\newcommand{\Lip}{\operatorname{Lip}}
\newcommand{\x}{{\bf x}}
\newcommand{\y}{{\bf y}}
\newcommand{\z}{{\bf z}}
\newcommand{\cR}{\mathcal{R}}
\newcommand{\cU}{\mathcal{U}}
\newcommand{\cV}{\mathcal{V}}
\newcommand{\cQ}{\mathcal{Q}}
\newcommand{\cspan}{\mathsf{c}_{\operatorname{A2}}}
\newcommand{\cbdeg}{\mathsf{c}_{\operatorname{A3}}}
\newcommand{\cmerge}{\mathsf{c}_{\operatorname{A4}}}
\newcommand{\creg}{\mathsf{c}_{\operatorname{A5}}}
\title[OT methods for combinatorial optimization over two random point sets] {Optimal transport methods for combinatorial optimization over two random point sets
%An Optimal Transport Approach to Random Euclidean Bipartite Combinatorial Optimization Problems
}
\author[M. Goldman]{Michael Goldman}
\address{M.G.:   CMAP, CNRS, \'Ecole polytechnique, Institut Polytechnique de Paris, 91120 Palaiseau,
France}
\email{michael.goldman@cnrs.fr}
\author[D. Trevisan]{Dario Trevisan}
\address{D.T.: Dipartimento di Matematica, Università degli Studi di Pisa, 56125 Pisa, Italy  }
\email{dario.trevisan@unipi.it}
\date{}
\subjclass[2010]{60D05, 90C05, 39B62, 60F25, 35J05}
\keywords{Travelling Salesperson Problem, matching problem, optimal transport, geometric probability}
\thanks{D.T. was
partially supported by the INdAM-GNAMPA project 2022 ``Temi di Analisi Armonica Subellittica''.}
\newcounter{proof-step}
\begin{document}

\begin{abstract}
We investigate the minimum cost of  a wide class of combinatorial optimization problems over random bipartite geometric graphs in $\R^d$  where the edge cost between two points is given by a $p$-th power of their Euclidean distance. This includes e.g.\ the travelling salesperson problem and the bounded degree minimum spanning tree.  We establish in particular almost sure convergence, as $n$ grows, of a suitable renormalization of the random minimum cost, if the points are uniformly distributed and $d \ge 3$, $1\le p<d$. Previous results were limited to the range $p<d/2$. 

Our proofs are based on subadditivity methods and build upon new bounds for random instances of the Euclidean bipartite matching problem, obtained through its optimal transport relaxation and functional analytic techniques.

\end{abstract}

\maketitle

\setcounter{tocdepth}{1}
\tableofcontents
\section{Introduction}

Combinatorial optimization problems on graphs are widespread in operation research, with applications in planning and logistics. Their study is strongly related to algorithm theory and computational complexity 
theory. The most representative example of such discrete variational problems is the travelling salesperson problem (TSP) \cite{shmoys1985traveling}: given a set of cities and distances between each
pair of them, one asks for the shortest route that visits each city exactly once and returns to the origin city (i.e.\ a tour). Like many related combinatorial problems and  despite its straightforward formulation,
the TSP belongs to the class of  NP-hard problems.  
In practical terms, computing an exact solution becomes computationally intractable as known algorithms perform exponentially many steps in the number of cities.

%The complexity results usually refer to the worst case-scenario for the problem formulated in a general weighted graph, i.e., without any underlying geometry or further modelling assumption. 
%\subsubsection*{Random Euclidean combinatorial optimization}
 In real-world situations, there is quite often the need to solve many similar instances of a given combinatorial optimization problem. In that case, additional structure, including geometry and randomness, can be exploited. The Euclidean formulation of the TSP, i.e., when cities are points in $\R^d$ and distances are given by the Euclidean distance, is still NP-hard \cite{papadimitriou1977euclidean}, but Karp \cite{karp1977probabilistic} observed that solutions to random instances, i.e., when cities are sampled independently and uniformly, can be efficiently approximated via a partitioning scheme.  His proof relies upon the seminal
 work by Beardwood, Halton and Hammersely \cite{beardwood1959shortest}, where precise asymptotics for optimal costs of a random instance of the problem were first established: 
 given i.i.d.\ points $(X_i)_{i=1}^n$ distributed according to a probability density $\rho$ on $\R^d$, denoting the length $
\mathcal{C}_{\mathsf{TSP}}((X_i)_{i=1}^n)$ of the (random) solution to the TSP cycling through such points satisfies the $\PP$-a.s.\ limit
\begin{equation}\label{eq:bhh} \lim_{n \to \infty} n^{\frac{1}{d}-1} \mathcal{C}_{ \mathsf{TSP}}((X_i)_{i=1}^n) = \beta_{\operatorname{BHH}} \int_{\R^d} \rho^{1-\frac{1}{d}},\end{equation}
where $\beta_{\operatorname{BHH}} = \beta_{\operatorname{BHH}}(d) \in (0, \infty)$ is a constant depending on the dimension $d$ only. The scaling $n^{1-1/d}$ is intuitively explained by the fact that the $n$ cities are connected through paths of typical length $n^{-1/d}$ (as if they were on a regular grid). 

Building upon these ideas, several authors \cite{papadimitriou1978probabilistic, steele1981subadditive, steele1997probability, yukich2006probability} contributed towards establishing a general theory to
obtain limit results of BHH-type, i.e., as in \eqref{eq:bhh}, for a wide class of random Euclidean combinatorial optimization problems. 
The theory allows also for more general weights than the Euclidean length, including $p$-th powers of the Euclidean distance, a variant often motivated by modelling needs. If $0<p<d$, with a
minimal modification of the techniques one obtains BHH-type results as in  \eqref{eq:bhh}, with the scaling replaced by $n^{1-p/d}$,  
the constant $\beta_{\operatorname{BHH}}$ now depending on $p$, $d$ and the specific combinatorial optimization problem, and the integrand $\rho^{1-1/d}$ replaced by $\rho^{1-p/d}$. 
For $p \ge d$, the situation becomes subtler and \eqref{eq:bhh} is known for the TSP only if $p=d$, see \cite{yukich1995asymptotics} and \cite[Section 4.3]{yukich2006probability}. 
% \textcolor{blue}{M: forse toglierei questa parte: The presence of such threshold is perhaps clarified by considering a variant of the TSP problem with points sampled uniformly over two cubes (both with volume $1/2$) that are far apart, e.g., at a distance $\ell>0$.  Cities will then be in both cubes, hence to connect them any tour will cost (at least) $2 \ell^p$, which is larger or comparable to $n^{1-p/d}$ if $p \ge d$. Of course one does not expect this to happen for the problem on the single cube, but makes the arguments more involved.}

 %\frac{ \mathcal{C}_{ \mathsf{TSP}}((X_i)_{i=1}^n)}{ n^{1-\frac{1}{d}}}%
%\subsection{The random Euclidean assignment problem}
Despite the wide applicability of this theory, several classical problems such as those formulated over two random sets of points, are not covered  and  require different mathematical tools. The Euclidean assignment problem, also called bipartite matching, is certainly the most representative among these: given two sets of $n$ points $(x_i)_{i=1}^n$, $(y_j)_{j=1}^n \subseteq \R^d$,  one defines the matching cost functional as
\[ \Mp\bra{ (x_i)_{i=1}^n, (y_j)_{j=1}^n } = \min_{\sigma} \sum_{i=1}^n |x_i - y_{\sigma(i)}|^p,\]
where the minimum is taken among all the permutations $\sigma$ over $n$ elements.
This is often interpreted in terms of optimal planning for the execution of a set of jobs at positions $y_j$'s to be assigned to a set of workers at the positions $x_i$'s. Although the assignment problem belongs to the P complexity class, i.e., an optimal $\sigma$ can be found in a polynomial number of steps (with respect to $n$) the analysis of random instances shows some interesting behavior in low dimensions. Indeed, if $(X_i)_{i=1}^n$, $(Y_j)_{j=1}^n$ are i.i.d.\ and  uniformly distributed on the cube $(0,1)^d$, it is known  \cite{dudley1969speed, AKT84, talagrand1992matching, dobric1995asymptotics} that \footnote{
%The notation $A\ll 1$, which we only use in assumptions, means that there exists an $\eps>0$ only depending on the dimension $d$ and on $p\ge 1$, such that if $A\leq \eps$ then the conclusion holds.
The notation $A\les B$ means that there exists a
constant $C>0$,  such that $A\le C B$, where $C$ depends on the dimension $d$, $p$ and possibly other quantities tacitly considered as fixed, e.g.\ a domain $\Omega \subseteq \R^d$ or a probability density $\rho$.  We use the notation $\les_q$ to indicate the dependence on the parameter $q$. We write $A\sim B$ if both $A\les B$ and $B\les A$.}
\begin{equation*}\label{eq:boundE}
 \EE\lt[ \mathsf{M}^1( (X_i)_{i=1}^n, (Y_j)_{j=1}^n ) \rt]\sim\begin{cases}
                                                                      \sqrt{n} &\textrm{ for } d=1\\                                                                      \sqrt{ n \log n}  & \textrm{ for } d=2\\
                                                                      n^{1 -\frac{1}{d}} & \textrm{ for } d\ge 3.
                                                                     \end{cases}
\end{equation*}
In particular, for $d\in\cur{1,2}$ the cost is  asymptotically larger than the heuristically motivated $n^{1-1/d}$. This exceptional scaling is intuitively due to local fluctuations of the distributions of the two families of points. 

%The Euclidean bipartite matching problem is a particular combinatorial problem, since it enjoys quite a rich structure thanks to its linear programming relaxation, yielding the Kantorovich optimal transport problem. This allows the application of functional analytic tools, e.g., duality, which play a role in many arguments to establish \eqref{eq:boundE}.  However, such is an exception and prevents a similar analysis for more complex problems, such as the TSP over two sets of $n$ points $(x_i)_{i=1}^n$, $(y_j)_{j=1}^n \subseteq \R^d$, where a tour must by definition alternate between the first and second set. 

Inspired by the combinatorial approach in \cite{BoutMar} for the random Euclidean bipartite matching problem in dimension $d\ge 3$, Barthe and Bordenave  \cite{BaBo} first proposed a general theory to establish results of BHH-type \eqref{eq:bhh} 
for a wide class of random Euclidean combinatorial optimization problems over two sets of $n$ points. Let us point out that the equality in \eqref{eq:bhh} is actually only proven
for uniform measures while in general only upper and lower bounds (which are conjectured to coincide) are known. % When applied to the  Euclidean bipartite matching problem it provides a BHH-type result, akin  to \eqref{eq:bhh}, if $d \ge 3$.
 In case of $p$-th power weighted distances, the theory developed in \cite{BaBo} applies in the range $0<p<d/2$, which appears quite naturally in their arguments. % In particular it excludes the cases $p=1$, $d\in \cur{1,2}$ \textcolor{blue}{M: perche questi? non li copriamo neanche noi}. 
 The difficulty to go beyond the threshold $p=d/2$ is that  \eqref{eq:bhh}  cannot hold  without additional hypothesis on the density $\rho$. For example, because of fluctuations a necessary 
 condition is connectedness of the support of $\rho$.
%  The underlying reason for such threshold is again clarified by considering a problem (e.g., the bipartite matching) over two cubes that are at a distance $\ell>0$.  
%  By the central limit theorem, any random instance of the problem will have a fluctuation of order $\sqrt{n}$ in the two distributions of points in each cube, hence a bipartite matching will necessarily 
%  have a cost larger than (approximatively) $\sqrt{n} \ell^{p}$, which becomes a dominant contribution if $p\ge d/2$. 
 Nevertheless, in the case of the Euclidean bipartite matching problem, it was recently proved \cite{goldman2021convergence} that if $\rho$ is the uniform measure on the unit cube with $d \ge 3$ and $p \ge 1$, then
\begin{equation}\label{eq:matching-bhh-cube} \lim_{n \to \infty}n^{\frac{p}{d}-1}  \EE\lt[ \Mp( (X_i)_{i=1}^n, (Y_j)_{j=1}^n ) \rt] = \beta_{\mathsf{M}}.\end{equation}
Here $\beta_{\mathsf{M}} \in (0, \infty)$ depends on $d$ and $p$ only.
%whith complete convergence for $p<d$.
%which is akin to \eqref{eq:bhh}, although
The proof is a combination of classical subadditivity arguments -- that originate from \cite{beardwood1959shortest} -- and tools from the theory of optimal transport.
In particular, the defect in subadditivity is  estimated  using the connection between Wasserstein distances and  negative Sobolev norms. In this context, 
the use of this type of  estimates  can be traced back to a recent PDE ansatz proposed in statistical physics \cite{CaLuPaSi14}. Since then, it has been   successfully used 
in the mathematical literature  \cite{AmStTr16,Le17,holden2018gravitational, goldman2021quantitative, BobLe19, ledoux2019optimal, goldman2022fluctuation, huesmann2021there, chen2022asymptotics}, even beyond the case of i.i.d.\ points \cite{wang2019limit, jalowy2021wasserstein, huesmann2022wasserstein, borda2021berry}.
We refer to  \cite{caracciolo2015scaling, BeCa, benedetto2021random} for further statistical physics literature. In fact, the technique in \cite{goldman2021convergence} is quite robust and coarser estimates 
can be used, avoiding the use of PDEs. Still, the results apply only for the Euclidean bipartite matching problem thanks to its connection with optimal transport. The main purpose of this paper is to show that for a quite 
general class of bipartite combinatorial problems it is actually possible to rely on the good bounds for the matching problem to obtain the analog of  \eqref{eq:bhh} provided $p<d$. This is inspired by \cite{capelli2018exact} 
where a similar idea is used for the TSP and the $2$-factor problem when $p=d=2$.  

As alluded to, an important open question left from the  theory developed in \cite{BaBo} (see also \cite{DeScSc13}) is the existence of a limit in \eqref{eq:bhh} for general densities. 
The only result in this direction is  \cite{ambrosio2022quadratic}, which established for $p=d=2$ that the limit of the expected cost (suitably renormalized) exists if $\Omega$ 
is a bounded connected open set, with Lipschitz boundary and $\rho$ is H\"older continuous and uniformly strictly positive and bounded from above on $\Omega$. 
This settled a conjecture from \cite{benedetto2021random} and, more importantly for our purposes, combined subadditivity and PDE arguments with a Whitney-type decomposition to take into account the structure of $\Omega$ and its boundary.
While we do not address this question here,  some of the  ideas from \cite{ambrosio2022quadratic} are further developed in this work.

\subsection{Main result}
Our aim is to establish limit results for the cost of a wide class of Euclidean combinatorial optimization problems of two random point sets, in the range $d/2 \le p <d$ for any dimension $d \ge 3$. This overcomes the limitations of \cite{BaBo}, showing that in higher dimensions bipartite problems behave much more similarly to non-bipartite ones. %This essentially shows that the natural for many bipartite problems, although    to the  which we recall is quite natural already in the  Euclidean functional theory. 
Our general theorem can be stated as follows (a precise description of all the assumptions and notation is given in \cref{sec:notation}).

\begin{theorem}\label{thm:main}
Let $d \ge 3$, $p\in [1,d)$ and let $\pP = (\cF_{n,n})_{n \in \mathbb{N}}$ be a combinatorial optimization problem over complete bipartite graphs such that assumptions \ref{as:isomorphism}, \ref{as:spanning}, \ref{as:bddegree}, \ref{ass:local-merging} and  \ref{as:growth} hold and write $\CPp( (x_i)_{i=1}^n, (y_j)_{j=1}^n)$ for the  optimal cost of the problem over the two sets of $n$ points $(x_i)_{i=1}^n$,  $(y_j)_{j=1}^n \subseteq \R^d$, with respect to the Euclidean distance raised to the power $p$. Then, there exists $\beta_{\pP}\in (0, \infty)$ depending on $p$, $d$ and $\pP$ only such that the following hold.

Let $\Omega \subseteq \R^d$ be a bounded open set and assume that it is  either convex  or has  $C^2$ boundary. Let $\rho$ be a H\"older continuous probability density on $\Omega$, uniformly strictly positive and bounded from above. Given i.i.d.\ random variables $(X_i)_{i=1}^\infty$, $(Y_j)_{j=1}^\infty$ with common law $\rho$ 
we have $\PP$-a.s.\ that
\begin{equation}\label{eq:limit-mean-main} 
\limsup_{n \to \infty} n^{\frac{p}{d}-1} \CPp\bra{ (X_i)_{i=1}^n, (Y_j)_{j=1}^n}  \le \beta_{\pP} \int_{\Omega} \rho^{1-\frac{p}{d}}.\end{equation}

Moreover, if $\rho$ is the uniform density and $\Omega$  is either  a cube or has $C^2$ boundary, then the above is a $\PP$-a.s.\ limit and equality holds. %left-hand side is a limit and coincides with the right-hand side. %exists finite and strictly positive (and coincides with \eqref{eq:limit-poisson}).

%hen, denoting by  complete  convergence holds
%\begin{equation}\label{eq:limit-mean-main} \lim_{n \to \infty} \CPp\bra{ (X_i)_{i=1}^n, (Y_j)_{j=1}^n} / n^{1-p/d} = \beta_{\pP} \in (0, \infty) \end{equation}
%towards a finite and strictly positive constant $\beta_{\pP}$ whose value depends on the specific problem $\pP$, the dimension $d$ and the exponent $p$.
\end{theorem}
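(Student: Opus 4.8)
The plan is to run the Beardwood--Halton--Hammersley subadditivity scheme in the form developed for Euclidean functionals in \cite{yukich2006probability} and adapted to bipartite problems in \cite{BaBo}, the one new ingredient being that the \emph{defect} in subadditivity produced by the local imbalance between the two point clouds is estimated through the sharp stochastic bounds for the bipartite matching problem, in particular \eqref{eq:matching-bhh-cube} and its optimal transport relaxation, rather than through the crude estimate that confines \cite{BaBo} to $p<d/2$.

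First I would establish, for the uniform law on the unit cube $Q_0=(0,1)^d$, the existence of $\beta_{\pP}:=\lim_n n^{p/d-1}\EE[\CPp(\mathcal X_n,\mathcal Y_n)]\in(0,\infty)$ together with the corresponding a.s.\ limit, where $\mathcal X_n,\mathcal Y_n$ are independent uniform $n$-samples. For the subadditive upper bound I partition $Q_0$ into $m^d$ congruent subcubes $Q_1,\dots,Q_{m^d}$ carrying $a_i$ points of $\mathcal X_n$ and $b_i$ points of $\mathcal Y_n$; in each $Q_i$ I solve $\pP$ on a balanced sub-instance of size $\min(a_i,b_i)$, which by assumptions \ref{as:isomorphism}, \ref{as:spanning} and scaling costs $m^{-p}$ times a unit-cube instance and yields a feasible connected structure on those points. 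The $s_i:=|a_i-b_i|$ leftover points of the majority type in each $Q_i$ are absorbed by choosing an auxiliary \emph{global} bipartite matching $\pi$ between all surplus $\mathcal X_n$-points and all surplus $\mathcal Y_n$-points and splicing each matched edge into the structures already built, which is feasible and increases the cost by $\lesssim\sum_{(x,y)\in\pi}|x-y|^p$ by assumptions \ref{ass:local-merging} and \ref{as:growth}. Since $\EE[\sum_i s_i]\lesssim m^{d/2}n^{1/2}$ and the surplus points are conditionally uniform inside the subcubes with well-behaved counts, choosing $\pi$ near-optimal and invoking the matching bounds gives $\EE[\sum_{(x,y)\in\pi}|x-y|^p]\lesssim (m^{d/2}n^{1/2})^{1-p/d}=m^{(d-p)/2}n^{(d-p)/(2d)}=o(n^{1-p/d})$ for fixed $m$; dividing by $n^{1-p/d}$, letting $n\to\infty$ and then $m\to\infty$ yields one inequality. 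The reverse inequality I would obtain from a genuinely superadditive boundary variant of $\CPp$ (points may be joined to $\partial Q_0$ for free, which also repairs the parity/balance constraint), differing from $\CPp$ by a lower-order boundary term; together with the continuity of $\CPp$ under insertion of a single point (again \ref{ass:local-merging}, \ref{as:bddegree}) a Fekete-type argument produces the limit.

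Next I would pass from the cube to $C^2$ (resp.\ convex) domains with the uniform law by a Whitney-type decomposition of $\Omega$ into dyadic cubes of sidelength comparable to the distance to $\partial\Omega$, as in \cite{ambrosio2022quadratic}: away from a boundary layer of width $\eta$ one applies the cube result after rescaling, the layer carries only $\lesssim\eta n$ points contributing $\lesssim(\eta n)^{1-p/d}=o(n^{1-p/d})$ (first $n\to\infty$, then $\eta\to0$), and the $C^2$/convexity hypothesis makes the layer of volume $O(\eta)$ and lets adjacent Whitney cubes be merged at controlled cost via \ref{ass:local-merging} and the matching bounds; a Riemann sum over the Whitney cubes gives $n^{p/d-1}\CPp\to\beta_{\pP}|\Omega|=\beta_{\pP}\int_\Omega\rho^{1-p/d}$. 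For a general H\"older density $\rho$, and only for the upper bound \eqref{eq:limit-mean-main}, I would cover $\Omega$ (up to a negligible boundary layer) by small cubes $R$ on which $\rho$ oscillates by at most $\eps$; the $\approx n\int_R\rho$ nearly-uniform points in $R$ can then be joined into a feasible structure of cost $\approx\beta_{\pP}(n\int_R\rho)^{1-p/d}|R|^{p/d}\approx\beta_{\pP}n^{1-p/d}\rho_R^{1-p/d}|R|$, the merging of all these structures costing $o(n^{1-p/d})$, and summing over $R$ and letting $\eps\to0$ gives $\limsup_n n^{p/d-1}\EE[\CPp]\le\beta_{\pP}\int_\Omega\rho^{1-p/d}$ — only an upper bound, since the superadditive estimate has no analogue for a possibly disconnected or irregular $\rho$. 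Finally, all bounds on $\EE[\CPp]$ are upgraded to $\PP$-a.s.\ statements by concentration: a single resampled sample point changes $\CPp$ by at most $O(\diam(\Omega)^p)$ and typically by $O(n^{-p/d})$ (splice it in and out by \ref{ass:local-merging}, \ref{as:bddegree}), so Efron--Stein gives $\operatorname{Var}(\CPp)=o(n^{2(1-p/d)})$, and Chebyshev along a polynomial subsequence with Borel--Cantelli, interpolated via the insertion estimates, yields the claimed a.s.\ convergence and a.s.\ $\limsup$ bound.

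The main obstacle is the estimate of the subadditivity defect $\EE[\sum_{(x,y)\in\pi}|x-y|^p]$ \emph{uniformly down to $p<d$}: a naive bound (charging each surplus point a full subcube diameter to the power $p$) only gives $o(n^{1-p/d})$ for $p<d/2$, which is precisely the range of \cite{BaBo}, so one must instead read this defect as a coarse-grained bipartite matching cost and control it through the optimal transport relaxation, the link between Wasserstein costs and negative Sobolev norms, and new matching estimates valid for the (non-i.i.d., possibly non-uniform) surplus configurations. The second, more technical, difficulty is the boundary analysis underlying the passage to $C^2$ and convex domains, for which the Whitney decomposition of \cite{ambrosio2022quadratic} is the natural device.
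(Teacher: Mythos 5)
Your scheme founders at the one step that is the actual crux of the theorem: the estimate of the subadditivity defect. In your construction the leftover points in each subcube are all of one colour (you keep $\min(a_i,b_i)$ points of each colour), so the two surplus clouds are mutually segregated at the subcube scale, and more importantly their counts carry macroscopic fluctuations: for instance the difference between the number of surplus $X$-points and surplus $Y$-points in the left half of the cube equals the $X$/$Y$ imbalance there, which is of order $\sqrt{n}$. Any matching $\pi$ of the surpluses to each other must therefore move $\sim\sqrt{n}$ points across a macroscopic distance, so $\EE\big[\sum_{(x,y)\in\pi}|x-y|^p\big]\gtrsim c\,n^{1/2}$ (and at the subcube scale it is of order $m^{d/2-p}n^{1/2}$), no matter how cleverly $\pi$ is chosen. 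Your claimed bound $(m^{d/2}n^{1/2})^{1-p/d}=m^{(d-p)/2}n^{1/2-p/(2d)}$ is thus false: it implicitly treats the two surplus clouds as i.i.d.\ samples of a common law, which they are not. Since $n^{1/2}=o(n^{1-p/d})$ exactly when $p<d/2$, a surplus-to-surplus matching can never take you past the Barthe--Bordenave threshold, and no ``new matching estimate for the surplus configurations'' can repair this, because the obstruction is a lower bound on that matching. The paper's resolution is structurally different and is the missing idea here: an $\eta$-thinning keeps a reserve of i.i.d.\ points out of all local optimizations, the defect in the subadditivity inequality (\cref{prop:partition}) is the full cost $\CPp(\x^0,\y^0)$ on (reserve $\cup$ surplus) against (reserve $\cup$ surplus), assumption \ref{as:growth} reduces this to $(\eta n)^{1-\frac{p}{d}}+\Mp(\x^0,\y^0)$, and the new bounds of \cref{sec:ot} (\cref{prop:density-helps-matching}, \cref{prop:density-helps-matching-partition}, \cref{thm:bound-matching}, all resting on \cref{prop:density-helps}) show that matching a cloud which is mostly i.i.d.\ plus an $O(\sqrt n)$ ``bad'' part costs only $(\eta n)^{1-\frac{p}{d}}(1+o(1))$: the surplus is diffused into the absolutely continuous reserve rather than matched to the opposite surplus. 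Without the reserve there is nothing to diffuse into, and your defect really is of order $\sqrt{n}$.

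Two further steps need repair. For the equality/limit part, your ``genuinely superadditive boundary variant'' is precisely the route the paper declines: it is not known that the boundary constant coincides with $\beta_{\pP}$ (see the remark following the theorem), so a Fekete argument on the boundary functional does not give the matching lower bound with the same constant. The paper instead obtains the liminf for uniform densities by embedding $\Omega$ in a large cube, applying the one-sided subadditivity \eqref{eq:liminf-poisson-finite-partition} to the partition of the cube into $\Omega$ and the components of its complement (each $C^2$, so \eqref{eq:green-kernel-bound} holds there, cf.\ \cref{rem:green}), and subtracting the limsup bounds on the complement; note also that the passage from Hölder densities to cubes in the upper bound uses the bi-Lipschitz transport map of \cref{prop:map-heat-semigroup}, not a direct Riemann-sum gluing. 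Finally, your concentration step as stated runs into the same threshold: with the worst-case influence $O(\diam(\Omega)^p)$ per resampled point, Efron--Stein only gives $\operatorname{Var}\lesssim n$, and $n=o(n^{2(1-p/d)})$ again requires $p<d/2$. To cover all $p<d$ you must use the degree bound \ref{as:bddegree} to get a self-improving estimate of the type $\sum_i(D_iF)^2\lesssim \diam(\Omega)^{p-2}\,\CPp$ for $p\ge2$ (with the Hölder variant for $p\in[1,2)$), which is exactly the gradient bound $|\nabla\CPp|^2\lesssim\CPp$ behind the Poincaré-inequality argument of \cref{prop:concentration}; this yields fluctuations $O(n^{(1-p/d)/2})$ and hence complete convergence.
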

% \textcolor{blue}{add remark on hypothesis on the regularity, in particular link with Riesz transform bounds}\\

Our assumptions \ref{as:isomorphism}, \ref{as:spanning}, \ref{as:bddegree}, \ref{ass:local-merging} and in particular \ref{as:growth} are slightly stronger than  those introduced in \cite[Section 5.3]{BaBo},
but it is not difficult to show that all the specific examples discussed in \cite{BaBo}  satisfy them. 
In particular, our result apply to the TSP, the minimum weight connected $k$-factor problem and the $k$-bounded degree minimum spanning tree. 
It is thus fair to say that for compactly supported densities, \cref{thm:main}  extends the main results in \cite{BaBo}. 
%Our and yield approximate subadditivity inequalities with more precise control on the error terms (\cref{lem:sub}), which ultimately allows us to extend the range of existence of the limit beyond the $p<d/2$ threshold. 
\begin{remark}
 Let us point out that \eqref{eq:limit-mean-main} also holds in expectation (see \cref{prop:limit-mean-iid}).
\end{remark}
\begin{remark}
 Arguing as in \cite{BaBo} (see also \cite{ambrosio2022quadratic}) and considering a ``boundary'' variant of $\pP$ it should be  possible to adapt the proof of \cref{thm:main}
 to show that there exists $\beta_{\pP}^b>0$ such that 
 \[
  \beta_{\pP}^b \int_{\Omega} \rho^{1-\frac{p}{d}}\le \liminf_{n \to \infty} n^{\frac{p}{d}-1} \CPp\bra{ (X_i)_{i=1}^n, (Y_j)_{j=1}^n}. 
 \]
However since we are currently not able to prove that $\beta_{\pP}^b=\beta_{\pP}$ we decided to leave it aside. 
\end{remark}

\begin{remark}
In fact our result applies, at least in expectation, to any $p-$homogeneous bi-partite functional $\C$ satisfying the  subadditivity inequality \eqref{Sp} (which is similar to the condition $(\mathcal{S}_p)$ from \cite{BaBo}) and the growth condition \eqref{Rp} (somewhat reminiscent of condition $(\mathcal{R}_p)$ from \cite{BaBo}). See  \cref{rem:phom}.  
\end{remark}

Of course, our result applies in particular for the Euclidean assignment problem. % although we expect that the limitation $p<d$ should be overcome.

\begin{corollary}\label{cor:matching}
For $d \ge 3$, $p\in [1,d)$, let $\Omega \subseteq \R^d$ be a cube or a bounded connected open set with $C^2$ boundary and let $\rho$ be a H\"older continuous probability density on $\Omega$, uniformly strictly positive and bounded from above.  Then, given i.i.d.\ $(X_i)_{i=1}^\infty$, $(Y_j)_{j=1}^\infty$ with common law $\rho$, we have $\PP$-a.s.\ that
\[ \limsup_{n \to \infty}n^{\frac{p}{d}-1}  \Mp\bra{ (X_i)_{i=1}^n, (Y_j)_{j=1}^n}  \le \beta_{\mathsf{M}} \int_{\Omega} \rho^{1-\frac{p}{d}},\]
with $\beta_{\mathsf{M}}$ as in \eqref{eq:matching-bhh-cube}. Moreover, if $\rho$ is the uniform density and $\Omega$  has $C^2$ boundary, then the above is a $\PP$-a.s.\ limit and equality holds.
%towards a finite and strictly positive constant %$\CPp(d,p)$ 
%whose value depends on the specific problem $\pP$, the dimension $d$ and the exponent $p$
\end{corollary}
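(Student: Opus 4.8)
The plan is to view the Euclidean assignment problem as a particular instance of the abstract framework underlying \cref{thm:main}, verify its structural hypotheses, and then simply invoke that theorem, identifying the resulting constant with $\beta_{\mathsf{M}}$ via the known asymptotics \eqref{eq:matching-bhh-cube}.

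First I would take $\pP = (\cF_{n,n})_{n}$ to be the combinatorial optimization problem whose feasible subgraphs of the complete bipartite graph on $(x_i)_{i=1}^n\cup(y_j)_{j=1}^n$ are exactly the perfect matchings, so that $\CPp = \Mp$. Then I would check \ref{as:isomorphism}--\ref{as:growth}. Invariance under graph isomorphism \ref{as:isomorphism} is immediate, since $\Mp$ depends only on the edge lengths of the matching; every feasible solution is spanning \ref{as:spanning}, since a perfect matching touches every vertex; every vertex of a matching has degree $1$, so the bounded-degree condition \ref{as:bddegree} holds with constant $1$; and the growth condition \ref{as:growth} follows from the trivial bound $\Mp((x_i)_{i=1}^n,(y_j)_{j=1}^n)\le n\,(\diam)^p$ coming from any permutation, together with the standard subdivision estimate. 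The one point requiring an argument -- but a routine and classical one, cf.\ \cite{yukich2006probability,BaBo} -- is the local merging assumption \ref{ass:local-merging}: given matchings on two groups of points lying in nearby cells, one removes a controlled number of edges near the interface and reconnects the exposed points by a matching across it, at a cost controlled by the relevant diameters; since all but a bounded number of vertices keep their partner, the merged configuration is again a perfect matching and the additional cost is of the required order.

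With the assumptions verified, \cref{thm:main} applies: for $\Omega$ convex or with $C^2$ boundary (in particular for a cube, or for a bounded connected open set with $C^2$ boundary) and $\rho$ Hölder continuous, uniformly strictly positive and bounded, there is $\beta_{\pP}\in(0,\infty)$ with
$\limsup_{n}n^{p/d-1}\Mp((X_i)_{i=1}^n,(Y_j)_{j=1}^n)\le \beta_{\pP}\int_\Omega\rho^{1-p/d}$ $\PP$-a.s., the $\limsup$ becoming a limit and the inequality an equality when $\rho$ is uniform and $\Omega$ is a cube or has $C^2$ boundary. It remains to show $\beta_{\pP}=\beta_{\mathsf{M}}$. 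For this I would specialize the equality case of \cref{thm:main} to $\Omega=(0,1)^d$, $\rho\equiv 1$, where $\int_\Omega\rho^{1-p/d}=1$, obtaining $n^{p/d-1}\Mp((X_i)_{i=1}^n,(Y_j)_{j=1}^n)\to\beta_{\pP}$ $\PP$-a.s., hence also in expectation by uniform integrability (e.g.\ from $\Mp\le n\,d^{p/2}$). Comparing with \eqref{eq:matching-bhh-cube}, which gives $n^{p/d-1}\EE[\Mp]\to\beta_{\mathsf{M}}$, yields $\beta_{\pP}=\beta_{\mathsf{M}}$; substituting back into the displayed estimate finishes the proof.

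The only genuinely non-elementary ingredient is \eqref{eq:matching-bhh-cube} itself, quoted from \cite{goldman2021convergence}; everything else is a direct translation of \cref{thm:main} to the matching functional. Accordingly, I expect the verification of \ref{ass:local-merging} -- and, to a much lesser extent, of \ref{as:growth} -- to be the only step that demands any care, and even there the construction is the standard interface-repair argument for matchings.
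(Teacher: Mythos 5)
Your route is the paper's own: the corollary is obtained by checking that the assignment problem satisfies \ref{as:isomorphism}--\ref{as:growth} (it is the $\kappa$-factor problem with $\kappa=1$, so \ref{ass:local-merging} follows from the same edge-swap as in the paper's Lemma on local merging, and \ref{as:growth} is in fact trivial here since $\CPp=\Mp$ appears on the right-hand side — no subdivision estimate is needed), applying \cref{thm:main}, and identifying $\beta_{\pP}=\beta_{\mathsf{M}}$ on the unit cube via \eqref{eq:matching-bhh-cube}.

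One step as written does not hold up: you pass from the a.s.\ limit to convergence in expectation ``by uniform integrability (e.g.\ from $\Mp\le n\,d^{p/2}$)'', but that bound only gives $n^{p/d-1}\Mp\les n^{p/d}$, which is not uniformly bounded and does not yield uniform integrability. The fix is immediate: either invoke the expectation version of \cref{thm:main} stated in the paper (\cref{prop:limit-mean-iid}, cf.\ the remark following the theorem), which gives $n^{p/d-1}\EE[\Mp]\to\beta_{\pP}$ on the unit cube directly, or establish uniform integrability from a genuine moment bound, e.g.\ $\Mp(\x,\y)^2\le n\,\mathsf{M}^{2p}(\x,\y)$ by Cauchy--Schwarz together with \eqref{eq:matching-iid} applied with exponent $2p$, which gives $\sup_n\EE\bigl[(n^{p/d-1}\Mp)^2\bigr]<\infty$. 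With that repair the identification $\beta_{\pP}=\beta_{\mathsf{M}}$, and hence the whole argument, is complete. A minor stylistic point: \ref{ass:local-merging} is a purely combinatorial condition on gluing feasible solutions at prescribed vertices, so the verification for matchings is just the two-edge swap; phrasing it in terms of interface cells and costs ``controlled by the diameters'' conflates the assumption with the subadditivity estimate (\cref{prop:partition}) that uses it.
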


\begin{remark}
 In the case of the matching problem, combining ideas from this paper and \cite{goldman2021convergence} the conclusion of \cref{cor:matching} could be extended to every $p\ge 1$ (at least in expectation).
\end{remark}

\subsection{Comments on the proof technique}
 
Our proof leverages on the techniques developed for the bipartite matching problem, in particular \cite{goldman2021convergence, ambrosio2022quadratic} to carefully estimate the defects in a geometric subadditivity argument. Comparing the approach in \cite{BaBo}, which works if $p<d/2$, with that in \cite{goldman2021convergence}, which holds instead for any $p$, a crucial difference is that the errors due to local oscillations in the two distributions of points are mitigated in the latter by spreading them evenly across all the points. This  is possible since the optimal transport relaxation allows for general couplings as well as continuous densities, rather than discrete matchings only.  

The overall strategy is thus to find a suitable replacement for such operation in the purely combinatorial setting. The starting point is \cref{prop:partition} where we prove a subadditivity inequality. The problem is then to estimate the defect in subadditivity. This is   achieved by combining the following three key observations. % taking into account the combinatorial nature of the problems.

%The overall strategy is to find suitable replacements relying only on the combinatorial nature of the problems
%, we propose instead to keep a small fraction $\eta \in (0,1)$ of uniformly distributed points, hence close to a small uniform density on the cube.

%when $p \ge d/2$, the bounds in \cite{BaBo} become too crude

 The first one is to bound from above the cost of the problem over any two point sets $(x_i)_{i=1}^n$, $(y_j)_{j=1}^n$ by the sum of a term of order $n^{1-p/d}$ plus the bipartite matching cost between the two point sets. This is stated as an assumption (\ref{as:growth}), but can be easily checked on many specific problems (\cref{lem:capelli}): being an upper bound, it usually suffices to combine an optimal matching with the solution to an additional non-bipartite combinatorial optimization problem, such as the TSP, to build a feasible solution. This approach was first successfully used in \cite{capelli2018exact} (see also \cite{ambrosio2022quadratic}) for the random bipartite TSP in the case $p=d=2$, where one can simply argue that the main contribution comes from the logarithmic corrections in the matching cost. 

The second key observation  is that for point sets  mostly made of  i.i.d.\ points (while much less is assumed on the remaining ones), it is still  possible to obtain good bounds for the matching  cost.  We refer to \cref{sec:ot} for the precise statements, but the underlying idea is strongly related to bounds for the optimal transport cost in terms of the negative Sobolev norms -- thus relying again on the PDE ansatz originally introduced in the statistical physics literature.

 The third observation is that, in order to ensure that a small fraction of i.i.d.\ uniformly distributed points can indeed be found in the subadditivity defect terms, it is enough to keep them out of the optimization procedure on the smaller scales. As usual with those arguments, the proof of existence of the limit is performed first on the Poisson version of the random problem, so to retain a fraction of points we perform a thinning procedure.

Besides these main ideas, plenty of technical modifications with respect to the arguments in \cite{BaBo} and \cite{goldman2021convergence, ambrosio2022quadratic} are required, e.g.\ in order to establish improved subadditivity inequalities  (\cref{prop:partition}) and to extend the Whitney-type decomposition argument from \cite{ambrosio2022quadratic} to $p\neq 2$.

\subsection{Further questions and conjectures}
Our results raise several questions about costs and properties of solutions to Euclidean random combinatorial optimization problems over two point sets. We list here a few which we believe are worth exploring.
\begin{enumerate}[label=\emph{\arabic*}.]
\item Existence of a limit in \eqref{eq:limit-mean-main} for non-uniform densities is rather easy to conjecture, but so far our techniques do not improve upon \cite{BaBo}, hence the problem remains largely open.
\item Our techniques break down if $p \ge d$, but it is natural to conjecture that \cref{thm:main} should hold also in that range. In fact, the correct rate $n^{1-p/d}$ could follow directly from (\ref{as:growth}) combined with the corresponding result for the matching problem.
\item In this work we considered only the case of compactly supported densities $\rho$. It would be interesting to investigate the case where the support is $\R^d$. To the best of our knowledge, the only results available so far in this direction are \cite{Le17,ledoux2019optimal} where the correct rates are established for the Gaussian density in the case of the matching problem. 
% \item It could be interesting to extend the result to less regular domains but also Riemannian manifolds (or even more general spaces). In particular, the $C^2$ regularity of the boundary of $\Omega$ is not necessary (as the case of the cube shows), and in fact what we need is a gradient bound for the Green's function of the Laplacian with null Neumann boundary conditions \eqref{eq:green-kernel-bound}. 
%\item The case of non-uniform densities over regular (in particular connected) domains is also of interest, and should be approachable by adapting the techniques from \cite{ambrosio2022quadratic} developed for the bipartite matching problem in the $p=d=2$ case.
\item The assumptions in \cite{BaBo} are slightly different than ours, although the specific problems considered therein satisfy both. It would be interesting to find examples which satisfy only one set of these, or possibly simplify even more our assumptions.
\item Many problems, such as the bounded degree minimum spanning tree, but also the bipartite matching problem itself, can be naturally formulated also for two families of points with different number of elements: it could be of interest to investigate limit results also in those cases.
\item The cases $d \in \cur{1,2}$ are necessarily excluded by our analysis, since subadditivity arguments do not apply already for the random bipartite matching problem. It is however already an open question, whether the additional logarithmic correction indeed appears in the asymptotic rates for many other problems. As an example, we mention that for the Euclidean minimum spanning tree over two random point sets (without any uniform bound on the degree) no logarithmic corrections appear \cite{correddu2021minimum}, but the maximum degree is unbounded, hence it is not covered by our results. % We conjecture howthat this should be the case already for the $3$-bounded degree minimum spanning tree.
\item In the deterministic literature, for the TSP and other NP-hard Euclidean combinatorial optimization problems, polynomial time approximation schemes are known \cite{arora2003approximation} for any (fixed) dimension $d$, as the number of points grows. Can our approach lead to similar schemes for problems on two families of points, possibly under some mild regularity assumption on their spatial distributions?
\end{enumerate}

\subsection{Structure of the paper}  In \cref{sec:notation} we first introduce some general notation.  We then discuss Whitney-type decompositions, Sobolev spaces as well as recall useful known facts on the Optimal Transport problem, and possibly some novel ones (\cref{prop:density-helps}). We close the section with a variant of the standard subadditivity (Fekete-type) arguments, suited for our purposes together with some simple concentration inequalities. \cref{sec:cop} is devoted to the combinatorial optimization problems we consider, discussing in particular the main assumptions that we require and some useful consequences. In \cref{sec:poisson} we establish a variant our main result in the case of Poisson point processes and in \cref{sec:main} we use it to deduce \cref{thm:main}. These two sections in fact rely upon the novel bounds for the Euclidean assignment problem that we finally establish in \cref{sec:ot}.

%This result extends \cite[Theorem 3]{BaBo} for the TSP in the range $p \in [d/2, d)$.
\section{Notation and preliminary results}\label{sec:notation}

\subsection{General notation}
Given $n \in \mathbb{N}$, we write $[n] = \cur{1, \ldots, n}$ and $[n]_1 = \cur{ (1,i)}_{i=1}^n$, $[n]_2 = \cur{(2,i)}_{i=1}^n$, which easily allows to define two disjoint copies of $[n]$. Given a finite set $A$, we write $|A|$ for the number of its elements, while, if $A \subseteq \R^d$ is infinite, $|A|$ denotes its Lebesgue measure.

Given a metric space $(\Omega, \dist)$,  $x \in \Omega$, $A\subseteq \Omega$, we write $\dist(x, A) = \min_{y \in A}\cur{ \dist(x,y)}$ and $\diam(A) = \sup_{x,y \in A} \dist(x,y)$. 
We endow every set  $\Omega \subseteq \R^d$  with the Euclidean distance. 
% A partition  $\cur{\Omega_k}_{k=1}^K$ of a set $\Omega$ is to be intended in the usual sense if in a general setting, but  when $\Omega \subseteq \R^d$ we always tacitly allow for an additional Lebesgue negligible set $\Omega_0$, i.e., $\Omega = \cup_{k=0}^K \Omega_k$ and $|\Omega_0| = 0$. 
A partition  $\cur{\Omega_k}_{k=1}^K$ of a set $\Omega$ is  always intended up to a set of Lebesgue measure zero.
A rectangle $R \subseteq \R^d$ is a subset of the form $R = \prod_{i=1}^d (x_i,x_i+L_i)$, and is said to be of moderate aspect ratio if for every $i,j$, $L_i/L_j\le 2$. If $L_i= L$ for every $i$, then $R = Q$ is a cube of side length $L$. We write $Q_L = (0,L)^d$. We write $I_\Omega$ for the indicator function of a set $\Omega$.

\subsection{Families of points}
Given a set $\Omega$, we consider finite ordered families of points $\x = \bra{x_i}_{i=1}^n \subseteq \Omega$, with $n \in \N$, letting $\x = \emptyset$ if $n=0$. For many purposes the order will not be relevant, but we thus may allow e.g.\ for repetitions (which will be probabilistically negligible anyway). Given a family $\x \subseteq \R^d$, we write $\mu^{\x} = \sum_{i=1}^n \delta_{x_i}$ for the associated empirical measure and, for every (Borel) $\Omega \subseteq \R^d$, we let $\x(\Omega) = \mu^{\x}(\Omega)$. In the special case $\Omega = \R^d$, we simply write $|\x| = \x(\R^d) = \mu^{\x}(\R^d)$ for the total number of points (counted with multiplicity). We also write $\x_ \Omega$ for its restriction to $\Omega$, i.e., the family of all points $x_i \in \Omega$, so that  $\x = \x_{\Omega}$ if $\x \subseteq \Omega$ (conventionally, we naturally re-index it over $i=1, \ldots, \x(\Omega)$ with the order inherited from that in $\x$). Given $\x = \bra{x_i}_{i=1}^n$, $\y= \bra{y_j}_{j=1}^m\subseteq \R^d$, their union is $\x \cup \y = (x_1, \ldots, x_n, y_1, \ldots, y_m)$. % their intersection $\x \cap \y = (x_i)_{i=1}^{s}$ and their difference $\x \setminus \y = (x_{s+i})_{i=1}^{n-s}$. 
Strictly speaking, the union should be called concatenation, since the operation is not commutative, in general. %but $\cup$ and $\cap$ are associative and it always holds $\x = (\x \cap \y) \cup (\x \setminus \y)$.  Given a finite Borel partition $\cR = \bra{\Omega_k}_{k=1}^K$ of $\Omega$, we also the intersection and difference over $\cR$ as follows:

\subsection{Whitney partitions}
We recall following partitioning result \cite[Lemma 5.1]{ambrosio2022quadratic}. 

%Write for simplicity, for $r>0$,
%\[ \Omega_r^c = \cur{ x \in \Omega\, : \, \dist(x, \Omega^c) <r}.\]

\begin{lemma}\label{lem:decomp}
Let $\Omega\subset \R^d$ be a bounded domain with Lipschitz boundary and let $\cQ = \{Q_i\}_i$ be a Whitney partition of $\Omega$. Then, for every $\delta>0$ sufficiently small, letting $\cQ_\delta=\{Q_i \ : \ \diam(Q_i) \ge \delta\}$, there exists a finite family $\cR_\delta=\{\Omega_j\}_j$ of disjoint open sets such that:
\begin{enumerate}[label=(\roman*),  series=partition]
\item \label{partition-1} $(\Omega_k)_{k=1}^K =  \cQ_\delta \cup \cR_\delta$ is a partition of $\Omega$,
\item \label{partition-2}$ |\Omega_k| \sim \diam(\Omega_k)^d$ for every $k=1, \ldots, K$,
\item \label{partition-3}if $\Omega_k \in \cQ_\delta$, then $\diam(\Omega_k) \sim \dist(x, \Omega^c)$ for every $x \in \Omega_k$,
\item \label{partition-4}if $\Omega_k \in \cR_\delta$, then $\diam(\Omega_k)\sim \delta$ and $\dist(x, \Omega^c) \les \delta$, for every $x \in \Omega_k$.
%\item if $\dist(\Omega_k, \Omega_{k'}) = 0$, then $\diam(Q_k) \sim \diam(Q_{k'})$, for every $k$, $k'$,
%\item the number of $\Omega_{k'}$'s such that $\dist(\Omega_k, \Omega_{k'}) = 0$ is $\les 1$,
\end{enumerate} 
Here all the implicit constants depend only on the initial partition $\cQ$ (and not on $\delta$).
\end{lemma}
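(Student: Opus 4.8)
The properties internal to the Whitney cubes come for free from the defining Whitney property $\diam(Q_i)\le\dist(Q_i,\Omega^c)\le C\diam(Q_i)$. A cube trivially satisfies $|Q_i|\sim\diam(Q_i)^d$, which is \ref{partition-2} for $\cQ_\delta$; and for $x\in Q_i$ one has $\diam(Q_i)\le\dist(Q_i,\Omega^c)\le\dist(x,\Omega^c)\le\dist(Q_i,\Omega^c)+\diam(Q_i)\le (C+1)\diam(Q_i)$, which is \ref{partition-3}. Since each cube of $\cQ_\delta$ has volume $\ges\delta^d$, is contained in the bounded set $\Omega$, and the cubes have pairwise disjoint interiors, $\cQ_\delta$ is finite. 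Hence the whole problem reduces to subdividing the open set
\[ U_\delta:=\Omega\setminus\bigcup_{Q\in\cQ_\delta}\overline{Q} \]
into a finite family $\cR_\delta=\{\Omega_j\}_j$ of pairwise disjoint open sets with $\diam(\Omega_j)\sim\delta$ and $|\Omega_j|\sim\delta^d$: indeed, the bound $\dist(x,\Omega^c)\les\delta$ of \ref{partition-4} is then automatic, because every $x\in U_\delta$ lies in (the closure of) a Whitney cube of diameter $<\delta$, so $\dist(x,\Omega^c)\le(C+1)\delta$; in particular $U_\delta$ sits in the collar $N_\delta:=\{x:\dist(x,\Omega^c)<(C+1)\delta\}$, and the removed cubes of $\cQ_\delta$ all lie at distance $\ges\delta$ from $\Omega^c$.

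\textbf{Grid construction of $\cR_\delta$.}
Overlay the lattice of half-open cubes $g$ of side $\delta$ (translates of $(0,\delta)^d$ by $\delta\mathbb{Z}^d$), and for each such $g$ set $\Omega_g:=g\cap U_\delta$, an open set with $\diam(\Omega_g)\le\sqrt d\,\delta$ and $|\Omega_g|\le\delta^d$; only finitely many are nonempty since $U_\delta$ is bounded. Call $g$ \emph{good} if $|\Omega_g|\ge\varepsilon_0\delta^d$ (with $\varepsilon_0=\varepsilon_0(\Omega)>0$ to be fixed), \emph{bad} otherwise; a good $\Omega_g$ automatically has $\diam(\Omega_g)\ges\delta$, since a subset of a $\delta$-cube with volume $\ge\varepsilon_0\delta^d$ cannot have small diameter. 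The plan is to keep the good pieces and glue each nonempty bad piece onto a good piece lying within a bounded number $m=m(\Omega)$ of grid steps; gluing only enlarges a piece by sets of diameter $\les\delta$ and preserves openness, so the resulting sets $\Omega_j$ remain open, satisfy $\diam(\Omega_j)\sim\delta$ and $|\Omega_j|\sim\delta^d$, are pairwise disjoint, and their union is $U_\delta$. (A volume/Besicovitch-type count also bounds the number of bad pieces a single good piece absorbs, giving $\#\cR_\delta$ uniform in $\delta$, if desired.) What must be proved is the density statement: every nonempty $\Omega_g$ has a good $\Omega_{g'}$ with $\dist(g,g')\les\delta$.

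\textbf{The geometric input.}
This is the only place where the Lipschitz regularity of $\partial\Omega$ is used. Cover the compact set $\partial\Omega$ by finitely many balls $B(z_m,r_0)$ in which, after a rotation, $\Omega\cap B(z_m,r_0)=\{x_d>\gamma_m(x')\}$ for an $L$-Lipschitz $\gamma_m$, and take $\delta$ small with respect to $r_0$. If $\Omega_g\ne\emptyset$ then $g$ meets $N_\delta$, so some $z\in\partial\Omega$ has $\dist(z,g)\les\delta$; work in a chart $B(z_m,r_0)\ni z$. Over a fixed $(d-1)$-dimensional grid square $g'$ near $z$, the slab $V:=\{x'\in g',\ \gamma_m(x')<x_d<\gamma_m(x')+c\delta\}$ has volume $\sim\delta^d$, lies in $\Omega$ and within distance $c\delta$ of $\partial\Omega$ — hence inside $U_\delta$, since the removed Whitney cubes are at distance $\ges\delta$ from $\Omega^c$ (choose $c$ small enough) — and, as $\gamma_m$ oscillates by at most $L\sqrt{d-1}\,\delta$ over $g'$, $V$ is covered by boundedly many grid cubes. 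One of them therefore has $|\Omega_g|\ges\delta^d$, i.e.\ is good for a suitable choice of $\varepsilon_0$, and it lies within $O(1)$ grid steps of the original $g$. This gives the density statement; re-indexing $(\Omega_k)_{k=1}^K=\cQ_\delta\cup\cR_\delta$ yields \ref{partition-1}–\ref{partition-4}, with all implicit constants depending only on $\Omega$ (equivalently on $\cQ$ together with the fixed finite atlas of Lipschitz charts) and not on $\delta$.

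\textbf{Main obstacle.}
The genuinely delicate point is the geometric input of the third step: controlling, uniformly in $\delta$, how much of $\Omega$ a $\delta$-grid cube sitting near $\partial\Omega$ can miss, while simultaneously guaranteeing that the good region one glues onto has not been consumed by the removed large Whitney cubes. The Lipschitz subgraph condition is exactly what resolves both issues at once — it forces the volume lower bound $|\Omega_g|\ges\delta^d$ for well-placed cubes and confines the removed cubes to $\{\dist(\cdot,\Omega^c)\ges\delta\}$, away from the collar — and it is the reason the hypothesis must be that $\partial\Omega$ is Lipschitz rather than merely that $\Omega$ is open.
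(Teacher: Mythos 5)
Your construction is correct in outline. A caveat on the comparison: this paper does not actually prove \cref{lem:decomp}; it is recalled verbatim from \cite[Lemma 5.1]{ambrosio2022quadratic}, so there is no internal proof to measure you against. Your route — keep the Whitney cubes of diameter $\ge \delta$ (for which \ref{partition-2}–\ref{partition-3} are immediate from $\diam Q\sim\dist(Q,\Omega^c)$), observe that the leftover set $U_\delta$ lies in a $C\delta$-collar of $\partial\Omega$ while the removed cubes stay at distance $\gtrsim\delta$ from $\Omega^c$, tile the collar by a $\delta$-grid, and merge each sliver cell into a nearby ``good'' cell whose intersection with $U_\delta$ has volume $\gtrsim\delta^d$, the existence of such a cell being guaranteed by a Lipschitz-subgraph slab of thickness $c\delta$ — is essentially the standard construction behind the cited lemma, and all the estimates you state do go through: the glued sets are open, disjoint, of diameter $\sim\delta$ and volume $\sim\delta^d$, with constants depending only on the Whitney constants and the Lipschitz atlas of $\partial\Omega$ (hence on $\cQ$), not on $\delta$.

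Two minor points you gloss over and should address in a full write-up, though neither is a genuine gap. First, the chart coordinates are rotated relative to your lattice, so ``a $(d-1)$-dimensional grid square $g'$'' in the chart is not a lattice object; the clean statement is simply that the slab $V$ has diameter $\lesssim_L\delta$, volume $\gtrsim\delta^d$, sits in $U_\delta$ within distance $\lesssim\delta$ of the original cell $g$, hence meets one of boundedly many lattice cells in volume $\gtrsim\delta^d$ by pigeonhole; one should also use a cover of $\partial\Omega$ by balls $B(z_m,r_0/2)$ with charts on $B(z_m,r_0)$ so that $V$ stays inside a single chart for $\delta$ small. Second, your pieces miss the grid hyperplanes and the faces of the Whitney cubes, and each bad cell must be assigned to exactly one good cell to keep disjointness; both are fine given the paper's convention that partitions are understood up to Lebesgue-null sets, and the bound on how many bad cells a good cell absorbs is just the trivial count of lattice cells within $m$ grid steps — no Besicovitch argument is needed.
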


%For our purpose, \cQ$, consisting of all the cubes $Q_i$ from the original Whitney decomposition whose side length is larger than $\delta$ and the ``bad'' set $\cR_\delta$, which consists only of sets $\Omega_k$'s such that $|\Omega_k|\sim \delta^{d}$ and $\diam(\Omega_k)\les \delta$. 
%
%The precise definition of $\cR_\delta$ will not be needed here, except for the fact that all $\Omega_k \in \cR_\delta$ are at distance $\les $
%\begin{equation} \label{eq:cr-delta-close-to-boundary} \bigcup_{\Omega_k \in \cR_\delta} \Omega_k \subseteq  \cur{ x \in \Omega \, : \dist(x, \Omega^c) < \sqrt{d} \delta}.\end{equation}
For later use, we collect  some useful bounds related to these partitions.

\begin{lemma}\label{lem:bound-partition}
Let $\Omega\subset \R^d$ be a bounded domain with Lipschitz boundary and let $\cQ = \{Q_i\}_i$ be a Whitney partition of $\Omega$. Then, for every $\delta>0$ sufficiently small, letting $(\Omega_k)_{k=1}^K = \cQ_\delta \cup \cR_\delta$ as in \cref{lem:decomp}, one has that $|\cR_\delta| \les \delta^{1-d}$ and the following holds:
\begin{enumerate}
\item For every $\alpha \in \R$,
 \begin{equation}\label{eq:whitney-general-q}
 \sum_{k=1}^K \diam(\Omega_k)^{\alpha} \les_\alpha \begin{cases} 1 & \text{if $\alpha>d-1$,}\\
 |\log \delta|& \text{if $\alpha=d-1$,}\\
 \delta^{1-d+\alpha} & \text{if $\alpha<d-1$.} 
 \end{cases}
\end{equation}
\item If $\alpha<0$, then for every $k =1,\ldots, K$, and $x \in \Omega_k$,
%\begin{equation}\label{eq:before-last-claim}
%\sum_{j\, : \, \dist(x, \Omega_j) \le \diam(\Omega_k)} |\Omega_j|^{-\frac{1}{2}} \les \delta^{1-\frac{d}{2}} \diam(\Omega_k)^{d-1},
%\end{equation}
%and 
%\begin{equation}\label{lastclaim}
%   \sum_{j \, : \, \dist(x,\Omega_j)>\diam(\Omega_k)} \frac{|\Omega_j|^{\frac{1}{2}}}{\dist(x, \Omega_j)^{d-1}} \les \delta^{1-\frac{d}{2}} |\log \bra{\diam(\Omega_k)}|,
%\end{equation}
\begin{equation}\label{eq:final-bound-min-sum-partition}
\sum_{j=1}^K \diam(\Omega_j)^{\alpha} \min\cur{1, \bra{ \frac{\diam(\Omega_j)}{\dist(x, \Omega_j)}}^{d-1}} \les  \delta^{\alpha} |\log (\delta)|.
\end{equation}
\end{enumerate}
In all the inequalities the implicit constants depend upon $\alpha$ and $\cQ$  in \eqref{eq:whitney-general-q} only.
\end{lemma}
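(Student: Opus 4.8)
The plan is to reduce everything to counting the cubes $Q_i$ in the Whitney partition $\cQ$ according to their size, together with the explicit description of $\cR_\delta$ provided by \cref{lem:decomp}. First I would establish the bound $|\cR_\delta| \les \delta^{1-d}$: by \ref{partition-2} and \ref{partition-4} every $\Omega_j \in \cR_\delta$ satisfies $|\Omega_j| \sim \diam(\Omega_j)^d \sim \delta^d$, and these sets are disjoint and contained in the set $\{x \in \Omega : \dist(x,\Omega^c) \les \delta\}$, whose Lebesgue measure is $\les \delta$ because $\partial\Omega$ is Lipschitz (a tubular-neighborhood estimate). Dividing the measure of this tube by the common volume $\sim\delta^d$ of each piece gives the claimed count.

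For part (1), I would group the dyadic Whitney cubes: for each integer $m \ge 0$ let $N_m$ be the number of $Q_i \in \cQ$ with $\diam(Q_i) \sim 2^{-m}$. The standard Whitney estimate (again using that $\partial\Omega$ is Lipschitz, so the boundary has finite $(d-1)$-dimensional content, and using \ref{partition-3}, i.e.\ $\diam(Q_i)\sim\dist(Q_i,\Omega^c)$) gives $N_m \les 2^{m(d-1)}$. The threshold $\delta$ enters because $\cQ_\delta$ only retains cubes with $\diam(Q_i)\ge\delta$, i.e.\ those with $2^{-m}\gtrsim\delta$, so $m$ ranges up to $\sim\log_2(1/\delta)$. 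Then
\[
\sum_{Q_i \in \cQ_\delta} \diam(Q_i)^\alpha \les \sum_{m=0}^{\log_2(1/\delta)} N_m \, 2^{-m\alpha} \les \sum_{m=0}^{\log_2(1/\delta)} 2^{m(d-1-\alpha)},
\]
which is $\les 1$ if $\alpha > d-1$, $\les |\log\delta|$ if $\alpha = d-1$, and $\les (2^{\log_2(1/\delta)})^{d-1-\alpha}=\delta^{1-d+\alpha}$ if $\alpha < d-1$. The contribution of $\cR_\delta$ is $|\cR_\delta|\cdot\delta^\alpha \les \delta^{1-d+\alpha}$ by the first part, which is always dominated by (or comparable to) the right-hand side in each regime; combining the two gives \eqref{eq:whitney-general-q}.

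For part (2) I would fix $k$ and $x \in \Omega_k$ and split the sum over $j$ into a "near" part, where $\dist(x,\Omega_j) \les \diam(\Omega_j)$ so the $\min$ is $1$, and a "far" part, where the $\min$ equals $(\diam(\Omega_j)/\dist(x,\Omega_j))^{d-1}$. For the near part one checks there are only boundedly many (or at most $O(|\log\delta|)$ across all scales) such $\Omega_j$ at each dyadic scale and each has $\diam(\Omega_j) \ge \delta$, so since $\alpha<0$ this is controlled by $\delta^\alpha|\log\delta|$. For the far part I would organize the $\Omega_j$ by dyadic diameter $2^{-m}$ (with $2^{-m}\gtrsim\delta$) and by dyadic annulus $\dist(x,\Omega_j)\sim 2^{-\ell}$ around $x$; there are $\les 2^{(d-1)(m-\ell)}$ sets at scale $2^{-m}$ inside the annulus of radius $2^{-\ell}$ (disjointness plus the volume bound \ref{partition-2}, noting such a set has volume $\sim 2^{-md}$ while the annulus has volume $\sim 2^{-\ell d}$, but the relevant packing is a $(d-1)$-dimensional one because cubes near scale $2^{-m}$ sitting at distance $2^{-\ell}$ from $x$ are essentially constrained to a sphere — here the Whitney structure and \ref{partition-3}–\ref{partition-4} are used). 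Summing $2^{-m\alpha}\cdot(2^{-m}/2^{-\ell})^{d-1}$ against this count telescopes: the geometric series in $m$ (using $\alpha<0$, so the largest allowed diameter $\sim 1$ or the boundary-layer scale dominates) and then in $\ell$ both converge or produce at most one logarithmic factor, yielding $\les\delta^\alpha|\log\delta|$. The main obstacle is getting the far-part packing estimate and the resulting double-sum bookkeeping exactly right so that only a single $|\log\delta|$ survives and the $\delta^\alpha$ prefactor is correct; this is where one must carefully use all four properties \ref{partition-1}–\ref{partition-4} of the decomposition rather than just crude volume counting.
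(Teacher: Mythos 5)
Your proof is correct and follows essentially the same route as the paper's: both rest on the packing estimate that at most $\les (r/s)^{d-1}$ of the sets $\Omega_j$ with $\diam(\Omega_j)\sim s\ge \delta$ can sit inside a ball $B(x,r)$, and both organize the sums dyadically in diameter (and, for part (2), also in distance from $x$); your near/far split at $\dist(x,\Omega_j)\approx\diam(\Omega_j)$, instead of the paper's threshold $\sim 2^{-\gamma}\diam(\Omega_k)$ with $2^{-\gamma}\sim\delta$, is an inessential variation and if anything slightly streamlines the bookkeeping. One caution on your far-part heuristic: the $(d-1)$-dimensional count does not come from the sets being ``constrained to a sphere'' --- the annulus $\{\dist(x,\cdot)\sim 2^{-\ell}\}$ has thickness $2^{-\ell}$, which would only give $\les 2^{(m-\ell)d}$ sets and would degrade the final bound to $\delta^{\alpha-1}$ --- but rather, as in the paper, from properties (iii)--(iv): a set of diameter $\sim 2^{-m}$ lies within distance $\les 2^{-m}$ of the Lipschitz boundary, so inside $B(x,C2^{-\ell})$ such sets occupy volume $\les 2^{-m}\,2^{-\ell(d-1)}$, which yields exactly the count $\les 2^{(m-\ell)(d-1)}$ you state.
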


By property \emph{\ref{partition-2}}, inequality \eqref{eq:whitney-general-q} also holds for the sum $\sum_{k=1}^K |\Omega_k|^{\alpha}$, with $\alpha d$ instead of $\alpha$.

\begin{proof}

Since $\partial \Omega$ is Lipschitz, it follows from properties \ref{partition-1}, \ref{partition-2} and \ref{partition-4} that, for every $x \in \Omega$ and $r \ge s \ge \delta$, 
\begin{equation}\label{eq:uniform-bound-omega-k} \abs{ \cur{k\, : \, \Omega_k\subseteq B(x, r), \diam(\Omega_k) \in [s, 2s)}} \les (r/s)^{d-1},\end{equation}
 with the implicit constant depending on $\cQ$ only.
%  \footnote{{\color{blue}D:should we comment more on this? it is sufficient to argue for $r$ (hence $s$ small) and therefore for points close to the boundary, where it follows using the parametrization of the boundary as a graph}}
 It follows that
 $|\cR_\delta| \les \delta^{1-d}$ and, for every $\ell \le |\log_2 \delta|$, the number of cubes $\Omega_k \in \cQ_\delta$ with $\diam(\Omega_k) \in [2^{-\ell}, 2^{-\ell+1})$ is estimated by $ 2^{\ell (d-1)}$. Therefore, for $\alpha \in \R$,
\[
\begin{split}  \sum_{k=1}^K \diam(\Omega_k)^{\alpha} & \les \sum_{\Omega_k \in \cQ_{\delta}} \diam(\Omega_k)^{\alpha}  + \sum_{\Omega_k \in \cR_\delta} \diam(\Omega_k)^{\alpha} \\
&  \les  \sum_{\ell \le |\log_2 \delta|} \abs{\cur{ \Omega_k \in \cQ_{\delta} : \diam(Q_k) \in [2^{-\ell}, 2^{-\ell+1})}}  2^{-\ell \alpha} + |\cR_\delta| \cdot \delta^{\alpha}\\
 & \les \sum_{\ell \le |\log_2 \delta|} 2^{\ell(d-1)}\cdot  2^{-\ell \alpha} + \delta^{1-d} \cdot \delta^{\alpha}.\end{split} 
\]
Since  $\ell$ is also bounded from below in the summation (e.g.\ by $-|\log_2\diam(\Omega)|$), we obtain \eqref{eq:whitney-general-q}.

%Fix $k$ and let $x \in \Omega_k$. 

%Let  $K= -\log_2 \delta$ and fix $x\in \Omega_j$. 

We next prove \eqref{eq:final-bound-min-sum-partition}. We claim that it follows from the following inequalities, valid for any $\gamma \in \mathbb{N}$:
\begin{equation}\label{eq:before-last-claim}
\sum_{j\, : \, \dist(x, \Omega_j) \le 2^{-\gamma} \diam(\Omega_k)} \diam(\Omega_j)^\alpha \les 2^{-\gamma(d-1)} \diam(\Omega_k)^{d-1} \delta^{\alpha+1-d},
\end{equation}
and, for $\beta<d-1$,
\begin{equation}\label{lastclaim}
   \sum_{j \, : \, \dist(x,\Omega_j)>2^{-\gamma} \diam(\Omega_k)} \frac{\diam(\Omega_j)^\beta }{\dist(x, \Omega_j)^{d-1}} \les |\gamma+ \log \bra{\diam(\Omega_k)}| \delta^{\beta+1-d}.
\end{equation}
Indeed, we can split the summation and use \eqref{eq:before-last-claim} and \eqref{lastclaim} to get
\begin{equation}\begin{split}
\sum_{j}  &  \diam(\Omega_j)^{\alpha} \min\cur{1, \bra{ \frac{\diam(\Omega_j)}{\dist(x, \Omega_j)}}^{d-1}} \\
&  \les  \sum_{j\, : \, \dist(x, \Omega_j) \le 2^{-\gamma} \diam(\Omega_k)} \diam(\Omega_j)^\alpha  +  \sum_{j\, : \, \dist(x, \Omega_j) > 2^{-\gamma} \diam(\Omega_k)} \frac{\diam(\Omega_j)^{d-1+\alpha}}{\dist(x, \Omega_j)^{d-1}}\\
& \les 2^{-\gamma(d-1)} \diam(\Omega_k)^{d-1} \delta^{\alpha+1-d} + |\gamma+ \log \bra{\diam(\Omega_k)}| \delta^{\alpha}.
\end{split}
\end{equation}
Recalling that $\diam(\Omega_k) \gtrsim \delta$ and choosing $\gamma$ so that $2^{-\gamma} \le \delta \le 2^{-\gamma+1}$ yields \eqref{eq:final-bound-min-sum-partition}.

In order to prove \eqref{eq:before-last-claim} and \eqref{lastclaim} we first notice that, given $\Omega_k$, $\Omega_j$ and $x \in \Omega_k$, we have that, for some constant $C = C(\cQ)$,
\begin{equation}\label{eq:omega-j-contained-ball} \Omega_j \subseteq B(x, C \max\cur{\dist(x, \Omega_j), \diam(\Omega_k)}).\end{equation}
% Indeed, using properties \ref{partition-3} (if $\Omega_k \in \cQ_\delta$) and \ref{partition-4} (if $\Omega_k \in \cR_\delta$), we have $\dist(x, \Omega_k) \les \diam(\Omega_k)$. 
Indeed, if $\Omega_j \in \cR_\delta$, then $\diam(\Omega_j)\les \delta\les \diam(\Omega_k)$, hence \eqref{eq:omega-j-contained-ball} holds.  
%$$ \Omega \setminus B(x, 2^{-\ell_k}) \subseteq \bigcup_{\ell \le \ell_k} A_\ell,$$
%%with  $A_\ell = B(x,\diam(\Omega_k))\setminus B(x, 2^{-\ell})$ dyadic annuli. %with $\ell \le \ell_0$. %centred in $x$ with initial radius $2^{-k_0}$, 
%and we notice first that $\dist(x, \Omega^c) \les 2^{-\ell_k}$ using properties \ref{partition-3} (if $\Omega_k \in \cQ_\delta$) and \ref{partition-4} (if $\Omega_k \in \cR_\delta$). Given $\Omega_j$ such that $\dist(x, \Omega_j) \le \diam(\Omega_k)$, we argue that $\Omega_j \subseteq B(x, C 2^{-\ell_k})$, for some constant $C =C(\cQ)$. Indeed, if $\Omega_j \in \cR_\delta$, then $\diam(\Omega_j) \sim \delta \les 2^{-\ell_k}$, proving the inclusion. 
If instead $\Omega_j \in \cQ_\delta$, then we can find $y \in \Omega_j$ with $|x-y|\le 2 \dist(x,\Omega_j)$, so that, by the triangle inequality,
\begin{equation}
\dist(y, \Omega^c) \le |x-y| + \dist(x, \Omega^c) \les \max\cur{\dist(x, \Omega_j), \diam(\Omega_k)}
\end{equation}
and by property (\ref{partition-3}) in \Cref{lem:decomp} we obtain that $\diam(\Omega_j) \les \max\cur{\dist(x, \Omega_j), \diam(\Omega_k)}$, yielding again the desired inclusion.

Hence, we prove \eqref{eq:before-last-claim} and \eqref{lastclaim}. Let $\ell_k \le |\log_2 \delta|$ be such that $\diam(\Omega_{k}) \in [2^{-\ell_k}, 2^{-\ell_k+1})$.  Combining \eqref{eq:omega-j-contained-ball} and \eqref{eq:uniform-bound-omega-k}, we see that, for every $\ell \le |\log_2\delta|$, there are  at most $2^{(\ell-\ell_k-\gamma)(d-1)}$ sets $\Omega_j$ such that $\dist(x, \Omega_j) \le 2^{-\gamma}\diam(\Omega_k)$ and $\diam(\Omega_j) \in [2^{-\ell}, 2^{-\ell+1})$. Therefore,
\begin{equation}
 \begin{split} \sum_{j\, : \, \dist(x, \Omega_j) \le 2^{-\gamma} \diam(\Omega_k)} \diam(\Omega_j)^{\alpha} & \les \sum_{\ell \le |\log_2\delta|} 2^{-\ell \alpha} 2^{(\ell-\ell_k)(d-1)} \\
& \les 2^{-(\gamma+\ell_k)(d-1)} \sum_{\ell \le |\log_2\delta|}  2^{-\ell(\alpha+1-d)} \\ &   \les  2^{-\gamma(d-1)}\diam(\Omega_k)^{d-1} \delta^{\alpha+1-d}.
% \cdot \begin{cases} 
%1 & \text{if $\alpha>d-1$,}\\
%|\log \delta| & \text{if $\alpha = d-1$,}\\
%\delta^{1-d+\alpha} & \text{if $\alpha< d-1$.}
%\end{cases}
\end{split}
\end{equation}
%(recall that $\ell$  is bounded from below by a constant depending on $\cQ$ only). 
This proves \eqref{eq:before-last-claim}. To prove \eqref{lastclaim}, we split dyadically,
\begin{equation}\label{eq:final-claim-step}\begin{split}
 \sum_{j \, : \, \dist(x,\Omega_j)>2^{-\gamma}\diam(\Omega_k)}  \frac{\diam(\Omega_j)^\beta }{d(x,\Omega_j)^{d-1}} & \les \sum_{\ell \le \ell_k+\gamma} \frac{1}{(2^{-\ell})^{d-1}} \sum_{ j\, : \, d(x,\Omega_j) \in [2^{-\ell}, 2^{-\ell+1})} \diam(\Omega_j)^\beta\\ 
 & \stackrel{\eqref{eq:omega-j-contained-ball}}{\les} \sum_{\ell \le \ell_k+\gamma} 2^{\ell(d-1)} \sum_{ \Omega_j \subset B(x,C 2^{-\ell})} \diam(\Omega_j)^\beta.
\end{split}\end{equation}
Let us also notice that, if $\Omega_j \subseteq B(x, C2^{-\ell})$, then necessarily $\delta \le \diam(\Omega_j) \les 2^{-\ell}$ (since $\diam(\Omega_j)^d \sim |\Omega_j|$).  Thus for $\ell'$ with $2^{-\ell'} \sim 2^{-\ell}$,
\begin{equation}\begin{split}
 \sum_{ \Omega_j \subset B(x, C 2^{-\ell})}\diam(\Omega_j)^\beta& \les \sum_{\ell' \le u \le |\log_2 \delta|} 2^{-u \beta} \sharp {\cur{ \Omega_j \subseteq B(x, C 2^{-\ell}) \, : \diam(\Omega_j) \in [2^{-u}, 2^{-u+1}) }}\\
 & \stackrel{\eqref{eq:uniform-bound-omega-k}}{\les} \sum_{\ell' \le u \le |\log_2 \delta|} 2^{-u\beta} \cdot 2^{(u-\ell) (d-1) }= 2^{-\ell(d-1)} \sum_{\ell' \le u \le |\log_2 \delta|} 2^{-u(\beta+1-d)}\\
 & \les 2^{-\ell(d-1)} \delta^{\beta+1-d},
%  \cdot \begin{cases} 
%1 & \text{if $\beta>d-1$,}\\
%|\log \delta| & \text{if $\beta = d-1$,}\\
%\delta^{1-d+\beta} & \text{if $\beta< d-1$.}
%\end{cases}
\end{split}
\end{equation}
using again that $\ell'$ is bounded from below by a constant depending on $\mathcal{Q}$ only.
%(in the last line we used again that  $\ell'$  is  bounded from below by a constant depending on $\mathcal{Q}$ only).
Plugging this bound in \eqref{eq:final-claim-step}, we conclude that
\begin{equation}\begin{split}
 \sum_{j \, : \, \dist(x,\Omega_j)>2^{-\gamma} \diam(\Omega_k)}  \frac{\diam(\Omega_j)^\beta}{d(x,\Omega_j)^{d-1}}  & \le \sum_{\ell \le \ell_k+\gamma} 2^{\ell(d-1)} \cdot  2^{-\ell(d-1)} \delta^{\beta+1-d}\\
 & \les  \bra{ \gamma + |\log \bra{ \diam(\Omega_k)}|} \delta^{\beta+1-d}.
\end{split}
\end{equation}
This concludes the proof of \eqref{lastclaim}.
\end{proof}
\subsection{Sobolev norms}

 %A partition $\mathcal{R}=\{ R_i\}$  of $R$  is called admissible if for every $i$, $R_i$ is  a rectangle of moderate aspect ratio and  $3^{-d}|R|\le |R_i|\le |R|$. Notice that in particular $\#\mathcal{R}\les 1$ for every admissible partition.

%Before we state the second result, we introduce some notation for negative Sobolev norms on a bounded connected open set $\Omega \subseteq \R^d$ with Lipschitz boundary. 
Given a bounded domain $\Omega \subseteq \R^d$ with Lipschitz boundary and $p \in (1,\infty)$, with H\"older conjugate $q = p/(p-1)$, we write $\| f \|_{L^p(\Omega)}$ for the Lebesgue norm of $f$, and 
\[
 \|f\|_{W^{-1,p}(\Omega)}=\sup_{|\nabla \phi|_{L^{q}(\Omega)}\le 1} \int_{\Omega} f \phi=\inf_{\textrm{div} \xi=f} \|\xi\|_{L^p(\Omega)}
\]
for the negative Sobolev norm. We notice in particular that if $\|f\|_{W^{-1,p}(\Omega)}<\infty$ then $\int_\Omega f=0$. In this case we may also restrict the supremum to functions $\phi$ having also average zero. When it is clear from the context, we will drop the explicit dependence on  $\Omega$ in the norms. 

Let us recall that we can bound the $W^{-1,p}$ norm by the $L^p$ norm.
We give  here a proof    based on the  embedding $L^{pd/(p+d)}\subset W^{-1,p}$ (for $p>d/(d-1)$)  which is  an elementary alternative to the PDE arguments used in  \cite[Lemma 3.4]{goldman2021convergence}.
\begin{lemma}
 Let $\Omega$ be a bounded domain with Lipschitz boundary and let $f:\Omega\to \R$ such that $\int_\Omega f=0$. Then, for every $p>d/(d-1)$,
 \begin{equation}\label{eq:Lp}
  \|f\|_{W^{-1,p}(\Omega)}\les |\Omega|^{\frac{1}{d}} \|f\|_{L^p(\Omega)}.
 \end{equation}
Moreover, the implicit constant depends on $\Omega$ only through the corresponding constant for the Sobolev embedding.
\end{lemma}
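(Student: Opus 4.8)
The plan is to dualize the Sobolev--Poincar\'e embedding and then interpolate with the trivial volume bound. Set $q = p/(p-1)$, the H\"older conjugate of $p$. The hypothesis $p > d/(d-1)$ is equivalent to $q < d$, so the Sobolev conjugate $q^\ast = dq/(d-q) \in (1,\infty)$ is well defined, and a direct computation gives $1/(q^\ast)' = 1 - 1/q^\ast = 1/p + 1/d$, i.e.\ $(q^\ast)' = pd/(p+d)$.

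First I would fix $\phi \colon \Omega \to \R$ with $\nor{\nabla \phi}_{L^q(\Omega)} \le 1$ and write $\bar\phi = |\Omega|^{-1} \int_\Omega \phi$ for its average. Since $\Omega$ is a bounded Lipschitz domain, the Sobolev--Poincar\'e inequality gives $\nor{\phi - \bar\phi}_{L^{q^\ast}(\Omega)} \le C_\Omega \nor{\nabla \phi}_{L^q(\Omega)} \le C_\Omega$, where $C_\Omega$ is exactly the constant of this embedding --- this is the only point at which the geometry of $\Omega$ enters. Using $\int_\Omega f = 0$ we may replace $\phi$ by $\phi - \bar\phi$ without changing $\int_\Omega f\phi$, so H\"older's inequality with exponents $q^\ast$ and $(q^\ast)' = pd/(p+d)$ yields
\[
\int_\Omega f\phi = \int_\Omega f(\phi - \bar\phi) \le \nor{f}_{L^{pd/(p+d)}(\Omega)} \nor{\phi - \bar\phi}_{L^{q^\ast}(\Omega)} \le C_\Omega\, \nor{f}_{L^{pd/(p+d)}(\Omega)}.
\]
Taking the supremum over all such $\phi$ gives $\nor{f}_{W^{-1,p}(\Omega)} \le C_\Omega \nor{f}_{L^{pd/(p+d)}(\Omega)}$, which is precisely the embedding $L^{pd/(p+d)} \subset W^{-1,p}$ announced in the excerpt.

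It then remains to raise the exponent from $pd/(p+d)$ to $p$. Since $pd/(p+d) < p$ and $|\Omega| < \infty$, H\"older's inequality gives $\nor{f}_{L^{pd/(p+d)}(\Omega)} \le |\Omega|^{\theta} \nor{f}_{L^p(\Omega)}$ with $\theta = \frac{p+d}{pd} - \frac1p = \frac1d$, and combining the two bounds produces \eqref{eq:Lp} with implicit constant $C_\Omega$, depending on $\Omega$ only through the Sobolev--Poincar\'e embedding constant, as claimed. I do not expect a genuine obstacle here: the only non-bookkeeping ingredient is the Sobolev--Poincar\'e inequality on bounded Lipschitz domains, which is classical; the one point requiring a little care is to perform the mean-zero reduction so that no stray $\nor{\phi}_{L^q}$ term appears and the final constant is purely the embedding constant.
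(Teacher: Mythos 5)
Your proof is correct and follows essentially the same route as the paper: Hölder with exponents $q^\ast$ and $pd/(p+d)$, the Sobolev(--Poincaré) embedding to control $\phi$, and a final Hölder step producing the factor $|\Omega|^{1/d}$. The only difference is that you carry out the mean-zero reduction for $\phi$ explicitly, whereas the paper relies on its earlier remark that the supremum in the $W^{-1,p}$ norm may be restricted to average-zero test functions.
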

\begin{proof}
 Let $q$ be the H\"older conjugate of $p$, $q^*$ the Sobolev conjugate of $q$ and $p^*=pd/(p+d)$ the H\"older conjugate of $q^*$. We then have for every $\phi$ with $\|\nabla \phi\|_{L^q(\Omega)}\le 1$,
 \[
  \int_{\Omega} f \phi\le \lt(\int_{\Omega} |f|^{p^*}\rt)^\frac{1}{p^*} \lt(\int_{\Omega} |\phi|^{q^*}\rt)^{\frac{1}{q^*}}\les  \lt(\int_{\Omega} |f|^{p^*}\rt)^\frac{1}{p^*} \lt(\int_{\Omega} |\nabla \phi|^{q}\rt)^{\frac{1}{q}}\le \lt(\int_{\Omega} |f|^{p^*}\rt)^\frac{1}{p^*}.
 \]
Using that $p^*<p$ and H\"older inequality concludes the proof of \eqref{eq:Lp}.
\end{proof}
As in \cite{ambrosio2022quadratic}, \eqref{eq:Lp} will however not be precise enough when estimating the error in subadditivity in the case of general densities and domains. We will instead rely on gradient bounds for 
the Green kernel $(G(x,y))_{x,y\in \Omega}$ of the Laplacian with Neumann boundary conditions to obtain sharper estimates. See \cite{AmStTr16,AmGlau,Le17,goldman2022fluctuation} for related results. Let us however point out that in our case we will not rely on any stochastic cancellation in the form of Rosenthal inequality \cite{rosenthal1970subspaces} but will instead use a purely deterministic estimate. We will assume that 

\begin{equation}\label{eq:green-kernel-bound} \abs{ \nabla _x  G(x,y) } \les |x-y|^{1-d}, \quad \text{for every $x$, $y \in \Omega$,}\end{equation}
where the implicit constant depends uniquely on $\Omega$. 
\begin{remark}\label{rem:green}This condition is satisfied for instance  if  $\Omega$ is $C^2$ or convex, see e.g.\ \cite{wang2013gradient}. Notice that since it is a local condition it also holds for $Q\backslash \Omega$ with $\Omega$ a $C^2$ open set with $d(\partial Q,\partial \Omega)>0$.
\end{remark}
\begin{remark}\label{eq:Riesz}
 Let us point out that as in \cite{Le17}, instead of \eqref{eq:green-kernel-bound} it would have been enough to have $L^p$ bounds (for  the same $p$ as for the cost $\CPp$)   on the Riesz transform for the Neumann Laplacian. From the available results for the Dirichlet Laplacian \cite{JerKenInhom,Shen}, we expect that for every Lipschitz domain there is $p>3$ (depending on the domain) for which these bounds hold. In particular, this would allow to extend the validity of Theorem \ref{thm:main} to every Lipschitz domain when $d=3$. However, since we were not able to find in the literature the corresponding results for the case of Neumann boundary conditions we kept the stronger hypothesis \eqref{eq:green-kernel-bound}.
\end{remark}

We then have 
\begin{lemma}\label{lem:W1pWhitney}
 Let $\Omega\subset \R^d$ be a bounded domain with Lipschitz boundary, such that \eqref{eq:green-kernel-bound} holds and let $\rho$ be a density bounded above and below on $\Omega$. For $\delta>0$ sufficiently small, let $(\Omega_k)_{k=1}^K = \cQ_\delta \cup \cR_\delta$ as in \cref{lem:decomp}. 
 If there exists $h>0$ such that   $|b_k|\le h^{\frac{1}{2}} |\Omega_k|^{\frac{1}{2}}$ for $k=1, \ldots, K$,  then for every $p\ge 1$,
\begin{equation}\label{estimW1pWhitney}
 \lt\|\sum_{k=1}^K \frac{b_k}{\rho(\Omega_k)}( I_{\Omega_k} -\rho(\Omega_k))\rho\rt\|_{W^{-1,p}(\Omega)}\les \delta^{1-\frac{d}{2}} |\log (\delta) |h^{\frac{1}{2}}.
\end{equation}

\end{lemma}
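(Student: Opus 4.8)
The plan is to estimate the negative Sobolev norm through its primal description $\|f\|_{W^{-1,p}(\Omega)}=\inf\{\|\xi\|_{L^p(\Omega)}:\Div\xi=f\}$, by exhibiting one explicit vector field built from the Neumann Green kernel. Write $f:=\sum_{k=1}^K\frac{b_k}{\rho(\Omega_k)}\bra{I_{\Omega_k}-\rho(\Omega_k)}\rho$ for the function inside the norm in \eqref{estimW1pWhitney}. Since $\rho$ is a probability density, each summand $\bra{I_{\Omega_k}-\rho(\Omega_k)}\rho$ integrates to zero, so $\int_\Omega f=0$ and the left-hand side of \eqref{estimW1pWhitney} is genuinely a $W^{-1,p}(\Omega)$ norm. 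I would take $\xi=-\nabla u$, where $u$ solves the Neumann problem $-\Delta u=f$, $\partial_n u=0$; equivalently $u(x)=\int_\Omega G(x,y)f(y)\,dy$ (recall $\int_\Omega f=0$), so that $\xi(x)=-\int_\Omega\nabla_xG(x,y)f(y)\,dy$, differentiation under the integral being legitimate because $|\nabla_xG(x,\cdot)|\les|x-\cdot|^{1-d}\in L^1(\Omega)$ by \eqref{eq:green-kernel-bound}. Then $\int_\Omega\xi\cdot\nabla\phi=-\int_\Omega\nabla u\cdot\nabla\phi=-\int_\Omega f\phi$ for every $\phi\in W^{1,q}(\Omega)$ (with $q$ the H\"older conjugate of $p$), so $\|f\|_{W^{-1,p}(\Omega)}\le\|\xi\|_{L^p(\Omega)}$, and everything reduces to estimating $\xi$.

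Since $\Omega$ is bounded, it suffices to prove the uniform bound $|\xi(x)|\les\delta^{1-\frac d2}|\log\delta|\,h^{1/2}$ for all $x\in\Omega$, because then $\|\xi\|_{L^p(\Omega)}\le|\Omega|^{1/p}\|\xi\|_{L^\infty(\Omega)}$ closes the argument. By \eqref{eq:green-kernel-bound} this amounts to controlling $\int_\Omega|x-y|^{1-d}|f(y)|\,dy$. For a.e.\ $y$ write $k(y)$ for the index with $y\in\Omega_{k(y)}$; separating in $f$ the single ``local'' term $I_{\Omega_{k(y)}}\rho$ from the ``global'' term $-(\sum_kb_k)\rho$, and using that $\rho$ is bounded above and below together with $|b_k|\le h^{1/2}|\Omega_k|^{1/2}$ and $|\Omega_k|\sim\diam(\Omega_k)^d$ (property \emph{\ref{partition-2}} in \cref{lem:decomp}), one gets $|f(y)|\les h^{1/2}\diam(\Omega_{k(y)})^{-d/2}+h^{1/2}\abs{\sum_kb_k}$. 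Here $\abs{\sum_kb_k}\le h^{1/2}\sum_k|\Omega_k|^{1/2}\les h^{1/2}\sum_k\diam(\Omega_k)^{d/2}\les h^{1/2}\delta^{1-\frac d2}$ by \eqref{eq:whitney-general-q} with $\alpha=\frac d2<d-1$ (here $d\ge 3$), so in all $|f(y)|\les h^{1/2}\bra{\diam(\Omega_{k(y)})^{-d/2}+\delta^{1-\frac d2}}$.

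The global contribution to the integral is then $\les h^{1/2}\delta^{1-\frac d2}\int_\Omega|x-y|^{1-d}\,dy\les h^{1/2}\delta^{1-\frac d2}$. For the local one I would split over the cells and use the elementary bound $\int_{\Omega_k}|x-y|^{1-d}\,dy\les\diam(\Omega_k)\min\{1,(\diam(\Omega_k)/\dist(x,\Omega_k))^{d-1}\}$: indeed, if $\dist(x,\Omega_k)\le\diam(\Omega_k)$ then $\Omega_k\subset B(x,C\diam(\Omega_k))$ and the integral is $\les\diam(\Omega_k)$, while if $\dist(x,\Omega_k)=r>\diam(\Omega_k)$ then $|x-y|\ge r$ on $\Omega_k$ and it is $\le|\Omega_k|r^{1-d}\sim\diam(\Omega_k)^dr^{1-d}$. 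Thus
\begin{align*}
\int_\Omega|x-y|^{1-d}\diam(\Omega_{k(y)})^{-d/2}\,dy
&=\sum_k\diam(\Omega_k)^{-d/2}\int_{\Omega_k}|x-y|^{1-d}\,dy\\
&\les\sum_k\diam(\Omega_k)^{1-\frac d2}\min\cur{1,\bra{\tfrac{\diam(\Omega_k)}{\dist(x,\Omega_k)}}^{d-1}},
\end{align*}
which is exactly the left-hand side of \eqref{eq:final-bound-min-sum-partition} with $\alpha=1-\frac d2<0$ (legitimate since $d\ge3$, and $x$ lies in some cell), hence $\les\delta^{1-\frac d2}|\log\delta|$. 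Adding the two contributions gives $|\xi(x)|\les h^{1/2}\delta^{1-\frac d2}|\log\delta|$ uniformly in $x$, which is the claim.

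The step I expect to be the real crux is getting the exponent $\delta^{1-\frac d2}$ rather than something weaker. A more naive route --- dualize, pair with a test function $\phi$ normalized by $\|\nabla\phi\|_{L^q(\Omega)}\le1$, replace $\phi$ by its $\rho$-average $\bar\phi_k$ on each $\Omega_k$ (permissible because each $I_{\Omega_k}-\rho(\Omega_k)$ has zero integral), then estimate $\sum_kb_k\bar\phi_k$ by Cauchy--Schwarz in $k$ --- only produces $\les K^{1/2}h^{1/2}\sim\delta^{(1-d)/2}h^{1/2}$ (with $K\sim\delta^{1-d}$ the number of cells), which is strictly worse than $\delta^{1-\frac d2}h^{1/2}$: it throws away the off-diagonal decay of $\nabla G$ and the cancellation across dyadic scales. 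Recovering the sharp power forces one to work with $\nabla G$ pointwise and to feed the resulting multiscale sum into the Whitney estimate \eqref{eq:final-bound-min-sum-partition}. The only other point to get right is that $\xi=-\nabla u$ does represent $f$ in the precise Neumann sense underlying the definition of $\|\cdot\|_{W^{-1,p}(\Omega)}$, i.e.\ when tested against all of $W^{1,q}(\Omega)$ and not just compactly supported functions.
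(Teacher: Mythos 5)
Your proof is correct and follows essentially the same route as the paper: both exhibit a competitor vector field built from the Neumann Green kernel, use the pointwise bound \eqref{eq:green-kernel-bound} together with the elementary estimate $\int_{\Omega_k}|x-y|^{1-d}\,dy\les\diam(\Omega_k)\min\{1,(\diam(\Omega_k)/\dist(x,\Omega_k))^{d-1}\}$, and conclude via \eqref{eq:final-bound-min-sum-partition} with $\alpha=1-\tfrac d2$. The only cosmetic difference is that you aggregate the mean-subtraction pieces into one global term controlled by \eqref{eq:whitney-general-q} with $\alpha=\tfrac d2$, whereas the paper keeps them inside each cellwise potential $\phi_k$; both yield the same $\delta^{1-\frac d2}|\log\delta|\,h^{1/2}$ bound.
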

\begin{proof}
Set
$$ B_k = \frac{ b_k}{\rho(\Omega_k)}, \qquad f_k= \bra{I_{\Omega_k}  - \rho(\Omega_k)} \rho.$$
Let then $\phi_k$ denotes the solution to the equation $\Delta \phi_k = f_k$, with null Neumann boundary conditions on $\Omega$ and use as competitor $\xi=\sum_{k=1}^K  B_k \nabla \phi_k$ in the definition of the $W^{-1,p}$ norm.
We get,
\begin{equation}\label{normW1pA}
 \nor{ \sum_{k=1}^K  B_k f_k}_{W^{-1,p}(\Omega)}^p  \le \int_{\Omega} \abs{ \sum_{k=1}^K B_k {\nabla \phi_k}}^p \les h^{\frac{p}{2}}  \int_{\Omega} \bra{\sum_{k=1}^K |\Omega_k|^{-\frac{1}{2}} \abs{\nabla \phi_k}}^p.
\end{equation}
To bound the last term, we use the integral representation  in terms of the Green's function,
\[ \phi_k = \int_{\Omega}  G(x,y) f_k(y) d y, \]
to obtain that, for every $x\in \Omega$,
\begin{equation}\label{boundgradphii}
|\nabla \phi_k(x)|\les \min\cur{\diam(\Omega_k), \frac{|\Omega_k|}{\dist(x,\Omega_k)^{d-1}}}.
\end{equation}
Indeed, by \eqref{eq:green-kernel-bound},
\[\begin{split}
  |\nabla \phi_k(x)|& \les \int_{\Omega_k} \frac{dy}{|x-y|^{d-1}}+ |\Omega_k| \int_{\Omega} \frac{dy}{|x-y|^{d-1}} 
   \le \int_{\cur{|y|\le \diam(\Omega_k)}} \frac{dy}{|y|^{d-1}} +|\Omega_k|\\
   & \les \diam(\Omega_k).
\end{split}\]
Moreover, for $x\notin \Omega_k$, we get directly from \eqref{eq:green-kernel-bound},
\[
 |\nabla \phi_k(x)|\les \frac{|\Omega_k|}{\dist(x,\Omega_k)^{d-1}}.
\]
For any $k=1, \ldots, K$ and $x \in \Omega_k$, we then estimate
\[ \begin{split} \sum_{j=1}^K |\Omega_j|^{-\frac{1}{2}} \abs{\nabla \phi_j(x)} &  \stackrel{\eqref{boundgradphii}}{\les}  \sum_{j=1}^K \diam(\Omega_j)^{1-d/2} \min\cur{1, \bra{ \frac{\diam(\Omega_j)}{\dist(x, \Omega_j)}}^{d-1}} \\
& \les \delta^{1-d/2}|\log(\delta)|
\end{split}\]
 having used inequality \eqref{eq:final-bound-min-sum-partition} from \cref{lem:bound-partition} with $\alpha = 1-d/2$.

Therefore, we can split the integration
\[\begin{split} \int_{\Omega} \bra{\sum_{j=1}^K |\Omega_j|^{-\frac{1}{2}} \abs{\nabla \phi_j}}^p & = \sum_{k=1}^K \int_{\Omega_k} \bra{\sum_{j=1}^K |\Omega_j|^{-\frac{1}{2}} \abs{\nabla \phi_j}}^p 
%& \les  \delta^{p(1-\frac{d}{2})} \sum_{k=1}^K  \bra{|\Omega_k|^{p+1} + |\Omega_k| \abs{\log(\diam(\Omega_k))}^p}\\
%& \les  \delta^{p(1-\frac{d}{2})}, 
%& \les \delta^{(1-d/2)p} \bra{ |\log (\delta) |^p +\sum_{k=1}^K |\Omega_k| \abs{\log(\diam(\Omega_k))}^p} \\
 \les \delta^{(1-d/2)p} | \log (\delta ) |^p
\end{split} \]
%estimating e.g.\ $\abs{\log(\diam(\Omega_k))}^p \les \diam(\Omega_k)^{-\eta}$ for some $\eta >0$ sufficiently small and using \eqref{eq:whitney-general-q}.
In combination  with \eqref{normW1pA} this concludes the proof of \eqref{estimW1pWhitney}.
\end{proof}

%  \footnote{Actually, we need the case where the boundary $\Omega$ is decomposed into connected components that are either convex or $C^2$, which is a minor variant not explicitly covered in \cite{wang2013gradient} but attainable with similar arguments.}

\subsection{Optimal Transport}

Given two positive Borel measures $\mu$, $\lambda$ on $\R^d$ with $\mu(\R^d) = \lambda(\R^d) \in (0, \infty)$ and finite $p$-th moments, the optimal transport cost of order $p\ge 1$ between $\mu$ and $\lambda$ is defined as the quantity
\[ \W(\mu, \lambda) = \min_{\pi\in\Gamma(\mu,\lambda)} \int_{\R^d\times\R^d} {|x-y|}^p d \pi(x,y),\]
where $\Gamma(\mu, \lambda)$ is the set of couplings between $\mu$ and $\lambda$, i.e., finite Borel measures $\pi$ on the product $\R^d\times \R^d$ such that their marginals are respectively $\mu$ and $\lambda$.  
% Usually, the theory is developed for $\mu$, $\lambda$ probabilities, but it is convenient to relax this condition: it holds anyway (since $\mu(\R^d) = \nu(\R^d)$),
% $$ \W(\mu, \lambda) = \mu(\R^d) \W\bra{  \frac{\mu}{\mu(\R^d)}, \frac{\nu}{\nu(\R^d)}}.$$
% 
% i.e., positive Borel measures on $\Omega\times \Omega$ such that the first and second marginals are respectively $\mu$ and $\lambda$, i.e.,
% \[ \pi \bra{ A \times \Omega} = \mu(A), \quad \pi \bra{ \Omega \times B} = \lambda(B),\]
% for Borel sets $A$, $B \subseteq \Omega$. 
Notice that if $\mu(\R^d) = \lambda(\R^d) = 0$ then $\W(\mu, \lambda) = 0$, while if $\mu(\Omega) \neq \lambda(\Omega)$, we conveniently extend the definition setting $\W(\mu, \lambda) = \infty$.
%Usually one assumes that $\mu$, $\nu$ are probabilities, but for our purposes it is convenient to deal with general positive measures. When restricted to probabilities, $W_p$ is a distance metrizing weak convergence, together with moments of order $p$. In particular, the triangle inequality holds:
Let us recall that the triangle inequality for the Wasserstein distance of order $p$ (which is defined as the the $p$-th root of $\W(\mu, \lambda)$) yields 
\begin{equation}\label{eq:triangle}
\W( \mu, \nu) \lesssim \W(\mu, \lambda) + \W(\nu, \lambda),
\end{equation}
A straightforward, but useful subadditivity inequality is
\begin{equation}\label{eq:convexity}
\W\bra{ \sum_{k} \mu_k, \sum_{k} \nu_k} \le \sum_{k} \W(\mu_k, \nu_k).
\end{equation}
valid for any (countable) family of measures $(\mu_k, \nu_k)_{k}$.

%We will also  use the classical subadditivity inequality 
%\begin{equation}\label{eq:sub} W_p^p\bra{ \sum_{i} \mu_i, \sum_{i} \lambda_i} \le \sum_i W^p_p(\mu_i, \lambda_i),\end{equation}
%for a finite set of positive measures $\mu_i$, $\lambda_i$.
%This follows from the observation that if $\pi_i \in \mathcal{C}(\mu_i, \lambda_i)$, then $\sum_i \pi_i \in \mathcal{C}(\sum_i \mu_i, \sum_i\lambda_i)$.

To keep notation simple, we write 
\[ \W_{\Omega} (\mu, \lambda)  = \W (\mu\restr\Omega, \lambda\restr \Omega).\]
and, if a measure is absolutely continuous with respect to Lebesgue measure, we only write its density. For example, $\W_{\Omega} \bra{ \mu,  \mu(\Omega)/|\Omega| }$
denotes the transportation cost between $\mu \restr \Omega$ to the uniform measure on $\Omega$ with total mass $\mu(\Omega)$.  

For $q \ge p$, Jensen inequality gives
\begin{equation}\label{eq:jensen}  \W_{\Omega} (\mu, \nu) \le \mu(\Omega)^{1-\frac{p}{q}} \bra{ \mathsf{W}_{\Omega}^q (\mu, \nu)}^{\frac{p}{q}}.\end{equation}

Our arguments make substantial use of two crucial properties of the optimal transport cost. The first one \cite[Lemma 3.1]{goldman2021convergence} is a simple consequence of \eqref{eq:triangle} and \eqref{eq:convexity}. %, namely lemma 3.1 and lemma 3.4. % \cite[Lemma 3.1]{goldman2021convergence} and \cite[Lemma 3.4]{goldman2021convergence}.

\begin{lemma}
 For every $p\ge 1$, there exists a constant $C>0$ depending only on $p$ such that the following holds. %For every rectangle  %$R\subset \R^d$ of moderate aspect ratio and every Borel partition $(R_i)_{i\in \N}$ of $\Omega$,  
Let $\Omega \subseteq \R^d$ be Borel and $(\Omega_k)_{k\in \N}$ be a countable Borel partition of $\Omega$. Then, for finite measures $\mu$, $\lambda$, and $\eps\in(0,1)$, we have the inequality
 \begin{equation}\label{eq:mainsub}
 \W_{\Omega}\lt(\mu, \alpha \lambda \rt)\le (1+\eps)\sum_i \W_{\Omega_k}\lt(\mu, \alpha_k  \lambda  \rt)+ \frac{C}{\eps^{p-1}}\W_\Omega\lt(\sum_k \alpha_k \chi_{\Omega_k}  \lambda , \alpha  \lambda  \rt).
 \end{equation}
 where $\alpha = \mu(\Omega)/\lambda(\Omega)$ and  $\alpha_k = \mu(\Omega_k)/\lambda(\Omega_k)$. 
\end{lemma}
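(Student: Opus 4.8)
The plan is to interpolate between $\mu$ and $\alpha\lambda$ through the auxiliary measure $\nu := \sum_k \alpha_k\chi_{\Omega_k}\lambda$, which by construction has $\nu\restr\Omega_k$ of total mass $\alpha_k\lambda(\Omega_k) = \mu(\Omega_k)$ on every cell, and hence $\nu(\Omega) = \mu(\Omega) = \alpha\lambda(\Omega)$. One may assume the right-hand side is finite: a cell with $\lambda(\Omega_k) = 0 < \mu(\Omega_k)$ forces the term $\W_{\Omega_k}(\mu,\alpha_k\lambda)$ to be infinite and there is nothing to prove, while cells with $\mu(\Omega_k) = \lambda(\Omega_k) = 0$ contribute nothing anywhere; so all the relevant $\alpha_k$ are well defined and $\nu$ makes sense.

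First I would apply the triangle inequality for the Wasserstein distance of order $p$ (the $p$-th root of $\W$), which is a genuine metric on finite measures of equal mass, to get
\[ \W_\Omega(\mu,\alpha\lambda)^{1/p} \;\le\; \W_\Omega(\mu,\nu)^{1/p} + \W_\Omega(\nu,\alpha\lambda)^{1/p}. \]
Then I would raise this to the $p$-th power using the elementary convexity bound $(a+b)^p \le \theta^{1-p}a^p + (1-\theta)^{1-p}b^p$, valid for $a,b\ge 0$ and $\theta\in(0,1)$, taking $\theta = (1+\eps)^{-1/(p-1)}$ (and reading it as the plain triangle inequality when $p=1$). This produces
\[ \W_\Omega(\mu,\alpha\lambda) \;\le\; (1+\eps)\,\W_\Omega(\mu,\nu) + C_\eps\,\W_\Omega(\nu,\alpha\lambda), \qquad C_\eps = \bigl(1-(1+\eps)^{-1/(p-1)}\bigr)^{1-p}. \]
Next, I would control the first term using the decompositions $\mu\restr\Omega = \sum_k\mu\restr\Omega_k$ and $\nu\restr\Omega = \sum_k\alpha_k\lambda\restr\Omega_k$ together with the subadditivity inequality \eqref{eq:convexity} applied to the family $(\mu\restr\Omega_k,\alpha_k\lambda\restr\Omega_k)_k$, which gives $\W_\Omega(\mu,\nu) \le \sum_k \W_{\Omega_k}(\mu,\alpha_k\lambda)$. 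The second term is already $\W_\Omega\bigl(\sum_k\alpha_k\chi_{\Omega_k}\lambda,\alpha\lambda\bigr)$ by the very definition of $\nu$, so combining the three displays yields the claimed inequality.

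The only genuine computation is to check that $C_\eps \le C/\eps^{p-1}$ for all $\eps\in(0,1)$, with $C$ depending on $p$ only; this is routine since $1-(1+\eps)^{-1/(p-1)} \ge (1-2^{-1/(p-1)})\,\eps$ on $[0,1]$, the left-hand side being concave in $\eps$ and vanishing at $\eps=0$. Thus there is no real obstacle: the lemma is essentially a bookkeeping exercise combining \eqref{eq:triangle} and \eqref{eq:convexity}, the one point to keep an eye on being that the mass of $\nu$ on each cell is forced to coincide with that of $\mu$, which is exactly what makes both the triangle inequality and the cellwise subadditivity step applicable.
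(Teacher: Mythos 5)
Your proof is correct and follows exactly the route the paper indicates (it cites the lemma from \cite[Lemma 3.1]{goldman2021convergence} and remarks it is a simple consequence of \eqref{eq:triangle} and \eqref{eq:convexity}): interpolate through $\nu=\sum_k\alpha_k\chi_{\Omega_k}\lambda$, use the weighted form of the triangle inequality to split the constants as $(1+\eps)$ and $C/\eps^{p-1}$, and apply cellwise subadditivity, the key point being that $\nu(\Omega_k)=\mu(\Omega_k)$ on each cell. Your handling of degenerate cells and the elementary bound $1-(1+\eps)^{-1/(p-1)}\ge (1-2^{-1/(p-1)})\,\eps$ by concavity are both fine, so nothing is missing.
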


The second one is \cite[Lemma 2.2]{ambrosio2022quadratic} which gives an upper bound for the Wasserstein distance in terms of a negative Sobolev norm. It follows from the Benamou-Brenier formulation of the optimal transport problem  (see also \cite[Corollary 3]{peyre2018comparison}).

 \begin{lemma}\label{lem:peyre} Assume that $\Omega \subseteq \R^d$ is a bounded connected open set with  Lipschitz boundary. If $\mu$ and $\lambda$ are measures on $\Omega$ with $\mu(\Omega) = \lambda(\Omega)$,  absolutely continuous with respect to the  Lebesgue measure and $\inf_\Omega \lambda>0$, then, for every $p\ge 1$,
 \begin{equation}\label{eq:estimCZ}
  W_{\Omega}^p(\mu,\lambda)\les \frac{1}{\inf_{\Omega} \lambda^{p-1}}\nor{ \mu - \lambda}_{W^{-1,p}(\Omega)}^p.
 \end{equation}
 \end{lemma}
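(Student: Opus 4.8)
The plan is to rely on the Benamou--Brenier formulation of optimal transport: for any weakly continuous curve of nonnegative measures $(\rho_t)_{t\in[0,1]}$ on $\Omega$ with $\rho_0=\mu$, $\rho_1=\lambda$, and any Borel momentum field $(m_t)_{t\in[0,1]}$ solving the continuity equation $\partial_t\rho_t+\Div m_t=0$ on $(0,1)\times\Omega$ with no-flux boundary conditions on $\partial\Omega$, one has
\[
\W_\Omega(\mu,\lambda)\;\le\;\int_0^1\!\!\int_\Omega \frac{|m_t|^p}{\rho_t^{p-1}}\,dx\,dt
\]
(for the Euclidean cost this is an inequality, which is all we need, even when $\Omega$ is not convex, since one simply bounds the metric speed of $(\rho_t)$ by $\bra{\int_\Omega |m_t|^p\rho_t^{1-p}\,dx}^{1/p}$ and applies Jensen's inequality on $[0,1]$). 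Hence the whole proof reduces to producing one admissible pair $(\rho_t,m_t)$ whose action is at most $C(p)\,(\inf_\Omega\lambda)^{1-p}\,\|\mu-\lambda\|_{W^{-1,p}(\Omega)}^p$.

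First I would fix the spatial vector field: since $\int_\Omega(\mu-\lambda)=0$, by the very definition of the negative Sobolev norm there is $\xi\in L^p(\Omega;\R^d)$ with $\Div\xi=\mu-\lambda$ weakly on $\Omega$ (tested against $W^{1,q}(\Omega)$ functions, which already encodes the no-flux condition), and with $\|\xi\|_{L^p(\Omega)}\le(1+\eps)\|\mu-\lambda\|_{W^{-1,p}(\Omega)}$. The obvious competitor -- linear interpolation $\rho_t=(1-t)\mu+t\lambda$ with constant momentum $m_t\equiv\xi$ -- does satisfy the continuity equation, but its action is in general infinite, and removing this obstruction is the crux of the proof. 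Indeed, only $\lambda$ and not $\mu$ is assumed uniformly positive, so on the set $\{\mu=0\}$ one only has $\rho_t\ge t\inf_\Omega\lambda$, whence $\int_0^1 t^{1-p}\,dt=\infty$ as soon as $p\ge 2$; and $\xi$ cannot be taken to vanish on that set, since there $\Div\xi=-\lambda\ne 0$.

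The remedy is a nonlinear reparametrization of time that makes the curve cross the neighbourhood of $\mu$ quickly while forcing the momentum to vanish there at a matching rate. Concretely, fixing an integer $m>p-1$ (for instance $m=\lceil p\rceil$), I would take
\[
\rho_t=(1-t^m)\mu+t^m\lambda,\qquad m_t=m\,t^{m-1}\,\xi.
\]
Then $\rho_t\ge 0$ has constant mass, $\rho_0=\mu$, $\rho_1=\lambda$, and $\partial_t\rho_t+\Div m_t=m t^{m-1}(\lambda-\mu)+m t^{m-1}\Div\xi=0$, so the pair is admissible. Since $\rho_t\ge t^m\lambda\ge t^m\inf_\Omega\lambda$, for a.e.\ $x$ one obtains
\[
\int_0^1\frac{|m_t(x)|^p}{\rho_t(x)^{p-1}}\,dt\;\le\;\frac{m^p\,|\xi(x)|^p}{(\inf_\Omega\lambda)^{p-1}}\int_0^1 t^{(m-1)p-m(p-1)}\,dt\;=\;\frac{m^p}{m-p+1}\cdot\frac{|\xi(x)|^p}{(\inf_\Omega\lambda)^{p-1}},
\]
where the choice $m>p-1$ makes the exponent $m-p$ exceed $-1$, hence the $t$-integral finite. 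Integrating in $x$ and invoking the Benamou--Brenier bound gives $\W_\Omega(\mu,\lambda)\le C(p)\,(\inf_\Omega\lambda)^{1-p}\,\|\xi\|_{L^p(\Omega)}^p$; letting $\eps\to 0$ yields \eqref{eq:estimCZ}.

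Two points then deserve a careful (but routine) check: the validity of the Benamou--Brenier inequality on a bounded Lipschitz, possibly non-convex, domain with no-flux boundary conditions, and the fact that the curve $(\rho_t)$ we built is genuinely absolutely continuous in the Wasserstein metric -- which follows a posteriori from the finiteness of its action established above. Finally, for $p=1$ the construction degenerates to $m=1$ and action $\|\xi\|_{L^1(\Omega)}$, so the statement reduces to the Beckmann ($L^1$) formulation of the $1$-Wasserstein distance and is immediate.
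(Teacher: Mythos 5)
Your argument is correct. Note that the paper does not give its own proof of \cref{lem:peyre}: it quotes \cite[Lemma 2.2]{ambrosio2022quadratic} (see also \cite{peyre2018comparison}), and those proofs are, like yours, based on the Benamou--Brenier formulation with a competitor $\xi$ satisfying $\Div \xi=\mu-\lambda$ and the lower bound $\rho_t\ge t\,\inf_\Omega\lambda$ along the interpolation, so you are following essentially the same route. The only difference is in how the endpoint degeneracy at $t=0$ is absorbed: the cited arguments keep the linear interpolation and use the length (metric-derivative) form of Benamou--Brenier, where the factor $\int_0^1 t^{-(p-1)/p}\,dt=p$ is finite for every $p\ge 1$, whereas you work with the action form and recover integrability through the reparametrization $t\mapsto t^{m}$ with $m>p-1$; the two devices are equivalent, your constant $m^p/(m-p+1)$ being just another admissible $C(p)$, and the remaining checks you list (zero-extension of $(\rho_t,m_t)$ to $\R^d$ using that the divergence constraint is tested against all of $W^{1,q}(\Omega)$, plus the fact that the Euclidean cost is dominated by the intrinsic one) are indeed routine.
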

 
As in many recent works on the matching problem, we will use this inequality to improve on the trivial bound
\begin{equation}\label{eq:w-trivial} \W_{\Omega} (\mu, \lambda) \le \diam(\Omega)^p \mu(\Omega),\end{equation}
which holds as soon as $\mu(\Omega) = \nu(\Omega)$. Much of our effort in the proofs will be ultimately to deal with an intermediate situation, where the measures can be decomposed as the sum of a ``good'' part, i.e., absolutely continuous with smooth density and a ``bad'' remainder about which not much can be assumed. We prove here a general inequality which  could also be of independent interest.

\begin{proposition}\label{prop:density-helps}
Let $\Omega \subseteq \R^d$ be a bounded Lipschitz domain, $\rho$ be a density bounded above and below on $\Omega$, $\mu$ be any finite measure on $\Omega$ and $h >0$. Then, for every $p>d/(d-1)$,
\begin{equation}\label{claim} \W_\Omega\bra{ \mu +h\rho, \alpha \rho} \les  \frac{1}{h^{\frac{p}{d}}} \mu(\Omega)^{1+\frac{p}{d}},
\end{equation}

where $\alpha= \frac{\mu(\Omega)}{\rho(\Omega)} +h$. Moreover, this inequality is invariant by rescaling of $\Omega$.
\end{proposition}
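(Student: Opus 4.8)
The plan is to compare $\mu+h\rho$ to a cell-wise average $\mu_K+h\rho$ of $\mu$ over a fine partition, transport $\mu_K+h\rho$ to $\alpha\rho$ by one application of \cref{lem:peyre}, and estimate the resulting $W^{-1,p}$ distance by telescoping over a dyadic hierarchy of scales. Write $m:=\mu(\Omega)$. We may assume $m\le h$: otherwise $\W_\Omega(\mu+h\rho,\alpha\rho)\le\diam(\Omega)^p\bra{m+h\rho(\Omega)}\les m\le m^{1+p/d}/h^{p/d}$ by \eqref{eq:w-trivial} (using $\rho(\Omega)\les 1$ and $(m/h)^{p/d}\ge1$). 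Fix an integer $K\ge0$ with $\delta:=2^{-K}\sim(m/h)^{1/d}$. When $\Omega$ is a cube, let $\mathcal P_0,\mathcal P_1,\dots,\mathcal P_K$ be the nested partitions of $\Omega$ into dyadic subcubes of side $2^{-k}$; for a general bounded Lipschitz domain one replaces these by the Whitney-type partitions of \cref{lem:decomp}, dyadically refined in the interior and truncated at scale $\delta$. The only features used below are that the cells $R$ of $\mathcal P_k$ partition $\Omega$, are nested in $k$, are \emph{fat}, i.e.\ $|R|\sim\diam(R)^d\sim2^{-kd}$ (property \ref{partition-2}), and are Lipschitz domains whose Lipschitz character is controlled uniformly in $R$ (so that \eqref{eq:Lp} holds on each $R$ with a uniform implicit constant). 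For each $k$ set $\mu_k:=\sum_{R\in\mathcal P_k}\frac{\mu(R)}{|R|}I_R$, the density obtained by averaging $\mu$ over the cells of $\mathcal P_k$, and also $\mu_{-1}:=\frac m{|\Omega|}I_\Omega$.

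First I would split, using \eqref{eq:triangle},
\[\W_\Omega(\mu+h\rho,\alpha\rho)\les\W_\Omega(\mu+h\rho,\mu_K+h\rho)+\W_\Omega(\mu_K+h\rho,\alpha\rho).\]
For the first term, \eqref{eq:convexity} removes the common background $h\rho$ and reduces to transporting $\mu$ to $\mu_K$ cell by cell via \eqref{eq:w-trivial}:
\[\W_\Omega(\mu+h\rho,\mu_K+h\rho)\le\W_\Omega(\mu,\mu_K)\le\sum_{R\in\mathcal P_K}\diam(R)^p\mu(R)\les\delta^pm.\]
For the second term, $\mu_K+h\rho$ and $\alpha\rho$ are absolutely continuous, have equal mass, and are bounded below by $h\inf_\Omega\rho\ges h>0$; \cref{lem:peyre} applied with reference measure $\alpha\rho$ (note $h\rho-\alpha\rho=-\tfrac m{\rho(\Omega)}\rho$) therefore gives
\[\W_\Omega(\mu_K+h\rho,\alpha\rho)\les\frac1{h^{p-1}}\Bigl\|\mu_K-\tfrac m{\rho(\Omega)}\rho\Bigr\|_{W^{-1,p}(\Omega)}^p.\]

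The heart of the argument is the estimate of this negative Sobolev norm, and the point is to perform the telescoping \emph{inside the norm}, which is genuine and hence costs no constant:
\[\Bigl\|\mu_K-\tfrac m{\rho(\Omega)}\rho\Bigr\|_{W^{-1,p}}\le\sum_{k=0}^K\|\mu_k-\mu_{k-1}\|_{W^{-1,p}}+\Bigl\|\tfrac m{|\Omega|}I_\Omega-\tfrac m{\rho(\Omega)}\rho\Bigr\|_{W^{-1,p}}.\]
The last term is $\les|\Omega|^{1/d}\bigl\|\tfrac m{|\Omega|}I_\Omega-\tfrac m{\rho(\Omega)}\rho\bigr\|_{L^p}\les m$ by \eqref{eq:Lp}. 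For $d_k:=\mu_k-\mu_{k-1}$, note that on each cell $R'\in\mathcal P_{k-1}$ one has $\int_{R'}d_k=\mu(R')-\mu(R')=0$; since the restrictions $d_kI_{R'}$ have disjoint supports, a standard localization of the $\operatorname{div}$-characterization of $W^{-1,p}$ together with \eqref{eq:Lp} on each cell yields
\[\|d_k\|_{W^{-1,p}(\Omega)}^p\le\sum_{R'\in\mathcal P_{k-1}}\|d_kI_{R'}\|_{W^{-1,p}(R')}^p\les\sum_{R'}|R'|^{p/d}\|d_kI_{R'}\|_{L^p(R')}^p\sim2^{-kp}\|d_k\|_{L^p(\Omega)}^p.\]
By fatness, $\|\mu_k\|_{L^p(\Omega)}^p=\sum_{R\in\mathcal P_k}\bigl(\tfrac{\mu(R)}{|R|}\bigr)^p|R|\sim2^{kd(p-1)}\sum_R\mu(R)^p\le2^{kd(p-1)}m^p$ (using $\sum_R\mu(R)^p\le\bigl(\sum_R\mu(R)\bigr)^p$ for $p\ge1$), and similarly for $\mu_{k-1}$, so $\|d_k\|_{L^p}\les2^{kd(p-1)/p}m$ and
\[\|d_k\|_{W^{-1,p}}\les2^{-k}\,2^{kd(p-1)/p}m=2^{k\gamma}m,\qquad\gamma:=\tfrac{d(p-1)}p-1.\]
This is exactly where the hypothesis is used: $\gamma>0\iff p(d-1)>d\iff p>\tfrac d{d-1}$, so the series is geometric and dominated by its last term, whence $\bigl\|\mu_K-\tfrac m{\rho(\Omega)}\rho\bigr\|_{W^{-1,p}}\les2^{K\gamma}m$.

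Collecting everything, $\W_\Omega(\mu+h\rho,\alpha\rho)\les\delta^pm+h^{1-p}m^p2^{K\gamma p}$. The two terms are comparable precisely when $2^{-Kp(1+\gamma)}=(m/h)^{p-1}$, i.e.\ — since $1+\gamma=\tfrac{d(p-1)}p$ — when $2^{-K}=(m/h)^{1/d}$, which is our choice of $K$; both terms then equal $m\,(m/h)^{p/d}=m^{1+p/d}/h^{p/d}$, proving \eqref{claim}. Scale invariance is immediate: replacing $\Omega$ by $\lambda\Omega$, $\mu$ by its push-forward, $\rho$ by $y\mapsto\rho(y/\lambda)$ and $h$ by $\lambda^{-d}h$ leaves $\mu(\Omega)$ unchanged while multiplying both sides of \eqref{claim} by $\lambda^p$. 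I expect the main obstacle to be the tension visible in the last two paragraphs: a single smoothing step (regularizing $\mu$ at one scale $r$ and applying \cref{lem:peyre} once) only produces the strictly weaker exponent $\tfrac{p-1}{p+d(p-1)}<\tfrac1d$, so one is forced to pass through \emph{all} dyadic scales down to $\delta\sim(m/h)^{1/d}$; this is harmless only because the telescoping is carried out inside the $W^{-1,p}$ norm and because $p>d/(d-1)$ makes the layers summable. A secondary technical point is to run the argument on a general bounded Lipschitz $\Omega$ rather than on a cube: this needs a dyadically refined Whitney decomposition whose cells are fat and uniformly Lipschitz, so that \eqref{eq:Lp} holds on each cell with a cell-independent constant — and that is the only place where the Lipschitz regularity of $\partial\Omega$ enters.
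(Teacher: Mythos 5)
Your core mechanism is sound, and it is genuinely different from the paper's. The paper smooths $\mu$ \emph{once}, with the Neumann heat semigroup $P_t$, applies \cref{lem:peyre}, and then estimates $\|P_t\mu-\tfrac{\mu(\Omega)}{\rho(\Omega)}\rho\|_{W^{-1,p}}$ by duality plus ultracontractivity, i.e.\ $\int\phi\,(P_t\mu-\mu(\Omega))=\int\phi_t\,d\mu\le\mu(\Omega)\|\phi_t\|_{L^\infty}\les\mu(\Omega)t^{-d/(2q^*)}\|\nabla\phi\|_{L^q}$, and optimizes in $t$. You replace the semigroup by piecewise-constant cell averages and recover the sharp exponent by telescoping across dyadic scales \emph{inside} the $W^{-1,p}$ norm; on a cube your exponent bookkeeping is correct, the zero-mean localization over the cells of $\mathcal P_{k-1}$ is legitimate (via duality and H\"older), and $p>d/(d-1)$ enters exactly where you say. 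One point of your commentary is wrong, though: a single smoothing step \emph{does} produce the exponent $p/d$ — that is precisely the paper's proof. Your "strictly weaker exponent" computation only reflects the crude bound $\|\mu_r-c\|_{W^{-1,p}}\les\|\mu_r\|_{L^p}$, whose right-hand side depends on how concentrated $\mu$ is; the duality/ultracontractivity bound (equivalently, the embedding $L^{pd/(p+d)}\subset W^{-1,p}$ combined with the $L^1\to L^{pd/(p+d)}$ smoothing of $P_t$) depends only on the total mass $\mu(\Omega)$, and then one optimized scale suffices. So the multiscale telescoping is not forced by the problem; it is the price of using only \eqref{eq:Lp} at each scale.

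The genuine gap is the case of a general bounded Lipschitz $\Omega$. Your argument needs a nested hierarchy $\mathcal P_0,\dots,\mathcal P_K$ of partitions such that every cell of $\mathcal P_k$ has diameter $\sim 2^{-k}$, is fat, and has Sobolev/Poincar\'e constants controlled uniformly in the cell and in $k$ (so that \eqref{eq:Lp} holds cell-wise with a uniform constant), together with exact nestedness (so that $\int_{R'}(\mu_k-\mu_{k-1})=0$ on each $R'\in\mathcal P_{k-1}$). \cref{lem:decomp} does not supply this: for a fixed $\delta$ it gives a single partition whose Whitney cells have size comparable to the distance to $\partial\Omega$ rather than a single scale, the boundary pieces in $\cR_\delta$ are only asserted to be measure-fat (property \ref{partition-2}), which is not enough for the Sobolev embedding behind \eqref{eq:Lp}, and nothing in the lemma makes the $\delta$- and $\delta/2$-decompositions nested. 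Building such a uniformly Lipschitz, nested, single-scale family on an arbitrary Lipschitz domain (or reducing to cubes via bi-Lipschitz charts as in \cref{prop:matching-iid}, at the cost of an extra gluing estimate) is plausible but requires a real construction, and it is exactly the technical burden the paper's heat-semigroup proof avoids, since that proof only uses the global Sobolev embedding on $\Omega$. A minor further point: your reduction to $\mu(\Omega)\le h$ and the bound $\delta^p\mu(\Omega)$ tacitly use a normalization such as $|\Omega|=1$ (so $\diam(\Omega)\les1$, $\rho(\Omega)\les1$); as in the paper, state this normalization first and invoke the scale invariance you prove at the end.
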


\begin{proof}
By scaling we may assume that $|\Omega|=1$. Notice that, 
by the trivial bound 
\[
 \W(\mu +h\rho, \alpha \rho)\les \mu(\Omega),
\]
we can assume that $\mu(\Omega) \ll h$. Let $P_t$ be the heat semi-group with null Neumann boundary conditions on $\Omega$ and set $\mu_t=P_t\mu$. 
 By triangle inequality \eqref{eq:triangle} and \eqref{eq:estimCZ}, we have 
\[\begin{split} \W(\mu +h\rho, \alpha \rho)&\les \W(\mu +h\rho, \mu_t+h\rho)+\W(\mu_t+h\rho, \alpha \rho)\\
 &\les \W\bra{ \mu , \mu_t}+\frac{1}{h^{p-1}}\nor{\mu_t- \frac{\mu(\Omega)}{\rho(\Omega)}\rho}^p_{W^{-1,p}}\\
 &\les t^\frac{p}{2} \mu(\Omega) +\frac{1}{h^{p-1}}\nor{ \mu_t- \frac{\mu(\Omega)}{\rho(\Omega)}\rho}^p_{W^{-1,p}}.
\end{split}\]
We now estimate the last term. For this let $q$ be the H\"older conjugate exponent of $p$, i.e., $q=p/(p-1) \in (1, d)$ and $q^* = qd/(d-q)$ be the Sobolev conjugate of $q$. We first use the triangle inequality and the fact that $\rho$ is bounded from above and below to estimate
\[
\begin{split}
 \nor{ \mu_t- \frac{\mu(\Omega)}{\rho(\Omega)}\rho}_{W^{-1,p}} & \le \nor{ \mu_t- \mu(\Omega)}_{W^{-1,p}} + \mu(\Omega)\nor{ 1- \frac{1}{\rho(\Omega)}\rho}_{W^{-1,p}}\\
 & \les \nor{ \mu_t- \mu(\Omega)}_{W^{-1,p}} + \mu(\Omega).
\end{split}
\]
Using that the Sobolev embedding is equivalent to ultra-contractivity i.e. if $\int_\Omega \phi=0$,
\[
 \|\phi_t\|_{L^\infty(\Omega)}\les  t^{-\frac{d}{2q_*}} \|\phi\|_{L^{q^*}(\Omega)}\les t^{-\frac{d}{2q_*}} \|\nabla \phi\|_{L^{q}(\Omega)} ,
\]
we finally estimate for every $\phi$ with $\|\nabla \phi\|_{L^{q}(\Omega)}\le 1$ and $ \int_{\Omega} \phi=0$,
\[
 \int_{\Omega}\phi(\mu_t-\mu(\Omega))= \int_\Omega \phi_t d\mu\le \mu(\Omega) \|\phi_t\|_{L^\infty(\Omega)}\le \mu(\Omega) t^{-\frac{d}{2q_*}}.\]
Therefore, by taking the supremum over $\phi$ we find 
\[
 \nor{ \mu_t- \mu(\Omega)}_{W^{-1,p}}\les \mu(\Omega )t^{-\frac{d}{2q_*}}.
\]
Taking the $p$-th power we find for $t\le 1$,
\[\begin{split}
 \W(\mu +h\rho, \alpha \rho)& \les \mu(\Omega)\lt[t^{\frac{p}{2}}  +t^{-\frac{pd}{2q^*}}\lt(\frac{\mu(\Omega)}{h}\rt)^{p-1}\rt]\\
 & =\mu(\Omega)\lt[t^{\frac{p}{2}}  +t^{-\frac{p}{2}\lt(\frac{d}{q}-1\rt)}\lt(\frac{\mu(\Omega)}{h}\rt)^{p-1}\rt]
\end{split}.\]
Optimizing in $t$ we find  $t^{\frac{p}{2}}=\lt(\frac{\mu(\Omega)}{h}\rt)^{\frac{(p-1)q}{d}}$ which satisfies $t\ll1$ if $\mu(\Omega)\ll h$. Since $(p-1)q=p$, this concludes the proof of \eqref{claim}.
\end{proof}

\begin{remark}
Since by H\"older inequality it will be enough for us to apply \cref{prop:density-helps} for $p$ arbitrarily close to $d$,  the condition  $p>d/(d-1)$ will not be a limitation for us.   Let us  however mention that, in the critical case $p=d/(d-1)$ one can argue similarly, relying instead on the Moser-Trudinger inequality \cite[Remark 1.4]{Cianchi}, to obtain (in the case $\rho=1$ and $\Omega=Q$ a cube for simplicity)
$$ \W_Q\bra{ \mu +\frac{h}{|Q|}, \frac { \mu(Q)}{|Q|} + \frac{h}{|Q|} } \les  |Q|^{1/(d-1)} \mu(Q) \lt|\log \lt(\frac{\mu(Q)}{h}\rt)\rt| \lt(\frac{\mu(Q)}{h}\rt)^{\frac{1}{d-1}}.$$
If instead $1\le p<d/(d-1)$, using the same proof as above but with the inclusion $W^{1,q}(Q) \subseteq L^\infty(Q)$ and letting $t \to 0$ gives the estimate 
$$ \W_Q\bra{ \mu + \frac{h}{|Q|}, \frac { \mu(Q)}{|Q|} + \frac{h}{|Q|} } \les |Q|^{p/d}  \mu(Q)\lt(\frac{    \mu(Q)}{h}\rt)^{p-1}.$$ 
% Let us also point out that the  proof  of \cref{prop:density-helps} extends to the case of  any bounded connected open set with Lipschitz boundary $\Omega$ up to  replacing the smoothing argument  by a convolution with a regularizing kernel such as the heat semi-group.
\end{remark}

We close this section  with the following result easily adapted from \cite[Proposition 2.4]{ambrosio2022quadratic} which helps in particular to reduce the transport problem from H\"older to constant densities.

\begin{proposition}\label{prop:map-heat-semigroup}
For $d\ge 1$, $\alpha \in (0,1)$ and $\rho_0>0$, there exists $C = C(\rho_0, d, \alpha)>0$ such that the following holds: for any $\rho\in C^\alpha( (0,1)^d)$ with
\[ \int_{(0,1)^d} \rho = 1 \quad \text{and} \quad  \rho_0 \le \rho\le \rho_0^{-1},\]
there exists $T: (0,1)^d \to (0,1)^d$ such that $T_{\sharp} \rho = 1$, with
\[ \Lip T, \Lip T^{-1} \le  1 + C \nor{ \rho -1}_{C^\alpha}. \]% \quad \text{and} \quad  T(\partial(0,1)^d) = \partial(0,1)^d.\]
\end{proposition}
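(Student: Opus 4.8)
The plan is to obtain $T$ as the $s\to\oo$ limit of the flow of the velocity field produced by the Neumann heat semigroup on $Q:=(0,1)^d$, following \cite[Proposition 2.4]{ambrosio2022quadratic}. Let $(P_s)_{s\ge 0}$ be the heat semigroup on $Q$ with null Neumann boundary conditions, set $\rho_s:=P_s\rho$ and $u_s:=\rho_s-1=P_s(\rho-1)$. Since $\int_Q\rho=|Q|=1$, the semigroup preserves both the total mass and the mean, the maximum principle gives $\rho_0\le\rho_s\le\rho_0^{-1}$, and $u_s\to 0$ as $s\to\oo$. From $\partial_s\rho_s=\Delta\rho_s=\diver(\nb\rho_s)$ we see that $(\rho_s,v_s)$, with $v_s:=-\nb\log\rho_s=-\nb\rho_s/\rho_s$, solves the continuity equation, and $v_s$ is tangent to $\partial Q$ since $\partial_n\rho_s=0$ there. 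I would then let $X_s$ be the flow of $(v_r)_{r>0}$ issued from the identity at time $0$; by tangency to the boundary it maps $Q$ onto $Q$, and it satisfies $(X_s)_\sharp(\rho\,dx)=\rho_s\,dx$. The estimate below yields $\int_0^\oo\nor{v_s}_{L^\oo}\,ds<\oo$, so that $T:=\lim_{s\to\oo}X_s$ exists, is bi-Lipschitz, maps $Q$ onto $Q$, and $T_\sharp(\rho\,dx)=\lim_{s\to\oo}\rho_s\,dx=dx$; the inverse $T^{-1}$ is the backward flow and obeys the same Lipschitz bounds.

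The core of the argument is the \emph{linear} bound $\int_0^\oo\nor{\nb v_s}_{L^\oo(Q)}\,ds\les_{\rho_0,d,\alpha}\nor{\rho-1}_{C^\alpha}$. Expanding $\nb v_s=-\nb^2\rho_s/\rho_s+(\nb\rho_s\otimes\nb\rho_s)/\rho_s^2$ and using $\rho_s\ge\rho_0$ gives $|\nb v_s|\les_{\rho_0}|\nb^2 u_s|+|\nb u_s|^2$, and likewise $\nor{v_s}_{L^\oo}\les_{\rho_0}\nor{\nb u_s}_{L^\oo}$. For $s\le 1$ I would use the parabolic smoothing estimates for the Neumann semigroup on $Q$ — which follow by writing the Neumann heat kernel as a locally finite sum of reflected Gaussians plus an exponentially small remainder — namely $\nor{\nb P_s g}_{L^\oo}\les s^{(\alpha-1)/2}[g]_{C^\alpha}$, $\nor{\nb^2 P_s g}_{L^\oo}\les s^{\alpha/2-1}[g]_{C^\alpha}$ and $\nor{\nb P_s g}_{L^\oo}\les s^{-1/2}\nor{g}_{L^\oo}$, applied with $g=\rho-1$; for $s\ge 1$ the spectral gap of the Neumann Laplacian on $Q$ gives $\nor{\nb P_s g}_{L^\oo}+\nor{\nb^2 P_s g}_{L^\oo}\les e^{-cs}\nor{g}_{C^\alpha}$. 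Crucially $\nor{\rho-1}_{L^\oo}\les_{\rho_0}1$, so in the quadratic term one trades one factor, $|\nb u_s|^2\les s^{(\alpha-1)/2}[\rho]_{C^\alpha}\cdot s^{-1/2}\nor{\rho-1}_{L^\oo}\les_{\rho_0}s^{\alpha/2-1}[\rho]_{C^\alpha}$; since $\alpha>0$ all the resulting powers $s^{\alpha/2-1}$ are integrable near $0$, and together with the exponential decay for $s\ge 1$ this gives $\int_0^\oo\nor{\nb v_s}_{L^\oo}\,ds\les_{\rho_0,d,\alpha}\nor{\rho-1}_{C^\alpha}$ (and the same bound for $\int_0^\oo\nor{v_s}_{L^\oo}\,ds$).

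From the variational equation $\partial_s\nb X_s=(\nb v_s)(X_s)\,\nb X_s$, and its analogue for the inverse flow, Gr\"onwall's lemma gives $\Lip X_s,\Lip X_s^{-1}\le\exp\big(\int_0^s\nor{\nb v_r}_{L^\oo}\,dr\big)$, hence $\Lip T,\Lip T^{-1}\le\exp(C\nor{\rho-1}_{C^\alpha})$ with $C=C(\rho_0,d,\alpha)$. For $\nor{\rho-1}_{C^\alpha}$ below the threshold $1/C$ — which is the only case needed in our applications, where one first rescales to a sufficiently small cube, on which the renormalized density is $C^\alpha$-close to $1$ — the elementary inequality $e^t\le 1+3t$ on $[0,1]$ upgrades this to $\Lip T,\Lip T^{-1}\le 1+3C\nor{\rho-1}_{C^\alpha}$, which is the claim.

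The step I expect to be the main obstacle is making the small-time parabolic (Schauder-type) estimates rigorous up to the boundary of the cube, whose corners prevent a direct appeal to standard smooth-boundary Schauder theory; the remedy is the explicit reflected-Gaussian form of the Neumann heat kernel on $Q$, for which the stated gradient and Hessian bounds are elementary. A secondary subtlety is that $(v_s)_s$ is only $L^1$ in time with values in Lipschitz vector fields — its Lipschitz norm blowing up like $s^{\alpha/2-1}$ as $s\to 0^+$ — so the classical Cauchy--Lipschitz theorem does not apply verbatim at $s=0$; however the bound $\int_0^1\nor{\nb v_s}_{L^\oo}\,ds<\oo$ is exactly the Osgood-type condition ensuring existence, uniqueness and bi-Lipschitz regularity of the flow, together with the quantitative Gr\"onwall bound used above.
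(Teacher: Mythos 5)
Your construction is the same one the paper relies on: the paper gives no proof of \cref{prop:map-heat-semigroup} but simply adapts \cite[Proposition 2.4]{ambrosio2022quadratic}, whose argument is exactly the Neumann heat-semigroup (Dacorogna--Moser type) flow you describe --- the field $v_s=-\nabla\rho_s/\rho_s$ solving the continuity equation, tangency to $\partial Q$ from the Neumann condition, the reflected-Gaussian kernel bounds $\|\nabla P_s g\|_\infty\lesssim s^{(\alpha-1)/2}[g]_{C^\alpha}$, $\|\nabla^2 P_s g\|_\infty\lesssim s^{\alpha/2-1}[g]_{C^\alpha}$, $\|\nabla P_s g\|_\infty\lesssim s^{-1/2}\|g\|_\infty$, the spectral gap for $s\ge1$, and Gr\"onwall along the flow. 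Your handling of the two technical points you single out (Schauder-type estimates up to the corners via the reflected kernel, and the flow of a vector field whose Lipschitz constant is only $L^1$ in time near $s=0$) is correct, as is the key trade $|\nabla u_s|^2\lesssim_{\rho_0}s^{\alpha/2-1}[\rho]_{C^\alpha}$ using $\|\rho-1\|_\infty\lesssim_{\rho_0}1$, which is what makes the bound on $\int_0^\infty\|\nabla v_s\|_\infty\,ds$ linear in $\|\rho-1\|_{C^\alpha}$.

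The one point where your write-up falls short of the statement as printed is the final linearization. Gr\"onwall yields $\Lip T,\ \Lip T^{-1}\le\exp\bra{C\|\rho-1\|_{C^\alpha}}$, and you pass to $1+3C\|\rho-1\|_{C^\alpha}$ only under the smallness assumption $\|\rho-1\|_{C^\alpha}\le 1/C$, which the proposition does not make: only $\rho_0\le\rho\le\rho_0^{-1}$ is assumed, so the H\"older seminorm can be arbitrarily large and $\exp(Cx)\le 1+C'x$ then fails, with no obvious repair inside this method. In the context of the paper the restriction is invisible: in the H\"older-on-a-cube step the proposition is applied after rescaling to cubes of side $r$, where $\|\rho^r-1\|_{C^\alpha}\lesssim r^\alpha$ is small, and in \cref{prop:matching-iid} only a bound $\Lip T\le C(\rho)$ is used, which the exponential form already gives. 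So your argument proves the proposition in every regime in which the paper uses it, but to prove it verbatim you should either state the bound in the exponential form or add the smallness threshold explicitly.
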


\subsection{A subadditivity lemma}

We will need a slight variant of the usual convergence results for subadditive functions, see e.g.\ \cite{steele1997probability, BoutMar}.

\begin{lemma}\label{lem:sub}
Let $\alpha, \beta, c>0$, $f: [1, \infty) \to [0, \infty)$ be continuous and such that the following holds: for every $\eta\in (0,1/2]$, there exists $C(\eta)>0$ such that, for every $m \in \N\setminus\cur{0}$ and $L \ge C(\eta)$,
\begin{equation}\label{eq:monotonicity} f(m L) \le f( L (1-\eta)) + c \eta^{\alpha} + C(\eta)L^{-\beta}.\end{equation}
Then $\lim_{L\to \infty} f(L) \in [0, \infty)$ exists.
\end{lemma}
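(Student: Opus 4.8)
The plan is to adapt the classical Fekete-type argument to the ``approximate'' subadditivity relation \eqref{eq:monotonicity}, which differs from the usual one in two ways: the contraction factor $(1-\eta)$ inside $f$ on the right-hand side, and the additive error $c\eta^\alpha + C(\eta) L^{-\beta}$ that can be made small only after first fixing $\eta$ and then taking $L$ large. First I would set $\ell = \liminf_{L\to\infty} f(L)$ and $L^* = \limsup_{L\to\infty} f(L)$; since $f \ge 0$ we have $\ell \in [0,\infty)$, and the first order of business is to show $L^* < \infty$, after which the goal becomes $L^* \le \ell$. For the finiteness, fix $\eta = 1/2$: for $L \ge C(1/2)$ and every $m \ge 1$ we get $f(mL) \le f(L/2) + c 2^{-\alpha} + C(1/2) L^{-\beta}$. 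Iterating this along a geometric sequence of scales (or just taking $L$ in a bounded window and using that any large number is within a factor $m$ of something in $[C(1/2), 2C(1/2)]$, say) bounds $f$ on a sequence tending to infinity; combined with continuity of $f$ on the compact windows this gives a uniform bound, hence $L^* < \infty$.

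The heart of the argument is the reverse inequality $L^* \le \ell$. Fix $\varepsilon > 0$. Choose $\eta \in (0,1/2]$ small enough that $c\eta^\alpha < \varepsilon$. Next pick $L_0 \ge C(\eta)$ with $f(L_0) \le \ell + \varepsilon$ (possible since $L_0$ can be taken along a sequence realizing the liminf, enlarging it if necessary so that $L_0 \ge C(\eta)$ — note that moving $L_0$ to a nearby value only costs a bit by continuity, so more carefully I would first fix $\eta$, hence $C(\eta)$, and then select $L_0 \ge C(\eta)$ from the liminf-realizing sequence directly). Now I want to reach \emph{every} sufficiently large $L$ from $L_0$. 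The relation \eqref{eq:monotonicity} with base point $L_0/(1-\eta)$ — which is still $\ge C(\eta)$ if $L_0$ was chosen slightly larger — lets us write, for any integer $m$, $f\big( m \tfrac{L_0}{1-\eta}\big) \le f(L_0) + c\eta^\alpha + C(\eta)(L_0/(1-\eta))^{-\beta} \le \ell + 2\varepsilon + C(\eta) L_0^{-\beta}$. This controls $f$ on the arithmetic-type progression $\{ m L_0/(1-\eta) : m \in \N\}$. To fill the gaps between consecutive points of this progression, I would apply \eqref{eq:monotonicity} once more with a base length $L$ ranging over the interval $[L_0/(1-\eta),\, L_0/(1-\eta)^2]$: for such $L$ and suitable $m$, the product $mL$ sweeps out all large reals, and the right-hand side is $f(L(1-\eta)) + c\eta^\alpha + C(\eta) L^{-\beta}$, where $L(1-\eta)$ lies in a fixed compact set on which $f$ is bounded (by continuity) — but that bound is not obviously close to $\ell$.

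So the cleaner route, and the one I would actually carry out, is the standard two-step ``divide then remainder'' scheme: given large $L$, write $L = m L_0' + r$ with $L_0' := L_0/(1-\eta)$, $m = \lfloor L/L_0' \rfloor$, and remainder term absorbed by choosing the base length in \eqref{eq:monotonicity} to be $L/m$ rather than exactly $L_0'$; since $L/m \to L_0'$ as $L \to \infty$ and $f$ is continuous, $f\big((L/m)(1-\eta)\big) \to f(L_0' (1-\eta)) = f(L_0) \le \ell + \varepsilon$, while $m \ge 1$ and $L/m \ge C(\eta)$ for $L$ large. Applying \eqref{eq:monotonicity} with this base gives $f(L) \le f\big((L/m)(1-\eta)\big) + c\eta^\alpha + C(\eta)(L/m)^{-\beta} \le \ell + \varepsilon + \big(f((L/m)(1-\eta)) - f(L_0)\big) + \varepsilon + C(\eta)(L/m)^{-\beta}$, and letting $L \to \infty$ the bracket and the last term vanish, yielding $\limsup_{L\to\infty} f(L) \le \ell + 2\varepsilon$. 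Since $\varepsilon$ was arbitrary, $L^* \le \ell$, hence $L^* = \ell$ and the limit exists and is finite. The main obstacle is precisely this gap-filling step: one must use the freedom to vary the base length slightly (exploiting continuity of $f$) so that the contraction factor $(1-\eta)$ lands the argument exactly back on a value of $f$ that is close to the liminf, rather than on an uncontrolled point; getting the bookkeeping of ``fix $\eta$, then $C(\eta)$, then $L_0$, then let $L\to\infty$'' in the right order is the only subtlety.
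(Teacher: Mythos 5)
Your strategy is essentially the paper's: first uniform boundedness via $\eta=1/2$ together with the fact that integer dilates of a fixed window cover a half-line, then $\limsup\le\liminf$ by applying \eqref{eq:monotonicity} with a base length whose contraction by $(1-\eta)$ lands (approximately) on a point $L_0$ where $f(L_0)\le \liminf_{L\to\infty} f(L)+\varepsilon$. The paper implements the second step by thickening $L_0$ to an interval $(a,b)$ via continuity and using that $\bigcup_{m}\bigl(ma/(1-\eta),mb/(1-\eta)\bigr)$ contains a half-line; your choice $m=\lfloor L/L_0'\rfloor$ with $L_0'=L_0/(1-\eta)$ and base length $L/m\to L_0'$ is the same mechanism written pointwise, so structurally you are on the paper's route.

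There is, however, one false claim in your closing estimate: you assert that, letting $L\to\infty$, ``the bracket and the last term vanish.'' The bracket does, by continuity of $f$ at $L_0$, but the last term $C(\eta)(L/m)^{-\beta}$ does not: precisely because $L/m$ converges to the fixed number $L_0'$, this term converges to $C(\eta)(L_0')^{-\beta}>0$, which is not small unless $L_0$ was taken large relative to $\eta$. The repair is exactly the paper's extra threshold $L_\varepsilon$: after fixing $\eta$ with $c\eta^{\alpha}\le\varepsilon$ (hence fixing $C(\eta)$), choose $L_0$ from the liminf-realizing sequence so large that also $C(\eta)L_0^{-\beta}\le\varepsilon$; then your estimate closes with $\limsup_{L\to\infty}f(L)\le\liminf_{L\to\infty}f(L)+3\varepsilon$, which suffices. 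Since you already carry the term $C(\eta)L_0^{-\beta}$ in your preliminary paragraph and your stated order of choices ($\eta$, then $C(\eta)$, then $L_0$) permits taking $L_0$ arbitrarily large, this is a one-line fix rather than a structural gap --- but as written the final step does not go through.
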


\begin{proof}
We use the following fact: for any open interval $(a,b) \subseteq [0, \infty)$, the union $$\bigcup_{m=1} ^\infty (ma, mb) \supseteq (A, +\infty)$$
contains a half-line, for some $A>0$. Indeed, one has $(ma, mb) \cap ((m+1)a, (m+1)b)\neq \emptyset$ if $mb > (m+1)a$, which holds for every $m> a/(b-a)$. 

First, we show that $f$ is uniformly bounded. Let $\eta = 1/2$ and use the fact that both $f(L)$ and $L^{-\beta}$ are continuous for $L \in [1/2,2]$, hence bounded, so that by \eqref{eq:monotonicity}, for every  $m \ge 1$, $L \in [1,2]$,
\[ f(m L) \le \sup_{\ell \in [1,2]} \bra{f(\ell/2) + c2^{-\alpha} + C(1/2) \ell^{-\beta}} < \infty.\]
since $\bigcup_{m= 1}^\infty [m, 2m] = [1, \infty)$, it follows that $f$ is uniformly bounded on $[1, \infty)$. To show that the limit exists (and is finite) we argue that
\[ \limsup_{L \to \infty} f(L)  \le \liminf_{L \to \infty} f(L).\]
Given $\eps\ll 1$, let $\eta  = \eta(\eps) \in (0,1/2]$ such that $c\eta^{\alpha}=  \eps$ and $L_{\eps}>0$ such that $C(\eta) L_{\eps}^{-\beta}=\eps$, so that, for every $L \ge L_\eps$,
$$ C(\eta) L^{-\beta} \le  \eps.$$
Let then $L^* > \max \cur{ L_\eps, C(\eta)}$ be such that
\[ f(L^*) < \liminf_{L \to \infty} f(L) + \eps.\]
By continuity of $f$, there exists $a < L^* <b$ with  $a>\max \cur{ L_\eps, C(\eta)}$ such that the same inequality holds for $L \in (a,b)$. For every $m \ge 1$, and $L \in (a/(1-\eta), b/(1-\eta))$, we have $L \ge \max\cur{L_\eps, C(\eta)}$ and $L(1-\eta) \in (a,b)$, hence using  \eqref{eq:monotonicity} we obtain
\[ f(m L) \le f(L (1-\eta)) + c \eta^{\alpha} + C(\eta) L^{-\beta}  \le  \liminf_{L \to \infty} f(L) + 3 \eps .\]
Using that $\cup_{m =1}^\infty (ma/(1-\eta), mb/(1-\eta))$ contains a half-line $(A,+\infty)$, it follows that
$$ \limsup_{L\to \infty}   f(L) \le \liminf_{L \to \infty} f(L) + 3 \eps,$$
and the thesis follows letting $\eps \to 0$.
\end{proof}
\subsection{Concentration inequalities}

We close this section by recalling some  standard concentration inequalities. Let us start with a general definition.
\begin{definition}\label{def:concen}
 We say that a random variable $X$ with $\EE\sqa{X}=h$ satisfies (algebraic) concentration if for every $q\ge 1$ there exists $C(q)\in (0, \infty)$ such that 
 \[
  \EE\sqa{|X-h|^q}\le C(q) |h|^{\frac{q}{2}}.
 \]

\end{definition}
We then have 

\begin{lemma} Poisson, binomial and hypergeometric random variables satisfy concentration. More precisely, if :
\begin{itemize}
\item[i)] 
 $N$ is a Poisson random variable with parameter $n \ge 1$ then, 
% \begin{equation}\label{eq:Cramer}
%  \PP[|N-n|\ge t]\le 2\exp\lt(-\frac{t^2}{2(t+n)}\rt).
% \end{equation}
%As a consequence,
 for every $q\ge 1$,
\begin{equation}\label{eq:momentPoi}
 \EE\lt[|N-n|^q\rt]\les_q n^{\frac{q}{2}}.
\end{equation}
Hence, for every $\gamma \in (0,1)$, 
\begin{equation}\label{eq:density-bound-below-Poi}
\PP \bra{  N < \gamma n \quad \text{or} \quad N > (1+\gamma) n} % \les \exp\bra{ - cn} 
\les_{q,\gamma} (1-\gamma)^{-2q} n^{-q}.
\end{equation}
\item[ii)]  $B$ is a binomial random variable  with parameters $n$ and $p\in (0,1)$ (so that $\EE\sqa{B} = np$) then, for every $q \ge 1$,
\begin{equation}\label{eq:binomial-concentration} 
\EE\lt[|B-np|^q\rt]\les_q n^{\frac{q}{2}}.
\end{equation}
\item[iii)] $H$  is  a hypergeometric random variables counting the number of red marbles extracted in $z$ draws without replacement from an urn containing  $u$ marbles, $r$ of which are red (so that $\EE\sqa{H} = z r/u$) then, for every $q \ge 1$,
\begin{equation}\label{eq:concentration-hyper} \EE\sqa{  \abs{ H - zr/u}^q } \les_q r^{\frac{q}{2}}.\end{equation}
\end{itemize}

\end{lemma}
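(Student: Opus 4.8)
The plan is to reduce all three bounds to moment estimates for sums of independent (or negatively dependent) bounded random variables, the basic tool being Rosenthal's inequality \cite{rosenthal1970subspaces}. First I would record a reduction to even exponents: for a random variable $X$ with $\EE[X]=h$ and $q\ge 1$, picking the smallest even integer $2m\ge q$, Lyapunov's inequality gives $\EE[|X-h|^q]^{1/q}\le \EE[(X-h)^{2m}]^{1/(2m)}$, so in each case it suffices to bound the even central moment $\EE[(X-h)^{2m}]$ by $C(m)|h|^m$, using in addition that $|h|$ is bounded below by a positive constant (which holds since $n\ge 1$, resp.\ since the means are comparable to the relevant integer parameter).

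For the binomial, write $B=\sum_{i=1}^n \xi_i$ with $\xi_i$ i.i.d.\ Bernoulli$(p)$, so that $B-np=\sum_{i=1}^n(\xi_i-p)$ is a sum of independent centered variables bounded by $1$, and Rosenthal's inequality gives, for even $2m$,
\[
\EE\bra{(B-np)^{2m}}\les_m \bra{\sum_{i=1}^n \EE\sqa{(\xi_i-p)^2}}^m+\sum_{i=1}^n\EE\sqa{|\xi_i-p|^{2m}}\les_m \bra{np(1-p)}^m + np(1-p)\les_m n^m,
\]
since $p(1-p)\le 1/4$ and $n\le n^m$, which together with the reduction step yields \eqref{eq:binomial-concentration}. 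For the Poisson, one can either observe that all cumulants of $N$ equal $n$, so that each even central moment is a polynomial in $n$ of degree $m$ and hence $\les_m n^m$ for $n\ge 1$, or split $N=N_1+N_2$ with $N_1\sim\mathrm{Poisson}(\lfloor n\rfloor)$ a sum of $\lfloor n\rfloor$ i.i.d.\ unit-rate Poisson variables (handled by Rosenthal as above) and $N_2\sim\mathrm{Poisson}(\{n\})$ with all moments $\les 1 \les n^{q/2}$, concluding by the triangle inequality in $L^q$; this gives \eqref{eq:momentPoi}. The tail bound \eqref{eq:density-bound-below-Poi} then follows from Markov's inequality applied with exponent $2q$: since $\cur{N<\gamma n}\cup\cur{N>(1+\gamma)n}\subseteq \cur{|N-n|\ge \min(1-\gamma,\gamma)\, n}$, we get $\PP(\cdots)\le (\min(1-\gamma,\gamma)\, n)^{-2q}\,\EE[|N-n|^{2q}]\les_q (\min(1-\gamma,\gamma))^{-2q} n^{-q}\les_{q,\gamma}(1-\gamma)^{-2q} n^{-q}$.

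For the hypergeometric, the crucial point — and the step I expect to be the main obstacle — is that a naive comparison of $H$ with a $\mathrm{Binomial}(z,r/u)$ variable would only give the bound $z^{q/2}$, which is too weak when $z\gg r$. Instead I would exploit the symmetry of the hypergeometric law in $r$ and $z$: from $\PP(H=k)=\binom{r}{k}\binom{u-r}{z-k}/\binom{u}{z}$, which is symmetric under $r\leftrightarrow z$, the variable $H$ has the same law as the number of red marbles drawn in $r$ draws without replacement from an urn of $u$ marbles of which $z$ are red. Applying the classical comparison between sampling without and with replacement to the convex function $t\mapsto |t-rz/u|^{2m}$ bounds $\EE[|H-rz/u|^{2m}]$ by the even central moment of a $\mathrm{Binomial}(r,z/u)$ variable, which by the binomial case is $\les_m r^m$; equivalently, one writes $H=\sum_{j\in R}X_j$ over the $r$ red marbles with $(X_j)$ negatively associated indicators of mean $z/u$ and invokes the Rosenthal-type moment inequality valid for negatively associated sequences. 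With the reduction step this yields \eqref{eq:concentration-hyper}.
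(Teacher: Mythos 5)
Your proposal is correct, and for the only part the paper actually proves — the hypergeometric bound iii) — it takes a genuinely different route. The paper dismisses i) and ii) as classical (your Rosenthal/cumulant arguments and the Markov-inequality deduction of \eqref{eq:density-bound-below-Poi} are standard and fine), and for iii) it invokes the exponential tail inequality of Hush and Scovel, $\PP(|H-\EE[H]|\ge\lambda)\le 2\exp(-\alpha\lambda^2)$ with $\alpha=\min\{1/(z+1)+1/(u-z+1),\,1/(r+1)+1/(u-r+1)\}\ge 1/(r+1)$, and then integrates the tail to get $\EE[|H-\EE[H]|^q]\lesssim_q 1+\alpha^{-q/2}\lesssim_q r^{q/2}$ for $r\ge 1$. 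You instead exploit the distributional symmetry $r\leftrightarrow z$ of the hypergeometric law and then Hoeffding's convex-ordering comparison between sampling without and with replacement (or, equivalently, a Rosenthal-type inequality for the negatively associated indicators indexed by the red marbles), reducing the even central moments to those of a $\mathrm{Binomial}(r,z/u)$ and hence to case ii). Both arguments hinge on exactly the same point you correctly identified as the crux — obtaining $r^{q/2}$ rather than the naive $z^{q/2}$ — and indeed the paper's lower bound $\alpha\gtrsim 1/(r+1)$ encodes the same symmetry that you use at the level of distributions. Your route is more self-contained (modulo Rosenthal/Hoeffding, both classical), while the paper's is shorter once the cited concentration result is granted; the degenerate case $r=0$, which the paper excludes explicitly since then $H=\EE[H]=0$, is also harmless in your argument and worth a one-line remark.
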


\begin{proof}
We only prove concentration in the  hypergeometric case, since it is classical for both Poisson and binomial random variables. We may assume that $r \ge 1$, otherwise there is nothing to prove since $H = \EE\sqa{H} = 0$. From \cite[Theorem 1]{hush2005concentration}, we have, for $\lambda \ge 2$,
$$ \PP\bra{  \abs{ H - \EE\sqa{ H}} \ge \lambda } \le 2 \exp\bra{ -\alpha \lambda^2 },$$
where
$$ \alpha = \min\cur{ \frac{1}{z+1} + \frac{1}{u-z+1},  \frac{1}{r+1} + \frac{1}{u-r+1} } \ge \frac{u+2 }{(r+1)(u-r+1)} \ge \frac{1}{r+1}.$$
As usual, writing 
$$ \EE\sqa{ \abs{ H - \EE\sqa{ H}}^q } = \int_0^\infty \PP\bra{  \abs{ H - \EE\sqa{ H}} \ge \lambda } p\lambda^{p-1} d \lambda, $$
yields the  bound
$$ \EE\sqa{  \abs{ H - \EE\sqa{ H}}^q  } \les_q 1+ \alpha^{-\frac{q}{2}} \les_q 1 + (r+1)^{\frac{q}{2}},$$
which  is bounded from above by $r^{q/2}$, since $r \ge 1$.
\end{proof}

\section{Combinatorial optimization problems over bipartite graphs}\label{sec:cop}
\subsection{Graphs}
Although we are interested in random combinatorial optimization over Euclidean bipartite graphs, it is useful to recall some general terminology.
A (finite, undirected) graph $G = (V, E)$ is defined by a finite set $V = V_G$ of vertices (or nodes) and a set of edges $E = E_G$, which is a collection of unordered pairs $e = \cur{x,y} \subseteq V$ with $x \neq y$.
A graph $G'$ is a subgraph of $G$ and we write $G' \subseteq G$, if $V_{G'} \subseteq V_G$ and $E_{G'}\subseteq E_G$. The induced subgraph over a subset of vertices $V' \subseteq V_G$ is defined as the subgraph $G'$ with $V_{G'} = V'$ and all the edges from $E_G$ connecting vertices in $V'$. It will be useful to denote by $\emptyset$ the empty graph, i.e., $V = \emptyset$, $E = \emptyset$, which is a subgraph of any graph $G$.% and $w'$ is the restriction of $w$ to $E'$. 

Given a vertex $x \in V$, its neighborhood in $G$ is the set 
\[ \mathcal{N}_G(x) = \cur{ y \in V\, : \cur{x,y} \in E}.\]
The degree of $x$ in $G$, $\deg_G(x)$, is the number of elements in $\mathcal{N}_G(x)$. Given $\kappa \in \mathbb{N}$, a graph $G$ is $\kappa$-regular if $\deg_G(x) = \kappa$ for every $x \in V_G$. We say that a subgraph $G' \subseteq G$ spans $V_G$ if $V_{G'} = V_G$ and $\cN_{G'}(x) \neq \emptyset$ for ever $x \in V_{G'}$. We say that two subgraphs $G_1$, $G_2$ of $G$ are disjoint if $V_{G_1} \cap V_{G_2} =\emptyset$. A graph $G$ is connected if it cannot be decomposed as the union of two disjoint subgraphs $G = G_1 \cup G_2$, i.e., $V_G = V_{G_1} \cup V_{G_2}$ with both $V_{G_1}$, $V_{G_2} \neq \emptyset$,  $V_{G_1} \cap V_{G_2} = \emptyset$ and $E_{G}  =E_{G_1} \cup E_{G_2}$. Given $\kappa \in \mathbb{N}$, $\kappa \ge 1$, we say that a graph $G$ is $\kappa$-connected if any subgraph $G' \subseteq G$ obtained by removing from $G$  $(\kappa-1)$-edges is still connected. A cycle is a connected $2$-regular graph, a tree is a connected graph which contains no cycles as subgraphs.%For our purposes, it is useful to slightly extend this notion by defining a $G$ is connected if $G$ cannot be decomposed as the union $G = G_1 \cup G_2$ of disjoint graphs with both $E_{G_1}$, $E_{G_2} \neq \emptyset$. In particular, isolated points $x \in G$ such that $\cN_G(x) = \emptyset$ are allowed.

Given two graphs $G_1$, $G_2$ and an injective function  $\sigma: V_{G_1} \to V_{G_2}$, we let $\sigma(E_1) = \cur{ \cur{\sigma(x), \sigma(y)} \, : \,  \cur{x,y} \in E_{G_1}}$. If $\sigma(E_1) \subseteq E_2$, then we say that $G_1$ embeds into $G_2$ via $\sigma$. If $\sigma$ is bijective and $\sigma(E_1) = E_2$, then we say that $G_1$ is isomorphic to $G_2$ via $\sigma$.
%We say that two graphs $G_1$, $G_2$ are isomorphic if there exists a bijection $\sigma: V_{G_1} \to V_{G_2}$ such that $E_{G_2} = \cur{ \cur{\sigma(x), \sigma(y)} \, : \,  \cur{x,y} \in E_{G_1}}$. We say that $G_1$ embeds in $G_2

A graph $G$ is complete if $E_G$ consists of all the pairs $\cur{x,y} \subseteq V$ with $x \neq y$. The complete graph over $V = [n]$ is commonly denoted by $\cK_n$. Any complete graph $G$ with $n$ vertices is isomorphic to $\cK_n$.  We say that the  graph $G$ is bipartite over a partition $V = X \cup Y$ (i.e., $X \cap Y = \emptyset$), if every $e\in E$ can be written as $e= \cur{x,y}$ with $x\in X$, $y \in Y$. A graph is complete bipartite if it is bipartite over a partition $V = X \cup Y$ and every pair $\cur{x,y}$ with $x\in X$, $y \in Y$ is an edge. For any $n, m \in \mathbb{N}$, any two complete bipartite graphs with $X$ having $n$ elements and $Y$ having $m$ elements are isomorphic. To fix a representative, we define $\cK_{n,m}$ as the complete bipartite graph over the vertex set $V = [n]_1 \cup [m]_2$.
%
% $n, m \in \mathbb{N}$ the complete bipartite graph  $\cK_{n,m}$ is defined e
%
%$X = \cur{1, \ldots, n}$, $Y = \cur{n+1, \ldots, n+m}$ is denoted by $\cK_{n,m}$.  

%
%
% Given \x = (x_i)_{i=1}^n$
% A graph is complete bipartite over a partition $V = X \cup Y$ if the edges are all the pairs $\cur{x,y}$ with $x\in X$, $y \in Y$. 
 
% The complete bipartite graph over $X = \cur{1, \ldots, n}$, 

%To define combinatorial optimization problems on a graph $G$,
We introduce a weight function on edges  $w: E \to [0, \infty)$, $w(e) = w(x,y)$. The total weight of $G$ is then
\[  w(G) = \sum_{e \in E} w(e).\]
A subgraph $G' \subseteq G$ of a weighted graph is always intended with the restriction of $w$ on $E'$. Notice that for the empty graph $\emptyset \subseteq G$ we have $w(\emptyset) = 0$.

%
%A weighted graph $G=(V, E, w)$ is further endowed with a weight function on edges $w: E \to [0, \infty)$, $w(e) = w(x,y)$. We define total weight of  a weighted graph $G$ as
%$$ w(G) = \sum_{e \in E} w(e).$$ 
We are interested in geometric realizations of graphs, where vertices are in correspondence with points in a metric space $(\Omega, \dist)$,  and the weight function is a power of the distance between the corresponding points, with a fixed exponent $p>0$. Since we consider only complete and complete bipartite graphs, we introduce the following notation. Given $\x = (x_i)_{i=1}^n \subseteq \Omega$, we let $\cK(\x)$ be the complete graph $\cK_n$ endowed with the weight function $w(i,j) =  \dist(x_i, x_j)^p$. Similarly, given $\x = (x_i)_{i=1}^n$, $\y = (y_j)_{j=1}^m \subseteq \Omega$, we let $\cK(\x, \y)$ denote the complete bipartite graph $\cK_{n,m}$ endowed with the weight function $w((1,i), (2,j)) = \dist(x_i, y_j)^p$. Notice that the points in $\x$ and $\y$ may not be all distinct, but this will in fact occur with probability zero. If all the points are distinct, then we can and will identify the vertex set directly with the set of points $\x$ for $\cK(\x)$, and with the set of points in $\x \cup \y$ for $\cK(\x, \y)$. With this convention, if $\x = \x^0 \cup \x^1$, $\y = \y ^0 \cup \y^1$, then both $\cK(\x^0, \y^0)$ and $\cK(\x^1, \y^1)$ are naturally seen as subgraphs of $\cK(\x, \y)$. 

%
% In most cases, $v$ is injective so we often directly identify $V$ with a subset of $\Omega$. For example, in the case of a complete graph $K_n$, given a family $\x = \cur{x_i}_{i=1}^n$, we have $w(i,j) = \dist(x_i, x_j)$ and we write directly $K(\x)$. For the complete bipartite graph $K_{n,m}$ given two families $\x = \cur{x_i}_{i=1}^n$, $\y = \cur{y_j}_{j=1}^m$, we have $w(i,n+j) = \dist(x_i, y_j)^p$ and write directly $K(\x, \y)$. 
\subsection{Combinatorial problems}
A combinatorial optimization problem $\mathsf{P}$ on weighted graphs  is informally defined by prescribing, for every graph $G$, a set of subgraphs $G' \subseteq G$, also called feasible solutions $\mathcal{F}_{G}$, and, after introducing a weight $w$, by minimizing $w(G')$ over all $G'\in \mathcal{F}_G$. 

Our aim is to study problems on random geometric realizations of complete bipartite graphs $\cK_{n,n}$, thus it is sufficient to define a combinatorial optimization problem over complete bipartite graphs as a collection of feasible solutions $\mathsf{P} = ( \mathcal{F}_{n,n})_{ n \in \mathbb{N}}$, with $\cF_{n,n}$ being the feasible solutions on $\cK_{n,n}$. We will mostly consider problems $\pP$ that satisfy the following assumptions:
\begin{enumerate}[label=A\arabic*,  series=A]
\item \label{as:isomorphism} \emph{(isomorphism)} if $\sigma$ is any isomorphism of $\cK_{n,n}$ into itself and $G \in \cF_{n,n}$, then $\sigma(G) = (\sigma(V_G), \sigma(E_G)) \in \cF_{n,n}$;
\item \label{as:spanning} \emph{(spanning)} for every $n \in \mathbb{N}$, $\cF_{n,n}$ is not empty and there exists $\cspan >0$ such that, for $n < \cspan$, $\cF_{n,n} = \cur{ \emptyset}$ while for $n \ge \cspan$, every $G \in \cF_{n,n}$ spans $\cK_{n,n}$;\
\item \label{as:bddegree} \emph{(bounded degree)} there exists $\cbdeg >0$ such that, for every  $n \in \N$ and every feasible solution $G \in \cF_{n,n}$, one has $\deg_G(x) \le \cbdeg$ for every $x \in G$.
\end{enumerate}

Given $\mathsf{P} = ( \mathcal{F}_{n,n})_{n \in \mathbb{N}}$, we canonically extend it to graphs $\cK_{n,m}$, with $n \neq m$, defining $\cF_{n,m}$ as the collection of all graphs $\sigma(G)$ where $G\in \cF_{z,z}$, $z = \min\cur{n,m}$ and $\sigma$ is an isomorphism of $\cK_{n,m}$ into itself.

In the geometric setting, i.e., when $\cK_{n,m}$ is mapped into $\cK(\x,\y)$ with $\x = (x_i)_{i=1}^n$, $\y = (y_j)_{j=1}^m\subseteq \Omega$, with $(\Omega, \dist)$ metric space, we introduce the following notation for the cost of a problem $\pP$:
$$ \CPp (\x, \y) = \min_{G \in \cF_{n,m}} \sum_{\cur{(1,i),(2,j)}\in E_G} \dist(x_i,y_j)^p .$$
Recalling the definition of $\cF_{n,m}$ if $n \neq m$, we also have the identity
\begin{equation}\label{eq:min-subgraphs}  \CPp (\x, \y) =  \min_{\substack{\x' \subseteq \x,  \y' \subseteq \y \\ |\x'| = |\y'| = \min\cur{|\x|, |\y|}}} \CPp (\x', \y').\end{equation}

\begin{remark}
Assumption \ref{as:spanning} ensures that, if $\min\cur{|\x|, |\y|} < \cspan$,  then $\CPp(\x, \y) = 0$.
\end{remark}

\begin{remark}
If $(\Omega', \dist')$ is a metric space and $S: \Omega \to \Omega'$ is Lipschitz, i.e., for some constant $\Lip S$ one has $\dist'(S(x), S(y)) \le (\Lip S) \dist(x, y)$ for every $x$, $y\in \Omega$, then writing $S(\x) = (S(x_i))_{i=1}^n$, $S(\y) = (S(y_j))_{j=1}^m$, we clearly have the inequality
\begin{equation}\label{eq:lipschitz-bound-deterministic} \CPp(  S(\x), S(\y) ) \le (\Lip S)^p \CPp(  \x, \y ).
\end{equation}
\end{remark}

\begin{remark}
Similar definitions and assumptions may be given in the non-bipartite case, thus defining combinatorial optimization problems $\mathsf{P} = ( \mathcal{F}_{n})_{n \in \mathbb{N}}$ over complete graphs, as a collection of feasible solutions $\mathcal{F}_{n}$ over the complete graph $\cK_{n}$. %Similarly, we write in the geometric setting
%$$ \CPp (\x) = \min_{G \in \cF_{n}} \sum_{\cur{x,y}\in E_G} \dist(x,y)^p.$$
\end{remark}

\subsection{Examples}

Let us introduce some fundamental examples of these problems. 

\subsubsection*{Assignment problem} 
The minimum weight bipartite matching problem, also called assignment problem, is defined letting $\cF_{n,n}$ be the set of perfect matchings in $\cK_{n,n}$, i.e., spanning subgraphs induced by a collection of edges which have no vertex in common (if $n=0$ we simply let $\cF_{n,n} = \cur{\emptyset}$). Feasible solutions are in correspondence with permutations $\sigma$ over $[n]$, letting 
$$ E_{\sigma} = \cur{ \cur{(1,i), (2,\sigma(i))} \, : \, i \in [n] }.$$
When $n \neq m$, e.g.\ $n \le m$, the same correspondence holds with the set of injective maps $\sigma:[n] \to [m]$. Therefore, given a weight $w$ on $\cK_{n,m}$, the cost of the assignment problem is
\[\min_{\sigma} \sum_{i=1}^n w\bra{ (1,i), (2,\sigma(i))}.\]
 In the geometric case, i.e., on the weighted graph $\cK(\x, \y)$ with $\x = (x_i)_{i=1}^n$, $\y=(y_j)_{j=1}^m \subseteq \Omega$ and $w\bra{ (1,i), (2,j)}=\dist(x_i, y_{j} )^p$, this expression becomes 
\[ \Mp(\x, \y) = \min_{\sigma} \sum_{i=1}^n \dist(x_i, y_{\sigma(i)} )^p.\]
If $n>m$, then one simply exchanges the roles of $n$ and $m$.
%The non-bipartite version of the minimum weight matching problem can be also defined, but will not be relevant in what follows.

\begin{remark}
If $n=m$, Birkhoff's theorem ensures equivalence between the bipartite matching problem and the optimal transport between the associated empirical measures $\mu^\x = \sum_{i=1}^n \delta_{x_i}$, $\mu^\y = \sum_{j=1}^n \delta_{y_j}$, i.e.,
\begin{equation}\label{eq:wass-equals-matching} \Mp(\x, \y) = \W( \mu^\x, \nu^\y). \end{equation}
Therefore, using the triangle inequality \eqref{eq:triangle}, we can bound from above as follows:
\begin{equation}
\label{eq:matching-below-wasserstein}
\Mp(\x, \y) \les \W\bra{ \mu^\x,  n \lambda} + \W\bra{ \mu^\y,  n \lambda},
\end{equation}
for every probability measure $\lambda$ on $\R^d$.
\end{remark}

\subsubsection*{Travelling salesperson problem}
The travelling salesperson problem (TSP) is usually defined on a general graph by prescribing as feasible solutions the cycles visiting each vertex exactly once (also called Hamiltonian cycles). In the complete bipartite case $\cK_{n,n}$, such cycles exist for every $n \ge 2$, and assumptions \ref{as:isomorphism},  \ref{as:spanning} and \ref{as:bddegree} are also clearly satisfied (letting $\cF_{n,n} = \cur{\emptyset}$ if $n\in \cur{0,1}$). Similarly as in the case of the assignment problem, feasible solutions are in this case in correspondence with pairs of permutations $\sigma$, $\tau$ over $[n]$, letting 
%: \cur{1, \ldots, n} \to \cur{1, \ldots, n}$, $\tau: \cur{1, \ldots, n} \to \cur{1, \ldots, m}$, letting
\begin{equation}\label{eq:e-sigma-tau} E_{\sigma, \tau} = \cur{ \cur{(1,\sigma(i)), (2,\tau(i))}, \cur{(1,\sigma(i), (2,\tau(i+1)) )} \, : \, i \in \cur{1, \ldots, n}},\end{equation}
where we conventionally let $\tau(n+1) = \tau(1)$ (we will always use summation $\operatorname{mod} n$ in such cases). In words, $\sigma$ and $\tau$ prescribe the order at which the vertices are visited by the cycle. When $n \neq m$, e.g.\ $n \le m$, the same correspondence holds with injective maps $\sigma$, $\tau$ from $[n]$ into $[m]$.

%clearly do not exist if $n\neq m$, thus we  slightly relax this condition by defining $\cF_{n,m}$ as the set of cycles visiting $2\min\cur{n,m}$ vertices exactly once (and not visiting the remaining $n+m-2\min\cur{n,m}$ vertices). 
%
%A

Therefore, given a weight $w$ on $\cK_{n,m}$, the cost of the TSP reads
\[\min_{\sigma, \tau} \sum_{i=1}^n w\bra{(1,\sigma(i)), (2,\tau(i))} + w\bra{(1,\sigma(i)), (2,\tau(i+1)) }.\]
 In the geometric case, i.e., on the weighted graph $\cK(\x, \y)$ with $\x = (x_i)_{i=1}^n$, $\y=(y_j)_{j=1}^m \subseteq \Omega$, this becomes 
\[ \C_{\mathsf{TSP}}^p(\x, \y) = \min_{\sigma, \tau} \sum_{i=1}^n \dist(x_{\sigma(i)}, y_{\tau(i)} )^p + \dist(x_{\sigma(i)},  y_{\tau(i+1)} )^p.\]
%which reduces to \eqref{eq:tsp-cost-euclidean} in the Euclidean case.
 If $n>m$, then one simply exchanges the roles of $n$ and $m$. %  Assumptions \ref{as:isomorphism} and \ref{as:bddegree} (with $\alpha=2$) are satisfied. 

The non-bipartite version of the TSP , i.e., on $\cK_n$, feasible solutions to the TSP are in correspondence with permutations $\sigma$ over $[n]$, letting 
$$ E_{\sigma} =  \cur{ \cur{\sigma(i), \sigma(i+1)} \, : \, i \in [n]}.$$
In the geometric case $\x = (x_i)_{i=1}^n \subseteq \Omega$, it becomes
\[ \C_{\mathsf{TSP}}^p(\x) =  \min_{\sigma} \sum_{i=1}^n \dist(x_{\sigma(i)}, x_{\sigma(i+1)} )^p. \]
% It follows that, if $G$ is any graph isomorphic to $\cK_{n,m}$, then 
%
%This allows to  $\mathcal{F}_G$ for a general complete bipartite graph $G$, isomorphic to some $K_{n,m}$, as the set of subgraphs of $G$ that correspond  to those in $\mathcal{F}_{n,m}$ via the isomorphism.

%Given any graph $G$ isometric to some $K_{n,m}$, we then define $\mathcal{F}_G$ via the isometry.

%restricted families such as complete bipartite graphs $K_{n,m}$, for $n$, $m \in \mathbb{N}$, also avoiding any set-theoretic issue. We then

\subsubsection*{Connected $\kappa$-factor problem}
The TSP can be generalized in many directions. For example, since a cycle is a connected graph such that every vertex has degree $2$, i.e., it is $2$-regular, we may instead define as feasible solutions $\kappa$-regular spanning connected subgraphs, for a fixed $\kappa \in \mathbb{N}$, $\kappa \ge 2$. This defines a non-empty set of feasible solutions $\cF_{n,n}$ over $\cK_{n,n}$ if $n \ge \kappa$ (otherwise we let $\cF_{n,n} = \cur{\emptyset}$) and assumptions \ref{as:isomorphism},  \ref{as:spanning} and \ref{as:bddegree} are easily seen to be satisfied. 
%
% thus we define $\cF_{n,m}$ as the set of $\alpha$-regular coverings of exactly $2\min\cur{n,m}$ vertices (and not covering the remaining $n+m-2\min\cur{n,m}$ vertices).
We refer to such problem as the (minimum weight) connected $\kappa$-factor problem. A  simpler variant is to require that feasible solutions are  $\kappa$-regular but not necessarily connected: this is simply known as (minimum weight) $\kappa$-factor problem. Let us notice that, for $\kappa=1$, this reduces to the assignment problem.  %The minimum weight $\alpha$-factor problem is equivalently formulated, without requiring the subgraph to be connected. 

Back to the the connected $\kappa$-factor problem, a simple fact worth noticing, that we will use below, is that any connected $\kappa$-regular bipartite graph $G$ is $2$-connected, i.e., it remains connected even after removing a single edge. Assume that $V_G = X \cup Y$, with $X \cap Y = \emptyset$  and by contradiction let $x\in X$, $y \in Y$ be such that $\cur{x,y} \in E_G$ and the subgraph $G' \subseteq G$ with edge set $E_{G'} = E_G \setminus \cur{x,y}$ is not connected: there are two disjoint subgraphs $G_1'$, $G_2'$ with  $x \in V_{G_1'}$, $y \in V_{G_2'}$ with $G' = G_1' \cup G_2'$. All the vertices in $G_1'$ have degree $\kappa$, except for $x$, whose degree is $\kappa-1$. However, if we let $n_X = |V_{G_1'} \cap X|$, $n_Y = |V_{G_1'} \cap Y|$, then using the fact that the graph $G_1'$ is bipartite we can count the number of edges as the sum of the degrees of the vertices in  $V_{G_1'} \cap X$ or equivalently of those in $V_{G_1'} \cap Y$, which leads to the identity $\kappa n_X - 1 = \kappa n_Y$, from which $\kappa(n_X - n_Y ) = 1$, which gives a contradiction.
 
%\footnote{ As a further variant we may require instead, keeping the graph connected, that the each vertex degree must be in a range $[\alpha_1, \c_2]$. This in the case of $\alpha=2$ would give to a Hamiltonian path instead of a cycle (since any cycle can be transformed into a path by removing an edge). \emph{A questa non ho ancora pensato}}

\subsubsection*{$\kappa$-bounded degree minimum spanning tree}

The minimum weight spanning trees (MST) problem is defined by letting feasible solutions be all spanning subgraphs that are trees, i.e.,  connect and acyclic, whose existence on any given connected graph is guaranteed by standard algorithms. This problem however may not have uniformly bounded degree, thus assumption \ref{as:bddegree} may not be satisfied. Therefore, we restrict the set of feasible solutions to spanning trees over $\cK_{n,n}$ such that that each vertex degree is less than or equal to some fixed $\kappa \ge 2$ (letting $\cF_{0,0}=\cur{\emptyset}$). This problem, known as the $\kappa$-bounded degree minimum spanning tree ($\kappa$-MST), satisfies assumptions \ref{as:isomorphism}, \ref{as:spanning} and \ref{as:bddegree}: notice in particular that removing any edge from a Hamiltonian cycle, i.e., a feasible solution for the TSP, gives a $2$-bounded degree minimum spanning tree. % hence the set of feasible solutions $\cF_{n,n}$ is non-empty for every $n \ge 1$ (otherwise we let $\cF_{n,n} = \cur{\emptyset}$).

We remark here that the $\kappa$-MST problem may be also directly defined over graphs $\cK_{n,m}$, with $n \neq m$, with a non trivial set of feasible solutions (provided that $|n-m|$ is not too large). However, also in this case we follow our the general convention, so that if $n \neq m$, the set $\cF_{n,m}$ does not contain spanning trees of $\cK_{n,m}$ but only spanning trees over subgraphs isomorphic to $\cK_{z,z}$ with $z = \min\cur{n,m}$.

A simple fact that we will use below is that any $G \in \cF_{n,n}$ contains at least one leaf (i.e., a vertex with degree $1$) in $[n]_1$ and one in $[n]_2$. This is because more generally any spanning tree over $\cK_{n,n}$ contains at least one leaf in $[n]_1$ and one in $[n]_2$.  Indeed, assume by contradiction that there are no leaves in $[n]_1$. Then, since the tree spans, all the vertices in $[n]_1$ must have degree at least $2$ (the graph is connected, hence every vertex has at least degree $1$) and since no edges connect pairs of vertices in $[n]_1$, these are all distinct, hence the tree contains at least $2n$ edges, which contradicts the well-known fact that any tree (not necessarily bipartite) over $2n$ vertices must have $2n-1$ edges. 

%Clearly, the same argument works if $n \ge m$, leading to at least $n-m+1$ leaves in $[n]_1$. In particular, if $n=m$, there is at least one leaf in $[n]_1$ and one in $[m]_2$.

In order to perform our analysis, we introduce two further assumptions that we discuss in the following subsections.

% arguing by induction, it is enough to show that there exists at least one leaf in $[m]_2$ (removing such a leaf will then give, up to isomorphism, a spanning tree on $\cK_{n,m-1}$). Assume by contradiction that there is no such leaf. Then, every vertex in

% \footnote{ https://math.stackexchange.com/questions/329133/prove-trees-has-a-leaf-in-set-of-vertices-coloured-black-or-set-of-vertices-col}

\subsection{Local merging}

Our analysis relies on a key subadditivity inequality, that ultimately follows by a stability assumption with respect to local merging operations, besides assumptions \ref{as:isomorphism} and \ref{as:bddegree}. Let us give the following general definition.

\begin{definition}[gluing] Given a graph $G$ and two disjoint subgraphs $G_1$, $G_2 \subseteq G$, we say that $G' \subseteq G$ is obtained by gluing at $x_1 \in V_{G_1}$, $x_2 \in V_{G_2}$ if $V_{G'} = V_{G_1}\cup V_{G_2}$, 
\[ (E_{G_1} \cup E_{G_2}) \setminus E_{G'} \subseteq \cN_{G_1}(x_1) \cup \cN_{G_2}(x_2)\]
and 
\[ E_{G'} \setminus (E_{G_1} \cup E_{G_2}) \subseteq \cur{ \cur{x_1, y} : y \in \cN_{G_2}(x_2)} \cup \cur{ \cur{x_2, y} : y \in \cN_{G_2}(x_1)}.\] 
\end{definition}

In words, gluing at $x_1$, $x_2$ means that the two subgraphs are joined by (possibly) removing and adding edges connecting $x_2$ to vertices from the neighborhood of $x_1$ in $G_1$, and similarly $x_1$ to vertices from the the neighborhood of $x_2$ in $G_2$. In particular, we have that $\mathcal{N}_{G'}(x) = \mathcal{N}_{G_1}(x)$ for every $x\in V_{G_1} \setminus \bra{ \cN_{G_1}(x_1)\cup \cur{x_1}}$, and similarly $\mathcal{N}_{G'}(x) = \mathcal{N}_{G_2}(x)$ for every $x\in V_{G_2} \setminus \bra{ \cN_{G_2}(x_2)\cup \cur{x_2}}$.

Back to combinatorial optimization problems over bipartite graphs, our assumption is, loosely speaking, that any two (non empty) feasible solutions $G \in \cF_{n,n}$,  $G' \in \cF_{n',n'}$, can be glued together yielding a feasible solution $G \in \cF_{n+n', n+n'}$. In fact,  we also allow adding up to $\mathsf{c}$ edges, but only connecting vertices of $G$, where $\mathsf{c} \in \mathbb{N}$ is a constant (depending only on the problem $\pP$).  Before  giving a precise formulation of the assumption, we notice that $G$ and $G'$ are in general not disjoint: what we mean is that $G'$ must be suitably ``translated''. Precisely, given $n \in \mathbb{N}$, we introduce the map
$$ \tau_n: V_{G'} \to [n+n']_1 \cup [n+n']_2$$
defined as
$$ \tau\bra{ (1,i)} = (1, n+i), \quad \tau\bra{ (2,j)} = (2,n+j),$$
%n+n_0+m_0+i & \text{if $n < i\le 2n$.}\end{cases}.$$
so that $G$, $\tau( G') \subseteq \cK_{n+n', n+n'}$ are disjoint.

% To cover more concrete examples, in fact, we can be more precise and require only that $n_2 = m_2$ and that, after gluing, one can possibly add (or remove) a bounded number of edges, but only from the vertices 

We consider therefore combinatorial optimization problems $\pP$ over bipartite graphs which satisfy the following assumption:
\begin{enumerate}[label=A\arabic*,  series=A, resume]
\item\label{ass:local-merging} \emph{(local merging)} there exists $\cmerge \ge 0$ such that, for every $n$, $n' \in \mathbb{N}$, and $G\in \mathcal{F}_{n,n}$, $G'\in \mathcal{F}_{n',n'}$ with both $G\neq \emptyset$ and $G' \neq \emptyset$, one can find $G'' \in \mathcal{F}_{n+n', n+n'}$ obtained by gluing $G$ and $\tau (G')$ at the vertices $(1,1)$, $(1,n+1)$
%\[ \text{ if $n' \le m'$, \quad  $(2,1)$, $(2,n+1)$ if $n'> m'$.}\]
and possibly adding up to $\cmerge$ edges from those of $\cK_{n,n}$.
%
%Then, for every $x_0\in V_{G_0}$, $x'_1 \in V_{G'_1}$, there exists  $G \in \mathcal{F}_{n_0+n, m_0+n}$ obtained by gluing $G_0$ and $G_1'$ at $x_0$, $x'_1$ and then (possibly) adding up to $\alpha$ edges of the form $\cur{n_0+i, n_0+n+m_0+j}$, $i, j \in \cur{1, \ldots, n}$.
%
%If $m_0 \le n_0$, then we require the same condition exchanging the roles of $n$ and $m$...
\end{enumerate}

%Notice that this assumption 
%\begin{definition}\label{def:stable-cop}
%
%\end{definition}

The reason why we also allow  up to $\cmerge$ additional edges is to include some problems where connectedness may be destroyed by gluing, such as the $\kappa$-MST. This should be compared  with the \emph{merging} assumption \cite[(A4)]{BaBo}, where a bounded number of edges from  the whole $\cK_{n+n', n+n'}$ instead is allowed to be added to the union $G \cup \tau(G')$ (with our notation). Notice however that, in our case, since the extra edges are from $\cK_{n,n}$ it remains true that
\begin{equation}\label{eq:neighbour-same} \text{ $\mathcal{N}_{G''}(x) = \mathcal{N}_{\tau (G')}(x)$, for every $x\in V_{\tau (G')} \setminus \bra{ \cN_{\tau (G')}((1,n+1))\cup \cur{(1, n+1)}}$,}\end{equation}
which is a key condition that we use below.
%and similarly if $n'>m'$. %We finally notice that, since we require assumption \ref{as:isomorphism} to hold (as we do), feasible solutions are invariant with respect to graph isomorphisms, we may as well require it only for gluing at $1 \in V_{G_0}$, $n+1 \in V_{G'_1}$.

All the problems described in the previous section satisfy \ref{ass:local-merging}.

\begin{lemma}
The TSP, the connected $\kappa$-factor problem (as well as the non connected one) and the $\kappa$-MST over complete bipartite graphs satisfy assumption \ref{ass:local-merging}.
\end{lemma}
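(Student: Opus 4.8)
The plan is to verify assumption \ref{ass:local-merging} for each of the three families separately, using the specific combinatorial structure of the feasible solutions. In all cases we take two nonempty feasible solutions $G \in \cF_{n,n}$, $G' \in \cF_{n',n'}$, replace $G'$ by its translate $\tau(G')$ so that $G$ and $\tau(G')$ are disjoint subgraphs of $\cK_{n+n',n+n'}$, and describe a gluing at the prescribed vertices $x_1 = (1,1)$ and $x_2 = (1,n+1)$ that produces a feasible solution $G'' \in \cF_{n+n',n+n'}$, adding at most a bounded number $\cmerge$ of edges all taken from $\cK_{n,n}$.

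For the TSP: $G$ is a Hamiltonian cycle on $\cK_{n,n}$ and $\tau(G')$ a Hamiltonian cycle on the translated copy. The vertex $x_1 = (1,1)$ has two neighbours $y, y'$ in $G$, say $y=(2,a)$, $y'=(2,b)$; similarly $x_2 = (1,n+1)$ has two neighbours $z, z'$ in $\tau(G')$. Remove the edge $\{x_1, y'\}$ from $G$ and the edge $\{x_2, z'\}$ from $\tau(G')$, obtaining two Hamiltonian paths; then add the two edges $\{x_1, z'\}$ and $\{x_2, y'\}$ to splice the paths into a single Hamiltonian cycle on all $n+n'$ vertices on each side. This is exactly a gluing at $x_1,x_2$ in the sense of the definition (edges removed are incident to $x_1$ in $G_1$ and $x_2$ in $G_2$; edges added connect $x_1$ to a neighbour of $x_2$ and $x_2$ to a neighbour of $x_1$), and no extra edges are needed, so $\cmerge = 0$ works. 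Degree and isomorphism closure are immediate.

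For the connected $\kappa$-factor problem: here $G$, $\tau(G')$ are $\kappa$-regular connected bipartite graphs. Using the $2$-connectedness observed in the excerpt, pick a neighbour $y$ of $x_1$ in $G$ such that $G$ minus $\{x_1,y\}$ is still connected, and similarly a neighbour $z$ of $x_2$ in $\tau(G')$ with $\tau(G')$ minus $\{x_2,z\}$ still connected; then add $\{x_1,z\}$ and $\{x_2,y\}$. The resulting graph is $\kappa$-regular (each of $x_1,x_2,y,z$ loses one edge and gains one), spans $\cK_{n+n',n+n'}$, and is connected because each original graph remains connected after the single edge deletion and the two new edges join the two pieces; again $\cmerge=0$. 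The non-connected $\kappa$-factor case is even easier: take the disjoint union $G \cup \tau(G')$ directly, which is already $\kappa$-regular, so no gluing modification is needed at all.

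For the $\kappa$-MST: $G$ and $\tau(G')$ are spanning trees of bounded degree $\le \kappa$. Using the fact recalled in the excerpt that a spanning tree of $\cK_{n,n}$ has a leaf in $[n]_1$, choose a leaf $\ell$ in $[n']_1$ of $\tau(G')$ and add the single edge joining $\ell$ to some vertex of $G$ on the opposite side: this connects the two trees into one spanning tree of $\cK_{n+n',n+n'}$, still acyclic (we added exactly one edge between two components), and the only degree that increased is that of one vertex of $G$, raising it from $\le\kappa$ to $\le\kappa+1$ — so strictly speaking one must be slightly more careful. The clean fix is to instead glue at the prescribed vertices $(1,1)$, $(1,n+1)$: delete an edge incident to $(1,n+1)$ in $\tau(G')$ to detach a leaf or reduce a degree, reroute it to $(1,1)$, and if connectedness of $\tau(G')$ is destroyed add up to $\cmerge$ edges within $\cK_{n,n}$ to repair it. Since a tree minus one edge has exactly two components, a bounded $\cmerge$ (in fact $\cmerge = 1$, using a leaf of one component) suffices, and the degree bound is preserved because the rerouted edge replaces a deleted one at $(1,1)$ after first removing one of its incident edges.

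The main obstacle is the $\kappa$-MST case: unlike the TSP and the $\kappa$-factor, a naive gluing can both violate the degree constraint at a single vertex and, after an edge deletion meant to fix the degree, break connectedness; the delicate point is to organize the deletion/rerouting at the \emph{prescribed} vertices $(1,1)$ and $(1,n+1)$ (rather than at conveniently chosen leaves) while keeping all degrees $\le \kappa$, acyclicity, and connectedness — repairing the last via a uniformly bounded number of $\cK_{n,n}$-edges. The TSP and $\kappa$-factor verifications are routine splicing arguments with $\cmerge = 0$.
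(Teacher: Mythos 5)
Your treatment of the TSP and of the (connected and non-connected) $\kappa$-factor problem is correct and is essentially the paper's argument: remove one edge incident to $(1,1)$ in $G$ and one incident to $(1,n+1)$ in $\tau(G')$, add the two crosswise edges, and use that a cycle, respectively a connected $\kappa$-regular bipartite graph (by $2$-connectedness), stays connected after deleting a single edge; this gives $\cmerge=0$.

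The $\kappa$-MST case, however, contains a genuine gap. Your ``clean fix'' removes an edge $\cur{(1,n+1),y'}$ of $\tau(G')$ and an edge $\cur{(1,1),y}$ of $G$, adds only the single crosswise edge $\cur{(1,1),y'}$, and proposes to restore connectedness of $\tau(G')$ ``by adding up to $\cmerge$ edges within $\cK_{n,n}$.'' This cannot work: edges of $\cK_{n,n}$ join only vertices of $G$ (indices $\le n$ on both sides), so they can never reconnect the two components of $\tau(G')\setminus\cur{(1,n+1),y'}$, and since $(1,n+1)$ receives no new edge in your construction, the component of $(1,n+1)$ stays isolated from the rest no matter how many $\cK_{n,n}$-edges you add; a simple edge count ($2(n+n')-3$ edges plus one extra) confirms the result is a forest with at least two components, hence not a feasible spanning tree. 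Your first attempt (joining a leaf of $\tau(G')$ to an arbitrary vertex of $G$) is also not admissible, independently of the degree issue you noticed, because that edge is neither of the gluing form at $(1,1),(1,n+1)$ nor an edge of $\cK_{n,n}$. The missing idea, which is the paper's, is to perform the full symmetric swap — remove $\cur{(1,1),y}$ and $\cur{(1,n+1),y'}$, add both $\cur{(1,1),y'}$ and $\cur{(1,n+1),y}$ — so that every vertex keeps its degree and the union becomes a spanning forest with exactly two components; one then adds a single extra edge of $\cK_{n,n}$ joining a leaf of $G$ in $[n]_1$ to a leaf of $G$ in $[n]_2$ lying in different components (each component contains a leaf of $G$, and $G$ has leaves on both sides), which restores connectedness, creates no cycle, and raises the degree of those two leaves only to $2\le\kappa$; hence $\cmerge=1$.
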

\begin{proof}
Let $G \in \cF_{n,n}$, $G'\in\cF_{n', n'}$ be both non empty. Then (e.g.\ by assumption \ref{as:spanning}) $\deg_{G}(1,1) \ge 1$ but also $\deg_{\tau (G')}(1, n+1) \ge 1$. The basic idea is to pick $y \in  \cN_{G}(1,1)$, $y' \in \cN_{G'}(1,n+1)$, remove the edges $\cur{(1,1), y}$, $\cur{(1,n+1), y'}$ and add instead $\cur{ (1,1), y'}$, $\cur{(1,n+1), y'}$. This operation does not change the vertex degrees, in particular at $(1,1)$ and $(1,n+1)$.

For the TSP and more generally the connected $\kappa$-factor problem, the resulting graph $G''$ is connected, because after removing a single edge, both graphs $G$ and $\tau (G')$ are still connected, and adding the new edges has the effect of connecting the two graphs (hence in this case $\cmerge = 0$).

For the $\kappa$-bounded degree MST, we use the fact that the tree $G \in \cF_{n,n}$ must have at least one leaf in the set of $[n]_1$ and one in the set $[n]_2$. Therefore, we obtain a connected tree (with degree bounded by $\kappa$) if we add also one edge connecting two such leaves (hence is this case $\cmerge = 1$).
\end{proof}
\subsection{Subadditivity inequality}
Using all the assumptions introduced so far, in particular \ref{ass:local-merging}, we establish a fundamental subadditivity inequality.
%
%\begin{proposition}[Approximate subadditivity]\label{prop:partition}
%Let $\pP$ be a combinatorial optimization problem over bipartite graphs satisfying assumptions \ref{as:isomorphism}, \ref{as:spanning}, \ref{as:bddegree}  and \ref{ass:local-merging}. Then, with $\c = \max\cur{\c_2, \c_3, \c_4}$, the following holds.
%
%For $(\Omega,\dist)$ a metric space and  $K \in \mathbb{N}$ with $K\ge \c$, consider a finite partition $\Omega = \cup_{k=1}^K \Omega_k$. If 
%\begin{itemize}
% \item $\x^0$, $\y^0 \subseteq \Omega$ are such that  $\min\cur{\x^0(\Omega_k), \y^0(\Omega_k)} \ge 1$  for every $k=1, \ldots, K$ (so that in particular $\min\cur{|\x^0|, |\y^0|} \ge  K\ge \c$),
% \item $\x^k$, $\y^k \subseteq \Omega_k$ with $|\x^k| = |\y^k| = n_k$ for every $k=1, \ldots, K$ and  either  $n_k \ge \c$ or $n_k =0$,
%\end{itemize}
%then 
%\begin{equation}\label{eq:sub} \CPp\bra{ \x^0 \cup \bigcup_{k=1}^K \x^k, \y^0 \cup  \bigcup_{k=1}^K \y^k }  - \sum_{k=1}^K \CPp (\x^k, \y^k)  \lesssim  \CPp(\x^0, \y^0)+   \sum_{k=1}^K \frac{\CPp(\x^k, \y^k)}{n_k} +  \diam(\Omega_k)^p,
%\end{equation}
%with the convention that only the terms in the summation such that $n_k>0$ appear, and the implicit constant depends upon $p$ and $\c$, but not on $K$.
%\end{proposition}

 \begin{proposition}[Approximate subadditivity]\label{prop:partition}
 Let $\pP$ be a combinatorial optimization problem over bipartite graphs satisfying assumptions \ref{as:isomorphism}, \ref{as:spanning}, \ref{as:bddegree}  and \ref{ass:local-merging}. %Then, with $\c = \max\cur{\c_2, \c_3, \c_4}$, the following holds.
 
 For  a metric space $(\Omega,\dist)$ and a finite partition $\Omega = \cup_{k=1}^K \Omega_k$,  $K \in \mathbb{N}$, 
\begin{enumerate}[label=\emph{\roman*})]
 \item \label{prop-partition-x0-assumption} let $\x^0$, $\y^0 \subseteq \Omega$ be such that $\min\cur{|\x^0|, |\y^0|} \ge \max\cur{\cspan, K}$,
 %  for every $k=1, \ldots, K$ (so that in particular $\min\cur{|\x^0|, |\y^0|} \ge  K\ge \c$),
 \item  for every $k =1, \ldots, K$, let $\x^k$, $\y^k \subseteq \Omega_k$ with $|\x^k| = |\y^k| =n_k$, with either $n_k \ge \cspan$ or $n_k = 0$ (i.e., both families are empty) ,% for every $k=1, \ldots, K$ and  either  $n_k \ge \c$ or $n_k =0$,
 \item let $\z=(z_k)_{k=1}^K$ with $z_k \in \Omega_k$, for every $k=1,\ldots, K$.
\end{enumerate}
Then, the following inequality holds:

% Let $K \in \mathbb{N}$, $(\Omega,\dist)$ be a metric space, consider any finite partition $\Omega = \cup_{k=1}^K \Omega_k$ and  let $\z=(z_k)_{k=1}^K$ with $z_k \in \Omega_k$ for every $k=1,\ldots, K$. Let $\x^0$, $\y^0 \subseteq \Omega$ with $\min\cur{|\x^0|, |\y^0|} \ge \max\cur{\c, K}$. Let also $\x^k$, $\y^k \subseteq \Omega_k$ with $|\x^k| = |\y^k| = n_k$ for every $k=1, \ldots, K$, with $n_k \ge \c$ or $n_k =0$. Then, 
 %$X = $L>0$, $m \in \mathbb{N}$ and consider a partition of $Q = Q_{mL}$ into $m^d$ cubes $Q_i = Q_L + Lz_i$, with $z_i \in \mathbb{Z}^d$. Let $\x^0$, $\y^0 \subseteq \Omega$ with $|\y^0| \le |\x^0|$ and $\y(Q_i) > 1$ for every $i$, and let $\x^i$, $\y^i \subseteq Q_i$ with $|\x^i| = |\y^i|$. %\footnote{We need to be a bit more careful infact, need $\y$ to be spread around to ensure that the points selected in $\dist(\x^k, \y)$ are attained at different $y$'s when $k$ changes}
 % Then,
 \begin{equation}\label{eq:sub}  \CPp\bra{ \x^0 \cup \bigcup_{k=1}^K \x^k, \y^0 \cup  \bigcup_{k=1}^K \y^k }  - \sum_{k=1}^K \CPp (\x^k, \y^k) 
  \lesssim  \CPp(\x^0, \y^0)+  \Mp(\z, \x^0)+  \sum_{k=1}^K   \diam(\Omega_k)^p.
 \end{equation}
 The implicit constant depends only upon $p$, $\cspan$, $\cbdeg$ and $\cmerge$ (in particular not on $K$).
% with the convention that summations are restricted only to the $k\in \cur{1, \ldots, K}$ such that $\CPp(\x^k, \y^k) > 0$. 
 \end{proposition}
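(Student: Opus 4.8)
The plan is to build a feasible solution for the big problem $\CPp(\x^0 \cup \bigcup_k \x^k, \y^0 \cup \bigcup_k \y^k)$ by taking an optimal solution $G^k$ for each local piece $\CPp(\x^k,\y^k)$, an optimal solution $G^0$ for $\CPp(\x^0,\y^0)$, and then gluing all these together into one feasible solution on $\cK_{N,N}$ (where $N = |\x^0| + \sum_k n_k$), using assumption \ref{ass:local-merging} repeatedly. The cost of the glued solution is at most $\sum_k \CPp(\x^k,\y^k) + \CPp(\x^0,\y^0)$ plus the cost of the newly added "gluing" edges, and the whole point is to control this extra cost by $\Mp(\z,\x^0) + \sum_k \diam(\Omega_k)^p$.

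More precisely, first I would discard the trivial cases: if some $n_k = 0$ the corresponding piece simply does not appear, and the assumption $\min\{|\x^0|,|\y^0|\}\ge\max\{\cspan,K\}$ guarantees $G^0\neq\emptyset$ spans $\cK(\x^0,\y^0)$ (by \ref{as:spanning}), so in particular $G^0$ has at least $K$ distinct vertices in $[|\x^0|]_1$; I would also handle the degenerate case where the left side is smaller than the right (or vice versa) using the identity \eqref{eq:min-subgraphs}, reducing to the balanced situation. Then for each $k$ with $n_k\ge\cspan$, apply \ref{ass:local-merging} to glue $G^k$ onto (the current partial union of) $G^0$ at some vertex $x_i \in \x^0$ and the gluing vertex of $G^k$: by \ref{ass:local-merging} this is done by gluing at the designated first vertices, which by \ref{as:isomorphism} can be relabeled to be any chosen vertices of the two graphs. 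The gluing removes/adds at most $\cbdeg$ edges incident to the two merge points, plus at most $\cmerge$ extra edges inside the already-merged part, so each of the $K$ merges adds $O_{\cbdeg,\cmerge}(1)$ new edges.

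The key estimate is then to bound the Euclidean length (to the power $p$) of these new edges. A new edge created when gluing $G^k$ at the point $x_{i(k)}\in\x^0$ connects either $x_{i(k)}$ to a point of $\x^k\cup\y^k\subseteq\Omega_k$, or the merge vertex of $G^k$ to a neighbor of $x_{i(k)}$ in the partial graph. Using the triangle inequality $|a-b|^p \lesssim_p |a - z_k|^p + |z_k - x_{i(k)}|^p + \ldots$ where $z_k\in\Omega_k$ is the reference point, each such new edge costs $\lesssim_p \diam(\Omega_k)^p + |z_k - x_{i(k)}|^p$ (points inside $\Omega_k$ are within $\diam(\Omega_k)$ of $z_k$; the term $|z_k - x_{i(k)}|^p$ is the "transport" cost of reaching $\x^0$ from the cell center). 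Choosing the assignment $k\mapsto i(k)$ to be (close to) an optimal matching between $\z = (z_k)_k$ and $\x^0$ — which is possible because $|\x^0|\ge K$, so \eqref{eq:min-subgraphs} lets us match all $K$ points $z_k$ injectively into $\x^0$ — the sum $\sum_k |z_k - x_{i(k)}|^p$ is exactly bounded by $\Mp(\z,\x^0)$. Since each merge contributes $O(1)$ edges, summing over $k$ gives the bound $\lesssim \CPp(\x^0,\y^0) + \Mp(\z,\x^0) + \sum_k \diam(\Omega_k)^p$, as claimed; the constant depends only on $p,\cspan,\cbdeg,\cmerge$ and not on $K$.

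The main obstacle I expect is bookkeeping the gluing correctly: \ref{ass:local-merging} is stated for gluing two feasible solutions into one, so I must iterate it $K$ times, and at each step I need to know that (i) the partial solution is still feasible on the appropriate $\cK_{m,m}$, (ii) I can choose which vertex of the big partial graph to glue at — this requires invoking \ref{as:isomorphism} to permute labels, and requires that distinct cells are glued at distinct vertices of $\x^0$ so that the degree at each $x_i\in\x^0$ increases by at most $O(1)$ total (hence stays bounded, as needed for feasibility and to keep \ref{as:bddegree} satisfied), and (iii) the "possibly adding up to $\cmerge$ edges from $\cK_{n,n}$" clause and the neighborhood-preservation property \eqref{eq:neighbour-same} do not interfere across successive merges. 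Condition (ii) is why the hypothesis $|\x^0|\ge K$ is needed, and is also where the matching $\Mp(\z,\x^0)$ naturally enters. A careful induction on $K$, merging one cell at a time while tracking which vertices of $\x^0$ have already been used as merge points, should make all of this rigorous.
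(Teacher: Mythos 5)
Your strategy is essentially the paper's: glue the local optimizers $G_k$ onto an optimizer $G_0$ of $\CPp(\x^0,\y^0)$ one cell at a time via \ref{as:isomorphism} and \ref{ass:local-merging}, choose the merge points in $\x^0$ through an optimal matching $\sigma$ with $\z$, control the new edges by the triangle inequality through $z_k$, and use bipartiteness plus distinctness of the merge points to keep neighborhoods and degrees under control along the induction. Two points, however, need repair, and the first is precisely the bookkeeping you flagged as an obstacle but did not resolve.

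First, the orientation of the gluing in \ref{ass:local-merging} matters: the up-to-$\cmerge$ extra edges are allowed only among the vertices of the \emph{first} block $\cK_{n,n}$. You place them ``inside the already-merged part''. With that orientation each such edge may join two arbitrary points of $\Omega$, so its cost can be as large as $\diam(\Omega)^p$; summed over the $K$ merges this gives a term of order $K\,\cmerge\,\diam(\Omega)^p$, which is not controlled by the right-hand side of \eqref{eq:sub}. Worse, adding edges inside the already-merged part at every step can enlarge $\cN_{\tilde G_{k-1}}(x)$ at vertices $x\in\x^0$ not yet used for merging, so the identity $\cN_{\tilde G_{k-1}}(x^0_{\sigma(k)})=\cN_{G_0}(x^0_{\sigma(k)})$ --- which you need both to bound the second family of gluing edges and to keep the relevant degrees bounded by $\cbdeg$ --- is no longer automatic. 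The paper applies \ref{ass:local-merging} with the cell graph $G_k$ as the first block and the partial graph $\tilde G_{k-1}$ as the second, so the extra edges lie inside $\Omega_k$ (cost at most $\cmerge\,\diam(\Omega_k)^p$) and \eqref{eq:neighbour-same} preserves the neighborhoods of all untouched vertices of $\x^0$; you should fix this orientation explicitly. Second, your per-edge estimate ``$\les \diam(\Omega_k)^p+|z_k-x_{i(k)}|^p$'' is not correct for the edges joining the cell's merge vertex to the neighbours $y\in\cN_{G_0}(x_{i(k)})$: these require the additional term $\dist(x_{i(k)},y)^p$, and it is only after summing over the (distinct) merge points and their at most $\cbdeg$ neighbours that these contributions are absorbed into a multiple of $\CPp(\x^0,\y^0)$. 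Your final bound does contain $\CPp(\x^0,\y^0)$, but your accounting attributes it only to the weight of $G_0$ itself, so this extra source must be made explicit; once both points are fixed, the argument is the paper's proof.
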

 
\begin{remark}\label{rem:remove-z-points}
The role played by the points $\z$ is quite marginal, and indeed if  $\x^0(\Omega_k) >0$ for every $k$, then by choosing $z_k \in \x^0_{\Omega_k}$, the  term  $\Mp(\z, \x^0)$ vanishes. %If moreover $\y^0(\Omega_k)$ for every $k$, and $K \ge \cspan$, then \ref{prop-partition-x0-assumption} is automatically satisfied.
%Also the fact that some $\x^k$'s and $\y^k$'s could be empty is a minor exception but useful when we apply the result.
\end{remark}

 \begin{proof}
 Recalling  \eqref{eq:min-subgraphs}, up to replacing $\x^0$, $\y^0$ with subsets $\x'$, $\y'$ with $|\x'| = |\y'| = \min\cur{|\x^0|, |\y^0|}$, we may also assume that $|\x^0| = |\y^0|$. For every $k=1, \ldots, K$ let $G_k \subseteq \cK(\x^k, \y^k)$ be a minimizer for $\pP$. If $n_k=0$, then $G_k =\emptyset$. Otherwise, $n_k \ge \cspan$, and by assumption \ref{as:spanning} it is in particular non-empty and using Markov inequality, we can choose $x^k \in \x^k$ such that 
 $$  \sum_{y \in \cN_{G_k} (x^k)} \dist(x^k, y)^p \le \frac{ 4\CPp(\x^k, \y^k)}{|\x^k|}\les \diam(\Omega_k)^p.$$
For the last estimate we used that $\deg_{x^k}(G_k) \le \cbdeg$.
 % At least one such point exists by Markov inequality. Indeed, if $n_k = 1$ there is nothing to prove, since 
 % $$  \sum_{y \in \cN_{G_k} (x^k)} \dist(x^k, y)^p  = \C(\x^k, \y^k).$$
 % If $n_k \ge 2$, then
 % \begin{equation}
 % \begin{split}
 %  & \frac{1}{n_k} \abs{ \cur{ x \in \x^k \, : \,  \sum_{y \in \cN_{G_k} (x)} \dist( x,y)^p> 4 \C(\x^k, \y^k)/|\x^k| }}\\
 %  &\qquad \qquad \le  \frac{ \sum_{x \in \x^k}  \sum_{y \in \cN_{G_k} (x)} \dist( x,y)^p }{ 4 \C(\x^k, \y^k)} = \frac{2 \C(\x^k, \y^k)}{4 \C(\x^k, \y^k)} = \frac 1 2.
 % \end{split}
 % \end{equation}
 % Therefore, there are at least $n_k/2 \ge 1$ points which satisfy the required inequality.
 % %To simplify the notation and without loss of generality, we assume that $\sigma^k(i) = \tau^k(i) = 1$. 
 Similarly, let $G_0\subseteq \cK(\x^0, \y^0)$ be a (also non-empty) minimizer for $\CPp(\x^0, \y^0)$ and let $\sigma :\{1, \ldots, K\}\to \{ 1, \ldots, |\x^0|\}$ be an optimal matching between $\z$ and $\x^0$. 
 
 We iteratively use  assumptions \ref{as:isomorphism} and \ref{ass:local-merging} to define feasible solutions
 $$ \tilde{G}_k \subseteq \cK\bra{ \x^0 \cup \bigcup_{i=1}^{k} \x^i, \y^0 \cup \bigcup_{i=1}^{k} \y^i}.$$
 We begin by letting $\tilde{G}_0 = G_0$. For $k =1, \ldots, K$, having already defined $\tilde{G}_{k-1}$, if $n_k = 0$, then we simply let $\tilde{G}_k = \tilde{G}_{k-1}$. Otherwise, we obtain a feasible solution $\tilde{G}_k$ by gluing $G_{k}$ with $\tilde{G}_{k-1}$ at the vertices $x^k$, $x_{k}^0$ and adding up to $\cmerge$ edges from $\cK(\x^k, \y^k)$. The fact that we can glue at any such pair of vertices is due to assumption \ref{as:isomorphism}: up to isomorphisms we can assume that $x^k$ corresponds to the abstract graph vertex $(1,1)$ and that $x_{k}^0$ to $(1, n_k+1)$.
 %
 %. We, we glue $G_0$ with $G_1$ at $x_{\sigma(1)}$,  $x_1$ and up to adding $\alpha$ edges from $K(\x^1, \y^1)$, we obtain a feasible solution $\tilde{G}_1$ for the problem over $K(\x^0 \cup \x^1, \y^0 \cup \y^1)$. 
 
 This construction gives the following inequality between the graph weights, if $n_k \neq 0$:
 \begin{equation}\label{eq:gluing-error} \begin{split} w( \tilde{G}_k ) - w( \tilde{G}_{k-1}) - w(G_k) \le  & \cmerge \diam(\Omega_k)^p \\
 &  + \sum_{y \in \cN_{G_k}(x^k) } \dist(x_{\sigma(k)}^0, y)^p +  \sum_{y \in \cN_{\tilde{G}_{k-1}}(x_{\sigma(k)}^0) } \dist(x^k, y)^p,
 \end{split} \end{equation}
 while if $n_k = 0$, we simply have $w(\tilde{G}_k) = w(\tilde{G}_{k-1})$. We bound from above the last two terms in \eqref{eq:gluing-error} as follows: first,
 \[\begin{split} \sum_{y \in \cN_{G_k}(x^k) } \dist(x_{\sigma(k)}^0, y)^p & \les \sum_{y \in \cN_{G_k}(x^k) } \dist( x_{\sigma(k)}^0, z_k)^p + \dist( z_k, x^k)^p + \dist(x^k, y)^p\\
 & \les \dist( x_{\sigma(k)}^0, z_k)^p  +  \dist( z_k, x^k)^p +\sum_{y \in \cN_{G_k}(x^k) } \dist(x^k, y)^p \\
 & \les   \dist( x_{\sigma(k)}^0, z_k)^p +  \diam(\Omega_k)^p,
 \end{split}\]
 where we used that $\deg_{x^k}(G_k) \le \cbdeg$. To bound the last term, we notice that each step in the construction we are locally merging at different points in $\x^0$: since no such points are adjacent because the graph is bipartite, using \eqref{eq:neighbour-same}  by induction yields
 %with respect to the previous steps, by definition of gluing and the fact that in $\tilde{G}_{k-1}$ all the additional edges are added from $\cK(\x^i, \y^i)$ with $i=1,\ldots, k-1$, we have
 \[ \cN_{\tilde{G}_{k-1}}(x_{\sigma(k)}^0)=\cN_{G_0}(x_{\sigma(k)}^0),\]
 which in particular contains at most $\cbdeg$ elements, since $G_0$ is feasible. Therefore,
 \[ \begin{split} \sum_{y \in  \cN_{\tilde{G}_{k-1}}(x_{\sigma(k)}^0)} \dist(x^k, y)^p & = \sum_{y \in  \cN_{G_0}(x_{\sigma(k)}^0)} \dist(x^k, y)^p \\
 & \les  \sum_{y \in  \cN_{G_0}(x_{\sigma(k)}^0)} \dist(x^k, z_k)^p +\dist(z_k, x_{\sigma(k)}^0)^p +  \dist(x_{\sigma(k)}^0, y)^p\\
 & \les  \diam(\Omega_k)^p +  \dist(z_k, x_{\sigma(k)}^0)^p + \sum_{y \in  \cN_{G_0}(x^0_{\sigma(k)})}\dist(x_{\sigma(k)}^0, y)^p.\end{split}\]
 Summing \eqref{eq:gluing-error} upon $k=1,\ldots, K$, we obtain \eqref{eq:sub} because
 $$ \sum_{k=1}^K \dist(z_k, x_{\sigma(k)}^0)^p =  \Mp(\z, \x^0)$$
 and, being all the points $x^0_{\sigma(k)}$ different,
 \[ \sum_{k=1}^K \sum_{y \in  \cN_{G_0}(x^0_{\sigma(k)})}\dist(x_{\sigma(k)}^0, y)^p \le \sum_{x\in \x^0} \sum_{y \in  \cN_{G_0}(x)}\dist(x, y)^p   =  \CPp(\x^0, \y^0). \qedhere \] 
 \end{proof}

\subsection{Growth/regularity}

The last assumption that we introduce for a combinatorial optimization problem $\pP$ over bipartite graphs is a general upper bound for the cost when specialized to a geometric graph in the Euclidean cube $(0,1)^d$:
\begin{enumerate}[label=A\arabic*,  series=A, resume]
\item\label{as:growth} (growth/regularity) There exists $\creg \ge 0$ such that, for every $\x, \y \subseteq (0,1)^d$, we have
\begin{equation} \label{eq:upper-bound-deterministic} \CPp(\x, \y) \le \creg \bra{ \min\cur{|\x|^{1-\frac{p}{d}}, |\y|^{1-\frac{p}{d}}}+ \Mp(\x,\y)}.\end{equation}
\end{enumerate}
\begin{remark}\label{remgrowth}
 Notice that if $\Omega\subset (0,1)^d$ then \eqref{eq:upper-bound-deterministic} applies in particular for $\x,\y \subseteq \Omega$. By scaling we obtain that for every bounded set $\Omega$ and every  $\x,\y \subseteq \Omega$,
  \[\CPp(\x, \y) \le \creg \bra{ \diam(\Omega)^p\min\cur{|\x|^{1-\frac{p}{d}}, |\y|^{1-\frac{p}{d}}}+ \Mp(\x,\y)}.\]
\end{remark}

Using \eqref{eq:min-subgraphs}, we obtain at once that in order to establish that a given problem $\pP$ satisfies \eqref{eq:upper-bound-deterministic} it is enough to consider the case where $\x$, $\y \subseteq (0,1)^d$ have the same number of elements.

Notice that this assumption seems slightly different with respect to the previous ones, as it explicitly refers to the cost for Euclidean realizations of the graph, instead of feasible solutions, and relies as well on the assignment problem. In fact, the constant $\creg$ depends upon the problem $\pP$ but also on the dimension $d$ and the exponent $p$, which however will be fixed in our derivations so we avoid to explicitly state it.

It is well known that quite general arguments, such as the space-filling curve heuristics \cite[Chapter 2]{steele1997probability}, lead to an upper bound in terms of $n^{1-p/d}$ for non-bipartite combinatorial optimization problems over $n$ points in a cube, under very mild assumptions, including those introduced above. Simple examples show that similar bounds cannot hold for their bipartite counterparts, which explains the second term in the right-hand side of \eqref{eq:upper-bound-deterministic}. 

To establish it in our examples we follow the strategy from \cite{capelli2018exact}, where  limit results for the random Euclidean bipartite TSP for $p=d=2$ were first obtained. % way to establish such assumption is to bound the bipartite problem using an optimal matching with its corresponding non-bipartite version, e.g., only on the family $\x$ and use the fact that 

%When we specialize to the Euclidean setting, letting e.g., $\Omega = Q \subseteq \R^d$ be a cube, we will use the following general upper bound \cite[Chapter 2]{steele1997probability}, valid for every family $\x \subseteq Q$ and $p>0$,\footnote{in realtà la dimostrazione lì è solo per $p=1$ ma sembra funzionare anche per $p>0$}
%$$ \Tb(\x) \les |Q|^{p/d} |\x|^{1-p/d}.$$
%Combined with \cref{lem:capelli}, this yields a bound for the bipartite problem (assuming $|\x| \le |\y|$):

%
%That we may add as an assumption\footnote{meglio non chiamarla A4, perché è solo nel caso Euclideo}

\begin{lemma}
\label{lem:capelli}
The TSP, the connected $\kappa$-factor problem (as well as the non-connected one) and the $\kappa$-MST problems over complete bipartite graphs satisfy assumption \ref{as:growth} (with a constant $\creg$ depending on $\kappa$, $p$, $d$ only).
\end{lemma}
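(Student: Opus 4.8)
The plan is to follow the strategy of \cite{capelli2018exact}: in each case we exhibit a feasible solution obtained by stitching together an optimal assignment with a cheap non-bipartite tour, and we estimate its cost via the elementary inequality $\dist(a,c)^p\le 2^{p-1}(\dist(a,b)^p+\dist(b,c)^p)$ and its $m$-fold version $\dist(a_0,a_m)^p\le m^{p-1}\sum_{l=0}^{m-1}\dist(a_l,a_{l+1})^p$. By \eqref{eq:min-subgraphs} we may assume $|\x|=|\y|=n$, and by \ref{as:spanning} we may assume $n\ge\cspan$ (otherwise $\CPp(\x,\y)=0$). Fix an optimal assignment $\sigma$, so that $\sum_{i=1}^n\dist(x_i,y_{\sigma(i)})^p=\Mp(\x,\y)$, and a cyclic ordering $\pi$ of $[n]$ realising the non-bipartite TSP tour through $\x$, so that by the space-filling curve bound (see \cite[Chapter 2]{steele1997probability} and the discussion following \eqref{eq:upper-bound-deterministic}) $\sum_{i=1}^n\dist(x_{\pi(i)},x_{\pi(i+1)})^p\les n^{1-\frac{p}{d}}$, indices taken modulo $n$.

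\emph{TSP and $\kappa$-MST.} The $2n$ edges $\{x_{\pi(i)},y_{\sigma(\pi(i))}\}$ and $\{y_{\sigma(\pi(i))},x_{\pi(i+1)}\}$, $i=1,\dots,n$, form a Hamiltonian cycle $G$ of $\cK(\x,\y)$. The matching edges contribute $\Mp(\x,\y)$, while the triangle inequality through $x_{\pi(i)}$ gives $\sum_i\dist(y_{\sigma(\pi(i))},x_{\pi(i+1)})^p\les\Mp(\x,\y)+\sum_i\dist(x_{\pi(i)},x_{\pi(i+1)})^p\les\Mp(\x,\y)+n^{1-\frac{p}{d}}$, whence $\C_{\mathsf{TSP}}^p(\x,\y)\le w(G)\les\Mp(\x,\y)+n^{1-\frac{p}{d}}$. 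Since $\kappa\ge 2$, removing any one edge of $G$ produces a Hamiltonian path, i.e.\ a spanning tree of maximal degree $\le 2\le\kappa$, of cost $\le w(G)$; this gives the bound for the $\kappa$-MST.

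\emph{$\kappa$-factor, connected or not.} For $c=0,1,\dots,\kappa-1$ let $\sigma_c$ be the perfect matching pairing $x_{\pi(i)}$ with $y_{\sigma(\pi(i+c))}$. Since $n\ge\cspan\ge\kappa$, the integers $i,i+1,\dots,i+\kappa-1$ are pairwise distinct modulo $n$, so the $\sigma_c$ are pairwise edge-disjoint and $G=\bigcup_{c=0}^{\kappa-1}\sigma_c$ is a $\kappa$-regular spanning subgraph of $\cK(\x,\y)$. For the connected variant we observe in addition that $\sigma_0\cup\sigma_1$ is a single Hamiltonian cycle: following $\sigma_0$ from $x$ to $y$ and then $\sigma_1$ backwards from $y$ to $x$ sends $x_{\pi(i)}$ to $x_{\pi(i-1)}$, an $n$-cycle on $\x$; hence $G\supseteq\sigma_0\cup\sigma_1$ is connected. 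Finally, expanding $\dist(x_{\pi(i)},y_{\sigma(\pi(i+c))})$ along the tour and applying the $m$-fold inequality with $m=c+1\le\kappa$, the cost of $\sigma_c$ is $\les_\kappa c\sum_i\dist(x_{\pi(i)},x_{\pi(i+1)})^p+\Mp(\x,\y)\les_\kappa n^{1-\frac{p}{d}}+\Mp(\x,\y)$, and summing over $c$ yields $w(G)\les_\kappa n^{1-\frac{p}{d}}+\Mp(\x,\y)$.

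In every case the feasible solution just built has cost bounded as in \eqref{eq:upper-bound-deterministic} with $\creg$ depending only on $\kappa$, $p$, $d$. I expect the only genuinely combinatorial point — the main (mild) obstacle — to be verifying the edge-disjointness and connectedness of the shifted matchings $\{\sigma_c\}$ in the $\kappa$-factor case, which is exactly where the hypothesis $n\ge\kappa$ (ensured by $\cspan\ge\kappa$) enters; the rest is a routine, if slightly tedious, bookkeeping of $p$-triangle inequalities along the reference tour.
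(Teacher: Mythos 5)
Your proof is correct and follows essentially the same route as the paper's: you interleave the optimal assignment with a cheap non-bipartite TSP tour of $\x$ and control the cost of the shifted matchings $\sigma_c$ via $p$-triangle inequalities along the tour, which is exactly the paper's construction $E_G=\{\{(1,\sigma(i)),(2,\rho(\sigma(i+\ell)))\}\}$ yielding $\CPp(\x,\y)\les \C^p_{\mathsf{TSP}}(\x)+\Mp(\x,\y)\les n^{1-p/d}+\Mp(\x,\y)$. The only (immaterial) difference is that you handle the $\kappa$-MST by deleting an edge of the bipartite Hamiltonian cycle, whereas the paper dominates it by the connected $\kappa$-factor cost; both observations already appear in the paper.
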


\begin{proof}
Let us first observe that the cost of the $\kappa$-MST problem is always bounded from above by the cost of the minimum weight connected $\kappa$-factor problem, since given any connected $\kappa$-factor, one can extract from it a MST whose degree at every vertex is then bounded by $\kappa$. Therefore it is sufficient to check that assumption \ref{as:growth} holds with $\pP$ being the connected $\kappa$-factor problem, for any $\kappa \ge 2$ (the case $\kappa=2$ being the TSP).

For $(\Omega, \dist)$ a general metric space and $\x, \y \subseteq \Omega$ we establish first the bound
\begin{equation}\label{eq:c-bip-c-mono} \CPp(\x, \y) \les \C_{\mathsf{TSP}}(\x) + \Mp(\x, \y).\end{equation}
Combining this with the fact that when $(\Omega,\dist)$ is the unit cube $(0,1)^d$ with the Euclidean distance,  $\C_{\mathsf{TSP}}(\x) \les |\x|^ {1-p/d}$ (a well-known fact, proved e.g.\ via space-filling curves) this would conclude the proof of \eqref{eq:upper-bound-deterministic}. \\
Assume without loss of generality that  $|\x| = |\y|  = n \ge \kappa$ and  let $\rho$  be a permutation over $[n]$ that induces an optimal assignment between $\x$ and $\y$.  % and let $G \subseteq \cK_n$ be an optimizer for the non-bipartite version of $\pP$. % For the TSP or more generally the  minimum weight connected $\alpha$-factor problem,
%In the TSP case, the argument is exactly that in \cite{capelli2018exact}, using $\pP'$ the (non bipartite) TSP.
Consider then an optimizer for the TSP over $\cK(\x)$, which we also identify with a permutation $\sigma$ over $[n]$. We then define the feasible solution $G \in \cF_{n,n}$ for the connected $\kappa$-factor problem whose edge set is
$$ E_{G} = \cur{  \cur{ (1, \sigma(i)), (2, \rho(\sigma(i+\ell))} \, : \, i \in [n], \ell \in \cur{0,1,\ldots, \kappa-1}},$$
which generalizes $E_{\sigma, \tau}$ from  \eqref{eq:e-sigma-tau} with $\tau= \rho \circ \sigma$ in the $\kappa=2$ case, and as in \eqref{eq:e-sigma-tau} we use the summation $\operatorname{mod} n$, i.e., $i+ \ell  = i + \ell - n$ if $i+\ell>n$. Clearly, any vertex has degree $\kappa$ and the graph is connected, since $E_{G} \supset E_{\sigma, \tau}$.

%via the edge set $E_{\sigma, \tau}$ in \eqref{eq:e-sigma-tau} for the pair of permutations $\sigma$, $\tau = \rho \circ \sigma$. It follows that
%
%minimum weight connected $\alpha$-factor problem, we simply define $G' \in \cF_{n,n}$ as follows: for every $i \in [n]$ and $j \in \cN_{G}(i)$, we add the edge $\cur{(1,j), (2, \sigma(i))}$ to $G'$. The resulting graph  $\alpha$-regular, since clearly $\deg_{G'}(2, \sigma(i)) =  \deg_{G}(i) = \alpha$ but also $\deg_{G'}(1,j) = \deg_G(j) = \alpha$, and connected, 
%
%It is sufficient to argue in the case $|\y| = |\x| = n$. Let $\sigma\in \mathcal{S}_n$ be a minimizer for $\Tb(\x)$ and let $\sigma'\in \mathcal{S}_n$ be an optimal matching between $\x$ and $\y$, i.e. a minimizer in \eqref{eq:mat}. We let $\sigma$ and $\tau = \sigma' \circ \sigma \in \mathcal{S}_n$ in \eqref{eq:tsp-bipartite}, so that
In follows that
\[
 \CPp(\x, \y)  \le \sum_{i=1}^n \sum_{\ell = 0}^{\alpha-1}  \dist(x_{\sigma(i)}, y_{\rho( \sigma(i+\ell))})^p.\]
Using  the triangle inequality for every $i$ and $\ell$, we bound from above
\[\dist(x_{\sigma(i)}, y_{\rho( \sigma(i+\ell))})^p \les \sum_{j = 0}^{\ell-1} \dist(x_{\sigma(i+j)}, x_{\sigma(i+j-1)})^p +\dist ( x_{\sigma(i+\ell)},  y_{\rho( \sigma(i+\ell))})^p.  \]
 Summation upon $i$ (keeping $\ell$ fixed) gives
 \[  \sum_{j = 0}^{\ell-1} \sum_{i=1}^n \dist(x_{\sigma(i+j)}, x_{\sigma(i+j-1)})^p +  \sum_{i=1}^n \dist ( x_{\sigma(i+\ell)},  y_{\rho( \sigma(i+\ell))})^p 
\les \ell \TSP(\x) + \Mp(\x, \y),\]
hence, after summing upon $\ell = 0, \ldots, \kappa-1$, we obtain \eqref{eq:c-bip-c-mono}.
% In case of the minimum weight connected $\alpha$-factor problem (with $\alpha \ge 3$), we use instead as $\pP'$ the minimum weight connected $(\alpha-1)$-factor problem. Precisely, we define 
% 
\end{proof}

\section{Convergence results for Poisson point processes}\label{sec:poisson}
\subsection{Point processes}

%As in \cite{DoYu95,BoutMar,BaBo}, we exploit invariance properties of Poisson point processes on $\R^d$ with uniform intensity in order to obtain simpler subadditivity estimates.
%We refer e.g.~to \cite{last2017lectures} for a general introduction to Poisson point processes.
We define a point process on $\R^d$ as a random finite family of points $\cN = (X_i)_{i=1}^N \subseteq \R^d$, i.e. a  $N$-uple of random variables with values in $\R^d$, where the total number of points $N$ is also random and a.s.\ finite (if $N=0$, then $\cN = \emptyset$). %Very often the order among the points is irrelevant to the description of law the process, i.e., the variables are exchangeable, but we keep this convention to clarify some constructions. 
We extend the notation for families of points to point processes (naturally defined for each realization of the random variables): for a process $\cN = \bra{X_i}_{i=1}^N$, write $\mu^\cN := \sum_{i=1}^N \delta_{X_i}$ and, given a Borel $\Omega \subseteq \R^d$, let $\cN(\Omega) = \mu^{\cN}(\Omega)$ be the (random) number of variables belonging to $\Omega$, while $\cN_{\Omega}$ denotes its restriction to $\Omega$, i.e., the collection of the variables such that $X_i \in \Omega$ (naturally re-indexed over $i=1, \ldots, \cN(\Omega)$, with the order inherited from the original process). Given two point processes $\cN = (X_i)_{i=1}^N$, $\cM = (Y_j)_{j=1}^M$, their  union is $\cN \cup \cM = (X_1, \ldots, X_N, Y_1, \ldots, Y_M)$.

%intersection $\cN \cap \cM$ and difference $\cN \setminus \cM$ also define point processes and similarly, for any finite Borel partition $\cR = \bra{\Omega_k}_{k=1}^K$ of $\Omega$, the intersection and difference over $\cR$, $\cN \cap_\cR \cM$, $\cN \cap_\cR \cM$.

%  we introduce the following terminology (which is a useful but slight abuse of notation if compared to set operations): their union is defined as the point process $\cN \cup \cM$ with the $N+M$ points $(X_1, \ldots, X_N, Y_1, \ldots, Y_M)$  their intersection as $\cN \cap \cM = (X_i)_{i=1}^{S}$ and their difference $\cN \setminus \cM = (X_{S+i})_{i=1}^{N-S}$, where $S = \min\cur{N, M}$, so that $\cN = (\cN \cap \cM) \cup (\cN \setminus \cM)$. Given a finite Borel partition $\cR = \bra{\Omega_k}_{k=1}^K$ of $\Omega$, we define 

Given a finite Borel measure $\lambda$ on $\R^d$, a Poisson point process $\cN^\lambda$ with intensity $\lambda$ can be constructed from a random collection of i.i.d.\ variables $(X_i)_{i=1}^\infty$ with common law $\lambda/\lambda(\R^d)$ and, after introducing a further independent Poisson variable $N^{\lambda}$ with mean $\lambda(\R^d)$, by considering only the first $N^{\lambda}$ variables, i.e.,
$$ \cN^\lambda := (X_i)_{i=1}^{N^{\lambda}}.$$ 
A key property of a Poisson point process (with intensity $\lambda$) is that, given any countable Borel partition $\R^d = \cup_{k} \Omega_k$, the variables $(\cN^\lambda(\Omega_k))_k$ are independent Poisson variables, each with mean $\lambda(\Omega_k)$ and, conditionally upon their value, the points in each $\Omega_k$ are i.i.d.\ variables with common probability law $\lambda\restr \Omega_k / \lambda(\Omega_k)$. This property can be summarized by stating that the restrictions $(\cN^\lambda_{\Omega_k})_{k}$ are independent Poisson point processes, with each $\cN^\lambda_{\Omega_k}$ having intensity given by the restriction $\lambda\restr \Omega_k$. %In particular, when $\rho = \lambda I_\Omega/|\Omega|$ is a multiple of the uniform probability on a bounded domain $\Omega$, the restrictions we simply write $\cN=\cN^\lambda$.%Indeed, one may also define a Poisson point process with constant intensity on the whole $\R^d$, but this is not necessary for our result, hence we always consider $\cN^\lambda$ on some Borel $\Omega$ with finite measure.

We will use the well-known thinning operation, which apparently dates back to R\'enyi \cite{renyi1956characterization}, to split a Poisson point process $\cN^\lambda$ with intensity $\lambda$ into two independent Poisson point processes, each containing approximatively a given fraction of points: for $\eta \in [0,1]$, the $\eta$-thinning of a Poisson point process $\cN^{\lambda} = (X_i)_{i=1}^{N^\lambda}$ defines the two processes
$$\cN^{(1-\eta)\lambda} = (X_i)_{i=1}^{N^{(1-\eta)\lambda}} \quad \text{and} \quad \cN^{\eta \rho} = (X_{N^{(1-\eta)\lambda}+i})_{i=1}^{N^\lambda-N^{(1-\eta)\lambda}},$$
where $N^{(1-\eta)\lambda} = \sum_{i=1}^{N} Z_i$ is defined using a further sequence of i.i.d.\ Bernoulli random variables $(Z_i)_{i=1}^\infty$ with $\PP(Z_i=1)=1-\eta$ (independent from the variables $(X_i)_i$ and $N^{\lambda}$). Clearly, $\cN^{\lambda} = \cN^{(1-\eta)\lambda}\cup \cN^{\eta \lambda}$, and it is straightforward to prove that both are independent and Poisson point processes with intensities respectively $(1-\eta)\lambda$ and $\eta \lambda$.

\subsection{Statement}

The aim of this section is to prove the  analogue of \cref{thm:main} for Poisson point processes (instead of i.i.d.\ points).

\begin{theorem}\label{thm:limit-poisson}
Let $d \ge 3$, $p\in [1,d)$ and let $\pP = (\cF_{n,n})_{n \in \mathbb{N}}$ be a combinatorial optimization problem over complete bipartite graphs such that assumptions \ref{as:isomorphism}, \ref{as:spanning}, \ref{as:bddegree}, \ref{ass:local-merging} and  \ref{as:growth} hold. Then, there exists $\beta_{\pP} \in (0, \infty)$ (depending on $p$ and $d$) such that the following holds.  

Let $\Omega \subseteq \R^d$ be a bounded domain with Lipschitz boundary and such that \eqref{eq:green-kernel-bound} holds. Let $\rho$ be a H\"older continuous probability density on $\Omega$, uniformly strictly positive and bounded from above. For every $n\in (0, \infty)$, let $\cN^{n\rho}$, $\cM^{n\rho}$ be independent Poisson point processes with intensity $n \rho$ on $\Omega$.  Then,
\begin{equation}\label{eq:limsup-poisson} \limsup_{n \to \infty} n^{\frac{p}{d}-1}\EE\sqa{ \CPp\bra{ \cN^{n\rho}, \cM^{n \rho}}} \le \beta_{\pP} \int_{\Omega} \rho^{1-\frac{p}{d}}.\end{equation}
Moreover, if $\rho$ is the uniform density and $\Omega$ is a cube or its boundary is $C^2$, then the limit exists and equals the right-hand side.
\end{theorem}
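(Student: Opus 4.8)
The plan is to run a geometric subadditivity argument directly on the Poisson model, using the approximate subadditivity inequality \cref{prop:partition} to control the error terms and \cref{lem:sub} to extract the limit. First I would treat the \emph{model case} $\Omega=Q_L=(0,L)^d$ with Lebesgue intensity, setting
\[
f(L):=L^{-d}\,\EE\big[\CPp(\cN^{L},\cM^{L})\big],
\]
where $\cN^{L},\cM^{L}$ are independent Poisson point processes on $Q_L$ of unit intensity. By the Lipschitz/scaling bound \eqref{eq:lipschitz-bound-deterministic}, the cost for Poisson processes on any cube with any uniform intensity is expressed through $f$, so once $\beta_\pP:=\lim_{L\to\infty}f(L)$ is shown to exist the uniform–cube case of \eqref{eq:limsup-poisson} follows with equality. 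That $\beta_\pP\in(0,\infty)$ is easy: the spanning assumption \ref{as:spanning} forces each feasible solution to have weight at least $\sum_{x\in\x}\dist(x,\y)^p$ once the problem is nontrivial, and $\EE[\sum_{x}\dist(x,\cM^{L})^p]\ges L^d$, while for the upper bound assumption \ref{as:growth} together with the (classical) matching bound $\EE[\Mp]\les L^d$ in $d\ge 3$ (see also \cref{sec:ot}) gives $f(L)\les 1$.

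The heart of the argument is to verify hypothesis \eqref{eq:monotonicity} of \cref{lem:sub} for $f$. Fix $\eta\in(0,1/2]$, split $Q_{mL}$ into $m^d$ subcubes of side $L$, and on each subcube perform an $\eta$–thinning of both processes: the $(1-\eta)$–parts form the ``bulk'' $(\x^k,\y^k)$, while the $\eta$–parts, together with (i) the excess needed to equalise the two bulk cardinalities on each subcube and (ii) the bulk of any subcube with fewer than $\cspan$ points, are gathered into a reserve $(\x^0,\y^0)$. By the independence properties of Poisson thinning these modifications move only $\les m^d L^{d/2}$ points in expectation, negligible once $L\ge C(\eta)$, and one has $\x^0\cup\bigcup_k\x^k=\cN^{mL}$, $\y^0\cup\bigcup_k\y^k=\cM^{mL}$. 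Applying \cref{prop:partition} with $\z$ the subcube centres and estimating the defect term by term: $\sum_k\diam(\Omega_k)^p\les m^dL^p=(mL)^d L^{p-d}$; each centre lies at expected $p$-th power distance $\les\eta^{-p/d}$ from the reserve, so $\EE[\Mp(\z,\x^0)]\les m^d\eta^{-p/d}=(mL)^d\,\eta^{-p/d}L^{-d}$; and, by \cref{remgrowth}, $\CPp(\x^0,\y^0)\le\creg\big(\diam(Q_{mL})^p|\x^0|^{1-p/d}+\Mp(\x^0,\y^0)\big)$ with $|\x^0|\approx\eta(mL)^d$ and $\EE[\Mp(\x^0,\y^0)]\les\eta^{1-p/d}(mL)^d$ from the assignment bounds of \cref{sec:ot}, whence $\EE[\CPp(\x^0,\y^0)]\les\eta^{1-p/d}(mL)^d$. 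Since $\EE[\sum_k\CPp(\x^k,\y^k)]\le(mL)^d f\big((1-\eta)^{1/d}L\big)$ by scaling (the bulk being a Poisson process of intensity $(1-\eta)$ times Lebesgue measure), dividing by $(mL)^d$ yields
\[
f(mL)\ \le\ f\big((1-\eta)^{1/d}L\big)\ +\ C\,\eta^{1-p/d}\ +\ C(\eta)\,L^{-(d-p)},
\]
which is \eqref{eq:monotonicity} with $\alpha=1-p/d$, $\beta=d-p$ (after writing $(1-\eta)^{1/d}=1-\tilde\eta$ with $\tilde\eta$ comparable to $\eta$). As $f$ is continuous, \cref{lem:sub} gives $\beta_\pP=\lim_L f(L)\in[0,\infty)$.

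For a general domain $\Omega$ and Hölder density $\rho$ I would get the $\limsup$ bound by a two–scale decomposition. Given $\delta>0$, take the Whitney partition $\cQ_\delta\cup\cR_\delta$ of \cref{lem:decomp} and subdivide every piece into a grid of cubes of small side $\ell$, obtaining a partition $\{R_j\}$ of $\Omega$. On each $R_j$ the density is within $O(\ell^\alpha)$ of a constant, so \cref{prop:map-heat-semigroup} and \eqref{eq:lipschitz-bound-deterministic} reduce the cost over $(\cN^{n\rho}_{R_j},\cM^{n\rho}_{R_j})$ to the uniform model case up to a factor $1+O(\ell^\alpha)$; the model case then gives $\EE[\CPp(\cN^{n\rho}_{R_j},\cM^{n\rho}_{R_j})]\le(1+o_n(1))(1+O(\ell^\alpha))\,\beta_\pP\,n^{1-p/d}\!\int_{R_j}\rho^{1-p/d}$, and summing over $j$ produces the main term $\beta_\pP\,n^{1-p/d}\!\int_\Omega\rho^{1-p/d}$. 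The defect from \cref{prop:partition}, with reserve $\x^0=\cN^{\eta n\rho}_\Omega$, $\y^0=\cM^{\eta n\rho}_\Omega$, splits as before: $\sum_j\diam(R_j)^p$ is finite and $n$–independent; $\EE[\Mp(\z,\x^0)]=o(n^{1-p/d})$ since for fixed $\ell$ the reserve meets every $R_j$ with overwhelming probability (\cref{rem:remove-z-points}); and $\EE[\CPp(\x^0,\y^0)]\les\eta^{1-p/d}n^{1-p/d}$ via \ref{as:growth} and the assignment bounds of \cref{sec:ot} on $\Omega$, which use \eqref{eq:green-kernel-bound}. The pieces of $\cR_\delta$ add only $\les\int_{\cup\cR_\delta}\rho^{1-p/d}\les\delta$ to the main term by \cref{lem:bound-partition}. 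Letting $n\to\infty$, then $\ell\to0$, then $\delta\to0$, then $\eta\to0$ proves \eqref{eq:limsup-poisson}. The equality claim holds for a cube by the scaling argument of the first paragraph; for a $C^2$ domain with $\rho\equiv1$ the $\limsup$ bound is the one just established, and the complementary inequality $\liminf_n n^{p/d-1}\EE[\CPp]\ge\beta_\pP|\Omega|$ would follow by inscribing in $\Omega$ an exhausting union of congruent cubes and running a superadditive boundary–functional counterpart of the above, smoothness of $\partial\Omega$ ensuring that the boundary corrections are of lower order (cf.\ \cite{BaBo,ambrosio2022quadratic}).

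The hardest point will be the estimate $\EE[\CPp(\x^0,\y^0)]\les\eta^{1-p/d}(mL)^d$ (and its analogue on $\Omega$): through \ref{as:growth} it is reduced to controlling a bipartite matching cost, but the reserve is only a Poisson configuration \emph{up to a small set of arbitrary extra points} (those introduced to equalise cardinalities), and once $p\ge d/2$ a crude bound on the matching of these extra points is no longer affordable — one genuinely needs the ``mostly i.i.d.'' assignment estimates of \cref{sec:ot}, which rest on the optimal–transport relaxation, the Green–kernel gradient bound \eqref{eq:green-kernel-bound}, and the $W^{-1,p}$ estimates of \cref{lem:W1pWhitney} and \cref{prop:density-helps}. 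A secondary difficulty is bookkeeping: the four coupled parameters $n$, $\ell$, $\delta$, $\eta$ must be sent to their limits in the correct order so that the $n$–independent diameter sums, the density–oscillation factor, the boundary–layer volume, and the reserve cost $\eta^{1-p/d}$ all vanish.
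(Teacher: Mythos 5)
Your treatment of the uniform case on a cube follows the paper's route (thinning, the subadditivity of \cref{prop:partition} over the $m^d$ subcubes, \cref{lem:sub}, and the Section~\ref{sec:ot} assignment bounds for the reserve), and the Hölder-on-a-cube reduction via \cref{prop:map-heat-semigroup} is also the paper's. One caveat already there: the reserve estimate must be \emph{uniform in $m$}, and the bad points $\cU,\cV$ only carry per-subcube moment bounds, so the relevant input is \cref{thm:bound-matching} (whose proof needs the independence across subcubes and a dyadic/concentration argument); applying \cref{prop:density-helps-matching} with a global moment bound $H\sim m^dL^{d/2}$ produces an error factor growing in $m$, so citing ``the assignment bounds of \cref{sec:ot}'' is only legitimate if you point at the right one.

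The genuine gap is in the general-domain upper bound. You fix $\delta$ and $\ell$, let $n\to\infty$ first, and claim the boundary pieces contribute only $\les\int_{\cup\cR_\delta}\rho^{1-p/d}\les\delta$. But the sets in $\cR_\delta$ (and any $\ell$-cells cut by $\partial\Omega$) are irregular: they cannot be tiled by cubes, \cref{prop:map-heat-semigroup} does not apply to them, and no BHH-order bound $\limsup_n n^{\frac{p}{d}-1}\EE[\CPp(\cN_{R},\cM_{R})]\les |R|$ is available from the results you invoke (the trivial bound $\CPp\les\diam(R)^p|\cN_R|$ is off by a factor $n^{p/d}\delta^p$, and \ref{as:growth} reduces the question to $\EE[\Mp(\cN_R,\cM_R)]$ on a piece with no uniform Lipschitz control, where \cref{prop:matching-iid} and \eqref{eq:estimCZ} are not applicable). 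This is exactly why the paper couples the Whitney scale to $n$, taking $\delta=n^{-\gamma}$ with $d/(d+1)<\gamma d<1$: then the $\cR_\delta$ pieces are killed by the trivial bound, at the price of a partition with $K\sim\delta^{1-d}\to\infty$, which in turn forces the quantitative machinery of \cref{lem:bound-partition}, \cref{lem:W1pWhitney} (using \eqref{eq:green-kernel-bound}) and \cref{prop:density-helps-matching-partition} to handle the reserve. With your order of limits ($n$ first, all scales fixed) the boundary layer is not controlled, so this step would fail as written. A second, smaller gap: for the lower bound on a $C^2$ domain you propose a superadditive boundary functional à la \cite{BaBo}; the paper explicitly avoids this because the identity $\beta^b_{\pP}=\beta_{\pP}$ is not known, and instead embeds $\Omega$ in a large cube $Q$, writes $Q=\Omega\cup\bigcup_k\Omega_k$ with $\Omega_k$ the components of $Q\setminus\Omega$ (each satisfying \eqref{eq:green-kernel-bound}), and combines the liminf-version of the finite-partition inequality \eqref{eq:liminf-poisson-finite-partition} with the already-proved upper bounds on the $\Omega_k$; your lower-bound sketch would need to supply the missing boundary-functional comparison, whereas this complementation argument needs nothing new.
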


After having introduced some general notation and proved some  basic facts, we split the proof into four main cases. We deal first with the case of a uniform density on a cube and establish existence of the limit via subadditivity. Then, we consider H\"older densities on a cube and move next to general domains. Finally, we establish existence of the limit for uniform densities on domains with $C^2$ boundary.

\subsection{General facts}\label{sec:strategy}
Although each case has its distinctive features, the underlying strategy is common and relies on \cref{prop:partition} in combination with a preliminary application of the thinning operation. To avoid repetitions and introduce a general notation, we give a  description of the construction and show a first lemma which uses the fundamental ideas upon which we elaborate in the next sections.

Let $\cN$, $\cM$ be two independent Poisson point processes on $\Omega$ with common intensity given by a finite measure $\lambda$. In our applications, $\lambda$ is Lebesgue measure or $\lambda = n \rho$, but for simplicity here we omit to specify it. We apply the $\eta$-thinning to $\cN = \cN^{1-\eta} \cup \cN^{\eta}$, obtaining independent Poisson point processes with respective intensities $(1-\eta)\lambda$, $\eta \lambda$, and  similarly to $\cM = \cM^{1-\eta} \cup \cM^{\eta}$.  Given a finite Borel partition $\Omega = \bigcup_{k=1}^K \Omega_k$, for each $k=1, \ldots, K$,  
%First, we notice that the restrictions $\cM^{(1-\eta')}_{Q_i}$, $\cM^{(1-\eta')}_{Q_i}$ are independent Poisson point processes with intensity $(1-\eta')$, so that 
%\begin{equation}\label{eq:check} \begin{split}  & \EE\sqa{\frac{1}{|Q_i|} \CPp\bra{ \cN^{(1-\eta')}_{Q_i}, \cM^{(1-\eta')}_{Q_i}}}  = \EE\sqa{  \CPp\bra{ \cN^{(1-\eta')}_{Q_L}, \cM^{(1-\eta')}_{Q_L}} } \\
%&  \qquad = (1-\eta')^{1-p/d} \EE\sqa{ \frac{1}{|Q_{ (1-\eta')^{1/d} L } |} \CPp\bra{ \cN_{Q_{(1-\eta')^{1/d} L}}, \cM_{Q_{(1-\eta')^{1/d}L}}}}\\
%& \qquad \le f( (1-\eta) L).
%\end{split}\end{equation}
%having used \eqref{eq:homogeneity}. 
we pick a minimizer $G_k\subseteq \cK\bra{\cN^{1-\eta}_{\Omega_k}, \cM^{1-\eta}_{\Omega_k}}$ for the problem
$$ \CPp( \cN^{1-\eta}_{\Omega_k}, \cM^{1-\eta}_{\Omega_k}).$$
%if the set of feasible solutions is non empty, otherwise we let $G_k$ be the empty graph.
%In principle $G_k$ could be the empty graph (if the cost is zero, e.g.\ because one of the two point processes contain very few points). 
Writing
\[ Z_k = \min\cur{ |\cN^{1-\eta}_{\Omega_k}|, |\cM^{1-\eta}_{\Omega_k}|},\]
we notice that $G_k = \emptyset$ if and only if $Z_k < \cspan$ (by \cref{rem:uniqueness-minimizer} for $p>1$, $G_k$ is a.s.\ unique. For $p=1$ we can consider a measurable selection). %If the number of vertices in $G_k$ is less than $\c=\min(\c_2,\c_3,\c_4)$ (recall assumptions \ref{as:spanning}, \ref{as:bddegree}, \ref{ass:local-merging}) then we replace $G_k$ by the emptyset.

We define 
 point processes $\cU$, $\cV$ on $\Omega$ by setting $\cU_{\Omega_k} \subseteq \cN^{1-\eta}_{\Omega_k}$,  $\cV_{\Omega_k} \subseteq \cM^{1-\eta}_{\Omega_k}$, given by all the points, respectively in $\cN^{1-\eta}_{\Omega_k}$ and $\cM^{1-\eta}_{\Omega_k}$, which do not belong to the set of vertices of $G_k$.  In particular, if $G_k = \emptyset$, then $\cU_{\Omega_k} = \cN^{1-\eta}_{\Omega_k}$, $\cV_{\Omega_k} = \cM^{1-\eta}_{\Omega_k}$. Notice that by construction the $K$ pairs of processes $\bra{ (\cU_{\Omega_k}, \cV_{\Omega_k})}_{k=1}^K$ are independent, but for any $k$ the two processes $\cU_{\Omega_k}$, $\cV_{\Omega_k}$ are not in general independent. 
 For later use, we prove:
\begin{lemma}
For every $k=1, \ldots, K$ such that 
\begin{equation}\label{eq:condition-omega-k-not-too-small}
\lambda(\Omega_k) > 4 \cspan,
\end{equation}
we have, for every $q  \ge 1$,
\begin{equation}\label{eq:bound-moment-q-remainder-uv} \EE\sqa{ |\cU_{\Omega_k}|^q +|\cV_{\Omega_k}|^q}\lesssim_{q} \lambda(\Omega_k)^{\frac{q}{2}}.\end{equation}
\end{lemma}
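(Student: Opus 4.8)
The plan is to first identify the integer‑valued random variable $|\cU_{\Omega_k}|$ explicitly and then reduce the estimate to standard Poisson concentration. Write $A = |\cN^{1-\eta}_{\Omega_k}|$ and $B = |\cM^{1-\eta}_{\Omega_k}|$; these are independent Poisson random variables of parameter $n_k := (1-\eta)\lambda(\Omega_k)$, and $Z_k = \min\cur{A,B}$. On the event $\cur{Z_k \ge \cspan}$, the minimizer $G_k$ realizing $\CPp(\cN^{1-\eta}_{\Omega_k},\cM^{1-\eta}_{\Omega_k})$ is, through \eqref{eq:min-subgraphs}, a feasible solution in $\cF_{Z_k,Z_k}$ built over subfamilies $\x' \subseteq \cN^{1-\eta}_{\Omega_k}$, $\y'\subseteq \cM^{1-\eta}_{\Omega_k}$ with $|\x'|=|\y'|=Z_k$; by the spanning assumption \ref{as:spanning} it uses \emph{all} of $\x'$, so the $\cN$‑side vertices of $G_k$ are exactly $\x'$ and $|\cU_{\Omega_k}| = A - Z_k = (A-B)_+$. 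On the complementary event $\cur{Z_k < \cspan}$ one has $G_k = \emptyset$ and hence $|\cU_{\Omega_k}| = A$. In either case, for every $q \ge 1$,
\[
|\cU_{\Omega_k}|^q \le |A - B|^q + A^q\,\mathbf 1_{\cur{\min\cur{A,B} < \cspan}},
\]
so it suffices to bound $\EE\bigl[|A-B|^q\bigr]$ and $\EE\bigl[A^q\,\mathbf 1_{\cur{\min\cur{A,B}<\cspan}}\bigr]$ (the estimate for $\cV_{\Omega_k}$ being identical with the roles of $\cN$ and $\cM$ exchanged).

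For the first term, writing $A - B = (A - n_k) - (B - n_k)$ and using the triangle inequality in $L^q$ together with the Poisson moment bound \eqref{eq:momentPoi} gives $\EE[|A-B|^q]^{1/q} \le \EE[|A-n_k|^q]^{1/q} + \EE[|B-n_k|^q]^{1/q} \les_q n_k^{1/2}$, i.e.\ $\EE[|A-B|^q] \les_q n_k^{q/2}$. For the second term, note $\mathbf 1_{\cur{\min\cur{A,B}<\cspan}} \le \mathbf 1_{\cur{A<\cspan}} + \mathbf 1_{\cur{B<\cspan}}$, so $A^q\,\mathbf 1_{\cur{\min\cur{A,B}<\cspan}} \le \cspan^q + A^q\,\mathbf 1_{\cur{B<\cspan}}$, and by independence of $A$ and $B$,
\[
\EE\bigl[A^q\,\mathbf 1_{\cur{B<\cspan}}\bigr] = \EE[A^q]\,\PP(B<\cspan) \les_q (1+n_k)^q\,\PP(B<\cspan).
\]
Since $\eta \le 1/2$ and, by hypothesis, $\lambda(\Omega_k) > 4\cspan$, we have $n_k \ge \tfrac12\lambda(\Omega_k) > 2\cspan$, hence $\cur{B<\cspan} \subseteq \cur{B < \tfrac12 n_k}$; applying \eqref{eq:density-bound-below-Poi} with $\gamma = 1/2$ and exponent $q$ yields $\PP(B<\cspan) \les_q n_k^{-q}$. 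Because $n_k$ is bounded below by the fixed constant $2\cspan$, we get $(1+n_k)^q n_k^{-q} \les 1 \les n_k^{q/2}$ and similarly $\cspan^q \les n_k^{q/2}$, so $\EE\bigl[A^q\,\mathbf 1_{\cur{\min\cur{A,B}<\cspan}}\bigr] \les_q n_k^{q/2}$.

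Combining the two estimates gives $\EE[|\cU_{\Omega_k}|^q] \les_q n_k^{q/2} \le \lambda(\Omega_k)^{q/2}$, and the same computation bounds $\EE[|\cV_{\Omega_k}|^q]$; summing proves \eqref{eq:bound-moment-q-remainder-uv}. The only genuinely non‑routine step is the first one — pinning down $|\cU_{\Omega_k}|$: one must invoke \ref{as:spanning} to know that on $\cur{Z_k \ge \cspan}$ the minimizer keeps exactly $Z_k$ vertices on each side (so that $|\cU_{\Omega_k}| = (A-B)_+$ is a difference of Poisson counts), while separately isolating the rare event $\cur{Z_k < \cspan}$, on which $\cU_{\Omega_k}$ degenerates to the entire restricted process and is controlled only through the Poisson lower‑tail bound \eqref{eq:density-bound-below-Poi}. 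Everything else is moment bookkeeping.
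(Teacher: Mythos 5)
Your proof is correct and follows essentially the same route as the paper: on the good event where the minimizer $G_k$ is nonempty, the spanning assumption \ref{as:spanning} forces the optimizer to use exactly $Z_k$ points on each side, so $|\cU_{\Omega_k}|$ is bounded by $\abs{\,|\cN^{1-\eta}_{\Omega_k}|-|\cM^{1-\eta}_{\Omega_k}|\,}$ and handled by Poisson concentration \eqref{eq:momentPoi}, while the complementary rare event is controlled through the Poisson lower-tail bound \eqref{eq:density-bound-below-Poi}. The only cosmetic differences are that you split on $\cur{Z_k\ge \cspan}$ rather than on $\cur{Z_k\ge (1-\eta)\lambda(\Omega_k)/2}$ as in the paper, and on the bad event you exploit independence of the two Poisson counts to factorize the expectation where the paper instead applies Cauchy--Schwarz; both variants are equally valid.
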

\begin{proof}
In the event 
   \begin{equation*}\label{eq:fraction-points} A_k = \cur{Z_k \ge (1-\eta)\lambda(\Omega_k)/2},
   \end{equation*}
     since $\eta \in (0,1/2)$ we have that $Z_k \ge \cspan$ hence by assumption \ref{as:spanning}, every feasible solution (in particular the optimal solution $G_k$) spans a subgraph of $\cK_{ \cN^{1-\eta}(\Omega_k), \cM^{1-\eta}(\Omega_k)}$ isomorphic to $\cK_{Z_k, Z_k}$, so that
 $$    |\cU_{\Omega_k}| \le |\cN^{1-\eta}_{\Omega_k}| - Z_k \le \abs{  |\cM^{1-\eta}_{\Omega_k}| - |\cN^{1-\eta}_{\Omega_k}|}.$$
 Using \eqref{eq:momentPoi}, we have
 $$ \EE\sqa{ |\cU_{\Omega_k}|^q  I_{A_k} } \le \EE\sqa{ \abs{  |\cM^{1-\eta}_{\Omega_k}| - |\cN^{1-\eta}_{\Omega_k}|}^q} \les_q  \lambda(Q_L)^{\frac{q}{2}}.$$
  By the union bound and \eqref{eq:density-bound-below-Poi} with $n = (1-\eta) \lambda(\Omega_k)$, $\gamma  =1/2$, we have
 \begin{equation*}\label{eq:estimate-few-points-Ak} \begin{split} \PP( A^c_k) & \le \PP(|\cN^{1-\eta}_{\Omega_k}| < (1-\eta) \lambda(\Omega_k)/2) + \PP(|\cM^{1-\eta}_{\Omega_k}| < (1-\eta) \lambda(\Omega_k)/2) ) \\
 & \les_q \lambda(\Omega_k)^{-q}.
 \end{split}\end{equation*}
Therefore,
  \[\begin{split} \EE \sqa{ |\cU_{\Omega_k}|^q  I_{A^c_k}} & \le \EE \sqa{  |\cN^{1-\eta}_{\Omega_k}|^q  I_{A^c_k}} \le \EE\sqa{|\cN^{1-\eta}_{\Omega_k}|^{2q} }^{\frac{1}{2}} \PP(A^c_k)^{\frac{1}{2}} \\
 & \les_q \lambda(\Omega_k)^{\frac{q}{2}}.
 \end{split}\]
 Arguing similarly for $|\cV_{\Omega_k}|$, we obtain  \eqref{eq:bound-moment-q-remainder-uv}.%Moreover, if the restriction of the intensity of $\cN$ (and $\cM$) on $\Omega_k$ coincides $\Omega_{k'}$ coincide, then the construction leads to processes $(\cU_{\Omega_k}, \cV_{\Omega_k})$, $(\cU_{\Omega_k}, \cV_{\Omega_k})
 \end{proof}
 
For $k = 1, \ldots, K$, we define 
\[ \x^k = \cN^{1-\eta}_{\Omega_k} \setminus \cU_{\Omega_k}, \quad \y^k = \cM^{1-\eta}_{\Omega_k} \setminus \cV_{\Omega_k},\]
 so that by construction  $|\x^k|  = |\y^k| =n_k$, with
\[ n_k = \begin{cases}Z_k & \text{if $Z_k \ge \cspan$,}\\
0 & \text{otherwise.}
\end{cases}\]
Moreover, since the optimizer $G_k$ is a feasible solution in $\cK(\x^k, \y^k)$, we have
$$ \CPp(\cN^{1-\eta}_{\Omega_k}, \cM^{1-\eta}_{\Omega_k}) = \CPp( \x^k, \y^k).$$
We then let $\x^0 = \cN^\eta \cup \cU$, $\y^0 = \cM^\eta \cup \cV$. In the event 
\begin{equation}\label{eq:event-many-points}
 \cur{ \min\cur{|\cN^\eta|, |\cM^\eta|} \ge \min\cur{K, \cspan}},
\end{equation}
    \cref{prop:partition} applies for any choice of points $\z = (z_k)_{k=1}^K$ with $z_k \in \Omega_k$, yielding the inequality
\begin{equation}\label{eq:subadditive-random} 
%\begin{split} 
\CPp\bra{ \cN, \cM}   - \sum_{k=1}^K \CPp(\cN^{1-\eta}_{\Omega_k}, \cM^{1-\eta}_{\Omega_k})  
%\\& 
\les  \CPp(\cN^\eta \cup \cU, \cM^\eta \cup \cV)+ \Mp(\cN^\eta \cup \cU, \z)  +   \sum_{k=1}^K    \diam(\Omega_k)^p. 
%\end{split}
\end{equation}
 By \cref{rem:remove-z-points}, if also
\begin{equation}\label{eq:event-many-points-each-omegak}  \cur{ \min_{k=1,\ldots, K} \min\cur{ |\cN^\eta_{\Omega_k}|, |\cM^{\eta}_{\Omega_k}|} \ge 1}\end{equation}
 then the term  $\Mp(\cN^\eta \cup \cU, \z)$ can be removed in \eqref{eq:subadditive-random}. 
 %We then add to each $\cU_{Q_i}$ and $\cV_{Q_i}$ an arbitrary point $z_i\in Q_i$ (for example the center of the cube) so that $\min\{|\cU_{Q_i}|,|\cV_{Q_i}|\}\ge 1$ for every $i$.

Once \eqref{eq:subadditive-random} is established, the next step is to take expectation and carefully estimate the ``error terms'' in the right-hand side. To convey the main ideas, we start with the simplest case when $K$ is kept fixed as we let $n \to \infty$ in the intensity of the process $\lambda = n \rho$.

\begin{lemma}\label{lem:subadditivity-finite-partition}
%Let $d \ge 3$, $p\in [1,d)$ and let $\pP = (\cF_{n,n})_{n \in \mathbb{N}}$ be a combinatorial optimization problem over complete bipartite graphs such that assumptions \ref{as:isomorphism}, \ref{as:spanning}, \ref{as:bddegree}, \ref{ass:local-merging} and  \ref{as:growth} hold.  Let $\Omega \subseteq \R^d$ be a bounded connected domain, convex or with $C^1$ boundary and let $\rho$ be a H\"older continuous probability density on $\Omega$, uniformly strictly positive and bounded from above. Let $\cN^{n\rho}$, $\cM^{n\rho}$ be independent Poisson point processes with intensity $n \rho$ on $\Omega$. 
% 
With the notation and assumptions of \cref{thm:limit-poisson}, fix $K \in \mathbb{N}$ and consider a Borel partition $\Omega =  \bigcup_{k=1}^K \Omega_k$. Then,
\begin{equation}\label{eq:limsup-poisson-finite-partition} \limsup_{n \to \infty} n^{\frac{p}{d}-1} \EE\sqa{ \CPp\bra{ \cN^{n\rho}, \cM^{n \rho}}} \le \sum_{k=1}^K \limsup_{n \to \infty} n^{\frac{p}{d}-1}\EE\sqa{ \CPp\bra{ \cN^{n\rho}_{\Omega_k}, \cM^{n \rho}_{\Omega_k}}} .\end{equation}
\end{lemma}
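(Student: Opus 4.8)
The plan is to run the construction of \cref{sec:strategy} with $\lambda=n\rho$, the given fixed partition $\Omega=\bigcup_{k=1}^K\Omega_k$, and a parameter $\eta\in(0,1/2)$ which will be sent to $0$ at the very end. On the event \eqref{eq:event-many-points} the subadditivity inequality \eqref{eq:subadditive-random} reads $\CPp(\cN^{n\rho},\cM^{n\rho})\les\sum_{k=1}^K\CPp(\cN^{1-\eta}_{\Omega_k},\cM^{1-\eta}_{\Omega_k})+\CPp(\cN^\eta\cup\cU,\cM^\eta\cup\cV)+\Mp(\cN^\eta\cup\cU,\z)+\sum_{k=1}^K\diam(\Omega_k)^p$, with implicit constant independent of $n$ and $K$. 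First I would dispose of the cheap terms: $\sum_{k}\diam(\Omega_k)^p$ is a constant, and since $|\z|=K$ is fixed one trivially has $\Mp(\cN^\eta\cup\cU,\z)\le K\diam(\Omega)^p$; moreover \eqref{eq:event-many-points} fails with probability $\les_q(\eta n)^{-q}$ for every $q$ (by \eqref{eq:density-bound-below-Poi}, since $|\cN^\eta|,|\cM^\eta|$ are Poisson with mean $\eta n$ and $\min\{K,\cspan\}$ is fixed), and on that event $\CPp(\cN^{n\rho},\cM^{n\rho})$ is controlled by the crude bound of \cref{remgrowth} combined with $\Mp\le\diam(\Omega)^p\min\{|\cdot|,|\cdot|\}$, hence is polynomial in Poisson counts of mean $n$; by Cauchy--Schwarz its contribution is $\les_q n(\eta n)^{-q/2}$. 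Multiplying by $n^{p/d-1}$ and letting $n\to\infty$, all of these contribute $0$ to the $\limsup$ (for $q$ large enough).

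The substantial term is $\CPp(\cN^\eta\cup\cU,\cM^\eta\cup\cV)$, which I would estimate via assumption \ref{as:growth} in the form of \cref{remgrowth}, so $\CPp(\cN^\eta\cup\cU,\cM^\eta\cup\cV)\les\min\{|\cN^\eta\cup\cU|,|\cM^\eta\cup\cV|\}^{1-\frac pd}+\Mp(\cN^\eta\cup\cU,\cM^\eta\cup\cV)$. For the first summand, $\EE[|\cN^\eta\cup\cU|]\le\eta n+\EE[|\cU|]$ and, by \eqref{eq:bound-moment-q-remainder-uv} (applicable once $n$ is large, since $n\rho(\Omega_k)\to\infty$) and Cauchy--Schwarz, $\EE[|\cU|]\les\sum_{k}(n\rho(\Omega_k))^{1/2}\les(Kn)^{1/2}$; by concavity of $t\mapsto t^{1-p/d}$ one gets $n^{\frac pd-1}\EE[\min\{\cdots\}^{1-\frac pd}]\le(\eta+(K/n)^{1/2})^{1-\frac pd}\to\eta^{1-\frac pd}$. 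For the matching summand, using \eqref{eq:triangle}, \eqref{eq:convexity} and a minor variant of \eqref{eq:matching-below-wasserstein} that handles unequal cardinalities (discarding excess points from the Poisson parts rather than from $\cU$, $\cV$), one reduces to bounding $\W_\Omega(\mu^{\cN^\eta\cup\cU},|\cN^\eta\cup\cU|\rho)$ and the analogous term for $\cM^\eta\cup\cV$, i.e.\ to transporting each side to a \emph{correctly normalised} multiple of $\rho$.

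The heart of the argument, and the step I expect to be the main obstacle, is to bound $\W_\Omega(\mu^{\cN^\eta}+\mu^\cU,(|\cN^\eta|+|\cU|)\rho)$ without paying $\diam(\Omega)^p|\cU|\approx\sqrt n$ for the leftover points $\cU$ --- a loss that would be fatal precisely in the regime $p\ge d/2$. Instead, by \eqref{eq:triangle} and \eqref{eq:convexity} this is $\les\W_\Omega(\mu^{\cN^\eta},|\cN^\eta|\rho)+\W_\Omega(\mu^\cU+|\cN^\eta|\rho,(|\cU|+|\cN^\eta|)\rho)$, and the second term falls exactly under \cref{prop:density-helps} with $h=|\cN^\eta|$ (applied with an auxiliary exponent in $(d/(d-1),d)$ and \eqref{eq:jensen} if $p\le d/(d-1)$), giving $\les|\cN^\eta|^{-p/d}|\cU|^{1+p/d}$. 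Taking expectations: $\EE[\W_\Omega(\mu^{\cN^\eta},|\cN^\eta|\rho)]\les\EE[|\cN^\eta|^{1-\frac pd}]\le(\eta n)^{1-\frac pd}$ by conditioning on $|\cN^\eta|$ and the classical upper bound for the Wasserstein distance between an i.i.d.\ sample and its law when $d\ge3$ (this is where $d\ge3$ enters; cf.\ \cref{sec:ot}), while on $\{|\cN^\eta|\ge\eta n/2\}$ one has $|\cN^\eta|^{-p/d}|\cU|^{1+p/d}\les(\eta n)^{-p/d}|\cU|^{1+p/d}$ with $\EE[|\cU|^{1+p/d}]\les_K n^{(1+p/d)/2}$ by \eqref{eq:bound-moment-q-remainder-uv}, so that $(\eta n)^{-p/d}\EE[|\cU|^{1+p/d}]\les_K\eta^{-p/d}\,n^{\frac12-\frac{p}{2d}}=o(n^{1-\frac pd})$ exactly because $p<d$; the event $\{|\cN^\eta|<\eta n/2\}$ has super-polynomially small probability and is absorbed by a crude bound. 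Hence $\limsup_{n\to\infty}n^{\frac pd-1}\EE[\CPp(\cN^\eta\cup\cU,\cM^\eta\cup\cV)]\le C\eta^{1-\frac pd}$: the $\Theta(\sqrt n)$ leftover points are \emph{dissolved into the $\rho$-background created by the $\eta$-thinning} rather than matched directly.

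Assembling the pieces and taking $\limsup_{n\to\infty}$ in the subadditivity inequality yields $\limsup_{n\to\infty}n^{\frac pd-1}\EE[\CPp(\cN^{n\rho},\cM^{n\rho})]\le\sum_{k=1}^K\limsup_{n\to\infty}n^{\frac pd-1}\EE[\CPp(\cN^{1-\eta}_{\Omega_k},\cM^{1-\eta}_{\Omega_k})]+C\eta^{1-\frac pd}$. Finally, $\cN^{1-\eta}_{\Omega_k}$ has the law of $\cN^{m\rho}_{\Omega_k}$ with $m=(1-\eta)n$, so $n^{\frac pd-1}\EE[\CPp(\cN^{1-\eta}_{\Omega_k},\cM^{1-\eta}_{\Omega_k})]=(1-\eta)^{1-\frac pd}\,m^{\frac pd-1}\EE[\CPp(\cN^{m\rho}_{\Omega_k},\cM^{m\rho}_{\Omega_k})]$ with $(1-\eta)^{1-\frac pd}\le1$, whence each $k$-th $\limsup$ is at most $\limsup_{m\to\infty}m^{\frac pd-1}\EE[\CPp(\cN^{m\rho}_{\Omega_k},\cM^{m\rho}_{\Omega_k})]$. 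Letting $\eta\to0$ (so $\eta^{1-p/d}\to0$ since $p<d$) gives \eqref{eq:limsup-poisson-finite-partition}.
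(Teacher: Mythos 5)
Your proposal is correct and follows essentially the same route as the paper: thinning, the subadditivity inequality \eqref{eq:subadditive-random} on a high-probability event, assumption \ref{as:growth} for the error term, and dissolving the $O(\sqrt n)$ leftover points $\cU,\cV$ into the $\eta$-background via \cref{prop:density-helps}, before rescaling $(1-\eta)n$ and letting $\eta\to 0$. The only differences are cosmetic: you bound $\Mp(\cN^\eta\cup\cU,\z)$ trivially by $K\diam(\Omega)^p$ instead of removing it via the event \eqref{eq:event-many-points-each-omegak}, and you re-derive inline (good/bad splitting plus the i.i.d.\ transport bound and \cref{prop:density-helps}) exactly the estimate that the paper packages as \cref{prop:density-helps-matching}.
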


\begin{proof}
We can assume that each $\Omega_k$ is not negligible. Then, condition \eqref{eq:condition-omega-k-not-too-small} with $\lambda = n \rho$ holds if $n$ is sufficiently large.
Letting 
\begin{equation}\label{eq:A-finiteK} A = \bigcap_{k=1}^K \cur{ \min\cur{|\cN^{n\eta\rho}_{\Omega_k}|, |\cM^{n\eta\rho}_{\Omega_k}|} \ge  \cspan} =\bigcap_{k=1}^K A_k,\end{equation}
we have that  both \eqref{eq:event-many-points} and \eqref{eq:event-many-points-each-omegak} hold $A$.

 By the union bound in combination with \eqref{eq:density-bound-below-Poi}, we estimate, for every $q \ge 1$,
\begin{equation}\label{eq:union-bound} \PP(A^c) \le \sum_{k=1}^K \PP(A^c_k)  \les_{q,\eta, K} n^{-q}.\end{equation}
 Combined with the trivial inequality $\CPp\bra{ \cN^{n\rho}, \cM^{n\rho}} \les |\cN^{n\rho}|$ we obtain that
\[ \EE\sqa{ \CPp\bra{ \cN^{n\rho}, \cM^{n\rho}}  I_{A^c} } \le \EE\sqa{ |\cN^{n\rho}|^2}^{\frac{1}{2}} \PP(A^c)^{\frac{1}{2}} \les_{q,K}  n^{\frac{1-q}{2}},\]
which is infinitesimal if $q>1$ (even without dividing by $n^{1-p/d}$). Therefore,
\begin{equation*}
\label{eq:Ac-does-not-matter} 
\limsup_{n \to \infty} n^{\frac{p}{d}-1}\EE\sqa{ \CPp\bra{ \cN^{n\rho}, \cM^{n \rho}} I_A}  =  \limsup_{n \to \infty} n^{\frac{p}{d}-1}\EE\sqa{ \CPp\bra{ \cN^{n\rho}, \cM^{n \rho}} }
\end{equation*}
and we only need to prove the following inequality, for fixed $\eta$,
\begin{equation*}\label{eq:limsup-poisson-finite-partition-eta}
\limsup_{n \to \infty} n^{\frac{p}{d}-1}\EE\sqa{ \CPp\bra{ \cN^{n\rho}, \cM^{n \rho}} I_A}  - \sum_{k=1}^K \limsup_{n \to \infty} n^{\frac{p}{d}-1}\EE\sqa{ \CPp\bra{ \cN^{n\rho}_{\Omega_k}, \cM^{n \rho}_{\Omega_k}}}  \les_{K} \eta^{1-\frac{p}{d}},
\end{equation*}
and finally let $\eta\to 0$ to obtain the thesis.
To this aim, we multiply \eqref{eq:subadditive-random} by $I_A$ and take expectation, obtaining the inequality
\begin{multline}\label{eq:exp-sub-explicit}
\EE\sqa{ \CPp\bra{ \cN^{n\rho}, \cM^{n\rho}}  I_{A} }  - \sum_{k=1}^K  \EE\sqa{ \CPp\bra{ \cN^{(1-\eta)n\rho}_{\Omega_k}, \cM^{(1-\eta)n\rho}_{\Omega_k}}  } 
 \\ \les   \EE\sqa{ \CPp\bra{ \cN^{\eta n\rho}\cup \cU, \cM^{\eta n\rho}\cup \cV}}+K.  
 \end{multline}
Since, for each $k=1, \ldots, K$,
\begin{equation*}\label{eq:exp-sub-finite-partition} \begin{split}  \limsup_{n \to \infty}  n^{\frac{p}{d}-1}\EE&\sqa{ \CPp\bra{ \cN^{(1-\eta)n\rho}_{\Omega_k}, \cM^{(1-\eta)n\rho}_{\Omega_k}}  } \\
& = (1-\eta)^{1-\frac{p}{d}} \limsup_{n \to \infty} \bra{(1-\eta)n}^{\frac{p}{d}-1}\EE\sqa{ \CPp\bra{ \cN^{(1-\eta)n\rho}_{\Omega_k}, \cM^{(1-\eta)n\rho}_{\Omega_k}}  }\\
%& =(1-\eta)^{1-p/d} \limsup_{n \to \infty} \EE\sqa{ \CPp\bra{ \cN^{n\rho}_{\Omega_k}, \cM^{n\rho}_{\Omega_k}}  }/n^{1-p/d}\\
& \le \limsup_{n \to \infty} n^{\frac{p}{d}-1}\EE\sqa{ \CPp\bra{ \cN^{n\rho}_{\Omega_k}, \cM^{n\rho}_{\Omega_k}}  },
\end{split}\end{equation*}
we need to focus only on the terms in the right-hand side of \eqref{eq:exp-sub-explicit}. Since the last term is constant, we are left with the proof of 	
\begin{equation}\label{tofinishfinite} 
\limsup_{n \to \infty} n^{\frac{p}{d}-1}\EE\sqa{ \CPp\bra{ \cN^{\eta n\rho}\cup \cU, \cM^{\eta n\rho}\cup \cV} }\les \eta^{1-\frac{p}{d}}. 
\end{equation}
We first notice that by  \eqref{eq:bound-moment-q-remainder-uv} and H\"older inequality we have  for every $q\ge 1$,
 \begin{equation}\label{momentuVfinite}
  \EE\sqa{ |\cU|^q +|\cV|^q}\lesssim_{q}  K^{\frac{q}{2}}n^{\frac{q}{2}}.
 \end{equation}
We now use  assumption~\ref{as:growth} so that 
\begin{equation}\label{eq:last-term-finite-partition} %\begin{split}
\EE\sqa{ \CPp\bra{ \cN^{\eta n\rho}\cup \cU, \cM^{\eta n\rho}\cup \cV}}   \les \EE\sqa{ | \cN^{\eta n\rho}\cup \cU|^{1-\frac{p}{d}}}
 + \EE\sqa{ \Mp\bra{ \cN^{\eta n\rho}\cup \cU, \cM^{\eta n\rho}\cup \cV}}.
 %\end{split}
 \end{equation}
 To estimate the  first term in the right-hand side, we use H\"older inequality and \eqref{momentuVfinite} with $q=1$,
 \begin{equation}\label{eq:last-term-easy-finite-partition} \begin{split}
 \EE\sqa{| \cN^{\eta n\rho}\cup \cU|^{1-\frac{p}{d}}} & \les \EE\sqa{ |\cN^{\eta n \rho}|}^{1-\frac{p}{d}} + \EE\sqa{  |\cU|}^{1-\frac{p}{d}}\\
 & \les n^{1-\frac{p}{d}}\bra{\eta^{1-\frac{p}{d}} + C_K n^{-\frac{1}{2}(1-\frac{p}{d})}}.\end{split}\end{equation}
 %which gives a contribution of the correct order.%infinitesimal after dividing by $n^{1-p/d}$, and letting first $n \to \infty$ and then $\eta \to 0$. For the second term, we rely instead of 
 
 \noindent For the second term, thanks to \eqref{momentuVfinite} we may use  \cref{prop:density-helps-matching} with $H=n^{1/2}$
 and $h=\min\cur{\EE\sqa{|\cN^{\eta n\rho}|}, \EE\sqa{|\cM^{\eta n\rho}|}}\sim n\eta$ so that for some $\alpha<2$ and $\beta>0$
 \[
  \EE\sqa{ \Mp\bra{ \cN^{\eta n\rho}\cup \cU, \cM^{\eta n\rho}\cup \cV}}\les n^{1-\frac{p}{d}}\bra{\eta^{1-\frac{p}{d}} + C_{K,\eta} n^{-\frac{\beta}{2} (2-\alpha)}}.  
 \]
Plugging  this and  \eqref{eq:last-term-easy-finite-partition} in \eqref{eq:last-term-finite-partition} concludes the proof of \eqref{tofinishfinite}.
 \end{proof}
 
 \begin{remark}\label{rem:limit-case-finite-partition}
We notice that the proof above yields also the inequality
 \begin{equation}\begin{split}\label{eq:liminf-poisson-finite-partition} \liminf_{n \to \infty} n^{\frac{p}{d}-1}\EE\sqa{ \CPp\bra{ \cN^{n\rho}, \cM^{n \rho}}}  & \le \liminf_{n \to \infty} n^{\frac{p}{d}-1}\EE\sqa{ \CPp\bra{ \cN^{n\rho}_{\Omega_1}, \cM^{n \rho}_{\Omega_1}}} \\
 & \quad  + \sum_{k=2}^K \limsup_{n \to \infty} n^{\frac{p}{d}-1} \EE\sqa{ \CPp\bra{ \cN^{n\rho}_{\Omega_k}, \cM^{n \rho}_{\Omega_k}}} .\end{split}\end{equation}
 This follows by repeating the argument only along a subsequence $n_\ell \to \infty$ such that
 \[ \lim_{\ell \to \infty} n_{\ell}^{\frac{p}{d}-1}\EE\sqa{ \CPp\bra{ \cN^{n_\ell \rho}_{\Omega_1}, \cM^{n_\ell \rho}_{\Omega_1}}} = \liminf_{n \to \infty} n^{\frac{p}{d}-1}\EE\sqa{ \CPp\bra{ \cN^{n\rho}_{\Omega_1}, \cM^{n \rho}_{\Omega_1}}}.\]
\end{remark}

\subsection{Uniform density on a cube}\label{sec:cube-constant-density}
In this section we consider the case of a uniform measure on a cube. Up to rescaling (see \eqref{eq:homogeneity}) it is equivalent to consider 
two independent Poisson point processes $\cN_{Q_L}$ and $\cM_{Q_L}$ with intensity one on $Q_L$ and prove that 
\[
f(L) =\frac{1}{|Q_L|} \EE\sqa{ \CPp\bra{ \cN_{Q_{L}}, \cM_{Q_L}}}
\]
has a limit as $L\to \infty$.
\begin{proposition}\label{prop:uniform}
Let $d \ge 3$, $p\in [1,d)$ and let $\pP = (\cF_{n,n})_{n \in \mathbb{N}}$ be a combinatorial optimization problem
over complete bipartite graphs such that assumptions \ref{as:isomorphism}, \ref{as:spanning}, \ref{as:bddegree}, \ref{ass:local-merging} and  \ref{as:growth} hold.
Then, there exists $\beta_{\pP} \in (0, \infty)$ (depending on $p$ and $d$) such that 
\begin{equation}\label{eq:limit-cube} 
\lim_{L \to \infty} f(L) = \beta_{\pP}. 
\end{equation}

\end{proposition}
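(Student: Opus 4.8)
The plan is to work with the function $f$ introduced above, $f(L)=|Q_L|^{-1}\EE\sqa{\CPp\bra{\cN_{Q_L},\cM_{Q_L}}}$ for two independent unit-intensity Poisson processes on $Q_L$, to establish an approximate subadditivity inequality of the form \eqref{eq:monotonicity}, and then to invoke \cref{lem:sub}. Two preliminary facts about $f$ are needed. \emph{Boundedness}: by \cref{remgrowth} on $Q_L$, together with $\EE\sqa{|\cN_{Q_L}|^{1-p/d}}\le L^{d-p}$ (Jensen) and the classical matching bound $\EE\sqa{\Mp\bra{\cN_{Q_L},\cM_{Q_L}}}\les L^d$ (e.g.\ \eqref{eq:matching-bhh-cube}), one gets $\EE\sqa{\CPp\bra{\cN_{Q_L},\cM_{Q_L}}}\les L^d$, so $f\les 1$ on $[1,\infty)$. \emph{Continuity}: realizing all the $\cN_{Q_L}$, $\cM_{Q_L}$ as restrictions of two fixed Poisson processes on $\R^d$, a.s.\ no point lies on $\partial Q_L$, so $L\mapsto\CPp\bra{\cN_{Q_L},\cM_{Q_L}}$ is a.s.\ continuous at $L$; dominated convergence, using $\CPp\les|\cN_{Q_{2L}}|\diam(Q_{2L})^p$ for $L'\in[L/2,2L]$, transfers this to $f$.

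For the subadditivity inequality I fix $\eta\in(0,1/2]$, set $\theta:=1-(1-\eta)^d\in[\eta,d\eta]$, partition $Q_{mL}$ into $m^d$ congruent subcubes $Q_L^{(1)},\dots,Q_L^{(m^d)}$ of side $L$, and run the construction of \cref{sec:strategy} on $\Omega=Q_{mL}$ with these subcubes, with Lebesgue intensity, and with thinning parameter $\theta$ in place of $\eta$. The point of this choice of $\theta$ is that the retained process $\cN^{1-\theta}$ has intensity $(1-\eta)^d$, so that under the dilation $x\mapsto(1-\eta)x$ its restriction to $Q_L^{(k)}$ has the law of a unit-intensity process on a translate of $Q_{(1-\eta)L}$; by $p$-homogeneity and translation invariance,
\[\sum_{k=1}^{m^d}\EE\sqa{\CPp\bra{\cN^{1-\theta}_{Q_L^{(k)}},\cM^{1-\theta}_{Q_L^{(k)}}}}=m^d(1-\eta)^{d-p}L^d f((1-\eta)L),\]
which divided by $|Q_{mL}|=(mL)^d$ equals $(1-\eta)^{d-p}f((1-\eta)L)\le f((1-\eta)L)$ since $d>p$ and $f\ge 0$. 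I then take expectations in \eqref{eq:subadditive-random} (with $\theta$ in place of $\eta$) on the event $\cur{\min\cur{|\cN^\theta|,|\cM^\theta|}\ge\max\cur{\cspan,m^d}}$ -- whose complement has probability $\le e^{-c\eta(mL)^d}$ and contributes, via $\CPp\les|\cN_{Q_{mL}}|\diam(Q_{mL})^p$ and Cauchy--Schwarz, at most $\les (mL)^p e^{-c\eta(mL)^d/2}$ after dividing by $(mL)^d$, a quantity decreasing in $mL$ hence absorbed -- and divide by $(mL)^d$. Three defect terms remain. The geometric one, $\sum_k\diam(Q_L^{(k)})^p=d^{p/2}m^dL^p$, normalizes to $\les L^{p-d}$. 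The term $\EE\sqa{\CPp\bra{\cN^\theta\cup\cU,\cM^\theta\cup\cV}}$: by \ref{as:growth} (in the form of \cref{remgrowth}) it is $\les(mL)^p\EE\sqa{\min\cur{|\cN^\theta\cup\cU|,|\cM^\theta\cup\cV|}^{1-p/d}}+\EE\sqa{\Mp\bra{\cN^\theta\cup\cU,\cM^\theta\cup\cV}}$; using $\EE\sqa{|\cN^\theta|}=\theta(mL)^d$, the bound $\EE\sqa{|\cU_{Q_L^{(k)}}|^q}\les L^{dq/2}$ from \eqref{eq:bound-moment-q-remainder-uv}, and concavity, the first summand normalizes to $\les\theta^{1-p/d}+L^{-(d-p)/2}\les\eta^{1-p/d}+L^{-(d-p)/2}$, while the matching summand is controlled by the optimal-transport estimates of \cref{sec:ot} -- notably \cref{prop:density-helps-matching}, exploiting that $\cU$, $\cV$ are of size $O((mL)^{d/2})$ and thus negligible against $\cN^\theta$, $\cM^\theta$ -- and normalizes to $\les\eta^{1-p/d}+C(\eta)L^{-\beta'}$, provided those bounds are used at the scale of the subcubes. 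Finally, $\EE\sqa{\Mp(\z,\x^0)}$: I take $z_k$ to be a point of $\cN^\theta$ in $Q_L^{(k)}$ when one exists (edge cost $\le\diam(Q_L^{(k)})^p$) and route the $z_k$ of the exceptional subcubes to the nearest point of $\cN^\theta$; a subcritical-percolation estimate for the empty subcubes gives $\EE\sqa{\Mp(\z,\x^0)}\les m^dL^p$ for $L\ge C(\eta)$, i.e.\ $\les L^{p-d}$ after normalization. Collecting, and using $\theta^{1-p/d}\le d^{1-p/d}\eta^{1-p/d}$, yields \eqref{eq:monotonicity} with $\alpha=1-p/d$ and $\beta=\min\cur{d-p,(d-p)/2,\beta'}>0$ and $C(\eta)$ independent of $m$.

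Then \cref{lem:sub} gives the existence of $\beta_\pP:=\lim_{L\to\infty}f(L)\in[0,\infty)$. For positivity: on the (a.s.\ eventually true, complement negligible) event $\cur{\min\cur{|\cN_{Q_L}|,|\cM_{Q_L}|}\ge\cspan}$, assumption \ref{as:spanning} forces every point of $\cN_{Q_L}$ to be incident to an edge of the optimal solution, so $\CPp\bra{\cN_{Q_L},\cM_{Q_L}}\ge\sum_{x\in\cN_{Q_L}}\dist(x,\cM_{Q_L})^p$; taking expectations and keeping only the points $x$ in the bulk $Q_{L/2}$, for which $\EE\sqa{\dist(x,\cM_{Q_L})^p}\ge\PP\bra{\cM_{Q_L}\cap B(x,1)=\emptyset}=e^{-|B(0,1)|}=:c_0>0$, one obtains $\EE\sqa{\CPp\bra{\cN_{Q_L},\cM_{Q_L}}}\ges L^d$ for $L$ large, whence $\beta_\pP>0$.

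The main obstacle is making the three defect estimates uniform in $m$. Unlike the fixed-partition situation of \cref{lem:subadditivity-finite-partition}, here $K=m^d\to\infty$, so naive union bounds over the subcubes and naive global applications of the transport and matching lemmas produce constants that blow up with $m$. The remedies -- which are precisely what the formulation of \cref{prop:partition} and of the bounds of \cref{sec:ot} are designed to accommodate -- are: (i) apply the transport/matching bounds at the scale of the individual subcubes and recombine, so that the factor $m^d$ is exactly absorbed by the volume normalization $(mL)^d$; (ii) replace union bounds over the $m^d$ subcubes by single global events of probability exponentially small in the total point count; and (iii) estimate $\Mp(\z,\x^0)$ by a percolation bound instead of the trivial diameter bound. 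Carrying out (i)--(iii) rigorously is the technical heart of the proof.
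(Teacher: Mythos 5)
Your overall architecture coincides with the paper's: thinning, partition of $Q_{mL}$ into $m^d$ cubes of side $L$, the subadditivity inequality \eqref{eq:subadditive-random} from \cref{prop:partition}, assumption \ref{as:growth} for the defect, and \cref{lem:sub} to conclude, with separate arguments for boundedness, continuity and positivity (your $\theta$-thinning reparametrization giving exactly $f((1-\eta)L)$ is a harmless variant of the paper's use of \eqref{eq:homogeneity}). The genuine gap is in the one estimate that is the technical heart of the proof, namely \eqref{claimNM}: controlling $\EE\sqa{\Mp(\cN^\theta\cup\cU,\cM^\theta\cup\cV)}$ \emph{uniformly in $m$}. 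You propose to do this with \cref{prop:density-helps-matching} ``at the scale of the subcubes'', letting the volume normalization absorb $m^d$, but neither reading works. Applied globally on $Q_{mL}$, that proposition gives a relative error $(H^{\alpha}/h)^{\beta}$ with $h\sim\eta(mL)^d$ and $H\sim m^dL^{d/2}$, and with the exponents its proof produces ($\alpha=1+p/d$, $\beta=1$ when $p>d/(d-1)$, and similarly otherwise) this is of order $\bigl(m^{p}L^{(p-d)/2}/\eta\bigr)^{\beta}$, which diverges as $m\to\infty$ at fixed $L$; so the issue is not the hypothesis $H\le h$ but that the conclusion is simply not uniform in $m$. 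Applied per subcube, there is no matching to bound: restricted to a single $Q_L^{(k)}$ the two families have unequal cardinalities, with discrepancies of order $L^{d/2}$ per cube (Poisson fluctuations plus $|\cU_{Q_i}|-|\cV_{Q_i}|$), and the whole difficulty is precisely the cost of transporting these local excesses across $Q_{mL}$, i.e.\ over distances up to $mL$, which no recombination of purely local bounds can see. The paper closes this step by invoking \cref{thm:bound-matching}, a separate result proved by a dyadic subadditivity scheme combined with hypergeometric concentration, whose bound is independent of $m$ exactly because it exploits the independence of the pairs $(\cU_{Q_i},\cV_{Q_i})$ across subcubes (its hypothesis \eqref{hypUV} being supplied by \eqref{eq:bound-moment-q-remainder-uv}). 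Citing that result, rather than \cref{prop:density-helps-matching}, is what is needed; as written, your item (i) defers the actual argument, and your closing sentence concedes the point.

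Two smaller remarks. In the positivity argument, assumption \ref{as:spanning} forces only the points of the \emph{smaller} family to be incident to the optimal solution (feasible solutions on $\cK_{n,m}$ span a copy of $\cK_{z,z}$ with $z=\min\{n,m\}$), so the lower bound should read $\CPp\ge\sum_{x}\dist(x,\cM_{Q_L})^p$ with the sum over the smaller family, as in the paper's Step 2; your nearest-neighbour estimate then goes through unchanged. For $\Mp(\z,\x^0)$, your percolation-plus-nearest-point scheme needs some bookkeeping (injectivity of the assignment when several rare empty cubes compete for the same nearby point); the paper's route is more direct: on the good event one samples $m^d$ points of $\cN^\eta$, which are i.i.d.\ uniform on $Q_{mL}$, and bounds $\Mp(\z,\cdot)$ via \eqref{eq:matching-below-wasserstein} and \eqref{eq:transport-iid}, giving $\les m^dL^p$ without any percolation argument.
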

\begin{proof}
% 
% In this section, we let $\Omega $ be a cube, $\rho$ be constant and establish \eqref{eq:limsup-poisson}, with existence of the limit and equality. In fact, in place of working with a fixed cube, we perform a simple rescaling and consider, for $L>0$, two independent Poisson point processes $\cN$, $\cM$, with unit intensity on a cube $Q_L$ and define,
% $$ f(L) :=  \frac{1}{|Q_L|} \EE\sqa{ \CPp\bra{ \cN_{Q_{L}}, \cM_{Q_L}}}.$$
% The thesis will be seen to be equivalent to the existence of the limit
%  %and first show that $\lim_{L \to \infty} f(L) = \beta(d,p)$ exists finite and strictly positive.  Then, we argue by a standard concentration of measure argument that the convergence is indeed complete.

We split the proof into several steps. In the first two steps we establish basic properties of $f$, before
moving to the main argument. This  follows the strategy of the previous section and ultimately relies upon an application of \cref{lem:sub}.\\

\medskip 
\setcounter{proof-step}{0}

\noindent{\emph{Step \stepcounter{proof-step}\arabic{proof-step}. Continuity and upper bound.}}
Writing $z = \min\cur{n,m}$, we first notice that by Assumption \ref{as:growth} and \eqref{eq:matching-iid} of \cref{prop:matching-iid},
\[\EE\sqa{ \CPp \bra{ (X_i)_{i=1}^n, (Y_j)_{j=1}^m}}  \les z^{1-\frac{p}{d}} + \EE\sqa{ \Mp\bra{ (X_i)_{i=1}^n, (Y_j)_{j=1}^m}} \les z^{1-\frac{p}{d}}.\]
This proves on the one hand that $f$ is bounded from above as
\begin{equation}\label{eq:uniform-bound-f} f(L) \les L^{p-d} \EE\sqa{ \min\cur{ |\cN_{Q_L}|, |\cM_{Q_L}|}^{1-\frac{p}{d}} }\les L^{p-d} \EE\sqa{ |\cN_{Q_L}| }^{1-\frac{p}{d}} \les 1.\end{equation}
On the other hand, combining it with dominated convergence it also gives continuity of $f$ thanks to the representation formula
%\les z^{1-p/d} +  \EE\sqa{ \Mp\bra{ (X_i)_{i=1}^z, (Y_j)_{j=1}^z}}.\]
\begin{equation*}\label{eq:conditioning} \begin{split} 
f(L) & = \sum_{n, m=0}^\infty \frac{1}{L^d} \EE\sqa{\CPp(\cN_{Q_{L}}, \cM_{Q_L}) \Big |  |\cN_{Q_L}|=n,  |\cM_{Q_L}|=m} e^{-2L^d} \frac{ (L^d)^{n+m}}{n! m!}\\
& =  L^{p-d} \sum_{n, m=0}^\infty \EE\sqa{\CPp \bra{ (X_i)_{i=1}^n, (Y_j)_{j=1}^m}} e^{-2L^d} \frac{ (L^d)^{n+m}}{n! m!}
\end{split}\end{equation*}
where  $(X_i)_{i=1}^n$, $(Y_j)_{j=1}^m$ are i.i.d.\ points on $Q_1$.
% Writing $z = \min\cur{n,m}$, we have first the trivial bound, that follows from \ref{as:bddegree} and \eqref{eq:min-subgraphs},
% $$ \EE\sqa{ \CPp\bra{ (X_i)_{i=1}^n, (Y_j)_{j=1}^m}} \le \cbdeg d^{p/2} z \les z,$$
% already entailing continuity of $L \mapsto f(L)$, by dominated convergence. We also prove that $f$ is uniformly bounded, using Assumption \ref{as:growth},
% where in the second inequality we used .
% %\\
% %& \les z^{1-p/d} + \EE\sqa{ \W_{Q_1}\bra{ \sum_{i=1}^z \delta_{X_i}, z } } +\EE\sqa{ \W_{Q_1}\bra{ \sum_{i=1}^z \delta_{Y_i}, z } }\\
% %& \les z^{1-p/d}, 
% 
% This yields the uniform upper bound
We also notice that by a simple scaling argument, if $\cN^{\lambda}$, $\cM^{\lambda}$ are independent Poisson processes  of intensity $\lambda>0$ on $Q_L$ then 
\begin{equation}\label{eq:homogeneity}  
\frac{1}{|Q_L|} \EE\sqa{ \CPp\bra{ \cN_{Q_{L}}^{\lambda}, \cM_{Q_L}^{\lambda} }} = \frac{\lambda^{1-\frac{p}{d}} }{|Q_{\lambda L}|} \EE\sqa{  \CPp\bra{ \cN_{Q_{\lambda^{\frac{1}{d}} L}}^1, \cM_{Q_{\lambda^{\frac{1}{d}} L}}^1}} = \lambda^{1-\frac{p}{d}} f(\lambda^{\frac{1}{d}} L).\end{equation}
Combined with  \eqref{eq:uniform-bound-f}, it yields that  for any cube $Q$ and $\lambda>0$,
\begin{equation}\label{eq:general-upper-bound-cube}
\EE\sqa{ \CPp\bra{ \cN_{Q}^{\lambda}, \cM_{Q}^{\lambda} }} \les |Q| \lambda^{1-\frac{p}{d}}. 
\end{equation}

\noindent{\emph{Step \stepcounter{proof-step}\arabic{proof-step}. Lower bound.}}
The spanning assumption \ref{as:spanning} yields that, if e.g.\ $\cspan \le n \le m$, then
$$ \CPp\bra{  (X_i)_{i=1}^n, (Y_j)_{j=1}^m} \ge  \sum_{i=1}^n \min_{j=1, \ldots, m} |X_i - Y_j|^p.$$
The following classical lower bound, e.g.\ proved in \cite[Chapter 2]{steele1997probability},
$$ \EE\sqa{  \min_{j=1, \ldots, m} |X_i - Y_j|^p } \gtrsim m^{-\frac{p}{d}}$$
entails that
$$  \CPp\bra{  (X_i)_{i=1}^n, (Y_j)_{j=1}^m}  \gtrsim m^{-\frac{p}{d}} \cdot n.$$
Writing $Z = \min\cur{|\cN_{Q_L}|, |\cM_{Q_L}|}$, we deduce that
$$ f(L) \gtrsim L^{p-d} \EE\sqa{   \max\cur{|\cN_{Q_L}|,|\cM_{Q_L}|}^{-\frac{p}{d}} Z I_{\cur{ Z\ge \cspan}}}.$$
Let
$$ A =  \cur{ |Q_L|/2 \le Z \le \max\cur{|\cN_{Q_L}|, |\cM_{Q_L}|} \le 3|Q_L|/2}.$$
By \eqref{eq:density-bound-below-Poi} with $\eta=1/2$, we have $\PP(A) \to 1$ as $L \to \infty$. Therefore id $L$ is large enough,
\[  f(L) 
   \gtrsim L^{p-d} \EE\sqa{   \max\cur{|\cN_{Q_L}|,|\cM_{Q_L}|}^{-\frac{p}{d}} Z I_{A}}  \gtrsim L^{p-d} \EE\sqa{ L^{d-p} I_{A}} \gtrsim 1.
\]
%  \eqref{eq:lower-bound} gives
%$$ \EE\sqa{ \T \bra{ (X_i)_{i=1}^n, (Y_j)_{j=1}^m}} \gtrsim \EE\sqa{ \T \bra{ (X_i)_{i=1}^n \cup (Y_j)_{j=1}^m}} \gtrsim (n+m)^{1-p/d},$$
%where the last inequality follows e.g.\ by \cite{steele}.
 %This entails that $f(L) \gtrsim 1$,
%$$ f(L) \gtrsim  L^{p-d} \EE\sqa{ (cN(Q_L), |\cM_{Q_L}|)^{1-p/d} } \gtrsim 1,$$

In the remaining steps we prove the following claim. There exists $\beta=\beta(p,d)>0$ such that for every $\eta\in(0,1/2)$, there exists $C(\eta)>0$
such that, for every $m \in \N$, $m \ge 1$ and $L \ge C(\eta)$,
\begin{equation}\label{toprovemonotonicity}
 f(mL)-f( (1-\eta)L)\les  \eta^{1-\frac{p}{d}} + C(\eta) L^{-\beta}.
\end{equation}
This would conclude the proof of \eqref{eq:limit-cube} by  \cref{lem:sub}.

\noindent{\emph{Step \stepcounter{proof-step}\arabic{proof-step}. Partitioning and exclusion of the event in which few points are sampled.}}  
Using the notation from  \cref{sec:strategy}, we partition $\Omega = Q_{mL}$ into $K=m^d$ cubes $Q_i = Q_L + L z_i \subseteq Q_{mL}$ with $z_i\in \mathbb{Z}^d$
and two independent Poisson processes $\cN$, $\cM$ of unit intensity on $Q_{mL}$.
%Differently from \cref{lem:subadditivity-finite-partition}, here $K = m^d$ is not fixed, which makes the argument slightly more delicate, but the overall structure is quite similar.

We first reduce  to the event
$$ A = \cur{ \min\cur{ |\cN^{\eta}_{Q_{mL}}|,  |\cM^{\eta}_{Q_{mL}}|} \ge \eta |Q_{mL}|/2 },$$
which contains \eqref{eq:event-many-points} 
provided  $L$ is sufficiently large (depending on $\eta$ only, not on $m$).
We first argue that $A^c$ is of small probability. Indeed, using a union bound we find that for every $q \ge 1$, 
\[  \PP(A^c)  \le \PP\bra{ |\cN^{\eta}_{Q_{mL}}| < |Q_{mL}|/2 } +\PP\bra{ |\cM^{\eta}_{Q_{mL}}| < |Q_{mL}|/2 }    \stackrel{\eqref{eq:density-bound-below-Poi}}{\les_{\eta,q}} |Q_{mL}|^{-q}.\]
If $A^c$ holds, we use the trivial bound that follows from Assumption \ref{as:bddegree}:
$$ \CPp( \cN_{Q_{mL}}, \cM_{Q_{mL}}) \les |\cN_{Q_{mL}}| |Q_{mL}|^{\frac{p}{d}},$$
so that for any given $\beta>0$ and  provided  we choose $q$ sufficiently large.
\begin{equation}\label{eq:bad-set} \begin{split} \frac 1 {|Q_{mL}|} \EE\sqa{ \CPp( \cN_{Q_{mL}}, \cM_{Q_{mL}}) I_{A^c}} & \les |Q_{mL}|^{\frac{p}{d}-1} \EE\sqa{ |\cN_{Q_{mL}}|^2}^{\frac{1}{2}} \PP(A^c)^{\frac{1}{2}} \\
& \les_{\eta,q} |Q_{mL}|^{\frac{p}{d}-1} |Q_{mL}| \cdot |Q_{mL}|^{-q} \les_\eta L^{-\beta}.
\end{split}\end{equation}

\noindent If $A$ holds, letting $\z$ be the set of centres of the $m^d$ cubes, inequality \eqref{eq:subadditive-random} reads
\begin{equation}\label{eq:sub-real} \begin{split}  \CPp( \cN_{Q_{mL}}, \cM_{Q_{mL}})  - \sum_{i=1}^{m^d} \CPp(\cN^{1-\eta}_{Q_i}, \cM^{1-\eta}_{Q_i}) 
& \les \CPp(\cN^\eta\cup \cU, \cM^\eta \cup \cV) \\
& \quad + \Mp(\cN^\eta \cup \cU, \z) + m^d L^p.
\end{split}\end{equation}
 Notice that by the properties of the Poisson point process, the law of  $\CPp(\cN^{1-\eta}_{Q_i}$, $\cM^{1-\eta}_{Q_i})$ equals that of $\CPp(\cN^{1-\eta}_{Q_L}$, $\cM^{1-\eta}_{Q_L})$. In particular 
 \[\begin{split} \frac{1}{|Q_L|} \EE\sqa{\CPp(\cN^{1-\eta}_{Q_i}, \cM^{1-\eta}_{Q_i})I_A} & \le \frac{1}{|Q_L|} \EE\sqa{\CPp(\cN^{1-\eta}_{Q_i}, \cM^{1-\eta}_{Q_i})}  \\
& = \frac{1}{|Q_L|} \EE\sqa{\CPp(\cN^{1-\eta}_{Q_L}, \cM^{1-\eta}_{Q_L})}\\
& \stackrel{\eqref{eq:homogeneity}}{=} (1-\eta)^{1-\frac{p}{d}}f((1-\eta)^{\frac{1}{d}}L) \le f((1-\eta)^{\frac{1}{d}}L).\end{split}\]
We thus obtain from \eqref{eq:sub-real},
\begin{multline*}
  \frac{1}{|Q_{mL}|}  \EE\sqa{ \CPp( \cN_{Q_{mL}}, \cM_{Q_{mL}}) I_A}    - f((1-\eta)^{\frac{1}{d}}L) \les \frac{1}{|Q_{mL}|}\EE\sqa{ \CPp(\cN^\eta\cup \cU, \cM^\eta \cup \cV)}\\  
 + \frac{1}{|Q_{mL}|}  \EE\sqa{ \Mp(\cN^\eta \cup \cU, \z) I_A} + L^{p-d}.
\end{multline*}
In the final two steps we prove that 
\begin{equation}\label{claimMz}
  \frac{1}{|Q_{mL}|}  \EE\sqa{ \Mp(\cN^\eta \cup \cU, \z) I_A}\les L^{p-d}
\end{equation}
and 
\begin{equation}\label{claimNM}
 \frac{1}{|Q_{mL}|}\EE\sqa{ \CPp(\cN^\eta\cup \cU, \cM^\eta \cup \cV)}\les \eta^{1-\frac{p}{d}} + C(\eta) L^{-\beta}.
\end{equation}
In combination with \eqref{eq:bad-set} this would conclude the proof of \eqref{toprovemonotonicity}.\\

\noindent{\emph{Step \stepcounter{proof-step}\arabic{proof-step}. Proof of \eqref{claimMz}.}}  
On $A$, we have $|\cN^{\eta}_{Q_{mL}}| \ge m^d$,  thus (randomly) choosing $m^d$ points from $\cN^{\eta}$, we find after relabelling a family $(X_i)_{i=1}^{m^d}$ of points i.i.d.\ and uniformly distributed on $Q_{mL}$. Recalling that $\z$ denotes the set of centres of the $m^d$ cubes $Q_i$ we can bound 
\[  \EE\sqa{ \Mp(\cN^\eta \cup \cU, \z) I_A} \le  \EE\sqa{ \Mp((X_i)_{i=1}^{m^d}, \z)}.\]
We then use \eqref{eq:matching-below-wasserstein} with $n=m^d$ and $\lambda$ the uniform density on the cube $Q_{mL}$, so that
\[ \EE\sqa{ \Mp((X_i)_{i=1}^{m^d}, \z)} \les \EE\sqa{ \W_{Q_{mL}}\bra{ \sum_{i=1}^{m^d} \delta_{X_i}, \frac{m^d}{|Q_{mL}|}}} + \EE\sqa{ \W_{Q_{mL}}\bra{ \mu^\z, \frac{m^d}{|Q_{mL}|}}} \les  m^d L^p,\]
having used \eqref{eq:transport-iid} to bound the first term (the second term is trivially estimated by transporting the mass on each cube $Q_i$ to its center). This proves \eqref{claimMz}.\\
\medskip

\noindent{\emph{Step \stepcounter{proof-step}\arabic{proof-step}. Proof of \eqref{claimNM}.}}
  We use Assumption \ref{as:growth} (on $Q_{mL}$ instead of $Q_1$, see \cref{remgrowth}), so that
\[ \begin{split} \EE\sqa{ \CPp(\cN^\eta\cup \cU, \cM^\eta \cup \cV) } &\les  (mL)^p\EE\sqa{ (|\cN^\eta_{Q_{mL}}|+|\cU|)^{1-\frac{p}{d}}  } + \EE\sqa{ \Mp(\cN^\eta\cup \cU, \cM^\eta \cup \cV)}.
\end{split}\]
We further bound the first contribution using H\"older inequality
\begin{equation}\label{eq:first-contribution-cube}\begin{split} \EE\sqa{ (|\cN^\eta_{Q_{mL}}|+|\cU|)^{1-\frac{p}{d}}  }&    \le   \EE\sqa{  |\cN^{\eta}_{Q_{mL}}|}^{1-\frac{p}{d}}+ \EE\sqa{ |\cU|}^{1-\frac{p}{d}} \\
& \les \eta^{1-\frac{p}{d}} (mL)^{d-p} + \EE\sqa{ |\cU|}^{1-\frac{p}{d}}.
\end{split}\end{equation}
To proceed further, let us recall that in \cref{sec:strategy} we argued that $\bra{ \bra{\cU_{Q_i}, \cV_{Q_i}}}_{i=1}^{m^d}$ are independent (and also independent from $\cN^{\eta}$, $\cM^{\eta}$). Moreover, since the law of each $\bra{\cN^{1-\eta}_{Q_i}, \cM^{1-\eta}_{Q_i}}$ coincides  with that of $\bra{\cN^{1-\eta}_{Q_L}, \cM^{1-\eta}_{Q_L}}$ (up to a translation by $-Lz_i$, since $Q_i = Q_L+Lz_i$) it follows that the same property holds for the processes $\bra{\cU_{Q_i}, \cV_{Q_i}}$: their law coincides with that of  $\bra{\cU_{Q_L}, \cV_{Q_L}}$ (also up to translating by $-Lz_i$).  %is a collection of $m^d$ i.i.d.\ (pairs of) random processes, the induced $m^d$ pairs of point processes $(\cU_{Q_i}, \cV_{Q_i})_{i}$ are also i.i.d.\ (of course  each $\cU_{Q_i}$ in general will not be independent of $\cV_{Q_i}$). 
%
%In particular, every $(\cU_{Q_i}, \cV_{Q_i})$ has the same law as $(\cU_{Q_L}, \cV_{Q_L})$. 

%Indeed, focusing on $\cU$ for definiteness, we have 
%\begin{multline*}
%  |\cU_{Q_L}| \le \cN^{(1-\eta)}(Q_L) - \min\cur{\cN^{(1-\eta')}(Q_L), \cM^{(1-\eta')}(Q_L)}  + \cspan \\
%  \le \abs{  \cM^{(1-\eta')}(Q_L) - \cN^{(1-\eta')}(Q_L)}.
%\end{multline*}
%Therefore, by \eqref{eq:momentPoi}
%\[
% \EE[|\cU_{Q_L}|^q]\les_q \EE\sqa{ \abs{  \cM^{(1-\eta')}(Q_L) - \cN^{(1-\eta')}(Q_L)}^q} +1 \les  |Q_L|^{\frac{q}{2}} +1\les_q |Q_L|^{\frac{q}{2}}.
%\]
%This proves \eqref{eq:bound-moment-q-remainder-uv}. 
Using \eqref{eq:bound-moment-q-remainder-uv} with $q=1$, we obtain
\[ \EE\sqa{ |\cU|} = m^d \EE\sqa{ |\cU_{Q_L}|} \les m^d L^{\frac{d}{2}},\]
thus \eqref{eq:first-contribution-cube} yields
\[ \frac{(mL)^p}{|Q_{mL}|}\EE\sqa{(|\cN^\eta_{Q_{mL}}|+|\cU|)^{1-\frac{p}{d}}  } \les  \eta^{1-\frac{p}{d}} + L^{\frac{p-d}{2}}.\] 
Combining this with \cref{thm:bound-matching}, concludes the proof of \eqref{claimNM}.
\end{proof}

\subsection{H\"older density on a  cube}

%
%\begin{theorem}\label{thm:limit-poisson}
%Let $d \ge 3$, $p\in (0,d)$ and let $\pP = (\cF_{n,n})_{n \in \mathbb{N}}$ be a combinatorial optimization problem over complete bipartite graphs such that assumptions \ref{as:isomorphism}, \ref{as:spanning}, \ref{as:bddegree}, \ref{ass:local-merging} and  \ref{as:growth} hold. For $n>0$, let $\cN^{n \rho}$, $\cM^{n\rho}$ be independent Poisson point processes of intensity $n \rho$ on the unit cube $(0,1)^d$. Then,
%\begin{equation}\label{eq:limit-poisson}  \limsup_{n \to \infty}  \EE\sqa{ \CPp\bra{ \cN^{n \rho}, \cM^{n \rho}}} / n^{1-p/d} \le \beta_{\pP}(p,d) \int \rho^{1-p/d}.\end{equation}
%\end{theorem}

In this section, we still assume that $\Omega = Q$ is a cube, but consider the case of a general H\"older continuous density $\rho$, uniformly bounded from above and below. Up to rescaling and translation, it is sufficient to consider the case $\Omega = (0,1)^d$. 
 
 %The proof is very similar to \cite[Lemma 2.5]{ambrosio-densities}. 
 The proof of \eqref{eq:limsup-poisson} in this case  is obtained by combining  the case of  constant density treated above together with  \cref{lem:subadditivity-finite-partition} and the following claim:
 there exists a constant $C = C(\rho)>0$ such that, for $r<C$ and for every cube $Q  \subseteq (0,1)^d$ with side length $r$ the following inequality holds:
 \begin{equation}\label{eq:inequality-cube-holder-iid}
 \EE\sqa{ \CPp\bra{  \cN^{n \rho}_{Q}, \cM^{n \rho}_{Q} }}\le (1+C^{-1} r^\alpha) \EE\sqa{ \CPp\bra{  \cN^{n \rho(Q)/r^d}, \cM^{n \rho(Q)/r^d}}},
 \end{equation}
 where $\cN^{n \rho(Q)/r^d}$, $\cM^{n \rho(Q)/r^d}$ are two independent Poisson point processes with constant intensity  $n \rho(Q)/r^d$ on the cube $(0,r)^d$, and $\alpha$ denotes the H\"older exponent of $\rho$. 
 
Indeed, assume that the claim holds and let us prove \eqref{eq:limsup-poisson}. Given any $r< C(p,\rho)$ of the form $r = 1/K^{1/d}$, we consider a partition of $(0,1)^d = \bigcup_{k=1}^K Q_k$ into $K$ disjoint sub-cubes of side length $r$, so that
\[\begin{split} \limsup_{n \to \infty} n^{\frac{p}{d}-1} \EE & \sqa{ \CPp\bra{  \cN^{n \rho}, \cM^{n \rho} }} \\
&  \stackrel{\eqref{eq:limsup-poisson-finite-partition}}{\le} \sum_{k=1}^{K} \limsup_{n \to \infty}n^{\frac{p}{d}-1}\EE\sqa{ \CPp\bra{  \cN^{n \rho}_{Q_k}, \cM^{n \rho}_{Q_k} }}\\
& \stackrel{\eqref{eq:inequality-cube-holder-iid}}{\le} (1+C^{-1} r^\alpha)\sum_{k=1}^{K}  \limsup_{n \to \infty}n^{\frac{p}{d}-1}\EE\sqa{ \CPp\bra{  \cN^{n \rho(Q_k)/r^d}, \cM^{n \rho(Q_k)/r^d} }}\\
& = \beta_{\pP}  (1+C^{-1} r^\alpha) \sum_{k=1}^{K}  \rho(Q_k)^{1-\frac{p}{d}} r^p,
\end{split}\]
where the last line follows from  \eqref{eq:limsup-poisson} in the case of a cube and constant intensity. Letting $K \to \infty$, we have that $r\to 0$ and the easily seen convergence
\[ \lim_{K \to \infty} \sum_{k=1}^{K}  \rho(Q_k)^{1-\frac{p}{d}} r^p = \lim_{K \to \infty}\int_{(0,1)^d} \sum_{k=1}^K I_{Q_k} \bra{ \frac{ \rho(Q_k)}{r^d}}^{-\frac{p}{d}} \rho  = \int_{(0,1)^d} \rho^{1-\frac{p}{d}}.\]
This would conclude the proof of \eqref{eq:limsup-poisson} also in this case.

%
%
% Fix $\varepsilon>0$ and consider $r = r(\epsilon)>0$ such that... a partition into $r^{-d}$ cubes $Q_k$ such that...
 
 % $(X_i)_{i=1}^n$ $(Y_j)_{j=1}^m$ are  i.i.d.\ on $Q$ with common density $\rho/\rho(Q)$ and $(X_i^r)_{i=1}^n$ are i.i.d.\ uniformly distributed on $[0,r]^d$, then
% \begin{align}\label{eq:holder-cube-upper} \EE\sqa{ W_{Q, p}^p\bra{  \frac 1 n \sum_{i=1}^n \delta_{X_i}, \frac{\rho}{\rho(Q)} }} & \le \bra{1 + c r^{\alpha} } \EE\sqa{ W_{Q, p}^p\bra{  \frac 1 n \sum_{i=1}^n \delta_{X_i^r}, \frac{1}{|Q|} }}, \quad \text{and}\\
% \label{eq:holder-cube} \EE\sqa{ \Wb_{Q, p}^p\bra{  \frac 1 n \sum_{i=1}^n \delta_{X_i}, \frac{\rho}{\rho(Q)} }} & \ge \bra{1 - c r^{\alpha} }  \EE\sqa{ \Wb_{Q, p}^p\bra{  \frac 1 n \sum_{i=1}^n \delta_{X_i^r}, \frac{1}{|Q|} }}.\end{align}
%
%(1-C r^\alpha)\EE\sqa{ W_{*}^p\bra{  \frac 1 n \sum_{i=1}^n \delta_{X_i^r}, \frac{1}{|Q|} }} \le  \EE\sqa{ W_{*}^p\bra{  \frac 1 n \sum_{i=1}^n \delta_{X_i}, \frac{\rho}{\rho(Q)} }}
%\]
We now prove  \eqref{eq:inequality-cube-holder-iid} for which we   closely follow \cite[Lemma 2.5]{ambrosio2022quadratic}. Up to translating, we may  assume that $Q = (0,r)^d$. We write $\rho_0 = \min_{(0,1)^d} \rho$ and define $\rho^r(x) = \rho(rx)r^d/\rho(Q)$ for $x \in (0,1)^d$, so that $\int_{(0,1)^d} \rho^r = 1$, and for every $x$, $y\in (0,1)^d$,
\[ \rho^r(x) - \rho^r(y) \le \frac{ \nor{\rho}_{C^\alpha}}{\bar{\rho}} r^\alpha |x-y|^\alpha,\]
thus $\nor{ \rho^r - 1 }_{C^\alpha}\les r^\alpha$ if $r$ is sufficiently small.
 We define $S: Q \to Q$ as  $S(x) = r T^{-1}(x/r)$, where $T$ is the map provided by \cref{prop:map-heat-semigroup}.
 It holds $\Lip S = \Lip T^{-1}$,  and $S_\sharp 1/r^d = \rho/\rho(Q)$. Therefore,  $S\bra{ \cN^{n\rho(Q)/r^d}}= (S(X_i))_{i=1}^{N^{n\rho(Q)/r^d}(Q)}$, which is a Poisson point process on $Q$ with intensity $n \rho$, i.e., it has the same law as $\cN^{n \rho}_Q$, and similarly $S\bra{\cM^{n\rho(Q)/r^d}}$ has the same law as $\cM^{n\rho}_Q$. Therefore,
\[\begin{split} \EE\sqa{ \CPp\bra{  \cN^{n \rho}_{Q}, \cM^{n \rho}_{Q} }}& =  \EE\sqa{ \CPp\bra{  S(\cN^{n \rho(Q)/r^d}), S(\cM^{n \rho(Q)/r^d}) }}\\
& \stackrel{\eqref{eq:lipschitz-bound-deterministic}}{\le} (\Lip S)^p \EE\sqa{ \CPp\bra{  \cN^{n \rho(Q)/r^d}, \cM^{n \rho(Q)/r^d}}}.
\end{split}\]
This proves the claim since  $(\Lip S)^p = (\Lip T^{-1})^p \le 1+C r^\alpha$ if $r$ is sufficiently small.
%  hence the point process  have the same law as $(X_i^r)_{i=1}^n$. Moreover, 
%\[ S_\sharp \bra{ \frac{ 1}{n} \sum_{i=1}^n \delta_{X_i}} = \frac 1 n \sum_{i=1}^n \delta _{S(X_i)}\]
%has the same law as $\frac 1 n \sum_{i=1}^n \delta_{X_i^r}$. Since $S(\partial Q) = \partial Q$ and $S(Q) = Q$, it follows from \eqref{eq:W-T} and  \eqref{eq:Wb-T} that
%\[   \EE\sqa{ W_*^p\bra{  \frac 1 n \sum_{i=1}^n \delta_{X_i^r}, \frac{1}{|Q|} }} \le (\Lip T)^p \EE\sqa{ W_*^p\bra{  \frac 1 n \sum_{i=1}^n \delta_{X_i}, \frac{\rho}{\rho(Q)} }},\]
%

\begin{remark}\label{rem:general-domain-holder-density}
Let us notice that 
the fact that $\Omega$ is a cube is not used in the proof of \eqref{eq:inequality-cube-holder-iid}, which therefore holds true for every bounded domain $\Omega$ 
and H\"older continuous density $\rho$ uniformly bounded from above and below. In particular, combining \eqref{eq:inequality-cube-holder-iid} with \eqref{eq:general-upper-bound-cube} we obtain that there exists $C=C(\rho)>0$ such that, for every cube $Q \subseteq \Omega$ with side length $r<C$,
\begin{equation}\label{eq:inequality-cube-holder-general}
\EE\sqa{ \CPp\bra{  \cN^{n \rho}_{Q}, \cM^{n \rho}_{Q} }}\les |Q| n^{1-\frac{p}{d}},
\end{equation}
where the implicit constant depends on $p$, $d$ and $\rho$ only.
\end{remark}

\subsection{General density on a domain}

We prove \eqref{eq:limsup-poisson} for a domain $\Omega$ and a H\"older density $\rho$.  The main difficulty here is that since we rely on the result established in the previous section we need to partition $\Omega$ into cubes. 
 This is accomplished relying on the Whitney-type decomposition provided by \cref{lem:decomp}. We begin by fixing a Whitney decomposition
 $\cQ =(Q_i)_i$ such that every cube $Q_i$ has side length $r< C$, where $C=C(\rho)$ is as in \cref{rem:general-domain-holder-density}.  Then, by \cref{lem:decomp}, for every  sufficiently small $\delta>0$ we  have a finite Borel partition of $\Omega = \bigcup_{k=1}^K \Omega_k$, whose elements are collected into the two disjoint sets $\cQ_{\delta}$, $\cR_{\delta}$.

We fix $\eta \in (0,1/2)$ and use  the construction from \cref{sec:strategy}. We set $\delta=n^{-\gamma}$ for $\gamma>0$ to be fixed below. The first constraint is that \eqref{eq:condition-omega-k-not-too-small} holds with $\lambda=n\rho$ so that we need $n\delta^d\gg1$, i.e.
\begin{equation}\label{eq:gamma-first-condition}
\gamma d<1.
\end{equation}
We first reduce to the case when there are many points in each $\Omega_k$. Defining the event $A$ as in \eqref{eq:A-finiteK} and arguing as in 
  \eqref{eq:union-bound} gives here, for every $q>0$, the inequality
\[ \begin{split} \PP(A^c) & \les_{q,\eta} \sum_{k=1}^K (n |\Omega_k|)^{-q} \les_{q,\eta} n^{-q} \delta^{1-d-dq} = n^{-q(1-d\gamma)+(d-1)\gamma },
\end{split}\]
where we used \eqref{eq:whitney-general-q} with $\alpha = -qd$ in the second inequality. Under the assumption \eqref{eq:gamma-first-condition} this is infinitesimal provided  $q$ is chosen sufficiently large.
Arguing exactly as before we can thus reduce ourselves to the case where $A$ holds. In that case,  both \eqref{eq:event-many-points} and \eqref{eq:event-many-points-each-omegak} hold  and thus by \eqref{eq:subadditive-random}
\begin{equation}\label{eq:exp-sub-explicit-2} \begin{split} \EE\sqa{ \CPp\bra{ \cN^{n\rho}, \cM^{n\rho}}  I_{A} } & - \sum_{k=1}^K  \EE\sqa{ \CPp\bra{ \cN^{(1-\eta)n\rho}_{\Omega_k}, \cM^{(1-\eta)n\rho}_{\Omega_k}}  } \\
& \qquad \les   \EE\sqa{ \CPp\bra{ \cN^{\eta n\rho}\cup \cU, \cM^{\eta n\rho}\cup \cV}} + \sum_{k=1}^K  \diam(\Omega_k)^p.  
\end{split}\end{equation}
We start by considering the left-hand side of \eqref{eq:exp-sub-explicit-2}. For $\Omega_k\in \cR_\delta$ we use the simple bound $ \CPp\bra{ \cN^{(1-\eta)n\rho}_{\Omega_k}, \cM^{(1-\eta)n\rho}_{\Omega_k}} \les \diam(\Omega_k)^p  |\cN^{(1-\eta)n\rho}_{\Omega_k}|$,  to estimate
 \[\begin{split}  n^{\frac{p}{d}-1}\sum_{\Omega_k \in \cR_\delta}  \EE\sqa{ \CPp\bra{ \cN^{(1-\eta)n\rho}_{\Omega_k}, \cM^{(1-\eta)n\rho}_{\Omega_k}}  } & \les  n^{\frac{p}{d}-1} \delta^p \sum_{\Omega_k \in \cR_\delta}  \EE\sqa{ |\cN^{(1-\eta)n\rho}_{\Omega_k}|}  \\
 & \les  n^{\frac{p}{d}-1} \delta^p \cdot \delta^{1-d} \cdot n \delta^d = n^{-\gamma + \frac{p}{d}(1-d\gamma)}.
 \end{split}\]
 This tends to zero provided $\gamma d >p/(p+1)$ which is in particular true if (recall that $p<d$)
 \begin{equation}\label{eq:gamma-second-condition}
 \gamma d > d/(d+1).
 \end{equation}
Notice that this  condition is compatible with \eqref{eq:gamma-first-condition}. Under condition \eqref{eq:gamma-second-condition} we thus have 
\begin{multline*}
 \limsup_{n\to \infty} n^{1-\frac{p}{d}} \sum_{k=1}^K  \EE\sqa{ \CPp\bra{ \cN^{(1-\eta)n\rho}_{\Omega_k}, \cM^{(1-\eta)n\rho}_{\Omega_k}}  } \\
 =\limsup_{n\to \infty}  \sum_{\Omega_k\in \cQ_\delta} n^{1-\frac{p}{d}} \EE\sqa{ \CPp\bra{ \cN^{(1-\eta)n\rho}_{\Omega_k}, \cM^{(1-\eta)n\rho}_{\Omega_k}}  }. 
\end{multline*}
Since every $\Omega_k\in \cQ_\delta$ is a cube, we may combine  \eqref{eq:limsup-poisson} in $\Omega_k$ together with the  precise limit procedure, 
justified by  the domination given in \eqref{eq:inequality-cube-holder-general} (this is why each cube $Q_i$ in the Whitney partition has side length $r<C$), to obtain 
 \begin{equation*}\label{eq:main-contribution-G-delta}
 \limsup_{n\to \infty} n^{1-\frac{p}{d}} \sum_{k=1}^K  \EE\sqa{ \CPp\bra{ \cN^{(1-\eta)n\rho}_{\Omega_k}, \cM^{(1-\eta)n\rho}_{\Omega_k}}  } 
  \le (1-\eta)^{1-\frac{p}{d}} \int_\Omega \rho^{1-\frac{p}{d}}.\end{equation*}
% and is justified by the reverse Fatou lemma (i.e., in the $\limsup$ case): for every $i$, we have the inequality
%  \[ \limsup_{n \to \infty}  n^{\frac{p}{d}-1}\EE\sqa{ \CPp\bra{ \cN^{(1-\eta)n\rho}_{Q_i}, \cM^{(1-\eta)n\rho}_{Q_i}}  } \le (1-\eta)^{1-\frac{p}{d}} \int_{Q_i} \rho^{1-\frac{p}{d}},\] using \eqref{eq:limsup-poisson} for the case of a H\"older continuous density on a cube.
% We also need a domination by a summable sequence, which is provided by inequality . This concludes the estimate for the sum in the left-hand side of \eqref{eq:exp-sub-explicit-2},

\noindent We now turn to  the right-hand side of \eqref{eq:exp-sub-explicit-2}. The last term is easily estimated using directly \eqref{eq:whitney-general-q} with $\alpha=p$. 
%\[ \sum_{k=1}^K \diam(\Omega_k)^p  %\sum_{\Omega_k \in \cR_\delta} \diam(\Omega_k)^p  + \sum_{\Omega_k \in \cQ_\delta} \diam(\Omega_k)^p \\
% \les   %\delta^{1-d} \cdot \delta^{p} + \sum_{\ell \le |\log_2 \delta|} 2^{\ell (d-1)} \cdot 2^{-\ell p} \les 
%\delta^{1-d+p} = n^{\gamma(d-p-1)},
%\]
%which gives in all the three cases a contribution which is infinitesimal after division by $n^{1-p/d}$.
In particular, if $p<d-1$ we  notice that 
\[
 n^{\frac{p}{d}-1}  \sum_{k=1}^K  \diam(\Omega_k)^p\les n^{\frac{p}{d}-1} \delta^{1-(d-p)}=(n\delta^d)^{-(1-\frac{p}{d})} \delta
\]
which goes to zero if \eqref{eq:gamma-first-condition} holds.
%
%provided that
%\begin{equation}\label{eq:gamma-third-condition} (d-p-1)\gamma<1-p/d.\end{equation}

We finally estimate   the first term in the right-hand side of  \eqref{eq:exp-sub-explicit-2}.  We argue as in  \eqref{eq:last-term-finite-partition} and \eqref{eq:last-term-easy-finite-partition} 
which we combine with \cref{prop:density-helps-matching-partition} 
to obtain that for every $\eps>0$,
\begin{multline}\label{almostpartition}
 n^{1-\frac{p}{d}}\EE\sqa{ \CPp\bra{ \cN^{\eta n\rho}\cup \cU, \cM^{\eta n\rho}\cup \cV}}\les  \EE\sqa{|\cU|/n}^{1-\frac{p}{d}}+\eta^{1-\frac{p}{d}}\\
 +C(\eta,\eps,\gamma) n^{\eps}\bra{ \bra{\max\cur{n^{\frac{p}{d}}\delta^{p+1},n^{\frac{2}{d}} \delta^3}}^{\alpha} +\bra{n\delta^d}^{-\beta}}.
\end{multline}
Using \eqref{eq:whitney-general-q} with $\alpha = d/2<d-1$ we have 
\[
 \EE\sqa{|\cU|/n}^{1-\frac{p}{d}}\les   \bra{\sum_{k=1}^K (|\Omega_k|/n)^{\frac{1}{2}}}^{1-\frac{p}{d}}  \les n^{\frac{1}{2}} \delta^{1-\frac{d}{2}} = \bra{\delta (n\delta^d)^{-\frac{1}{2}}}^{1-\frac{p}{d}}. \]
Under condition  \eqref{eq:gamma-first-condition} this term goes to zero. Regarding the term inside brackets in \eqref{almostpartition} we notice that if $q=\max\cur{p,2}$, then under condition \eqref{eq:gamma-first-condition},
\[
 \max\cur{n^{\frac{p}{d}}\delta^{p+1},n^{\frac{2}{d}} \delta^3}=n^{-\gamma+ \frac{q}{d}(1-d\gamma)}.
\]
In particular, as above  this term goes to zero under condition \eqref{eq:gamma-second-condition}.\\
We can thus choose first $\gamma$ satisfying both \eqref{eq:gamma-first-condition} and \eqref{eq:gamma-second-condition} and then $\eps=\eps(\alpha,\beta,\gamma,p)>0$ such that 
\[
 \lim_{n\to \infty}  n^{\eps}\bra{ \bra{\max\cur{n^{\frac{p}{d}}\delta^{p+1},n^{\frac{2}{d}} \delta^3}}^{\alpha} +\bra{n\delta^d}^{-\beta}}=0. 
\]
With this choice we find 
\[
 \limsup_{n\to \infty }n^{1-\frac{p}{d}}\EE\sqa{ \CPp\bra{ \cN^{\eta n\rho}\cup \cU, \cM^{\eta n\rho}\cup \cV}}\les \eta^{1-\frac{p}{d}},
\]
from which we conclude the proof of \eqref{eq:limsup-poisson} after sending $\eta\to 0$.

\subsection{Uniform density on a domain} In this last case, we assume that $\Omega$ is a bounded domain with $C^2$ boundary and $\rho = I_{\Omega}/|\Omega|$ is uniform. 
After a simple rescaling, it is more convenient to argue with Poisson point processes $\cN^n_{\Omega}$, $\cM^{n}_\Omega$ with constant intensity $n$ (on $\Omega$) so that the thesis reduces to 
\[ \lim_{n \to \infty} n^{\frac{p}{d}-1}\EE\sqa{ \CPp\bra{ \cN^{n}_{\Omega}, \cM^{n}_{\Omega}}}  =  \beta_{\pP} |\Omega|.\]
Since the boundary of $\Omega$ is $C^2$, we can apply the result from the  previous section and obtain the upper bound
\[ \limsup_{n \to \infty} n^{\frac{p}{d}-1}\EE\sqa{ \CPp\bra{ \cN^{n}_{\Omega}, \cM^{n}_{\Omega}}}  \le \beta_{\pP} |\Omega|.\]

To prove the corresponding lower bound, we follow closely the argument of \cite[Theorem 24]{BaBo}: we fix a cube $Q$ sufficiently large so that $\Omega \subseteq Q$ and introduce a Poisson point process
$\cN^n_{Q}$ with intensity $n$ on $Q$. For $k=2,\ldots, K$, let  $\Omega_k$ be the connected components of $Q\backslash \Omega$ so that  $Q\backslash \Omega=\cup_{k=2}^K\Omega_k$. Notice that for every $k$  either $\partial \Omega_k$ is $C^2$ or is the union of $\partial Q$ and a $C^2$ surface. 
In particular each $\Omega_k$ satisfies 
\eqref{eq:green-kernel-bound}.  Using \eqref{eq:liminf-poisson-finite-partition} with the decomposition $Q = \Omega \cup \bigcup_{k=2}^K \Omega_k$, we obtain
\[ \begin{split} \beta_{\pP} |Q|=\liminf_{n\to \infty}n^{\frac{p}{d}-1}\EE\sqa{ \CPp\bra{ \cN^{n}_{Q}, \cM^{n}_Q}}  & \le  \liminf_{n \to \infty} n^{\frac{p}{d}-1}\EE\sqa{ \CPp\bra{ \cN^{n}_{\Omega}, \cM^{n}_{\Omega}}}  \\
& \quad + \sum_{k=2}^K\limsup_{n \to \infty} n^{\frac{p}{d}-1}\EE\sqa{ \CPp\bra{ \cN^{n}_{ \Omega_k}, \cM^{n}_{ \Omega_k}}}.
\end{split}\]
Now for every $k$, using \eqref{eq:limsup-poisson} we have 
%For the right-hand side, we use the fact that $Q \setminus \Omega$ also satisfies the assumptions of the previous section (since the boundary has two connected components, one convex and the other $C^2$) and the restriction  $\cN^{n}_{\Omega \setminus Q}$ is a poisson Point process on $\Omega\setminus Q$ with constant intensity $n$, so that
\[ \limsup_{n \to \infty} n^{\frac{p}{d}-1}\EE\sqa{ \CPp\bra{ \cN^{n}_{ \Omega_k}, \cM^{n}_{ \Omega_k}}}  \le  \beta_{\pP}|\Omega_k|.\]
Therefore,
\[ \liminf_{n \to \infty} n^{\frac{p}{d}-1}\EE\sqa{ \CPp\bra{ \cN^{n}_{\Omega}, \cM^{n}_{\Omega}}}   \ge \beta_{\pP} |Q| - \beta_\pP \sum_{k=2}^K |\Omega_k| = \beta_{\pP} | Q|,\]
which is the desired conclusion.

\section{Proof of main result}\label{sec:main}

From \cref{thm:limit-poisson}, we deduce our main result \cref{thm:main}. We follow a relatively standard strategy,  using de-Poissonization and concentration of measure arguments with the 
 necessary adjustments to deal with our setting. First, we argue that \cref{thm:limit-poisson}  yields similar convergence in the case of a deterministic number of independent points. %, to deduce convergence from \cref{thm:limit-poisson} . % thus completing the proof of \cref{thm:main}. % the optimal matching problem of $n$ independent and uniformly distributed points in the unit cube $Q_1$. % $of $L^d$ independent and  uniformly distributed points (in $Q_L$) to Lebesgue, or equivalently

\begin{proposition}\label{prop:limit-mean-iid}
Let $d \ge 3$, $p\in [1,d)$ and let $\pP = (\cF_{n,n})_{n \in \mathbb{N}}$ be a combinatorial optimization problem over complete bipartite graphs such that assumptions \ref{as:isomorphism}, \ref{as:spanning}, \ref{as:bddegree}, \ref{ass:local-merging} and  \ref{as:growth} hold. Then, with $\beta_{\pP}\in (0, \infty)$ given by
\cref{thm:limit-poisson} the following hold.

Let $\Omega \subseteq \R^d$ be a bounded domain with Lipschitz boundary and such that \eqref{eq:green-kernel-bound} holds and let $\rho$ be a H\"older continuous probability density on $\Omega$, uniformly strictly positive and bounded from above. 

Given i.i.d.\ random variables $(X_i)_{i=1}^\infty$, $(Y_j)_{j=1}^\infty$ with common law $\rho$, we have 
\begin{equation}\label{eq:limit-mean-main-iid}
\limsup_{n \to \infty} n^{\frac{p}{d}-1} \EE\sqa{ \CPp\bra{ (X_i)_{i=1}^n, (Y_j)_{j=1}^n}}  \le \beta_{\pP} \int_{\Omega} \rho^{1-\frac{p}{d}}.\end{equation}

Moreover, if $\rho$ is the uniform density and $\Omega$  is either  a cube or has $C^2$ boundary, the limit exists and is equal to the right-hand side. %left-hand side is a limit and coincides with the right-hand side. %exists finite and strictly positive (and coincides with \eqref{eq:limit-poisson}).
\end{proposition}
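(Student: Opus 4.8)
The plan is to de-Poissonize \cref{thm:limit-poisson}. Realize the binomial model (with a deterministic number $n$ of points) and the Poisson model on a single infinite i.i.d.\ sequence $(X_i)_{i=1}^\infty$, $(Y_j)_{j=1}^\infty$ with common law $\rho$: given an independent Poisson variable $N_m$ with mean $m$, set $\cN^{m\rho}=(X_i)_{i=1}^{N_m}$, and likewise $\cM^{m\rho}$ from an independent copy. Write $g(n)=\EE[\CPp((X_i)_{i=1}^n,(Y_j)_{j=1}^n)]$ and $\tilde g(m)=\EE[\CPp(\cN^{m\rho},\cM^{m\rho})]$. Throughout I use the trivial bound $\CPp(\x,\y)\le\cbdeg\min\{|\x|,|\y|\}\diam(\Omega)^p$ coming from \ref{as:bddegree}, the Poisson concentration estimates \eqref{eq:momentPoi}--\eqref{eq:density-bound-below-Poi}, and the a priori bound $\EE[\Mp((X_i)_{i=1}^k,(Y_j)_{j=1}^{k'})]\lesssim\min\{k,k'\}^{1-p/d}$ for i.i.d.\ points (already invoked in Step~1 of the proof of \cref{prop:uniform}). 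The key point is that the discrepancy between $n$ and the independent Poisson counts can be kept of order $\sqrt{n\log n}$, so that transferring a solution from one model to the other through a \emph{single} application of \ref{ass:local-merging} costs at most $O\bigl((\sqrt{n\log n})^{1-p/d}\bigr)=o(n^{1-p/d})$, which is where the hypothesis $p<d$ enters.

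\emph{Upper bound.} Fix a large constant $C_0$ and set $n_-=n-C_0\sqrt{n\log n}$. On the event $B_-$ that $n-N_{n_-}$ and $n-M_{n_-}$ both lie in $[\tfrac12 C_0\sqrt{n\log n},\,2C_0\sqrt{n\log n}]$ --- which by \eqref{eq:density-bound-below-Poi} has probability $\ge 1-n^{-q}$ for every $q$ once $C_0$ is large --- we have $\cN^{n_-\rho}\subseteq(X_i)_{i=1}^n$, $\cM^{n_-\rho}\subseteq(Y_j)_{j=1}^n$. Let $z=\min\{N_{n_-},M_{n_-}\}$; by \eqref{eq:min-subgraphs} an optimal solution of $\CPp(\cN^{n_-\rho},\cM^{n_-\rho})$ spans a $\cK_{z,z}$, covering $z$ of the $X$-points and $z$ of the $Y$-points, and leaves $n-z\in[\tfrac12 C_0\sqrt{n\log n},2C_0\sqrt{n\log n}]$ uncovered points on each side. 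Gluing this solution, via \ref{as:isomorphism} and \ref{ass:local-merging}, to an optimal solution of $\pP$ on those $n-z$ leftover $X$-points and $n-z$ leftover $Y$-points produces a feasible solution for $\cK_{n,n}$, so that on $B_-$
\[
\CPp\bigl((X_i)_{i=1}^n,(Y_j)_{j=1}^n\bigr)\le\CPp(\cN^{n_-\rho},\cM^{n_-\rho})+\CPp(\x',\y')+C(\cbdeg,\cmerge)\diam(\Omega)^p,
\]
with $|\x'|=|\y'|=n-z$. Bounding $\CPp(\x',\y')\le\creg\bigl((n-z)^{1-p/d}+\Mp(\x',\y')\bigr)$ by \ref{as:growth} (see \cref{remgrowth}) and taking expectations --- the contribution of $B_-^c$ being $\le\EE[|\cN^{n\rho}|^2]^{1/2}\PP(B_-^c)^{1/2}=o(1)$ --- we obtain $g(n)\le\tilde g(n_-)+O\bigl((\sqrt{n\log n})^{1-p/d}\bigr)+o(1)$. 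Dividing by $n^{1-p/d}$, using $n_-/n\to1$ and the limsup bound \eqref{eq:limsup-poisson}, gives $\limsup_n n^{p/d-1}g(n)\le\beta_\pP\int_\Omega\rho^{1-p/d}$, that is, \eqref{eq:limit-mean-main-iid}.

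\emph{Lower bound, for the last assertion.} Now $\rho$ is uniform and $\Omega$ is a cube or has $C^2$ boundary, so \eqref{eq:limsup-poisson} holds as a genuine limit: $n^{p/d-1}\tilde g(n)\to\beta_\pP\int_\Omega\rho^{1-p/d}$. Set $n_+=n+C_0\sqrt{n\log n}$. On the event $B_+$ that $N_{n_+}-n$ and $M_{n_+}-n$ both lie in $[\tfrac12 C_0\sqrt{n\log n},2C_0\sqrt{n\log n}]$ (probability $\ge1-n^{-q}$) we have $(X_i)_{i=1}^n\subseteq\cN^{n_+\rho}$, $(Y_j)_{j=1}^n\subseteq\cM^{n_+\rho}$. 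Gluing an optimal solution of $\CPp((X_i)_{i=1}^n,(Y_j)_{j=1}^n)$ to an optimal solution of $\pP$ on $\min\{N_{n_+}-n,M_{n_+}-n\}$ of the extra $X$-points and the same number of extra $Y$-points (so as to cover $\min\{N_{n_+},M_{n_+}\}$ vertices on each side) yields, via \ref{as:isomorphism}, \ref{ass:local-merging} and \ref{as:growth}, on $B_+$
\[
\CPp(\cN^{n_+\rho},\cM^{n_+\rho})\le\CPp\bigl((X_i)_{i=1}^n,(Y_j)_{j=1}^n\bigr)+\creg\bigl((2C_0\sqrt{n\log n})^{1-p/d}+\Mp(\x'',\y'')\bigr)+C\diam(\Omega)^p,
\]
with $|\x''|,|\y''|\lesssim\sqrt{n\log n}$. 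Taking expectations (again $B_+^c$ contributes $o(1)$, and $\EE[\Mp(\x'',\y'')\mathbf{1}_{B_+}]\lesssim(\sqrt{n\log n})^{1-p/d}$ by the i.i.d.\ matching estimate) gives $\tilde g(n_+)\le g(n)+o(n^{1-p/d})$, hence $\liminf_n n^{p/d-1}g(n)\ge\beta_\pP\int_\Omega\rho^{1-p/d}$. Together with the upper bound, this shows the limit exists and equals the right-hand side.

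The only genuinely delicate point --- which I expect to be the main obstacle --- is the bookkeeping in the two gluing steps: one must peel off exactly the right number of already-matched points so that \ref{ass:local-merging} is applied to two honest non-empty feasible solutions on complete bipartite graphs of \emph{equal} size, verify that the residual auxiliary instance always has $\Theta(\sqrt{n\log n})$ points (large enough to be non-trivial by \ref{as:spanning}, yet small enough that \ref{as:growth} together with the i.i.d.\ matching bound controls its cost by $O((\sqrt{n\log n})^{1-p/d})=o(n^{1-p/d})$), and dispatch the low-probability events $B_\pm^c$ with the crude degree bound.
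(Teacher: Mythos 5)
There is a genuine gap in the upper-bound half of your argument, and it sits exactly at the point you flagged as ``bookkeeping''. On $B_-$ the optimal Poisson solution covers only $z=\min\{N_{n_-},M_{n_-}\}$ points per side, so on the larger side the residual family $\y'$ (say) consists of the i.i.d.\ points $(Y_j)_{j=M_{n_-}+1}^n$ \emph{together with} the $|N_{n_-}-M_{n_-}|\sim\sqrt n$ Poisson points that the optimizer chose to leave uncovered. Nothing is known about the spatial distribution of these optimizer-selected points (for the assignment problem, for instance, they are biased towards regions of local excess), so the i.i.d.\ matching estimate \eqref{eq:matching-iid} does not apply to $\Mp(\x',\y')$, and your claim $\EE\sqa{\Mp(\x',\y')\mathbf 1_{B_-}}\les(\sqrt{n\log n})^{1-\frac pd}$ is unjustified. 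Treating them as adversarial — which is what the paper does, via the processes $\cU,\cV$ with only cardinality moment bounds — the best available estimate is \cref{prop:density-helps} (or \cref{prop:density-helps-matching}), which for $\sim\sqrt n$ bad points absorbed into a reservoir of only $h\sim\sqrt{n\log n}$ i.i.d.\ points gives an error of order $\sqrt n\,(\log n)^{-\frac p{2d}}$; this is $o(n^{1-\frac pd})$ only when $p\le d/2$. So your scheme with an $O(\sqrt{n\log n})$ buffer silently re-imposes the Barthe--Bordenave restriction $p< d/2$ and misses precisely the new range $d/2\le p<d$; the constraint $p<d$ does not enter where you placed it (the harmless $(\sqrt{n\log n})^{1-p/d}$ term) but in controlling the adversarial leftovers.

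The paper's proof avoids this by keeping a \emph{macroscopic} reservoir: it compares the binomial model with Poisson processes of intensity $(1-\eta)n\rho$, so that on the good event the $\sim\sqrt n$ uncovered points $\cU,\cV$ of the Poisson optimizer are merged with $\sim\eta n$ fresh i.i.d.\ points, and \cref{prop:density-helps-matching} with $h\sim\eta n$, $H\sim\sqrt n$ (this is where $p<d$ is used, through $H^{1+p/d}\ll h$) bounds the residual cost by $\eta^{1-\frac pd}n^{1-\frac pd}$ up to lower-order terms; one then sends $\eta\to0$ \emph{after} $n\to\infty$, which is why the error cannot be made $o(n^{1-p/d})$ for fixed buffer size. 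Your lower-bound direction, by contrast, is essentially sound and close to the paper's brief ``similar argument'' remark: there the extra points $(X_i)_{i=n+1}^{N_{n_+}}$, $(Y_j)_{j=n+1}^{M_{n_+}}$ are genuinely i.i.d.\ and you are free to choose equal-sized i.i.d.\ subfamilies for the auxiliary instance, so \ref{as:growth} together with \eqref{eq:matching-iid} does give an $O((\sqrt{n\log n})^{1-p/d})$ error. To repair the upper bound you should replace the $\sqrt{n\log n}$ buffer by an $\eta n$ thinning-type reservoir and invoke \cref{prop:density-helps-matching}, exactly as in the proof of \cref{thm:limit-poisson}.
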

\begin{remark}\label{rem:phom}
 The only properties we used to established \cref{prop:limit-mean-iid} are the subadditivity property \eqref{eq:sub}, the growth condition \eqref{eq:upper-bound-deterministic} as well as the $p-$homogeneity of the problem. In particular it holds for every bipartite $p-$homogeneous functional $\C$   satisfying
 \begin{itemize}
  \item  For  every $\Omega\subset \R^d$ and every partition $\Omega = \cup_{k=1}^K \Omega_k$,  $K \in \mathbb{N}$, if $\x^0$, $\y^0 \subseteq \Omega$ are such that $\min\cur{|\x^0|, |\y^0|} \ge \max\cur{\cspan, K}$,
 for every $k =1, \ldots, K$,  $\x^k$, $\y^k \subseteq \Omega_k$ are such that  $|\x^k| = |\y^k| =n_k$, with either $n_k \ge \cspan$ or $n_k = 0$ and $\z=(z_k)_{k=1}^K$ with $z_k \in \Omega_k$, for every $k=1,\ldots, K$ then 
 \begin{equation}\label{Sp}  \C\bra{ \x^0 \cup \bigcup_{k=1}^K \x^k, \y^0 \cup  \bigcup_{k=1}^K \y^k }  - \sum_{k=1}^K \C(\x^k, \y^k) 
  \lesssim  \C(\x^0, \y^0)+  \Mp(\z, \x^0)+  \sum_{k=1}^K   \diam(\Omega_k)^p.
 \end{equation}
 \item There exists $\creg \ge 0$ such that, for every $\x, \y \subseteq (0,1)^d$, we have
\begin{equation} \label{Rp} \C(\x, \y) \le \creg \bra{ \min\cur{|\x|^{1-\frac{p}{d}}, |\y|^{1-\frac{p}{d}}}+ \Mp(\x,\y)}.\end{equation}
 \end{itemize}

\end{remark}

\begin{proof}
The proof is similar to the proof of \cref{lem:subadditivity-finite-partition}. 
%The argument is a variant of \cite[Proposition 6.1]{goldman2021convergence}, allowing for an error term in the monotonicity assumption, that we bound using \cref{prop:density-helps}. 
% For $n$, $m\in \mathbb{N}$, we set
% $$ f(n, m) = \EE\sqa{ \C_\pP( (X_i)_{i=1}^{n},  (Y_j)_{j=1}^{m} )},$$
% and for brevity $f(n) = f(n,n)$.
We set $\x=(X_i)_{i=1}^{n}$ and $\y=(Y_j)_{j=1}^{n}$. 
Let $\eta\in(0,1/2)$ and consider two independent copies $\cN^{(1-\eta)n\rho}$ and $\cM^{(1-\eta)n\rho}$ of Poisson point processes with intensity $(1-\eta)n\rho$ on $\Omega$. 
We claim that 
\begin{equation}\label{claimdepoi}
 \limsup_{n\to \infty}  n^{\frac{p}{d}-1} \EE\sqa{ \CPp\bra{ \x, \y}}- \limsup_{n\to \infty} n^{\frac{p}{d}-1} \EE\sqa{ \CPp\bra{ \cN^{(1-\eta)n\rho}, \cM^{(1-\eta)n \rho}}}\les \eta^{1-\frac{p}{d}}.
\end{equation}
 By \cref{thm:limit-poisson}, this would conclude the proof of \eqref{eq:limit-mean-main-iid} since $\eta$ is arbitrary. We introduce the random variables $N=\max\cur{n-|\cN^{(1-\eta)n\rho}|,0}$ and $M=\max\cur{n-|\cN^{(1-\eta)n\rho}|,0}$
and notice that by the concentration properties of Poisson random variables, also $N$ and $M$ have the concentration property. Moreover, the event 
\[
 A=\cur{ |N-\eta n|\le \eta n/2}\cap \cur{|M- \eta n|\le \eta/2}
\]
is of overwhelming small probability and thus arguing exactly as in the proof of \cref{lem:subadditivity-finite-partition} we have 
\[
 \limsup_{n\to \infty}  n^{\frac{p}{d}-1}  \EE\sqa{ \CPp\bra{ \x, \y}}=\limsup_{n\to \infty} n^{1-\frac{p}{d}} \EE\sqa{ \C_\pP( \x,  \y ) I_A}.
\]
We let $\cN= (X_i)_{i=n-N+1}^n$ and $\cM=(Y_j)_{j=n-M+1}^n$ so that  in $A$, $\x=\cN^{(1-\eta)n\rho}\cup \cN$, $\y=\cM^{(1-\eta)n\rho}\cup \cM$ and  $\min\cur{|\cN|,|\cM|}\ges \eta n$. \\
In $A$ we let $\x^1\subset \cN^{(1-\eta)n\rho}$ and $\y^1\subset \cM^{(1-\eta)n\rho}$ be such that $|\x^1|=|\y^1|$ and 
$$ \C_\pP( \cN^{(1-\eta)n\rho},  \cM^{(1-\eta)n\rho} ) = \C_\pP(\x^1, \y^1 ).$$
We then set $\cU=\cN^{(1-\eta)n\rho}\backslash \x^1$, $\cV=\cM^{(1-\eta)n\rho}\backslash \y^1$, $\x^0=\cU\cup \cN$ and $\y^0=\cV\cup \cM$. 
Using \cref{lem:sub} on $\Omega$ with  $K=1$, i.e.\ a trivial partition, we find that in $A$,
\begin{multline*}
 \C_\pP(\x,\y)-\C_\pP( \cN^{(1-\eta)n\rho},  \cM^{(1-\eta)n\rho} )\les \C_\pP(\x^0,\y^0) +1\\
 \stackrel{\eqref{eq:upper-bound-deterministic}}{\les} \min\cur{ |\x^0|^{1-\frac{p}{d}}, |\y^0|^{1-\frac{p}{d}} } + \Mp(\x^0, \y^0)  + 1.
\end{multline*}
 Multiplying by $I_A$, taking expectation and arguing exactly as in \eqref{tofinishfinite} (using in particular \cref{prop:density-helps-matching}) we conclude the proof of  \eqref{claimdepoi}.\\

With a similar argument one can prove that 
\[
 \liminf_{n\to \infty} n^{\frac{p}{d}-1} \EE\sqa{ \CPp\bra{ \cN^{n\rho}, \cM^{n \rho}}}\le \liminf_{n\to \infty} n^{\frac{p}{d}-1} \EE\sqa{ \CPp\bra{ \x, \y}},
\]
which concludes the proof of \cref{prop:limit-mean-iid}.
\end{proof}

To conclude the proof of \cref{thm:main}, we prove a concentration bound, which improves \eqref{eq:limit-mean-main-iid} to complete convergence. The argument requires minimal assumptions on the combinatorial optimization problem and relies essentially on the validity 
of a Poincar\'e inequality.

\begin{proposition}\label{prop:concentration}
Let $d \ge 3$, $p\in [1,d)$ and let $\pP = (\cF_{n,n})_{n \in \mathbb{N}}$ be a combinatorial optimization problem over complete bipartite graphs such that assumptions  \ref{as:bddegree} and  \ref{as:growth} hold. Let $\Omega \subseteq \R^d$ be a bounded domain with Lipschitz boundary and let $\rho$ be a probability density on $\Omega$, uniformly strictly positive and bounded from above. Let $(X_i)_{i=1}^\infty$, $(Y_j)_{j=1}^\infty$ be i.i.d.\ random variables with common law $\rho$.

For every $q \ge 2$ and  $\varepsilon>0$,
\begin{equation} \label{eq:concentration-cost-n}
\PP\bra{ n^{\frac{p}{d}-1}\abs{ \CPp\bra{ (X_i)_{i=1}^n, (Y_j)_{j=1}^n} - \EE\sqa{\CPp\bra{ (X_i)_{i=1}^n, (Y_j)_{j=1}^n}}}  > \varepsilon} \le_q \frac{1}{ \varepsilon^q n^{\frac{\alpha q}{2}}},\end{equation}
with 
$$ \alpha = \begin{cases} 1-2/d & \text{if $p \in [1,2)$,}\\
1-p/d & \text{if $p \ge 2$.}\end{cases}$$
In particular,  complete (hence $\PP$-a.s.) convergence holds:
$$ \lim_{n \to \infty} n^{\frac{p}{d}-1}\abs{ \CPp\bra{ (X_i)_{i=1}^n, (Y_j)_{j=1}^n} - \EE\sqa{\CPp\bra{ (X_i)_{i=1}^n, (Y_j)_{j=1}^n}}}  = 0.$$
\end{proposition}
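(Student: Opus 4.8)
The plan is to apply a standard "bounded differences in $L^q$" (Efron--Stein / martingale) argument, exploiting the fact that modifying the position of a single point $X_i$ or $Y_j$ changes the optimal cost $\CPp$ by a controlled amount. First I would set up the notation: write $F_n = \CPp\bra{(X_i)_{i=1}^n, (Y_j)_{j=1}^n}$ and let $(X_i', Y_j')$ be independent copies, so that $F_n^{(i)}$ denotes the cost after replacing $X_i$ by $X_i'$ (and similarly for the $Y$'s). The first key step is a deterministic stability estimate: if one removes a single point from each side (using \eqref{eq:min-subgraphs} this reduces to a problem of size $n-1$) and then re-inserts the two replacement points, one can build a feasible competitor. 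Concretely, one should show
\[
\abs{ F_n - F_n^{(i)} } \les \diam(\Omega)^p + \bra{ \text{local cost near } X_i \text{ in an optimal solution} } + \bra{\text{same near } X_i'},
\]
where the "local cost near a point $x$" in an optimal solution is the sum of $\dist(x,y)^p$ over the at most $\cbdeg$ neighbours $y$ of $x$ — this uses \ref{as:bddegree}. The reinsertion of $X_i'$ costs at most $\diam(\Omega)^p$ per new edge (again at most $\cbdeg$ of them), and the growth assumption \ref{as:growth} is not even strictly needed here, only \ref{as:bddegree}, though \ref{as:growth} is used to control $\EE[F_n]$ and the global scale.

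The second step is to turn this into an $L^q$ bound. Using the Efron--Stein-type / Burkholder inequality for sums of martingale differences (or the simpler Marcinkiewicz--Zygmund inequality applied to the Doob martingale of $F_n$ with respect to the filtration generated by $X_1,\ldots,X_n,Y_1,\ldots,Y_n$), one obtains, for $q \ge 2$,
\[
\EE\sqa{ \abs{ F_n - \EE[F_n] }^q } \les_q \EE\sqa{ \bra{ \sum_{i=1}^n \EE_i\sqa{ (F_n - F_n^{(i)})^2 } + \sum_{j=1}^n \EE_j\sqa{(F_n - \tilde F_n^{(j)})^2} }^{q/2} },
\]
where $\EE_i$ denotes conditional expectation. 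Each summand $\EE_i[(F_n - F_n^{(i)})^2]$ is bounded, via the step-one estimate and the fact that the typical nearest-neighbour distance scale is $n^{-1/d}$, by a quantity of order $n^{-2p/d}$ when $p \ge 2$, and of order $n^{-2/d}$ when $1 \le p < 2$ (in the latter regime $(\diam \Omega)^p$ does not help directly and one must instead bound $(F_n - F_n^{(i)})^2 \le \diam(\Omega)^p \cdot |F_n - F_n^{(i)}|$, using the first power of the difference which is of order $n^{-1/d}$ — this is exactly the source of the two-case split in $\alpha$). Hence the bracketed sum is of order $n \cdot n^{-2\alpha/(1) \cdot \text{(something)}}$; more precisely one gets $\sum_i \EE_i[\cdots] \les n \cdot n^{-1 - \alpha} \cdot n^{2(1-p/d)}$ after accounting for the global normalization, so that $n^{p/d-1}(F_n - \EE[F_n])$ has $q$-th moment $\les_q n^{-\alpha q/2}$. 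Then \eqref{eq:concentration-cost-n} follows from Markov's inequality, and complete convergence follows from Borel--Cantelli once $q$ is taken large enough that $\sum_n n^{-\alpha q/2} < \infty$.

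The main obstacle I anticipate is the deterministic stability estimate in step one in the regime $1 \le p < 2$: there the crude bound "changing one point changes the cost by $O(\diam(\Omega)^p)$" is too lossy, and one genuinely needs that the *increment* $|F_n - F_n^{(i)}|$ is of the order of a single edge length $|X_i - X_i'| + (\text{nearest-neighbour scale})$ rather than its $p$-th power, which forces working with $|F_n - F_n^{(i)}|^{2} \le \diam(\Omega)^p |F_n - F_n^{(i)}|$ and then estimating $\EE_i[|F_n - F_n^{(i)}|]$ by a nearest-neighbour-distance expectation of order $n^{-1/d}$ (as in the classical lower-bound estimate $\EE[\min_j |X_i - Y_j|^p] \gtrsim m^{-p/d}$ recalled in the proof of \cref{prop:uniform}, but now used as an upper bound for the first power). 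Carefully justifying that the re-optimization after a single-point swap only disturbs a bounded-degree local neighbourhood — so that the perturbation is truly local — is where assumptions \ref{as:bddegree} and the identity \eqref{eq:min-subgraphs} do the essential work, and this is the part that must be written out with care rather than treated as routine.
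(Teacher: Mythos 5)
There is a genuine gap, and it sits exactly where you flag the ``main obstacle'': the deterministic single-point stability estimate. Under the proposition's hypotheses (only \ref{as:bddegree} and \ref{as:growth}; no merging or insertion structure is assumed), the only competitor you can build for the configuration in which $X_i$ is replaced by $X_i'$ is the \emph{swap}: keep the optimal abstract graph $G$ and re-evaluate its weight, which changes only the at most $\cbdeg$ edges incident to index $i$. This gives $\abs{F_n-F_n^{(i)}}\les \sum_{y\in\cN_G(X_i)}\dist(X_i',y)^p+\sum_{y\in\cN_{G'}(X_i')}\dist(X_i,y)^p$, and these sums are \emph{not} ``local costs near $X_i$ and $X_i'$'': the new point $X_i'$ is connected to the old neighbours of $X_i$, which are at distance of order $\diam(\Omega)$ from it, so the conditional increment is of order $1$, not $n^{-p/d}$ (nor $n^{-1/d}$ for $p<2$). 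Making the increment genuinely local would require a surgery that deletes a vertex from a feasible solution and re-inserts the new point near its own neighbours while preserving feasibility; nothing in \ref{as:bddegree}/\ref{as:growth} (nor even \ref{ass:local-merging}, which concerns gluing two solutions, not vertex deletion/insertion) provides this, and already for the assignment problem the replacement of one point can force a long alternating re-matching, so no local bound on $F_n-F_n^{(i)}$ holds in general. With the only available $O(1)$ increments, Efron--Stein/Burkholder yields $\EE\abs{F_n-\EE F_n}^q\les_q n^{q/2}$, i.e.\ a normalized fluctuation of order $n^{p/d-1/2}$, which fails to vanish precisely in the regime $p\ge d/2$ that the result is meant to cover. (A further warning sign: if your claimed per-coordinate bound $n^{-2p/d}$ were true for $p\ge2$, you would obtain $\alpha=1$, strictly stronger than the stated $\alpha=1-p/d$.)

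The paper avoids this by replacing the discrete perturbation with an infinitesimal one: at a.e.\ configuration the cost $\CPp=\min_G w_G$ is differentiable and, by an envelope argument, $\nabla\CPp=\nabla w_G$ for the optimal $G$, so the gradient \emph{is} exactly local, $\abs{\nabla_{x_i}\CPp}\les\sum_{y\in\cN_G(x_i)}\abs{x_i-y}^{p-1}$ with at most $\cbdeg$ terms. Summing gives $\abs{\nabla\CPp}^2\les\CPp$ for $p\ge2$ and $\abs{\nabla\CPp}^2\les\CPp^{1/r}n^{1-1/r}$ (H\"older, $r=p/(2(p-1))$) for $p\in[1,2)$, which is the true source of the two values of $\alpha$. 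One then applies an $L^q$-Poincar\'e inequality on $\Omega^{2n}$ with respect to $\rho^{\otimes 2n}$ (constant independent of $n$ by tensorization, upgraded from $q=2$ to general $q$ via a median/chain-rule argument), and uses \ref{as:growth} together with the matching moment bound \eqref{eq:matching-iid} to control $\EE\sqa{\CPp^{q/2}}\les n^{(1-p/d)q/2}$; Markov and Borel--Cantelli then conclude as you indicate. If you want to salvage a replacement-based argument, you would need to prove a local insertion/deletion lemma for the specific problem at hand (it works for the TSP by splicing, but not under the abstract assumptions of the proposition), so the Poincar\'e/gradient route is the one that matches the stated generality.
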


\begin{remark}[Poincar\'e inequality]
We first recall that for every  Lipschitz function $F: \Omega^{2n} \to \R$ we have the following $L^q$-Poincar\'e inequality,
\begin{equation}
\label{eq:poincare-q} \EE\sqa{ \abs{ F\bra{ (X_i)_{i=1}^n, (Y_j)_{j=1}^n} - \EE\sqa{ F\bra{ (X_i)_{i=1}^n, (Y_j)_{j=1}^n }}}^q } \les_q \EE\sqa{ \abs{ \nabla F \bra{ (X_i)_{i=1}^n, (Y_j)_{j=1}^n}}^q}.
\end{equation}
Here  $\abs{\nabla F}$  denotes the usual Euclidean norm of the gradient. We stress the fact that the implicit constant in \eqref{eq:poincare-q} does not depend upon $n$.

Inequality \eqref{eq:poincare-q} is a consequence of well-known facts: first, the assumptions on $\Omega$ yield the $L^2$-Poincar\'e inequality with respect to the uniform measure,
$$ \int_{\Omega} \abs{ u - \frac{1}{|\Omega|}\int_\Omega u }^2 \les \int_\Omega |\nabla u|^2. $$
Using  that the constant $c = \int_{\Omega} u \rho$ minimizes $\int_{\Omega} \abs{ u - c }^2 \rho$ and that $\rho$ is bounded from above and below, and, we obtain the weighted version
$$ \int_{\Omega} \abs{ u - \int_\Omega u \rho }^2 \rho \le \int_{\Omega} \abs{ u - \frac{1}{|\Omega|}\int_\Omega u }^2\rho \le C \int_\Omega |\nabla u|^2 \rho.$$
for some $C = C(\rho, \Omega) \in (0, \infty)$. Then, a standard tensorization argument \cite[Corollary 5.7]{ledoux2001concentration} entails that the inequality holds also on the product space $\Omega^{2 n}$, endowed with the product measure $\rho^{\otimes 2n}$, with the same constant $C$. This yields \eqref{eq:poincare-q} with $q=2$. 

The general case $q\ge 2$ follows finally from  the chain rule. Preliminarily, we notice that if $\mu$ is a probability measure on $\R^D$, then the validity of the inequality
\begin{equation}\label{eq:poincare-mean} \int \abs{ u - \int u d \mu}^q d \mu \les \int \abs{\nabla u}^qd \mu  \end{equation}
for every Lipschitz function $u: \R^D \to \R$ is equivalent to
\begin{equation}\label{eq:poincare-median}  \int \abs{ u - m_u}^q d \mu \les \int \abs{\nabla u}^qd \mu, \end{equation}
where $m_u$ denotes a median of (the law) of $u$, i.e.\ any $m \in \R$ such that $\mu( u \le m) \ge 1/2$ and $\mu( u \ge m) \ge 1/2$. Indeed,
\[ \abs{ m_u - \int u d \mu} \le \abs{ \int (m_u - u) d \mu } \le \int \abs{ m_u - u} d \mu.\]
Since $m_u$ can be characterized as a minimizer for $c \mapsto \int \abs{ u - c}d\mu$, we also have
\[ \int \abs{ m_u - u} d \mu \le  \int \abs{ u - \int u d \mu} d \mu.\]
Using Jensen's inequality, we obtain
\[ \abs{ m_u - \int u d \mu}^q \le \min \cur{ \int \abs{ m_u - u}^q d \mu, \int \abs{ u - \int u d \mu}^q d \mu.}\]
Then, assuming that \eqref{eq:poincare-mean} or \eqref{eq:poincare-median} holds, using the triangle inequality and the bound above, we obtain the validity of the other inequality. 

To conclude, we assume that \eqref{eq:poincare-median} holds for $q=2$ and argue that it also holds for any $q \ge 2$. Up to adding a suitable constant, we can assume that $u$ is Lipschitz with $m_u = 0$. We then consider the Lipschitz function $v = \abs{u}^{q/2} \operatorname{sign}(u)$ (recall that in our case the support of $\mu = \rho^{\otimes 2n}$ is bounded, hence we can assume that also $u$ is bounded), so that $m_v = 0$ and apply the $q=2$ case of \eqref{eq:poincare-median}:
\[\begin{split} \int \abs{u}^q d \mu & = \int \abs{v}^2 d \mu \les \int |\nabla v|^2 d \mu \les \int \abs{u}^{q-2} |\nabla u|^2 d \mu \\
& \les \bra{ \int \abs{u}^q d \mu}^{1-2/q} \bra{ \int \abs{\nabla u}^q d \mu}^{2/q} .
\end{split}\]
Dividing both sides by $\bra{ \int \abs{u}^q d \mu}^{1-2/q}$ yields the desired conclusion.
\end{remark}
\begin{proof}[Proof of \cref{prop:concentration}]
The second statement follows choosing $q$ sufficiently large in \eqref{eq:concentration-cost-n} so that the right-hand side in \eqref{eq:concentration-cost-n} is summable.  
We thus focus on the proof of \eqref{eq:concentration-cost-n}. Given a feasible  $G \subseteq \cK_{n,n}$, i.e., $G \in \cF_{n,n}$, and $\x = (x_i)_{i=1}^n$, $\y = (y_j)_{j=1}^n \subseteq \Omega$,  write 
$$ w_G(\x, \y) = \sum_{ \cur{(1,i), (2,j)} \in E_G } | x_i - y_j|^p.$$
Since $p \ge 1$, $w_G$ is Lipschitz with a.e.\ derivative given by 
\[ \nabla_{x_i} w_G(\x, \y) =  \sum_{ (2,j) \in \cN_G((1,i))} p |x_i-y_j|^{p-2} (x_i-y_j),\]
and
\[ \nabla_{y_j} w_G(\x, \y) =  -\sum_{ (1,i) \in \cN_G((1,j))} p |x_i-y_j|^{p-2} (x_i-y_j).\]
Notice also that $w_G$ is differentiable at every $(\x,\y)$ such that $x_i\neq y_j$ for every $i,j$. Since $G \in \cF_{n,n}$, assumption \ref{as:bddegree} yields that the sums above contain at most $\c_3$ terms, hence we bound, using Cauchy-Schwarz inequality,
$$ \abs{ \nabla_{x_i} w_G(\x, \y)}^2 \les \sum_{ (2,j) \in \cN_G((1,i))} |x_i-y_j|^{2(p-1)},$$% \les \sum_{ (2,j) \in \cN_G((1,i))} |x_i-y_j|^{p},$$
and similarly
$$ \abs{ \nabla_{y_j} w_G(\x, \y)}^2  \les \sum_{ (2,j) \in \cN_G((1,i))} |x_i-y_j|^{2(p-1)}.$$
Summing upon $i$ and $j\in\cur{1,\ldots, n}$, we obtain, for the Euclidean norm of the gradient, the inequality
$$ \abs{ \nabla w_G(\x, \y)}^2  \les  \sum_{ \cur{(1,i),(2,j)} \in E_G} |x_i-y_j|^{2(p-1)}.$$
If $p \ge 2$, we simply bound each term $|x_i-y_j|^{2(p-1)} \le \diam(\Omega)^{p-2} |x_i-y_j|^{p}$, obtaining
$$  \abs{ \nabla w_G(\x, \y)}^2  \les \sum_{ \cur{(1,i),(2,j)} \in E_G} |x_i-y_j|^{p}  = w_G(\x, \y).$$
If $p \in [1,2)$, we use H\"older inequality and the fact that $|E_G| \les n$ (again by assumption \ref{as:bddegree}), to obtain
$$ \abs{ \nabla w_G(\x, \y)}^2  \les \bra{ \sum_{ \cur{(1,i),(2,j)} \in E_G} |x_i-y_j|^{p}}^{\frac{1}{r}} n^{1-\frac{1}{r}} = w_G(\x, \y)^{\frac{1}{r}} n^{1-\frac{1}{r}},$$
with $r = p/(2(p-1))$.

\noindent Using the trivial bound $w_G(\x,\y) \les n$, it follows in particular that each $w_G(\x, \y)$ has a Lipschitz constant bounded independently of $G$ (although the bound depends upon $n$). Therefore, also
$$ \CPp\bra{ \x, \y} = \inf_{G \in \cF_{n,n}} w_G(\x, \y),$$
is Lipschitz, hence differentiable at Lebesgue a.e.\ $(\x, \y)$, by Rademacher theorem. 
Let $(\x,\y)$ be a point of differentiability for both $w_G$ and $\CPp$ (which holds for Lebesgue a.e. point).  Let $G = G(\x, \y) \in \cF_{n,m}$ be any minimizer for the problem on the graph $\cK(\x,\y)$ (which is a.e. unique if $p>1$ by Remark \cref{rem:uniqueness-minimizer}).
For every $(\x', \y')$, we have the inequality
%Let $\x=\{X_i\}_{i=1}^n$ and $\y=\{Y_j\}_{j=1}^n$ be iid uniformly distributed in $(0,1)^d$. We show that almost surely there is a unique minimizer of $\CPp(\x,\y)$.te
%By a simple union bound this extends to the case when $|\x|\neq |\y|$ and thus also to the case of two Poisson point processes. We denote by $\|\cdot\|$ the Euclidean norm on $(\R^{2d})^n$. Let $(\x,\y)$ be a point of differentiability of $\CPp$ and let us assume that the minimizer of $\CPp(\x,\y)$ is not unique.
%Denote by $\cQ$ the set of minimizers. Since for every $G\in \mathcal{F}_{n,n}$, the map $(\x,\y)\mapsto \sum_{\{((1,i),(2,j)\}\in E_G} \|x_i-y_j\|^p$ is continous and 
%since the set of admissible solutions $\mathcal{F}_{n,n}$ is finite, for $(\x',\y')$ close enough to $(\x,\y)$ we have 
%\[
% \CPp(\x',\y')=\min_{G\in \cG} \sum_{ \{((1,i),(2,j)\}\in E_G} \|x_i-y_j\|^p.
%\]
%That is a minimizer for $(\x',\y')$ must be a minimizer for $(\x,\y)$. 
%Since for every $G\in \cG$ and every $(\x',\y')$ in this neighborhood, 
\[
 \CPp(\x',\y')\le w_G(\x', \y'),
\]
with equality at $(\x,\y)$, hence we obtain the identities,
\begin{equation}\label{eq:identity-derivative-cpp}
 \nabla_{x_i} \CPp(\x,\y)= \nabla_{x_i} w_G(\x, \y), \quad \nabla_{y_j} \CPp(\x,\y)= \nabla_{y_j} w_G(\x, \y).
\end{equation}
%i.e., for a.e.\ $(\x,\y)$ we have that $\nabla \CPp\bra{ \x, \y} = \nabla w_{\tilde{G}(\x, \y)} (\x, \y)$, where $\tilde{G}(\x,\y)$ denotes an optimal solution the problem on the weighted graph $\cK(\x, \y)$ (in fact, $\tilde{G}(\x, \y)$ is a.e.\ unique, see \cref{rem:uniqueness-minimizer} below). Therefore, we bound the norm of the Sobolev gradient as follows:
Therefore,
\[ \abs{ \nabla  \CPp\bra{ \x, \y} }^2     \les \begin{cases} \CPp\bra{ \x, \y}^{\frac{1}{r}} n^{1-\frac{1}{r}} & \text{if $p \in [1,2)$,}  \\
 \CPp\bra{ \x, \y} & \text{if $p \ge 2$.}
 \end{cases}
\]
If now $\x=(X_i)_{i=1}^n$ and $\y=(Y_j)_{j=1}^n$, combining this with \eqref{eq:poincare-q} and \eqref{eq:upper-bound-deterministic} yields  
\[ \begin{split} \EE&\sqa{ \abs{ \CPp\bra{ \x, \y } - \EE\sqa{ \CPp\bra{ \x, \y}}}^q } \les_q	 \EE\sqa{ \abs{ \nabla \CPp\bra{ \x, \y}}^{q}}\\
&\qquad \qquad  \qquad \qquad \les \begin{cases} \vspace{1em}
   \EE\sqa{ \bra{ n^{1-\frac{p}{d}}+  \Mp \bra{ \x, \y}}^{\frac{q}{2r}}} n^{(1-\frac{1}{r})\frac{q}{2}}
& \text{if $p \in [1,2)$,}\\
  \EE\sqa{ \bra{n^{1-\frac{p}{d}}+ \Mp \bra{ \x, \y}}^{\frac{q}{2}}} & \text{if $p\ge 2$.}
\end{cases}
\end{split}\]
 By the equivalence between $\Mp$ and $\W$ (recall \eqref{eq:wass-equals-matching}), the triangle inequality \eqref{eq:triangle} and \eqref{eq:jensen} and finally using \eqref{eq:matching-iid} with $qp$ instead of $p$, we bound from above
$$ \EE\sqa{ \bra{ \Mp \bra{ (X_i)_{i=1}^n, (Y_j)_{j=1}^n}}^{\frac{q}{2}}} \les n^{(1-\frac{p}{d})\frac{q}{2}}.$$
If $p \ge 2$, we conclude at once that
$$ \EE\sqa{ \abs{ \CPp\bra{ \x, \y } - \EE\sqa{ \CPp\bra{ \x, \y}}}^q } \les_q n^{(1-\frac{p}{d})\frac{q}{2}},$$
hence \eqref{eq:concentration-cost-n} by Markov inequality. If $p \in [1,2)$, we bound similarly and obtain, after simple computations,
$$ \EE\sqa{ \abs{ \CPp\bra{ \x, \y } - \EE\sqa{ \CPp\bra{ \x, \y}}}^q } \les n^{\bra{ (1-\frac{p}{d})(1-\frac{1}{p})+\frac{1}{p}-\frac{1}{2}}q},$$
which leads to the corresponding case of \eqref{eq:concentration-cost-n} by Markov inequality.
\end{proof}

%\section*{Acknowledgments}
 %M.G was partially supported by the  project 
%ANR-18-CE40-0013 SHAPO financed by the French Agence Nationale de la Recherche (ANR) and by the LYSM LIA AMU CNRS ECM INdAM. 
%M.G. also thanks the Centro de Giorgi in Pisa for its hospitality.  %D.T.\ was partially supported by the University of Pisa, Project PRA 2018-49, and Gnampa project 2019 ``Propriet\`a analitiche e geometriche di campi aleatori''.

\begin{remark}[uniqueness of minimizers]\label{rem:uniqueness-minimizer}
If $p>1$, for Lebesgue a.e.\ $(\x, \y)$, the minimizer $G \in \cF_{n,m}$ for the problem on $\cK(\x, \y)$ is unique. 
This in particular yields that it is unique a.s., when $\x = (X_i)_{i=1}^n$, $\y = (Y_j)_{j=1}^m$ are random i.i.d.\ with a common density $\rho$. 
For simplicity, we argue in the case of $|\x| = |\y|$ only, but the same result holds in general. %We use the fact that, for every (chosen) Borel function $f$ depending on a subset, the identity 

Let $(\x, \y)$ be a differentiability point for $\CPp(\x, \y)$ with $X_i\neq Y_j$ for every $i,j$. Notice that by the previous proof this holds a.s. .
Let $G, G' \in \mathcal{F}_{n,n}$ be minimizers for the problem on $\cK(\x, \y)$, so that by \eqref{eq:identity-derivative-cpp} we obtain  that, for every $i\in [n]$, $\nabla_{x_i} w_G (\x, \y) =  \nabla_{x_i} w_{G'} (\x, \y)$, i.e.,
$$ \sum_{ (2,j) \in \cN_G((1,i))} \abs{x_i - y_j}^{p-2} (x_i-y_j)  = \sum_{ (2,j) \in \cN_{G'}((1,i))} \abs{x_i - y_j}^{p-2} (x_i-y_j) $$
Assuming that $E_G \neq E_{G'}$, we can find $i$, $j \in [n]$ such that $(2,j) \in \cN_{G'}((1,i)) \setminus \cN_G((1,i))$ (up to exchanging the roles of $G$ and $G'$). Then, % \cur{(1,i), (2,j)} \in E_{G} \setminus E_{G'}$. Then, %$\cN_{G}((1,i))\neq \cN_{G'}(x_i)$. Choose $y\in \cN_{G}(x_i)\backslash \cN_{G'}(x_i)$ so that from the considerations above,
\begin{equation}\label{eq:identity-direct}\begin{split} 
 \abs{x_i-y_j}^{p-2}(x_i-y_j)= & \sum_{(2,k) \in \cN_{G}((1,i)} \abs{x_i-y_k}^{p-2}(x_i-y_k) \\
 & - \sum_{ (2,k)\in \cN_{G'}((1,i)) \setminus \cur{(2,j)} } \abs{x_i-y_k}^{p-2}(x_i-y_k).
 \end{split}
\end{equation}
We notice that the right-hand side above is a function $U(\x, \y)$ which however does not depend on the variable $y_j$. The map $$z \in \R^d \mapsto \abs{z}^{p-2} z \in \R^d$$
is invertible, with a Borel inverse which we denote by $f$,  hence  we can rewrite \eqref{eq:identity-direct} equivalently as the identity
$$ y_j = x_i - f\bra{ U (\x, \y)},$$
where right-hand side is a Borel function of $(\x, \y)$ which does not depend on $y_j$. This identity however cannot hold on a set of positive Lebesgue measure.
%
%where $\hat{y}=\y\backslash\{y\}$. Since the map $f_{x_i}(y)=\|x_i-y\|^{p-2}(x_i-y)$ is a diffeomorphism from $\R^d\backslash \{x_i\}$ to $\R^d\backslash\{0\}$, we conclude that 
%\[
% y =f_{x_i}^{-1}(F(\x,\hat{y}))
%\]
%and thus $y$ has to  be deterministically obtained from $(\x, \hat{y})$ which would contradict the fact that the points are indipendent. 
\end{remark}

\section{Bounds for the Euclidean assignment problem}\label{sec:ot}

In this section we establish some novel upper bounds for the random Euclidean assignment problem, in the case of not necessarily i.i.d.\ uniformly distributed points. %We focus on the $d \ge 3$, $p<d$ case, although some results may be extended to further ranges, with appropriate modifications.

\subsection{Matching of i.i.d.\ points} 
We begin with a general upper bound for the Wasserstein distance between the empirical measure of i.i.d.\ points and the corresponding common law when $d\ge 3$ and $p\ge 1$. As a consequence, we also obtain a similar bound for the Euclidean assignment problem.
We derive the general case of a H\"older continuous
law bounded above and below on an open connected set with Lipschitz boundary $\Omega$ from the case of the uniform law on a cube $Q \subseteq \R^d$. In that case,  
it is a well-known result, marginally discussed in \cite{AKT84}, where the  focus is on the $d=2$ case. 
However, we point out that the case $d\ge 3$, $p\ge d/2$ was, to our knowledge, not explicitly covered in the literature until the proof provided by \cite{Le17}, 
which clearly extends to any $p\neq 2$ (see also  \cite{goldman2021convergence}).

\begin{proposition}\label{prop:matching-iid}
Let $d \ge 3$, $p \ge 1$ and $\Omega$ be a bounded connected open set with Lipschitz boundary. For every H\"older continuous density $\rho: \Omega\mapsto \R$ bounded above and below and independent sequences 
$(X_{i})_{i=1}^\infty$, $(Y_j)_{j=1}^\infty$ of i.i.d.\ random variables with common law $\rho$,
\begin{equation}\label{eq:transport-iid}
 \EE\sqa{  \W\bra{\sum_{i=1}^n \delta_{X_i}, n\rho } } \les |\Omega|^{\frac{p}{d}} n^{1-\frac{p}{d}},                                 
\end{equation}
and therefore
\begin{equation}\label{eq:matching-iid}
 \EE\sqa{  \Mp\bra{ (X_i)_{i=1}^n, (Y_j)_{j=1}^m} } \les |\Omega|^{\frac{p}{d}} \min\cur{n,m}^{1-\frac{p}{d}}.                                
\end{equation}
\end{proposition}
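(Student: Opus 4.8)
The strategy is to first establish \eqref{eq:transport-iid} for the uniform law on a cube $Q$, and then transfer the result to a general H\"older density on a Lipschitz domain $\Omega$ via the map of \cref{prop:map-heat-semigroup} together with a Whitney-type partition. The matching bound \eqref{eq:matching-iid} then follows easily. First I would reduce to the cube $Q = Q_1 = (0,1)^d$ with unit intensity: by scaling (as in \eqref{eq:homogeneity}) the estimate $\EE[\W(\sum_i \delta_{X_i}, n)] \les n^{1-p/d}$ on $Q_1$ gives $\EE[\W_{Q}(\mu^{\cN}, n\rho)]\les |Q|^{p/d} n^{1-p/d}$ for any cube. For the cube case, the natural route is a dyadic/martingale decomposition in the spirit of \cite{AKT84}: subdivide $Q_1$ into $2^{dm}$ subcubes at each scale $m$, write the transport from $\mu^{\cN}$ to the uniform measure as a telescoping sum over scales of transports that equidistribute the mass conditionally on each subcube, and use \eqref{eq:mainsub} (the Wasserstein subadditivity with the constant $C/\eps^{p-1}$ that controls the defect). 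At each scale the defect term is a transport between piecewise-constant densities whose fluctuations are controlled by \eqref{eq:momentPoi} (or binomial concentration \eqref{eq:binomial-concentration} in the i.i.d.\ case) applied on each of the $2^{dm}$ cells; one gets a contribution of order $2^{m(d-p)/d}\cdot (\text{fluctuation})$ which, after taking expectations and using that the fluctuation of the count in a cell of expected mass $n 2^{-dm}$ is of order $\sqrt{n 2^{-dm}}$, sums to a geometric series in $m$ that converges precisely because $d\ge 3$ and $p<d$ keep the exponent negative (the series $\sum_m 2^{m(p/2 - d/2 + \ldots)}$ converges); the tail beyond the scale $2^{dm}\sim n$ is handled by the trivial bound \eqref{eq:w-trivial} on each of the $n$ remaining cells, each carrying $O(1)$ points, giving another $O(n^{1-p/d})$ contribution.

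The passage from the cube to a general H\"older density $\rho$ on $\Omega$ proceeds in two steps. I would first treat the case $\Omega = Q_1$ with a H\"older density bounded above and below: by \cref{prop:map-heat-semigroup} there is a bi-Lipschitz map $T$ with $T_\sharp\rho = 1$, and since pushing forward $n$ i.i.d.\ $\rho$-points by $T$ yields $n$ i.i.d.\ uniform points, the inequality \eqref{eq:lipschitz-bound-deterministic}-type bound for the Wasserstein cost (transport cost is $p$-homogeneous under Lipschitz maps) together with $\Lip T, \Lip T^{-1}\les 1$ reduces this case to the uniform one. For a general Lipschitz domain $\Omega$, I would cover $\Omega$ by a Whitney partition $\cQ$ of cubes $Q_i$ (as in \cref{lem:decomp}), apply \eqref{eq:mainsub} once with this partition to split $\W_\Omega(\mu^{\cN}, n\rho)$ into $\sum_i \W_{Q_i}(\mu^{\cN}, n\rho(Q_i)/|Q_i|\cdot I_{Q_i})$ plus the defect $\W_\Omega(\sum_i \alpha_i I_{Q_i}, n\rho)$; on each $Q_i$ the density $\rho/\rho(Q_i)$ restricted to $Q_i$ is, after rescaling to $Q_1$, H\"older with norm controlled independently of $i$ (as in the computation preceding \eqref{eq:inequality-cube-holder-iid}), so the first term is handled by the previous step and summing $\sum_i |Q_i|^{p/d}(n\rho(Q_i))^{1-p/d}\les n^{1-p/d}\int_\Omega \rho^{1-p/d}$ by H\"older's inequality and $\sum_i |Q_i| = |\Omega|$; the defect term is a transport between a density piecewise-constant on the Whitney cubes and $n\rho$, whose fluctuations $|\alpha_i - n\rho(Q_i)| \les \sqrt{n\rho(Q_i)}\les \sqrt{n|Q_i|}$ are exactly of the form required by \cref{lem:W1pWhitney}, which via \eqref{estimW1pWhitney} and \eqref{eq:estimCZ} gives a bound $\les \delta^{(1-d/2)p}|\log\delta|^p\cdot n^{p/2}$ — but here one should rather take the Whitney partition fixed (not $\delta$-dependent) and instead iterate the scale decomposition, or simply absorb this into the same dyadic argument; in any case it contributes at most $O(n^{1-p/d})$.

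Finally, \eqref{eq:matching-iid} follows from \eqref{eq:transport-iid} by Birkhoff's theorem \eqref{eq:wass-equals-matching} and the triangle inequality: assuming $n\le m$ and using \eqref{eq:min-subgraphs} to restrict to an equal number of points, $\Mp((X_i)_{i=1}^n,(Y_j)_{j=1}^n) = \W(\mu^{\x},\mu^{\y}) \les \W(\mu^{\x}, n\rho) + \W(\mu^{\y}, n\rho)$ by \eqref{eq:triangle}, and each term is bounded via \eqref{eq:transport-iid}, giving $\les |\Omega|^{p/d} n^{1-p/d} = |\Omega|^{p/d}\min\{n,m\}^{1-p/d}$. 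The main obstacle is the cube estimate \eqref{eq:transport-iid} itself: getting the correct exponent $1-p/d$ (rather than a weaker one) requires carefully organizing the multiscale/martingale decomposition so that the defect contributions at scale $m$ form a convergent geometric series, which is where the hypotheses $d\ge 3$ and $p<d$ are essential; one must also be slightly careful that the Poisson/binomial count in a cell can occasionally be much larger than its mean, but the algebraic concentration \eqref{eq:momentPoi}–\eqref{eq:binomial-concentration} together with H\"older's inequality handle this in expectation. (Alternatively, as the authors note, one can cite \cite{Le17} for the cube case directly and only carry out the H\"older/domain reduction, which is the genuinely new part here.)
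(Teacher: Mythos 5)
Your reduction of \eqref{eq:matching-iid} to \eqref{eq:transport-iid}, your treatment of a H\"older density on a cube via \cref{prop:map-heat-semigroup}, and the option of quoting \cite{Le17} for the uniform cube are exactly what the paper does (the paper also first uses Jensen's inequality \eqref{eq:jensen} to reduce to $p>d/(d-1)$, which you will need anyway in order to invoke $W^{-1,p}$ bounds). The genuine gap is in your passage from the cube to a general bounded connected Lipschitz domain. You propose a Whitney partition and control of the subadditivity defect via \cref{lem:W1pWhitney}, but that lemma rests on the Neumann Green-kernel gradient bound \eqref{eq:green-kernel-bound}, which is \emph{not} assumed in the proposition (it holds for convex or $C^2$ domains, not for general Lipschitz ones — see the discussion around \eqref{eq:green-kernel-bound}); moreover its hypothesis $|b_k|\le h^{1/2}|\Omega_k|^{1/2}$ is deterministic and must hold simultaneously on the $\sim\delta^{1-d}$ Whitney pieces, so you would need a good event and a union bound, and the scale $\delta$ must then be tuned against $n$ and against the boundary-layer pieces of $\cR_\delta$. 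If you instead try to close the defect estimate with the elementary embedding \eqref{eq:Lp}, the balance between the defect (forcing $\delta$ not too small) and the boundary layer (forcing $\delta$ not too large) only works out for $p\lesssim d/2$, so your blanket claim that the defect "in any case contributes at most $O(n^{1-p/d})$" is unsubstantiated precisely in the regime $p\ge d/2$ that the paper is after.

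The paper avoids all of this with a much simpler device: by \cite{Garcia-Slepcev}, every bounded connected Lipschitz domain is bi-Lipschitz homeomorphic to a smooth ``well-partitioned'' domain, i.e.\ one admitting a \emph{finite} partition $(\Omega_k)_{k=1}^K$ into convex polytopes each bi-Lipschitz to a cube. After one application of \eqref{eq:mainsub} with this fixed finite partition, the local terms are pushed to cubes by the bi-Lipschitz maps and handled by the cube case, while the defect is a transport between piecewise-constant multiples of $\rho$ on finitely many pieces; it is bounded via \eqref{eq:estimCZ} and \eqref{eq:Lp} by $\sum_k|\Omega_k|\,|n_k/(n\rho(\Omega_k))-1|^p$, whose expectation is $\les n^{-p/2}\le n^{-p/d}$ by binomial concentration \eqref{eq:binomial-concentration} — no Whitney decomposition, no Green-function estimates, no scale matching. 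A secondary point: the proposition is stated (and later used, e.g.\ with exponent $qp$ in the concentration argument) for \emph{all} $p\ge1$, so your remark that $p<d$ is essential for the cube estimate is both unnecessary (the dyadic series closes for every $p\ge1$ once $d\ge3$) and, if it were truly needed by your argument, would leave the stated range uncovered.
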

\begin{proof}
Inequality \eqref{eq:matching-iid} follows  from \eqref{eq:transport-iid} assuming e.g.\ $n \le m$ and  \eqref{eq:matching-below-wasserstein} with $\lambda = \rho$.
% \[ \begin{split}  \EE\sqa{ \Mp\bra{ (X_i)_{i=1}^n, (Y_j)_{j=1}^m}} & \le  \EE\sqa{ \Mp\bra{ (X_i)_{i=1}^n, (Y_j)_{j=1}^n}} \\
% &  \les \W\bra{\sum_{i=1}^n \delta_{X_i}, n\rho } + \W\bra{\sum_{j=1}^n \delta_{Y_j}, n\rho } \les   |\Omega|^{\frac{p}{d}}  n^{1-\frac{p}{d}}.
% \end{split}\] 
Hence, we focus on the proof of \eqref{eq:transport-iid}.  By Jensen inequality \eqref{eq:jensen}, it is enough to prove this bound for large $p$ so that we may assume without loss of generality that $p>d/(d-1)$.
 We then set $\mu= \frac{1}{n} \sum_{i=1}^n \delta_{X_i}$. 
 
  We first prove the statement in the case $\Omega=Q$ is a cube. By scaling we may assume that $Q=(0,1)^d$ is the unit cube.
 By Proposition \ref{prop:map-heat-semigroup}, there is a bi-Lipschitz map $T: Q\mapsto Q$ with Lipschitz constant depending
 only on $\rho$ such that $T\sharp \rho=1$. Then,  $X_i'=T(X_i)$ are i.i.d.\ uniformly distributed on $Q$
  Letting $\mu'=T\#\mu=\frac{1}{n}\sum_{i=1}^n \delta_{X'_i}$ we have 
  \[
  \W(\mu, \rho)\les \W(\mu',1) 
 \]
 and the statement follows from \cite{Le17}.
 
 Consider now $\Omega$ a general  bounded connected open set with Lipschitz boundary. We say that $\Omega$ is well-partitioned if there exists convex polytopes $(\Omega_k)_{k=1}^K$  covering $\Omega$, with $|\Omega_k\cap \Omega_{k'}|=0$ for $k\neq k'$ and  
 such that each $\Omega_k$ is bi-Lipschitz homeomorphic to a cube. By \cite{Garcia-Slepcev},  every connected and Lipschitz domain is bi-Lipschitz homeomorphic to a well-partitioned and
 smooth domain so that arguing exactly as above we may assume that $\Omega$ itself is smooth and well-partitioned.
Let $T_k: \Omega_k\mapsto Q_k$ be Lipschitz homeomorphisms between $\Omega_k$ and some cubes $Q_k$. We then define $\rho_k=T_k\#\rho$, $n_k=\mu(\Omega_k)$ and $\mu_k= \frac{n}{n_k}T_k\#\mu$.
Notice in particular that we may
write $\mu_k=\frac{1}{n_k}\sum_{i=1}^{n_k} \delta_{Y_i}$ where $(Y_i)_{i=1}^\infty$ are i.i.d.\ with common law $\rho_k/\rho_k(Q_k)$ and that $n_k$ is a Binomial random variable with parameters $n$ and $\rho_k(Q_k)=\rho(\Omega_k)$.
Using \eqref{eq:mainsub} with $\eps=1$ we thus find
\[ \begin{split}
 \W(\mu,\rho) & \les \sum_{k=1}^K \W_{\Omega_k}\bra{\mu, \frac{n_k}{n\rho(\Omega_k)}\rho} +\W\bra{\sum_{k=1}^K \frac{n_k}{n \rho(\Omega_k)} \rho I_{\Omega_k},\rho}\\
 & \stackrel{\eqref{eq:estimCZ}}{\les} \sum_{k=1}^K  \frac{n_k}{n}\W_{Q_k}\bra{\mu_k, \rho_k} +\nor{\sum_{k=1}^K \bra{\frac{n_k}{n \rho(\Omega_k)}-1} I_{\Omega_k} \rho}_{W^{-1,p}(\Omega)}^p\\
 & \stackrel{\eqref{eq:Lp}}{\les}\sum_{k=1}^K  \frac{n_k}{n}\W_{Q_k}\bra{\mu_k, \rho_k} +\sum_{k=1}^K |\Omega_k| \lt|\frac{n_k}{n \rho(\Omega_k)}-1\rt|^p.
\end{split}\]
Taking the expectation and using the concentration properties of binomial random variables \eqref{eq:binomial-concentration} we find 
\[
 \EE\sqa{\W(\mu,\rho)}\les \sum_{k=1}^K\EE\sqa{ \frac{n_k}{n}\W_{Q_k}\bra{\mu_k, \rho_k}} + \frac{1}{n^{\frac{p}{2}}}.
\]
By the first part of the proof and the concentration properties of Binomial random variables we get 
\[
 \EE\sqa{ \frac{n_k}{n}\W_{Q_k}\bra{\mu_k, \rho_k}}\les \frac{1}{n^{\frac{p}{d}}}
\]
which concludes the proof of \eqref{eq:matching-iid} since $p/2>p/d$.
\end{proof}

\begin{remark}\label{rem:matching}
By translation and scaling invariance, when $\Omega=Q$ and $\rho= \frac{1}{|\Omega|} I_{Q}$ is the uniform measure of a cube $Q\subset \R^d$, the implicit constant in \eqref{eq:matching-iid} does not depend on $Q$.
\end{remark}
%\begin{remark}By combining the arguments of \cite{goldman2021convergence} together with the proof of Theorem \ref{thm:limit-poisson} we could actually show that for every $p\ge 1$, 
%  \[
%   \lim_{n\to \infty} n^{\frac{p}{d}}\EE\sqa{  \W\bra{ \frac{1}{n}\sum_{i=1}^n \delta_{X_i}, \rho } }\le \beta_{\mathsf{W}}\int_{\Omega} \rho^{1-\frac{p}{d}}.
%  \]
%  under the slightly stronger hypothesis of gradient bounds  \eqref{eq:green-kernel-bound} for the Neumann Green's function. Here $\beta_{\mathsf{W}}\in (0, \infty)$ is the constant (depending on $d$ and $p$) corresponding to the case $\Omega=Q$ and $\rho= \frac{1}{|\Omega|} I_{Q}$ whose existence is guaranteed by \cite{goldman2021convergence}. A similar statement can be obtained in the bipartite
%case thus extending Theorem \ref{thm:limit-poisson} to any $p\ge 1$ in  the particular case of the matching problem. 
%\end{remark}

 %Existence of a limit for $d \ge 3$ and every $p \ge 1$  was established in \cite{goldman2020convergence}. %(see \cite{FoGu15} for some non-asymptotic bounds)

\subsection{Matching with a fraction of i.i.d.\ points} 
In this section we extend the bound \eqref{eq:matching-iid} for the matching  to the case where most of the points are still  i.i.d.\ but essentially no assumption is made on the remaining points. 
This is used in  \cref{thm:limit-poisson} and in the de-Poissonization procedure (see Proposition \ref{prop:limit-mean-iid}). Just like in \cref{thm:limit-poisson} we will have to
consider three different situations. Let us however set some common notation. Letting  $\cN$, $\cM$, $\cU$ and $\cV$ be point processes  on $\Omega$ ($\cN$ and $\cM$ will contain the i.i.d.\ points),  we want to estimate 
\[
 \EE\sqa{ \Mp( \cU \cup \cN, \cV \cup \cM) }.
\]
Setting 
\[Z=\min\cur{|\cU|+|\cN|,|\cV|+|\cM|},\]
we want to construct two (random)  subsets $\cS \subseteq \cU \cup \cN$, $\cT \subseteq \cV \cup \cM$, both containing $Z$ points, so that 
\begin{equation}\label{triangleMp} 
\Mp\bra{ \cU \cup \cN,  \cV \cup \cM } \le \W\bra{ \mu^{\cS}, \nu^{\cT}}\stackrel{\eqref{eq:triangle}}{\les} \W\bra{ \mu^{\cS}, Z\rho} +\W\bra{ \nu^{\cT}, Z\rho}, 
\end{equation}
where $\mu^{\cS}, \mu^{\cT}$ are the associated empirical measures. 
We then separately estimate the two terms on the right-hand side of \eqref{triangleMp}. 
Since the construction is completely symmetric, we detail it only for $\cS \subseteq \cU \cup \cN$. It is given  as the union of two sets, a ``good'' set $\cG \subseteq \cN$ and a ``bad'' set
$\cB \subseteq \cU$. We first define the set $\cG$ by sampling without replacement 
 \[
  |\cG|=\min\cur{|\cN|,Z}
 \]
points from $\cN$. Similarly, the set $\cB$ is constructed by sampling without replacement 
\[
 |\cB|=\max\cur{Z-|\cN|,0}
\]
points from $\cU$. Notice that 
\begin{equation}\label{Z}
 Z=|\cG|+|\cB|
\end{equation}
and that when conditioned on $|\cG|$, the points in $\cG$ are still i.i.d.\ with common law $\rho$.
We then write $\mu^{\cS} = \mu^{\cG} + \mu^{\cB}$ for the associated empirical measure.  
Using the triangle inequality \eqref{eq:triangle} and \eqref{eq:convexity}, we then  split the estimate in two:
\[\begin{split} \W (\mu^{\cS}, Z\rho ) & \les   \W \bra{ \mu^{\cG} + \mu^{\cB}, |\cG|\rho + \mu^{\cB}} + \W \bra{|\cG|\rho + \mu^{\cB}, Z\rho}\\
& \les \W \bra{ \mu^{\cG},|\cG|\rho}+  \W \bra{|\cG|\rho + \mu^{\cB}, Z\rho }.
\end{split}\]
 Taking expectation we find
 \begin{equation}\label{commonstartpoint}
  \EE\sqa{ \W (\mu^{\cS}, Z\rho )}\les \EE\sqa{\W \bra{ \mu^{\cG},|\cG|\rho}}+  \EE\sqa{\W \bra{|\cG|\rho + \mu^{\cB}, Z\rho }}.
 \end{equation}
To estimate the first term in the right-hand side, we will rely on \eqref{eq:transport-iid}.   It is in the estimate of  the last term that we need to argue differently depending on the cases. 
In the first one (see \cref{prop:density-helps-matching}), since we have a good control on the moments of $|\cU|$ we can directly appeal to \cref{prop:density-helps}. In the two other cases
(see \cref{prop:density-helps-matching-partition,thm:bound-matching}) we need to combine it with a localization argument.\\

We start with the first case.
\begin{proposition}\label{prop:density-helps-matching}
 Let $d \ge 3$, $p \ge 1$ and $\Omega$ be a bounded domain with Lipschitz boundary,  $\rho: \Omega\mapsto \R$ be a  H\"older continuous density bounded above and below and $(X_{i})_{i=1}^\infty$, $(Y_j)_{j=1}^\infty$ be  
 independent sequences 
of i.i.d.\ random variables with common law $\rho$. \\
 Then, there exists $\alpha=\alpha(p,d)<2$ and $\beta=\beta(p)>0$ such that the following holds. Let  $M, N\in \N$ be random variables satisfying concentration (recall  \cref{def:concen}) 
 and set $\cN=(X_{i})_{i=1}^N$, $\cM = (Y_{j})_{j=1}^M$ and $h=\min\cur{\EE\sqa{M},\EE\sqa{N}}$. 
 Then, for every point processes $\cU$, $\cV$, for which  there exists $1\le H\le h$ such that for every $q\ge 1$ 
\begin{equation}\label{hyp:UVhelps}
 \EE\sqa{ |\cU|^q+|\cV|^q}\le C(q) H^q
\end{equation}
for some $C(q)>0$, we have 
\begin{equation*}\label{eq:density-helps-random} 
\EE\sqa{ \Mp( \cU \cup \cN, \cV \cup \cM) } \les  h^{1-\frac{p}{d}}\bra{1+ \bra{\frac{H^\alpha}{h}}^\beta}.
\end{equation*}
Here the implicit constant depends only on $p$, $d$, the constants involved in the concentration properties of $M,N$ and $(C(q))_{q\ge 1}$ from \eqref{hyp:UVhelps}. 
   
\end{proposition}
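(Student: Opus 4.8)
The plan is to follow the decomposition strategy laid out in the preamble, namely inequality \eqref{triangleMp} together with the splitting \eqref{commonstartpoint}, and to estimate each of the two resulting terms. We work with $\cS = \cG \cup \cB$ and $\cT = \cG' \cup \cB'$ as constructed above, where $\cG \subseteq \cN$, $\cG' \subseteq \cM$ are obtained by sampling without replacement $\min\{|\cN|, Z\}$ resp.\ $\min\{|\cM|, Z\}$ points (so that, conditionally on their cardinality, they are i.i.d.\ with law $\rho$), and $\cB \subseteq \cU$, $\cB' \subseteq \cV$ collect the remaining $\max\{Z - |\cN|, 0\}$ resp.\ $\max\{Z - |\cM|, 0\}$ points. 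By symmetry it suffices to bound $\EE[\W(\mu^\cS, Z\rho)]$.

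First I would handle the ``good'' term $\EE[\W(\mu^\cG, |\cG| \rho)]$. Conditioning on $|\cG|$ and applying \eqref{eq:transport-iid} from \cref{prop:matching-iid} (the points of $\cG$ being i.i.d.\ with law $\rho$ given $|\cG|$), this is $\les \EE[|\cG|^{1-p/d}] \le \EE[|\cG|]^{1-p/d} \le (\EE[N])^{1-p/d} \les h^{1-p/d}$ by Jensen and $|\cG| \le |\cN| = N$. For the ``bad'' term $\EE[\W(|\cG|\rho + \mu^\cB, Z\rho)]$, the key is to apply \cref{prop:density-helps} conditionally: note $Z\rho = |\cG|\rho + (Z - |\cG|)\rho$ and $Z - |\cG| = |\cB| \le |\cU|$, so with $\mu := \mu^\cB$, which satisfies $\mu(\Omega) = |\cB| \le |\cU|$, and with the background density $|\cG|\rho/\rho(\Omega)$ (bounded above and below once $|\cG| \sim h$), \cref{prop:density-helps} gives, for $p$ close to $d$,
\[
\W\bigl(|\cG|\rho + \mu^\cB, Z\rho\bigr) \les \frac{1}{|\cG|^{p/d}} |\cU|^{1 + p/d},
\]
provided $|\cG| \gtrsim h$, which by concentration of $N$ fails only on an event of probability $\les_q h^{-q}$. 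On that exceptional event we use the trivial bound $\W \les \diam(\Omega)^p (|\cU| + |\cN| + |\cV| + |\cM|)$ and Cauchy–Schwarz together with the moment bounds \eqref{hyp:UVhelps} and concentration of $N, M$; choosing $q$ large this contributes a negligible term. On the main event, taking expectation and using \eqref{hyp:UVhelps} with the bound $H \le h$, we get $\EE[|\cG|^{-p/d}|\cU|^{1+p/d}] \les h^{-p/d} H^{1 + p/d} \les h^{1 - p/d} (H/h)^{1 + p/d} \cdot H^{?}$; more precisely, one arranges the exponents so that the error is $h^{1-p/d}(H^\alpha/h)^\beta$ for suitable $\alpha = \alpha(p,d) < 2$ and $\beta = \beta(p) > 0$, using that $H \le h$ and that we may take $p/d$ arbitrarily close to $1$ (hence H\"older in \eqref{hyp:UVhelps} lets us absorb the surplus power of $H$ at the cost of shrinking $\beta$). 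Summing the good and bad contributions, and the symmetric estimates for $\cT$, yields the claimed bound via \eqref{triangleMp}.

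The main obstacle is bookkeeping the exponents so that the final error genuinely has the form $h^{1-p/d}(1 + (H^\alpha/h)^\beta)$ with $\alpha < 2$ and $\beta > 0$ independent of the (large) choice of $p$ used inside \cref{prop:density-helps}: one must apply \cref{prop:density-helps} with an exponent $p'$ strictly between $d/(d-1)$ and $d$ (using \eqref{eq:jensen} to pass from $p$ to $p'$ and back), track how $1 + p'/d$ interacts with the moment bound $\EE[|\cU|^{1+p'/d}] \les H^{1+p'/d}$, and check that the requirement $|\cG| \gtrsim h$ (needed for $\mu(\Omega) \ll |\cG|$ in \cref{prop:density-helps}) is compatible with $H \le h$. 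A secondary technical point is ensuring the conditioning arguments are legitimate, i.e.\ that given $|\cG|$ the points of $\cG$ are indeed exchangeable with common law $\rho$ — this follows from the sampling-without-replacement construction and the i.i.d.\ structure of $\cN$ — and that the exceptional-event estimate is uniform in all the relevant parameters, which is where the hypothesis that $M, N$ satisfy concentration (\cref{def:concen}) enters.
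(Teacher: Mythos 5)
Your proposal is correct and follows essentially the same route as the paper: the same good/bad splitting via \eqref{triangleMp}--\eqref{commonstartpoint}, \cref{prop:matching-iid} for the i.i.d.\ part conditioned on $|\cG|$, \cref{prop:density-helps} applied on the event $\{|\cG|\gtrsim h\}$ whose complement is negligible by concentration of $M,N$, and Jensen's inequality \eqref{eq:jensen} to reduce exponents below $d/(d-1)$ to one above it (the paper uses $p'=2$). The bookkeeping you flag as the main obstacle is in fact immediate: on the good event $h^{-p/d}\,\EE\sqa{|\cU|^{1+p/d}} \les h^{-p/d}H^{1+p/d} = h^{1-p/d}\,H^{1+p/d}/h$, so $\alpha=1+p/d$ and $\beta=1$ (with $\beta$ multiplied by $p/2$ after the Jensen reduction), and no surplus power of $H$ needs absorbing.
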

\begin{proof}
Starting from \eqref{triangleMp} and  \eqref{commonstartpoint} we first estimate by \eqref{eq:matching-iid} and H\"older inequality,
\[
 \EE\sqa{\W \bra{ \mu^{\cG},|\cG|\rho}}\les \EE\sqa{|\cG|^{1-\frac{p}{d}}}\le \EE\sqa{|\cG|}^{1-\frac{p}{d}}.
\]
Since $|\cG|\le \min\cur{M,N} +|\cV|$, by \eqref{hyp:UVhelps} with $q=1$ and $H\le h$, we have $\EE\sqa{|\cG|}\les h$ and thus
\[
 \EE\sqa{\W \bra{ \mu^{\cG},|\cG|\rho}}\les h^{1-\frac{p}{d}}.
\]
We are then  left with the proof of 
\begin{equation}\label{toprovehelps}
 \EE\sqa{\W \bra{|\cG|\rho + \mu^{\cB}, Z\rho }}\les h^{1-\frac{p}{d}} \bra{\frac{H^\alpha}{h}}^\beta.
\end{equation}
We first single out the event 
\[
 A=\cur{|\cG|\ge h/2}
\]
and claim that for $q\ge 1$
\begin{equation}\label{Achelps}
 \PP\sqa{A^c}\les_q h^{-q}.
\end{equation}
Indeed, since $A^c\subset \{N\le \EE\sqa{N}/2\}\cup\{M\le \EE\sqa{M}/2\}$, \eqref{Achelps} follows by combining a  union bound together with the concentration properties of $M$ and $N$.
Since 
\[
\W \bra{|\cG|\rho + \mu^{\cB}, Z\rho }\le  \W\bra{ \mu^{\cB}, |\cB|\rho}\les |\cB|\le |\cU|,
\]
 we then find 
\[
 \EE\sqa{\W \bra{|\cG|\rho + \mu^{\cB}, Z\rho }I_{A^c}}\les \EE\sqa{|\cU|^2}^{\frac{1}{2}} \PP\sqa{A^c}^{\frac{1}{2}}\stackrel{\eqref{hyp:UVhelps}\&\eqref{Achelps}}{\les_q} h^{-q} H.
\]
By taking $q$ large enough, in order to prove \eqref{toprovehelps} it is therefore sufficient to show 
\begin{equation}\label{toprovehelpsA}
  \EE\sqa{\W \bra{|\cG|\rho + \mu^{\cB}, Z\rho }I_A}\les h^{1-\frac{p}{d}} \bra{\frac{H^\alpha}{h}}^\beta.
\end{equation}
We start with the case $p>d/(d-1)$. By  \cref{prop:density-helps},
\begin{multline*}
 \EE\sqa{\W \bra{|\cG|\rho + \mu^{\cB}, Z\rho }I_A}\les\EE\sqa{\frac{|\cB|^{1+\frac{p}{d}}}{|\cG|^{\frac{p}{d}}} I_A}
 \les h^{-\frac{p}{d}}\EE\sqa{ |\cU|^{1+\frac{p}{d}} I_A}\stackrel{\eqref{hyp:UVhelps}}{\les} h^{1-\frac{p}{d}} \frac{H^{1+\frac{p}{d}}}{h}.
\end{multline*}
This  proves \eqref{toprovehelpsA} in this case with $\alpha=1+p/d$ and $\beta=1$.\\
If now $p<d/(d-1)<2$, we use Jensen's inequality \eqref{eq:jensen} to obtain 
\begin{multline*}
  \EE\sqa{\W \bra{|\cG|\rho + \mu^{\cB}, Z\rho }I_A}\les\EE\sqa{ Z^{1-\frac{p}{2}} \bra{\mathsf{W}^2\bra{|\cG|\rho + \mu^{\cB}, Z\rho }I_A}^{\frac{p}{2}}}\\
  \le \EE\sqa{ Z}^{1-\frac{p}{2}} \EE\sqa{\mathsf{W}^2\bra{|\cG|\rho + \mu^{\cB}, Z\rho }I_A}^{\frac{p}{2}}.
\end{multline*}
Recalling \eqref{Z} we find $\EE\sqa{ Z}\les h+H\les h$. Using finally \eqref{toprovehelpsA} with $p=2$ we conclude that 
\[
 \EE\sqa{\W \bra{|\cG|\rho + \mu^{\cB}, Z\rho }I_A}\les h^{1-\frac{p}{2}} \bra{h^{1-\frac{2}{d}} \bra{\frac{H^\alpha}{h}}^\beta}^{\frac{p}{2}}= h^{1-\frac{p}{d}}  \bra{\frac{H^\alpha}{h}}^{\beta\frac{p}{2}}.
\]
This proves \eqref{toprovehelpsA} also in this case.
% \[
%  \W\bra{\mu^{\cB}+h\rho,  Z\rho }\le Z^{1-\frac{p}{q}}\bra{ \mathsf{W}^q\bra{\mu^{\cB}+h\rho,  Z\rho }}^{\frac{p}{q}}. 
% \]
% Since $q>d/(d-1)$ we can apply \cref{prop:density-helps} with $\mu^{\cB}$ instead of $\mu$ and $h$ instead of $\eta$, so that
% $$ \mathsf{W}^q\bra{\mu^{\cB}+h\rho,  Z\rho } \les_q \frac{ (Z-h)^{1+q/d}}{h^{q/d}} =  H^{1+q/d} h^{-q/d}.$$
% We thus find
% \[ \begin{split}
%  \W\bra{\mu^{\cB}+h\rho,  Z\rho } & \le Z^{1-\frac{p}{q}}\bra{ H^{1+\frac{q}{d}} h^{-\frac{q}{d}}}^{\frac{p}{q}}\les (H^{1-\frac{p}{q}}+h^{1-\frac{p}{q}})H^{\frac{p}{q}+\frac{p}{d}} h^{-\frac{p}{d}}\\
%  & \les  h^{-\frac{p}{d}}\bra{H^{1+\frac{p}{d}} +  H^{1+\frac{p}{d}} (h/H)^{1-\frac{p}{q}}}.
% \end{split}\]
% After taking expectation, this concludes the proof of the claim.
\end{proof}
We now consider the case when the moment bounds for $\cU$ and $\cV$ are only valid after restricting  on a Whitney-type decomposition from \cref{lem:decomp}.

\setcounter{proof-step}{0}
\begin{proposition}\label{prop:density-helps-matching-partition}
Let $d \ge 3$, $p \ge 1$ and $\Omega \subseteq \R^d$ be a bounded connected open set with Lipschitz boundary and such that \eqref{eq:green-kernel-bound} holds. 
Fix a Whitney partition $\cQ = (Q_i)_i$, and for  $\delta >0$ let $(\Omega_k)_{k=1}^K = \cQ_\delta \cup \cR_\delta$ be given  by \cref{lem:decomp}. Let finally $\rho$ be a H\"older continuous probability density on $\Omega$, bounded above and below.\\
Then, there exist $\alpha=\alpha(p,d)>0$ and $\beta=\beta(p,d) >0$ such that the following holds. For every $\eta\in (0,1)$, $\eps>0$ and $\gamma\in( 0, 1/d)$, there exists $C(\eta,\eps,\gamma)$ such that 
for every Poisson point processes $\cN^{\eta n \rho}, \cM^{\eta n \rho}$ with intensity  $\eta n\rho$ and every point processes $\cU$ and $\cV$ on $\Omega$ such that 
\begin{equation}\label{hypmomUVhelpmatching}
 \EE\lt[|\cU_{\Omega_k}|^q+ |\cV_{\Omega_k}|^q\rt]\les_q (n|\Omega_k|)^{\frac{q}{2}} \qquad \forall q>0,
\end{equation}
if $\delta= n^{-\gamma}$ then
\begin{multline*}
 n^{\frac{p}{d}-1}\EE\sqa{ \Mp( \cU \cup \cN^{\eta n\rho}, \cV \cup \cM^{\eta n\rho}) }\les 
 \eta^{1-\frac{p}{d}}\\
 +C(\eta,\eps,\gamma) n^{\eps}\bra{ \bra{\max\cur{n^{\frac{p}{d}}\delta^{p+1},n^{\frac{2}{d}} \delta^3}}^{\alpha} +\bra{n\delta^d}^{-\beta} }. 
\end{multline*}

\end{proposition}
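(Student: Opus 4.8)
The plan is to follow the scheme set up in the paragraph preceding \cref{prop:density-helps-matching}, now with $\cN = \cN^{\eta n\rho}$, $\cM = \cM^{\eta n\rho}$ and $h = \eta n$, but to estimate the resulting Wasserstein terms by localizing on the Whitney decomposition $(\Omega_k)_{k=1}^K = \cQ_\delta \cup \cR_\delta$, in the spirit of \cite{ambrosio2022quadratic}. By \eqref{triangleMp} and \eqref{commonstartpoint} (applied to both $\cU\cup\cN$ and $\cV\cup\cM$, the construction being symmetric), it suffices to bound $\EE[\W(\mu^{\cG}, |\cG|\rho)]$ and $\EE[\W(|\cG|\rho + \mu^{\cB}, Z\rho)]$, where $Z = |\cG| + |\cB|$. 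The first is handled directly by \eqref{eq:transport-iid} (applied conditionally on $|\cG|$, the points of $\cG$ being i.i.d.\ with law $\rho$) together with $|\cG| \le |\cN^{\eta n\rho}|$ and $\EE[|\cG|] \le \eta n$, giving $\EE[\W(\mu^{\cG}, |\cG|\rho)] \lesssim (\eta n)^{1-p/d}$, i.e.\ the leading contribution $\eta^{1-p/d}$ after multiplication by $n^{p/d-1}$.

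For the second term, introduce the good event
\[ A = \cur{ |\cG| \ge \tfrac{\eta n}{2},\ \min\{|\cN^{\eta n\rho}|, |\cM^{\eta n\rho}|\} \ge \tfrac{\eta n}{2},\ |\cU_{\Omega_k}| + |\cV_{\Omega_k}| \le n^{\eps'}(n|\Omega_k|)^{1/2}\ \text{for all }k }, \]
for a small $\eps' = \eps'(\eps,p)>0$. By Poisson concentration \eqref{eq:density-bound-below-Poi}, Markov's inequality applied to \eqref{hypmomUVhelpmatching}, a union bound over the $|\cQ_\delta \cup \cR_\delta| \lesssim \delta^{1-d}$ cells, and the constraint $\gamma d < 1$, the probability $\PP(A^c)$ decays faster than any power of $n$; combined with the a.s.\ bound $\Mp(\cU\cup\cN, \cV\cup\cM) \lesssim |\cU| + |\cV| + |\cN^{\eta n\rho}| + |\cM^{\eta n\rho}|$ and the moment estimates (using \eqref{eq:whitney-general-q} with $\alpha = d/2 < d-1$ to control $\EE[|\cU|^q]$, $\EE[|\cV|^q]$), the contribution of $A^c$ to $n^{p/d-1}\EE[\Mp(\cdots)]$ is negligible.

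On $A$, write $b_k = |\cB_{\Omega_k}| \le |\cU_{\Omega_k}|$ with $\cB_{\Omega_k} \subseteq \cU_{\Omega_k}$, introduce the intermediate measure $\lambda_1 = |\cG|\rho + \sum_k \frac{b_k}{\rho(\Omega_k)}I_{\Omega_k}\rho$ (so $\lambda_1(\Omega) = Z$), and split by the triangle inequality $\W(|\cG|\rho + \mu^{\cB}, Z\rho) \lesssim \W(|\cG|\rho + \mu^{\cB}, \lambda_1) + \W(\lambda_1, Z\rho)$. For the first piece, subadditivity \eqref{eq:convexity} over the partition gives $\W(|\cG|\rho + \mu^{\cB}, \lambda_1) \le \sum_k \W_{\Omega_k}\bra{ |\cG|I_{\Omega_k}\rho + \mu^{\cB_{\Omega_k}},\ \bra{|\cG| + \tfrac{b_k}{\rho(\Omega_k)}}I_{\Omega_k}\rho }$; on each cube $\Omega_k \in \cQ_\delta$ we apply \cref{prop:density-helps} (with density $I_{\Omega_k}\rho$, bounded above and below by $k$-independent constants, and mass parameter $h = |\cG|$), obtaining $\lesssim |\Omega_k|^{p/d}|\cG|^{-p/d}b_k^{1+p/d}$, while on $\Omega_k \in \cR_\delta$ we use the trivial bound $\lesssim \diam(\Omega_k)^p b_k \lesssim \delta^p b_k$. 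For the second piece, $\lambda_1 - Z\rho = \sum_k \frac{b_k}{\rho(\Omega_k)}(I_{\Omega_k} - \rho(\Omega_k))\rho$ is precisely the object in \cref{lem:W1pWhitney} (on $A$ one has $b_k \lesssim n^{1/2+\eps'}|\Omega_k|^{1/2}$, so one takes there $h = n^{1+2\eps'}$), and since $Z\rho$ has density $\gtrsim \eta n$, \cref{lem:peyre} yields $\W(\lambda_1, Z\rho) \lesssim (\eta n)^{1-p}\bra{\delta^{1-d/2}|\log\delta|\,n^{1/2+\eps'}}^p$. Taking expectations, summing over $k$ via \eqref{eq:whitney-general-q} (using $\EE[b_k^{1+p/d}] \lesssim (n|\Omega_k|)^{(1+p/d)/2}$, and $|\cR_\delta| \lesssim \delta^{1-d}$ with $\diam(\Omega_k) \sim \delta$ on $\cR_\delta$), multiplying by $n^{p/d-1}$ and setting $\delta = n^{-\gamma}$, one finds that the $\cQ_\delta$--contribution and the $\W(\lambda_1, Z\rho)$--contribution are bounded (up to factors $|\log\delta|^p$ and $n^{\eps' p}$ absorbed into $C(\eta,\eps,\gamma)n^\eps$) by $(n\delta^d)^{-\beta}$, whereas the near-boundary $\cR_\delta$--contribution is bounded by $(n^{p/d}\delta^{p+1})^\alpha(n\delta^d)^{-1/2}$, hence by $(n^{p/d}\delta^{p+1})^\alpha + (n\delta^d)^{-\beta}$, for suitable $\alpha, \beta$ depending only on $p, d$. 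Finally, the case $p \le d/(d-1)$ is reduced to $p = 2$ by first applying Jensen's inequality \eqref{eq:jensen} to $\W(|\cG|\rho + \mu^{\cB}, Z\rho)$ (at the cost of a factor $\EE[Z]^{1-p/2} \lesssim (\eta n)^{1-p/2}$), which is why the $p=2$ instance $n^{2/d}\delta^3 = (n^{p/d}\delta^{p+1})\big|_{p=2}$ enters the maximum.

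The main obstacle is the last bookkeeping step: one must check that every error term — a sum $\sum_k \diam(\Omega_k)^\theta$ over cells ranging in size from $\delta$ up to $\diam(\Omega)$, with $\theta$ depending on $p, d$ — is dominated by $\bra{\max\{n^{p/d}\delta^{p+1}, n^{2/d}\delta^3\}}^\alpha + (n\delta^d)^{-\beta}$ \emph{uniformly} in $\gamma \in (0, 1/d)$ (and in the geometry of the decomposition), which reduces to a family of linear inequalities in $\gamma$ among exponents; and that \cref{prop:density-helps} may be invoked with a constant uniform over the members of $\cQ_\delta$, which follows from the scale invariance in \cref{prop:density-helps} and property \ref{partition-2} of \cref{lem:decomp}.
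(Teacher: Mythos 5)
Your proposal follows essentially the same route as the paper: the same reduction via \eqref{triangleMp} and \eqref{commonstartpoint}, the same good event with $\PP(A^c)$ super-polynomially small under $\gamma d<1$, the same local/global splitting over the Whitney partition (your intermediate measure $\lambda_1$ is exactly $\sum_k\alpha_k I_{\Omega_k}\rho$ produced by \eqref{eq:mainsub} with $\eps=1$, and $\lambda_1-Z\rho=\sum_k\frac{b_k}{\rho(\Omega_k)}(I_{\Omega_k}-\rho(\Omega_k))\rho$ is precisely the object fed into \cref{lem:W1pWhitney} and \cref{lem:peyre}), the trivial bound on $\cR_\delta$, \cref{prop:density-helps} on the cubes of $\cQ_\delta$, and the Jensen reduction of $p\le d/(d-1)$ to $p=2$.

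One quantitative slip: your per-cube estimate from \cref{prop:density-helps} is too strong. With the density $I_{\Omega_k}\rho$ (bounded above and below by $k$-independent constants) and mass parameter $h=|\cG|$, the scale-invariant form of the proposition gives $\W_{\Omega_k}\bra{\mu^{\cB}+|\cG|\rho,\alpha_k\rho}\lesssim |\cG|^{-p/d}\,b_k^{1+p/d}\sim_\eta n^{-p/d}b_k^{1+p/d}$, with \emph{no} factor $|\Omega_k|^{p/d}$: the $\ell^p$ rescaling factor and the change $h\mapsto h\ell^d$ in the effective unit-scale mass parameter cancel exactly. Your stated bound $|\Omega_k|^{p/d}|\cG|^{-p/d}b_k^{1+p/d}$ mixes the two normalizations (a probability density on $\Omega_k$ versus an order-one density) and is false for small cells (e.g.\ a single extra point in a cell with $\eta n|\Omega_k|\gg1$ costs $\sim(\eta n)^{-p/d}$, not $|\Omega_k|^{p/d}(\eta n)^{-p/d}$). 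Since the error is in the favorable direction, nothing breaks: with the correct bound the $\cQ_\delta$ sum is controlled, via \eqref{eq:whitney-general-q}, by $n^{2\eps}(n\delta^d)^{-\frac12(1-\frac pd)}$, which is of the admitted form $(n\delta^d)^{-\beta}$, and the rest of your bookkeeping closes exactly as in the paper.
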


\begin{proof}
Using the notation from the beginning of this section, we start as above from \eqref{triangleMp} and  \eqref{commonstartpoint} and  estimate by \eqref{eq:matching-iid},
\[
 \EE\sqa{\W \bra{ \mu^{\cG},|\cG|\rho}}\les  \EE\sqa{|\cG|}^{1-\frac{p}{d}}.
\]
Since $|\cG|\le |\cN^{n\eta \rho}|$ we get
\[
 \EE\sqa{\W \bra{ \mu^{\cG},|\cG|\rho}}\les (\eta n)^{1-\frac{p}{d}}.
\]
In order to conclude the proof it is thus enough to show
\begin{equation}\label{mainclaimdensityhelps}
n^{\frac{p}{d}-1} \EE\sqa{\W\bra{ \mu^{\cB}+|\cG|\rho, Z\rho}}\le C(\eta,\eps,\gamma) n^{\eps}\bra{ \bra{\max\cur{n^{\frac{p}{d}}\delta^{p+1},n^{\frac{2}{d}} \delta^3}}^{\alpha} +\bra{n\delta^d}^{-\beta}}. 
\end{equation}
\setcounter{proof-step}{0}

\noindent{\emph{Step \stepcounter{proof-step}\arabic{proof-step}. Reduction to a ``good'' event.}}
We let 
\[
 A=\{|\cG|\in[\eta n/2, 3\eta n]\}\cap \bigcap_{k=1}^K  \cur{ \max\cur{ |\cU_{\Omega_k}|, |\cV_{\Omega_k}| }\le (n |\Omega_k|)^{\frac{1}{2}} \cdot n^{\eps} }.
\]
and claim that 
\begin{equation}\label{estimAcdensityhelp}
 n^{\frac{p}{d}-1}\EE\sqa{\W\bra{ \mu^{\cB}+|\cG|\rho, Z\rho} I_{A^c}}\le C(\eta, \eps,\gamma)\bra{n\delta^d}^{-\beta}.
\end{equation}
We first prove that for every $q>0$,
\begin{equation}\label{estimPAcdensityhelp}
 \PP\sqa{A^c}\les_q C(\eta) n^{-q} +\delta^{1-d} n^{-\eps q}. 
\end{equation}
To prove this we use a union bound and split
\begin{multline*}
  \PP\sqa{A^c}\le \PP\sqa{|\cG|\notin [\eta n/2, 3\eta n] }\\
  +\sum_{k=1}^K \PP\sqa{|\cU_{\Omega_k}|\ge (n |\Omega_k|)^{\frac{1}{2}} \cdot n^{\eps} }+ \sum_{k=1}^K \PP\sqa{|\cV_{\Omega_k}|\ge (n |\Omega_k|)^{\frac{1}{2}} \cdot n^{\eps} }.
\end{multline*}
Regarding the first term we notice that 
\begin{multline*}
 \{|\cG|\notin[\eta n/2, 3\eta n]\}\subset \{ |\cN^{\eta n \rho}|< \eta n/2\}\cup \{ |\cN^{\eta n \rho}|> 3\eta n\} \cup \{ |\cM^{\eta n \rho}|< \eta n/2\}.
\end{multline*}
Using once more a union bound and \eqref{eq:density-bound-below-Poi}, we find 
\[
 \PP\sqa{|\cG|\notin [\eta n/2, 3\eta n] }\les_q C(\eta) n^{-q}.
\]
Regarding the two sums, by \eqref{hypmomUVhelpmatching}, we have for every $k\in [1,K]$
\[
 \PP\sqa{|\cU_{\Omega_k}|\ge (n |\Omega_k|)^{\frac{1}{2}} \cdot n^{\eps} }\les_q n^{-\eps q}
\]
and similarly for $\cV$. Since $K\les \delta^{1-d}$ by \eqref{eq:whitney-general-q} this concludes the proof of \eqref{estimPAcdensityhelp}.\\
We now turn to \eqref{estimAcdensityhelp}. As above by the bound $\W\bra{ \mu^{\cB}+|\cG|\rho, Z\rho}\les |\cU|$ and  Cauchy-Schwarz, we have 
\[
\EE\sqa{\W\bra{ \mu^{\cB}+|\cG|\rho, Z\rho} I_{A^c}}\les \EE\sqa{|\cU|^2}^{\frac{1}{2}}\PP\sqa{A^c}^{\frac{1}{2}}.
\]
Using once more Cauchy-Schwarz together with \eqref{hypmomUVhelpmatching} with $q=2$ we have 
\[
  \EE\sqa{|\cU|^2}^{\frac{1}{2}}\les K^{\frac{1}{2}} n^{\frac{1}{2}}
\]
so that by \eqref{estimPAcdensityhelp} and $K\les \delta^{1-d}$
\[
n^{\frac{p}{d}-1} \EE\sqa{\W\bra{ \mu^{\cB}+|\cG|\rho, Z\rho} I_{A^c}}\les_q C(\eta) \delta^{\frac{1}{2}(1-d)} (n^{-q} +\delta^{1-d} n^{-\eps q})^{\frac{1}{2}}n^{\frac{p}{d}-\frac{1}{2}}.
\]
Since $\delta=n^{-\gamma}$, this concludes the proof of \eqref{estimAcdensityhelp} provided we choose $q$ large enough depending on $\eps$ and $\gamma$.

\medskip
\noindent In the remaining two steps we prove that in $A$,
\begin{equation}\label{mainclaimdensityhelpsA}
 n^{\frac{p}{d}-1}\W\bra{ \mu^{\cB}+|\cG|\rho, Z\rho}\les_\eta  n^{\eps}\bra{ \bra{\max\cur{n^{\frac{p}{d}}\delta^{p+1},n^{\frac{2}{d}} \delta^3}}^{\alpha} +\bra{n\delta^d}^{-\beta} }. 
\end{equation}
After taking expectation and in combination with \eqref{estimAcdensityhelp} this would conclude the proof of \eqref{mainclaimdensityhelps}. From this point all the estimates are deterministic.\\

\medskip 
\noindent{\emph{Step \stepcounter{proof-step}\arabic{proof-step}. Estimate for $p>d/(d-1)$.}}
We  first use  \eqref{eq:mainsub}, e.g.\ with $\varepsilon=1$, to obtain
\begin{equation}\label{eq:subadd-epsilon-1}%\begin{split}
\W_{\Omega} \bra{ \mu^{\cB}+ |\cG|\rho, Z\rho }  \les \sum_{k=1}^K \W_{\Omega_k}\bra{ \mu^{\cB} + |\cG|\rho, \alpha_k \rho}
%& \quad 
+ \W_{\Omega}\bra{ \sum_{k=1}^K \alpha_k I_{\Omega_k} \rho, Z\rho },
%\end{split}
\end{equation}
with
\begin{equation}\label{eq:alpha_k} \alpha_k = \frac{ \mu^{\cB}(\Omega_k)}{\rho(\Omega_k)} + |\cG|.\end{equation}
We bound the terms in the right-hand side separately. For the sum of ``local'' terms, we estimate differently according to $\Omega_k\in \cR_\delta$ or $\Omega_k\in \cQ_\delta$.
In the first case we use the naive bound
\[ \begin{split} \W_{\Omega_k}\bra{ {\mu^{\cB} + |\cG|\rho}, \alpha_k \rho}  & \stackrel{\eqref{eq:convexity}}{\le} \W_{\Omega_k}\bra{ \mu^{\cB} ,  \frac{\mu^{\cB}(\Omega_k)}{\rho(\Omega_k)} \rho}
 \stackrel{\eqref{eq:w-trivial}}{\le} \diam(\Omega_k)^p |\cU_{\Omega_k}| \\
 & \les n^{\frac{1}{2}+\varepsilon} \delta^{p + \frac{d}{2}}.\end{split}\]
Since $K\les \delta^{1-d}$ we find
\begin{equation}\label{eq:second-term-matching-whitney}n^{\frac{p}{d}-1}
\sum_{\Omega_k \in \cR_\delta} \W_{\Omega_k}\bra{ \mu^{\cB} + |\cG| \rho, \alpha_k \rho} \les  n^\eps n^{\frac{p}{d}} \delta^{1+p} (n\delta^{-d})^{ - \frac{1}{2}}\le n^\eps n^{\frac{p}{d}} \delta^{1+p}.
\end{equation}
If $\Omega_k \in \cQ_\delta$ is a cube,  we use instead \cref{prop:density-helps} with $\mu^{\cB}$ instead of $\mu$ and $|\cG|$ instead of $h$, so that
\[  \begin{split} \W_{\Omega_k}\bra{ {\mu^{\cB} + |\cG| \rho}, \alpha_k\rho}  & \les_\eta   \frac{\mu^{\cB}(\Omega_k)^{1+\frac{p}{d}}}{n^{\frac{p}{d}}}    \les_\eta n^{-\frac{p}{d}} |\cU_{\Omega_k}|^{1+\frac{p}{d}} \\
&  \les_\eta n^{(1+\frac{p}{d})\eps}  n^{\frac{1}{2} (1-\frac{p}{d})} |\Omega_k|^{\frac{1}{2}(1+\frac{p}{d})}.
\end{split}\]
Summing this inequality yields
\begin{equation*}\begin{split} n^{\frac{p}{d}-1}\sum_{\Omega_k \in \cQ_\delta} \W_{\Omega_k}\bra{ \mu^{\cB} + |\cG| \rho, \alpha_k \rho}   & \les_\eta   n^{(1+\frac{p}{d})\eps}  n^{-\frac{1}{2} (1-\frac{p}{d})} \sum_{k=1}^K   |\Omega_k|^{\frac{1}{2}(1+\frac{p}{d})} \\
&  \stackrel{ \eqref{eq:whitney-general-q}}{\les_\eta}  n^{2 \eps}  n^{-\frac{1}{2} (1-\frac{p}{d})}\max\cur{1,\delta^{\frac{1}{2}(d-2-p)}}.
\end{split}\end{equation*}
Notice that since $n \delta^d\ge 1$, 
\[
 n^{-\frac{1}{2} (1-\frac{p}{d})}\max\cur{1,\delta^{\frac{1}{2}(d-2-p)}}\le (n\delta^d)^{-\frac{1}{2} (1-\frac{p}{d})}
\]
so that 
\begin{equation}\label{eq:third-term-matching-whitney}
 n^{\frac{p}{d}-1}\sum_{\Omega_k \in \cQ_\delta} \W_{\Omega_k}\bra{ \mu^{\cB} + |\cG| \rho, \alpha_k \rho} \les_\eta n^{2 \eps} (n\delta^d)^{-\frac{1}{2} (1-\frac{p}{d})}.
\end{equation}

% Moreover, using again that $\mu^{\cB}(\Omega_k) \le |\cU_{\Omega_k}|$ and \eqref{eq:bound-moment-q-remainder-uv} with $q=1+p/d$, we conclude that
%\begin{equation}\label{eq:bound-wasserstein-good-omega-k} \EE\sqa{ \W_{\Omega_k}\bra{ {\mu^{\cB} + |\cG|\rho}, \alpha_k \rho}I_A} \les C(\eta)\lambda(\Omega)^{-p/d}  \lambda(\Omega_k)^{(1+p/d)/2}.\end{equation}
%\[  \W_{\Omega_k}\bra{ {\mu^{\cU} + |\cG|\rho}, \alpha_k \rho}  \les \sum_{\Omega_k \in \cR_\delta}\]
We then consider the last term in \eqref{eq:subadd-epsilon-1}. Using  \cref{lem:peyre} with $Z \rho \gtrsim  \eta n$ in place of $\lambda$ (recall that we assume here that $A$ holds),  we get
\begin{equation}\label{firststepglobalmatchingwhitney}  \W_{\Omega}\bra{ \sum_{k=1}^K \alpha_k I_{\Omega_k} \rho, Z\rho } \les_\eta  n^{1-p} \nor{ \sum_{k=1}^K \alpha_kI_{\Omega_k} \rho - Z\rho}_{W^{-1,p}(\Omega)}^p.
\end{equation}
Recalling \eqref{eq:alpha_k} and that $Z = \mu^{\cB}(\Omega) + |\cG|$, we can rewrite
\[  \sum_{k=1}^K \alpha_kI_{\Omega_k} \rho - Z\rho  = \sum_{k=1}^K  \frac{ \mu^{\cB}(\Omega_k)}{\rho(\Omega_k)}   \bra{I_{\Omega_k}- \rho(\Omega_k)} \rho.
\]
By \eqref{estimW1pWhitney} of \cref{lem:W1pWhitney} with $h= n^{1+2\eps}$ we thus have in $A$
\[
 \nor{ \sum_{k=1}^K \alpha_kI_{\Omega_k} \rho - Z\rho}_{W^{-1,p}(\Omega)}\les n^{\eps} \delta^{1-\frac{d}{2}} |\log (\delta )| n^{\frac{1}{2}}.  
\]
Combining this with \eqref{firststepglobalmatchingwhitney} we get that in $A$,
\[n^{\frac{p}{d}-1} \W_{\Omega}\bra{ \sum_{k=1}^K \alpha_k I_{\Omega_k} \rho, Z\rho } \les  n^{p\eps} (n\delta^d)^{-p\frac{(d-2)}{2d}} |\log (\delta )|^p . 
\]
Inserting this estimate, \eqref{eq:second-term-matching-whitney} and \eqref{eq:third-term-matching-whitney} in \eqref{eq:subadd-epsilon-1} we finally obtain (notice that $p(d-2)>d-p$ for $p>d/(d-1)$) that in $A$,
\[
 n^{\frac{p}{d}-1}\W\bra{ \mu^{\cB}+|\cG|\rho, Z\rho}\les_\eta n^{\max\cur{p,2} \eps} \bra{ n^{\frac{p}{d}} \delta^{1+p}+(n\delta^d)^{-\frac{(d-p)}{2d}} |\log (\delta )|^p }.
\]
Up to replacing $\eps$ by $\max\cur{p,2} \eps$ and choosing $\beta< \frac{(d-p)}{2d}$ this  concludes  the proof of \eqref{mainclaimdensityhelpsA} if $p>d/(d-1)$.\\

\medskip
\noindent{\emph{Step \stepcounter{proof-step}\arabic{proof-step}. Estimate for $p\le d/(d-1)$.}}
Since $2>d/(d-1)\ge p$, we may use Jensen's inequality \eqref{eq:jensen} to infer that in $A$,
\[\begin{split}
n^{\frac{p}{d}-1} \W\bra{ \mu^{\cB}+|\cG|\rho, Z\rho}& \le n^{\frac{p}{d}-1} Z^{1-\frac{p}{2}}\bra{\mathsf{W}^2\bra{\mu^{\cB}+|\cG|\rho, Z\rho}}^{\frac{p}{2}}\\
 & \les (\eta +n^{-\frac{1}{2}})^{1-\frac{p}{2}}\bra{n^{\frac{2}{d}-1}\mathsf{W}^2\bra{\mu^{\cB}+|\cG|\rho, Z\rho}}^{\frac{p}{2}}\\
 & \les \bra{n^{\frac{2}{d}-1}\mathsf{W}^2\bra{\mu^{\cB}+|\cG|\rho, Z\rho}}^{\frac{p}{2}}.
\end{split}\]
Using  \eqref{mainclaimdensityhelpsA} for $p=2$ concludes the proof of \eqref{mainclaimdensityhelpsA} also in this case.
\end{proof}

Finally, we  consider the case of a cube $Q_{mL}$ decomposed into cubes of sidelength $L$. The difficulty compared to the previous two cases is 
to obtain bounds which are independent of $m$. This is achieved using the additional independence for the point processes $\cU$, $\cV$.
While we believe that a direct proof combining Green kernel bounds in the spirit of the proof of \cref{lem:W1pWhitney} together with a Rosenthal
type inequality for the (non independent) random variables $\mu^{\cB}(Q_i)$ should be possible we give
a more elementary proof based on subadditivity and concentration.  %This is the most technical result, but its application is key to random Euclidean bipartite combinatorial optimization problems.

\begin{proposition}\label{thm:bound-matching}
Let $d \ge 3$, $\eta \in (0,1/2)$, $L \ge 1$ and $m \in \mathbb{N}\setminus\cur{0}$. Let $\cU$, $\cV$ be point processes on $Q_{mL}$ such that the restrictions $(\cU_{Q_i}, \cV_{Q_i})_{i}$ on all sub-cubes 
$Q_i= Q_L + Lz_i\subseteq Q_{mL}$, with $z_i \in \mathbb{Z}^d$, are independent copies (translated by the vector $Lz_i$) of the pair of processes $(\cU_{Q_L}, \cV_{Q_L})$ and such that for every $q\ge 1$, there exists $C(q)>0$ such that 
\begin{equation}\label{hypUV} \EE\sqa{ |\cU_{Q_i}|^q +|\cV_{Q_i}|^q}\le C(q) L^{d\frac{q}{2}}.\end{equation}
Let $\cN^{\eta}$, $\cM^{\eta}$ independent Poisson processes on $Q_{mL}$ with constant intensity $\eta$, also independent from $(\cU,\cV)$. Then, 
for every $p \in [1, d)$, there exists $C(\eta) = C(\eta,p,d, (C(q))_{q\ge 1})>0$ and $\alpha= \alpha(p,d)>0$, such that, if $L \ge C(\eta)$,
\[\EE\sqa{ \frac{1}{|Q_{m L}|} \Mp\bra{ \cU \cup \cN^\eta,  \cV \cup \cM^{\eta} }} \lesssim  \eta^{1-\frac{p}{d}} + \frac{ C(\eta)}{L^\alpha}.\]
\end{proposition}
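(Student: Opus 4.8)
The plan is to follow the general strategy set up at the start of \cref{sec:ot}. Truncating both families to the common size $Z=\min\{|\cU|+|\cN^\eta|,\,|\cV|+|\cM^\eta|\}$ and, as in \eqref{triangleMp}--\eqref{commonstartpoint}, splitting each truncated family into a ``good'' i.i.d.\ part $\cG$ (sampled from $\cN^\eta$, resp.\ $\cM^\eta$) and a ``bad'' part $\cB\subseteq\cU$ (resp.\ $\subseteq\cV$), one reduces to estimating
\[
\EE\sqa{\W\bra{\mu^{\cG},\,\tfrac{|\cG|}{|Q_{mL}|}}}\qquad\text{and}\qquad\EE\sqa{\W\bra{\mu^{\cB}+\tfrac{|\cG|}{|Q_{mL}|},\,\tfrac{Z}{|Q_{mL}|}}},
\]
together with the symmetric terms. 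Since, conditionally on $|\cG|\le|\cN^\eta|$, the points of $\cG$ are i.i.d.\ uniform on $Q_{mL}$, \eqref{eq:transport-iid} of \cref{prop:matching-iid} (applied with $\Omega=Q_{mL}$) gives that the first term is $\les (mL)^p\,\EE[|\cG|]^{1-\frac pd}\le (mL)^p(\eta(mL)^d)^{1-\frac pd}=\eta^{1-\frac pd}|Q_{mL}|$, which after normalization is exactly the $\eta^{1-\frac pd}$ contribution. It therefore remains to bound the second, ``defect'' term by $\les C(\eta)\,|Q_{mL}|\,L^{-\alpha}$ \emph{uniformly in $m$}; this uniformity is the whole difficulty and is where the independence of $(\cU_{Q_i},\cV_{Q_i})_i$ must be used.

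For the defect term I would argue by induction on $m$, halving the side at each step (a routine padding argument removes the restriction to powers of $2$). Write $g(m):=|Q_{mL}|^{-1}\,\EE[\Mp(\cU\cup\cN^\eta,\cV\cup\cM^\eta)]$. For the base case $m=1$, rescaling $Q_L$ to the unit cube turns the statement into an instance of \cref{prop:density-helps-matching} with $h=\eta L^d$ and $H=L^{d/2}$ (so that $H\le h$ once $L\ge C(\eta)$); since the exponent there is $<2$, this yields $g(1)\les\eta^{1-\frac pd}+C(\eta)L^{-\alpha'}$ for some $\alpha'>0$. For the inductive step, partition $Q_{mL}$ into its $2^d$ dyadic sub-cubes $H$ of side $mL/2$; by the restriction and independence properties of Poisson processes and the factorized bound \eqref{hypUV}, the data $(\cU_H,\cV_H,\cN^\eta_H,\cM^\eta_H)$ again satisfy the hypotheses of the proposition with $m$ replaced by $m/2$. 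Applying the subadditivity inequality \cref{prop:partition} with this partition (the leftover points playing the role of $\x^0,\y^0$ and the cube centres that of $\z$), and noting that $|Q_{mL}|^{-1}\sum_H\EE[\Mp(\cdot_H,\cdot_H)]=g(m/2)$, one gets $g(m)\le g(m/2)+e(m)$, where $e(m):=|Q_{mL}|^{-1}\,\EE[\text{subadditivity defect}]$. The key observation is that, since $\cN^\eta$ and $\cM^\eta$ dominate $\cU,\cV$ in cardinality on every sub-cube (again because $L$ is large), the per-cube truncations can be chosen so that $\x^0,\y^0$ consist of i.i.d.\ uniform points removed from $\cN^\eta$, resp.\ $\cM^\eta$; by Poisson concentration and \eqref{hypUV} each has, off an event of negligible probability, at most $\asymp_\eta m^{d/2}L^{d/2}$ points. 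Hence, by \eqref{eq:matching-iid}, the dominant piece $\Mp(\x^0,\y^0)$ of the defect (the other pieces $\Mp(\z,\x^0)$ and $\sum_H\diam(H)^p$ being $\les (mL)^p$) is $\les_\eta (mL)^p\,(m^{d/2}L^{d/2})^{1-\frac pd}=(mL)^{\frac{p+d}{2}}$, so that $e(m)\les_\eta (mL)^{\frac{p-d}{2}}$. Because $p<d$ this is a geometric series over dyadic scales, and iterating gives $g(2^K)\le g(1)+\sum_{k=1}^K e(2^k)\les\eta^{1-\frac pd}+C(\eta)L^{-\alpha}$ with $\alpha=\min\{\alpha',(d-p)/2\}$; on the complementary small-probability events one absorbs the error using the trivial bound $\Mp\les|\cU\cup\cN^\eta|\,|Q_{mL}|^{p/d}$ together with Cauchy--Schwarz and concentration, exactly as in \cref{lem:subadditivity-finite-partition} and \cref{prop:uniform}.

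The main obstacle is making the inductive step rigorous: one must arrange the per-cube truncations so that the subadditivity defect is genuinely the matching cost of two \emph{i.i.d.\ uniform} clouds --- which forces one to discard points of $\cN^\eta,\cM^\eta$ rather than to pass to per-cube \emph{optimal} sub-matchings, and requires tracking the (non-independent) leftover cardinalities by concentration --- while still controlling the per-cube terms $\Mp(\cdot_H,\cdot_H)$ by the inductive quantity; this is handled by strengthening the induction hypothesis so as to allow the removal of a bounded fraction of i.i.d.\ uniform points before matching. The alternative --- estimating the defect directly through Green-kernel bounds as in \cref{lem:W1pWhitney} combined with a Rosenthal inequality for the independent masses $\mu^{\cB}(Q_i)$ --- would also give $m$-uniform bounds when $d\ge3$ (the relevant sums of $|x-y|^{2(1-d)}$ over a uniform grid converging in that range), but it is precisely the heavier route that this subadditivity-and-concentration argument is meant to avoid.
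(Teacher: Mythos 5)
Your reduction via the good/bad splitting and the $\eta^{1-p/d}$ bound for the good part are fine, but the inductive step -- the heart of your argument -- has a genuine gap, and it sits exactly where the $p\ge d/2$ difficulty lives. In your construction the leftover families $\x^0,\y^0$ are the per-sub-cube cardinality excesses removed from $\cN^\eta$, resp.\ $\cM^\eta$: in each sub-cube of side $mL/2$ you remove points of whichever colour is in excess, so the two removed clouds are colour-segregated (in a given sub-cube at most one of $\x^0,\y^0$ is nonempty). Such a pair is not distributed like two independent i.i.d.\ uniform samples on $Q_{mL}$, and \eqref{eq:matching-iid} does not apply to it: matching the $\asymp_\eta (mL)^{d/2}$ excess points of one colour to excess points of the other colour forces transport over distances of order $mL$, so this piece of the defect is of order $(mL)^{p+d/2}$, not your claimed $(mL)^{(p+d)/2}$. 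After normalising by $|Q_{mL}|$ and summing over dyadic scales this gives $\sum_k (2^kL)^{p-d/2}$, which is uniform in $m$ only when $p<d/2$; in other words, estimated this way your scheme reproduces the Barthe--Bordenave barrier rather than overcoming it. (A secondary point: \eqref{hypUV} does not force $\EE|\cU_{Q_L}|=\EE|\cV_{Q_L}|$, so the excesses may even carry a systematic part of order $m^dL^{d/2}$.) One can try to repair this by also dumping into $\x^0,\y^0$ a small i.i.d.\ buffer at every scale and invoking \cref{prop:density-helps-matching} for the buffer-plus-excess matching, but then the per-scale defect estimate is itself a ``matching with a fraction of i.i.d.\ points'' statement of the very type being proved, with constants that must be checked to be uniform in the scale -- your sketch does not do this. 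The same unresolved issue appears in the identification $|Q_{mL}|^{-1}\sum_H\EE[\Mp(\cdot_H,\cdot_H)]=g(m/2)$: once cardinalities are equalised per sub-cube, the per-cube pair is no longer a translated copy of the original processes (the Poisson part is depleted by a random, configuration-dependent number of points correlated with $\cU_H,\cV_H$, and \eqref{hypUV} has to be re-verified for the new bad parts). You flag this and appeal to ``strengthening the induction hypothesis'', but that hypothesis is never formulated nor shown to propagate down the dyadic tree; this is precisely the missing technical content.

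For comparison, the paper avoids both problems by performing the good/bad split once, at the top scale, and running the dyadic subadditivity not on the matching cost of point configurations but on the normalised transport cost $\EE\big[|R|^{-1}\W_R\big(\mu^{\cB}+\tfrac{|\cG|}{|Q|},\tfrac{\mu^{\cB}(R)}{|R|}+\tfrac{|\cG|}{|Q|}\big)\big]$ of the bad empirical measure against its own local averages, on the everywhere-positive background $|\cG|/|Q|\gtrsim\eta$. The per-scale defect is then controlled through \eqref{eq:mainsub}, \cref{lem:peyre} and \eqref{eq:Lp} by the fluctuations of $\mu^{\cB}(R_i)/|R_i|$, which are bounded by hypergeometric concentration plus a Rosenthal inequality for the independent $|\cU_{Q_i}|$, yielding a summable $|R|^{-p(d-2)/(2d)}$ per scale; the base case on $Q_L$ is \cref{prop:density-helps}, and $p\in[1,2)$ follows by Jensen. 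It is this ``Wasserstein of the bad part on a dense background'' formulation -- not a direct subadditivity on $\Mp$ -- that beats the $p\ge d/2$ threshold, so your proposal would need to be substantially reworked along these lines (or via the Green-kernel/Rosenthal route you mention, which is essentially what the paper's preliminary remark suggests) to close the gap.
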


\begin{remark}
Let us preliminarily notice that, for any $R \subseteq Q_{mL}$ that is the disjoint union of $k$ cubes among the cubes $Q_i = Q_L + Lz_i$, $z_i \in \mathbb{Z}^d$, we have the upper bound, if $q \ge 1$,
\begin{equation} \label{eq:moment-ur}\begin{split} \EE\sqa{ |\cU_R|^{q}} & = k^{q} \EE\sqa{ \bra{\frac 1 k \sum_{Q_i \subseteq R} |\cU_{Q_i}|}^{q}} \le k^{q} \EE\sqa{\frac 1 k \sum_{Q_i \subseteq R} |\cU_{Q_i}|^{q}} \\
& \les k^{q} L^{d\frac{q}{2}} \les (kL^d)^{q} = |R|^{q}.
\end{split}\end{equation}
In particular, we have 
\begin{equation} \label{eq:moment-qml} \EE\sqa{ |\cU|^{q} } \les m^{dq} L^{d\frac{q}{2}} \les |Q_{mL}|^q.
\end{equation}
 Moreover, by Rosenthal inequalities \cite{rosenthal1970subspaces},  if $q \ge 2$,
\[\begin{split}  \EE\sqa{ \abs{ |\cU_R| - \EE\sqa{|\cU_R|}}^q } & = \EE\sqa{ \abs{ \sum_{Q_i\subseteq R} \bra{ |\cU_{Q_i}| - \EE\sqa{|\cU_{Q_i}|}} }^q} \\
 & \les k \EE\sqa{ \abs{ |\cU_{Q_L}| - \EE\sqa{|\cU_{Q_L}|}}^q } + k^{\frac{q}{2}} \EE\sqa{ \abs{ |\cU_{Q_L}| - \EE\sqa{|\cU_{Q_L}|}}^2 }^{\frac{q}{2}} \\
 & \les k L^{d\frac{q}{2}} +  k^{\frac{q}{2}} L^{d\frac{q}{2}} \les |R|^{\frac{q}{2}}.
 \end{split}\]
 We will use all these bounds in the proof below.
\end{remark}
%
%\begin{remark}
%%The exact expression in the right-hand side is not really relevant for a qualitative application of the lemma: it will be enough to notice that
%%$$ \EE\sqa{ \frac{1}{|Q_{m L}|^d} \Mp\bra{ \cU \cup \cN^\eta,  \cV \cup \cM^{\eta} }} \lesssim  \eta^{1-p/d} + \frac{ C(\eta)}{L^\alpha}$$
%%for some constant $C(\eta)$ (also depending on $p$, $d$, but not on $L$ nor $m$) and some $\alpha = \alpha(p,d)>0$.
%The proof gives the following explicit bound, for $p \ge 2$,
%\begin{equation}\label{eq:lemma-qml-thesis-explicit} \EE\sqa{ \frac{1}{|Q_{m L}|^d} \Mp\bra{ \cU \cup \cN^\eta,  \cV \cup \cM^{\eta} }} \lesssim_p \eta^{1-p/d}  + \frac{\eta^{-p/d}}{  L^{(d-p)/2}} + \frac{\eta^{1-3p}}{ L^{d/2-1}} + \frac{\eta^{1-3p-p/d}}{ L^{(d/2-1) + (d-p)/2}}.\end{equation}
%\end{remark}

%%%%%%%%% New proof %%%

\begin{proof}[Proof of \cref{thm:bound-matching}]
 For simplicity, we write throughout the proof $Q$ instead of $Q_{mL}$. 
 As in the previous two proofs, we start from \eqref{triangleMp} and  \eqref{commonstartpoint} (with $\rho=I_Q/|Q|$) and  estimate by \eqref{eq:matching-iid}, see also \cref{rem:matching},
\[
 \EE\sqa{\frac{1}{|Q|}\W_Q \bra{ \mu^{\cG},\frac{|\cG|}{|Q|}}}\les  \EE\sqa{|\cG|}^{1-\frac{p}{d}} |Q|^{\frac{p}{d}-1}.
\]
Since $|\cG|\le |\cN^\eta|$ we get
\[
\EE\sqa{\frac{1}{|Q|}\W_Q \bra{ \mu^{\cG},\frac{|\cG|}{|Q|}}}\les \eta^{1-\frac{p}{d}}.
\]
In order to conclude the proof it is thus enough to show

 \setcounter{proof-step}{0}
\begin{equation}\label{eq:bound-bad-part} \EE\sqa{ \frac{1}{|Q|} W_{Q}^p \bra{ \frac{ |\cG|}{|Q|} + \mu^{\cB}, \frac{Z}{|Q|} } } \les \frac{C(\eta)}{L^\alpha}.\end{equation}
We split the proof into several steps. We first consider the case $p\ge 2\ge d/(d-1)$.\\
\medskip 

\noindent{\emph{Step \stepcounter{proof-step}\arabic{proof-step}. Concentration bounds for $\mu^\cB$.}}
In this intermediate step, we collect some facts about $\mu^{\cB}(R)$, where $R \subseteq Q$ is a disjoint union of $k$ cubes $Q_i = Q_L + Lz_i$, $z_i \in \mathbb{Z}^d$. First of all, the construction of $\cB$ ensures that $\EE\sqa{ \mu^{\cB}(Q_i)}$ does not depend on $Q_i$ (one could in fact prove that $(\mu^{\cB}(Q_i))_{i}$ is an exchangeable sequence). We deduce that
\begin{equation} \label{eq:constant-mean} \EE\sqa{ \mu^{\cB}(Q_i)} = \frac{ \EE\sqa{ \mu^{\cB}(Q)}}{m^d}, \quad \text{hence} \quad  \frac{\EE\sqa{ \mu^{\cB}(R)}}{|R|} = \frac{ \EE\sqa{ \mu^{\cB}(Q)}}{|Q|}.\end{equation}
Indeed, when conditioned on $Z=z$, $|\cN^\eta|=n$,  $|\cU| = u_Q$, and the number of points $|\cU_R|=u_R \le u_Q$,  $\mu^{\cB}(R)$ is the number of ``successes''
in the random sampling procedure, without replacement which we used to define $\cB$, with $b = \max\cur{z-n,0}$ draws from an urn containing $ u_Q$ marbles, $u_R$ of which have the desired feature 
(their extraction defines a success). This is explicitly given by a hypergeometric distribution with parameters $\bra{u_Q, u_R, b}$: given $s_R \le u_R$,
\[\PP\bra{ \mu^{\cB}(R) = s_R  | B}=   { u_R \choose s_R} {u_Q-u_R \choose b - s_R} / { u_Q \choose b},\]
where for brevity we write
$$ B = \cur{ Z=z, |\cN^\eta|=n_Q,  |\cU| = u_Q, |\cU_R| = u_R}.$$
Specializing to $R = Q_i$, we see that this quantity does not depend on $Q_i$, since $|\cU_{Q_i}|$ are i.i.d.\ variables, hence the joint laws of the variables $(Z, |\cN^\eta|, |\cU|,  |\cU_{Q_i}| )$ involved in the definition of the law of $\mu^{\cB}(Q_i)$ do not depend on $i$. 

\noindent Using the concentration inequality \eqref{eq:concentration-hyper} for hypergeometric random variables, we have
$$ \EE\sqa{\abs{ \mu^{\cB}(R) - \EE\sqa{\mu^{\cB}(R)|B}}^p |B} \les \bra{u_R}^{\frac{p}{2}},$$
 from which we find, thanks to \eqref{eq:moment-ur} (recall that $p\ge 2$),
\begin{equation}\label{eq:density-fluctation} \EE\sqa{ \abs{ \mu^\cB(R)- \EE\sqa{ \mu^\cB(R)}} ^p  } \lesssim  |R|^{\frac{p}{2}}.\end{equation}

\noindent{\emph{Step \stepcounter{proof-step}\arabic{proof-step}. Subadditivity bound.}}
Using \eqref{eq:constant-mean} and \eqref{eq:density-fluctation} above, we are in a position to follow closely the main argument of \cite[Proposition 5.4]{goldman2021convergence}. We  define, for a rectangle $R \subseteq Q$ that is a union of cubes $Q_i$'s,
\[
 \fref(R)= \EE\lt[\frac{1}{|R|}\W_R\bra{ \mu^\cB + \frac{|\cG|}{|Q|}, \frac{\mu^{\cB}(R)}{|R|} + \frac{|\cG|}{|Q|}  }\rt].
\]
We say that $\cR$ is an admissible partition  of $R$ if it is made of rectangles satisfying the following conditions. Each $R_k\in \cR$ is a union of cubes $Q_i$, it is of moderate aspect ratio and $3^{-d}|R|\le |R_j|\le |R|$. We claim that there exists $C_\eta=C(d,p,\eta)>0$ such that    for   every admissible partition $\cR$ 
of $R$  and every $\eps\in(0,1)$, we have 
 \begin{equation}\label{onestep}
  \fref(R)\le (1+\eps)\sum_i \frac{|R_i|}{|R|} \fref(R_i) +\frac{C_\eta}{\eps^{p-1}} \frac{1}{|R|^{\frac{p(d-2)}{2d}}}.
 \end{equation}
 Setting 
 \[
  \alpha = \frac{ \mu^{\cB}(R)}{|R|}+\frac{ |\cG|}{|Q|},  \quad  \alpha_i = \frac{ \mu^{\cB}(R_i)}{|R_i|}+\frac{|\cG|}{|Q|}
 \]
and using \eqref{eq:mainsub}, this reduces to 
\begin{equation}\label{toproveonestep}
 \EE\sqa{ \frac{1}{|R|}\W_{R}\bra{\sum_{i} \alpha_i I_{R_i},\alpha}}
 \le \frac{C_\eta}{ |R|^{\frac{p(d-2)}{2d}}}.
\end{equation}

\noindent First, we single out the event 
$$ A= \cur{ \min\cur{ |\cN^\eta|,  |\cM^\eta|} \ge \eta |Q| /2}.$$
Notice that on $A$, we have $\alpha\ges  \eta$.
By the concentration bound \eqref{eq:density-bound-below-Poi}, for every $q\ge 1$, $\mathbb{P}(A^c) \les_q (\eta |Q|)^{-q}\le (\eta |R|)^{-q}$. Therefore, if $A^c$ holds, we can use the trivial bound

\[\begin{split}\frac{1}{|R|}\W_{R}\bra{\sum_{i} \alpha_i I_{R_i},\alpha}& \le \frac{1}{|R|}\W_{R}\bra{\sum_{i} \frac{ \mu^{\cB}(R_i)}{|R_i|} I_{R_i},\frac{ \mu^{\cB}(R)}{|R|}} \\
& \le |R|^{\frac{p}{d}-1} \mu^{\cB}(R)
\le |R|^{\frac{p}{d}-1} |\cU_R|.
\end{split}\]
Using   Cauchy-Schwarz inequality and \eqref{eq:moment-ur} with $q=2$, we get for any $q \ge 1$,
\[  \EE \sqa{  \frac{1}{|R|}\W_{R}\bra{\sum_{i} \alpha_i I_{R_i},\alpha} I_{A^c}}   \les_{\eta,q} |R|^{\frac{p}{d}-q},
\]
which is estimated by the right-hand side of \eqref{toproveonestep} provided we choose $q$ large enough.\\

\noindent If $A$ holds, we use \eqref{eq:estimCZ} in combination with \eqref{eq:Lp} (recall that for rectangles of moderate aspect ratio the Sobolev constant is uniformly bounded) to get   
\begin{align*}
 \frac{1}{|R|}\W_{R}\bra{\sum_{i} \alpha_i I_{R_i},\alpha}&\les \frac{ |R|^{\frac{p}{d}-1} }{ \alpha^{p-1}} \sum_{i} |R_i| \abs{\alpha_i - \alpha }^p\\
 &\les \eta^{1-p} |R|^{\frac{p}{d}}\sum_{i} \abs{\alpha_i - \alpha }^p.
\end{align*}
We thus have 
\[\begin{split}
 \EE\sqa{\frac{1}{|R|}\W_{R}\bra{\sum_{i} \alpha_i I_{R_i},\alpha}I_A} & \les \eta^{1-p} |R|^{\frac{p}{d}}\sum_{i}\EE\sqa{\abs{\alpha_i - \alpha }^pI_A}\\ &
 \le \eta^{1-p} |R|^{\frac{p}{d}}\sum_{i}\EE\sqa{\abs{\alpha_i - \alpha }^p}.\end{split}\]
 Using that $\alpha_i-\alpha=\frac{\mu^{\cB}(R_i)}{|R_i|}-\frac{\mu^{\cB}(R)}{|R|}$, \eqref{eq:constant-mean} and triangle inequality we have 
\[ \begin{split}
  \sum_{i}\EE\sqa{\abs{\alpha_i - \alpha }^p}
  & \les \sum_i \EE\sqa{\abs{\frac{\mu^{\cB}(R_i)}{|R_i|}- \EE\sqa{\frac{\mu^{\cB}(R_i)}{|R_i|}} }^p} + \EE\sqa{\abs{\frac{\mu^{\cB}(R)}{|R|}- \EE\sqa{\frac{\mu^{\cB}(R)}{|R|}} }^p}\\
  & \stackrel{\eqref{eq:density-fluctation}}{\les} |R|^{-\frac{p}{2}}. 
 \end{split}\]
This proves  
\[
 \EE\sqa{\frac{1}{|R|}\W_{R}\bra{\sum_{i} \alpha_i I_{R_i},\alpha}I_A}\les \frac{\eta^{1-p}}{ |R|^{\frac{p(d-2)}{2d}}},
\]
concluding the proof of \eqref{toproveonestep}.

\noindent{\emph{Step \stepcounter{proof-step}\arabic{proof-step}. Dyadic approximation.}} Starting from the cube $Q=Q_{mL}$, we build a sequence of finer and finer partitions of $Q_{mL}$ by rectangles of moderate aspect ratios that are unions of sub-cubes $Q_i$'s. We let $\mathcal{R}_0=\{Q_{mL}\}$ and define $\mathcal{R}_k$ inductively as follows.
 Let $R\in \mathcal{R}_k$. Up to translation we may assume that $R=\prod_{i=1}^d (0, m_i L)$ for some $m_i\in \N$. We then split each interval $(0,m_i L)$ into $(0,\lfloor\frac{m_i}{2}\rfloor L)\cup(\lfloor\frac{m_i}{2}\rfloor L, m_i L)$. 
 It is readily seen that this induces an admissible partition of $R$. Let us point out that when $m_i=1$ for some $i$, the corresponding interval   $(0,\lfloor\frac{m_i}{2}\rfloor L)$ is empty.
 This procedure stops after a finite number of steps $K$ once $\mathcal{R}_K=\{Q_L+z_i, z_i\in [0,m-1]^d \cap \mathbb{Z}^d\}$. It is also readily seen that $2^{K-1}<m\le 2^K$ and that for every $k\in [0,K]$ and every $R\in \mathcal{R}_k$ we have $|R|\sim (2^{K-k} L)^d$. \\
We prove via a downward induction the existence of $\Lambda_\eta>0$ such that  for every $k\in [0,K]$ and every $R\in \mathcal{R}_{k}$,
\begin{equation}\label{induction}
 \fref(R)\le \fref(Q_L)+ \Lambda_\eta(1+\fref(Q_L)) L^{-\frac{d-2}{2}} \sum_{j=K-k}^K 2^{- j\frac{d-2}{2}}.
\end{equation}
The statement is clearly true for $k=K$, since the law of the point process on each cube $Q_i = Q_L + z_i$ is the same, hence $f(Q_i) = f(Q_L)$. Assume that it holds true for $k+1$. Let $R\in \mathcal{R}_{k}$. Applying \eqref{onestep} with $\eps= (2^{K-k} L)^{-(d-2)/2}\ll1$, we get 
\[ \begin{split}
 \fref(R)&\le (1+ \eps) \sum_{R_i\in \mathcal{R}_{k+1}, R_i\subset R} \frac{|R_i|}{|R|} \fref(R_i) + \frac{C_\eta}{\eps^{p-1}} \frac{1}{|R|^{\frac{p(d-2)}{2d}}}\\
 &\stackrel{\eqref{induction}}{\le} (1+\eps) \lt(\fref(Q_L)+ \Lambda_\eta(1+\fref(Q_L))L^{-\frac{d-2}{2}} \sum_{j=K-k+1}^K 2^{- j\frac{d-2}{2}}\rt) \\
 & \qquad \qquad +  C_\eta(2^{K-k} L)^{-\frac{d-2}{2}}\\
 &\le  \fref(Q_L)+  \Lambda_\eta(1+\fref(Q_L))L^{-\frac{d-2}{2}}\cdot\\
 & \qquad \qquad \cdot \lt[\sum_{j=K-k+1}^K 2^{- j\frac{d-2}{2}}+2^{-(K-k)\frac{d-2}{2}}\lt( \frac{C_\eta+1}{\Lambda_\eta}+L^{-\frac{d-2}{2}} \sum_{j=K-k+1}^K 2^{- j\frac{d-2}{2}}  \rt)\rt].
\end{split}\]
If $L$ is large enough (depending on $\eta$)  then 
$$ \bra{ \sum_{j=K-k+1}^K 2^{- j\frac{d-2}{2}}}\eta^{1-3p} L^{-\frac{(d-2)}{2}}\les_\eta  \bra{ \sum_{j=0}^\infty 2^{- j\frac{d-2}{2}}} L^{-\frac{(d-2)}{2}	} \le \frac{1}{2}.$$ Finally, choosing $\Lambda_\eta\ge 2(C+1)$ yields \eqref{induction}. 
Applying \eqref{induction} to $R=Q_{mL}$ and using that $\sum_{j\ge 0} 2^{- j\frac{d-2}{2}}<\infty$, we get 
\begin{equation}\label{conclusiondyadic}
 \fref(Q_{mL})\le \fref(Q_L)+ \Lambda_\eta(1+\fref(Q_L)) \frac{1}{L^{\frac{d-2}{2}}}.
\end{equation}

\noindent{\emph{Step \stepcounter{proof-step}\arabic{proof-step}. Conclusion in the case $p\ge 2$.}}
We finally claim that 
\begin{equation}\label{claimfQL}
 f(Q_L)\le \frac{C_\eta}{L^{\frac{1}{2}(d-p)}}.
\end{equation}
Arguing verbatim as in the proof of \eqref{toproveonestep} of {\it Step 5}, we see that it is enough to assume that we are in the event $A= \cur{ \min\cur{ |\cN^\eta|,  |\cM^\eta|} \ge \eta |Q| /2}$. 
Since in this case  $|\cG|/|Q| \gtrsim \eta$,  \cref{prop:density-helps} yields
 \[ \begin{split}  \EE\sqa{ \frac{1}{|Q_L|} \W_{Q_L}\bra{ \mu^\cB + \frac{|\cG|}{|Q|}, \frac{\mu^\cB(Q_L)}{|Q_L|}+ \frac{|\cG|}{|Q|}} I_A }  &
 \les \frac{1}{|Q_L| \eta^{\frac{p}{d}}} \EE\sqa{  \bra{ \mu^\cB(Q_L)}^{1+\frac{p}{d}} I_A }\\
 & \le \frac{1}{|Q_L| \eta^{\frac{p}{d}}} \EE\sqa{  \bra{ |\cU_{Q_L}|}^{1+\frac{p}{d}}  }  \\& \stackrel{\eqref{hypUV}}{\lesssim} \frac{L^{\frac{d+p}{2}}}{L^d \eta^{\frac{p}{d}}}   \les \frac{1}{\eta^{\frac{p}{d}}L^{\frac{d-p}{2}}}.
\end{split}\]
This proves \eqref{claimfQL}. Inserting this into \eqref{conclusiondyadic} finally gives (recall that $p>2$)
\[
 \fref(Q)\le \frac{C_\eta}{L^{\frac{1}{2}(d-p)}}.
\]
This concludes the proof of \eqref{eq:bound-bad-part} with $\alpha=(d-p)/2$ when $p\ge 2$.\\

\noindent{\emph{Step \stepcounter{proof-step}\arabic{proof-step}. The case  $p\le 2$.}}
If $p\le 2$, we argue as in the previous two proofs and use 
 \eqref{eq:jensen}
to obtain (recall that $Z=|\cG|+|\cB|\le |\cU|+|\cN^\eta|$)
\begin{multline*}  \EE\sqa{ \frac{1}{|Q|} \W_{Q} \bra{ \frac{ |\cG|}{|Q|} + \mu^{\cB}, \frac{ Z}{|Q|} } }    \le \EE\sqa{ \bra{ \frac{Z}{|Q|}}^{1-\frac{p}{2}} \bra{ \frac{1}{|Q|}  \mathsf{W}_{Q}^2 \bra{ \frac{ |\cG|}{|Q|} + \mu^{\cB}, \frac{ Z}{|Q|} } }^{\frac{p}{2}}  }\\ 
\le \bra{ \frac { \EE\sqa{Z}}{|Q|}}^{1-\frac{p}{2}} \EE\sqa{  \frac{1}{|Q|}  \mathsf{W}_{Q}^2 \bra{ \frac{ |\cG|}{|Q|} + \mu^{\cB}, \frac{ Z}{|Q|} }  }^{\frac{p}{2}}\\
  \les \bra{L^{-\frac{d}{2}}+\eta }^{1-\frac{p}{2}}\bra{ \frac{C(\eta)}{L^\alpha}}^{\frac{p}{2}} \les \bra{ \frac{C(\eta)}{L^\alpha}}^{\frac{p}{2}},
  \end{multline*}
where in the last step we used \eqref{eq:moment-qml} and \eqref{eq:bound-bad-part} with $p=2$. This concludes the proof of \eqref{eq:bound-bad-part} for any $p<d$. \qedhere
\end{proof}

\printbibliography

\end{document}